\newtheorem{theorem}{Theorem}[section]
\newtheorem{lemma}[theorem]{Lemma}
\theoremstyle{definition}
\newtheorem{definition}[theorem]{Definition}
\theoremstyle{remark}
\newtheorem{remark}[theorem]{Remark}
\newtheorem{example}[theorem]{Example}
\newcommand{\R}{\mathbb{R}}
\newcommand{\N}{\mathbb{N}}
\newcommand{\Q}{\mathbb{Q}}
\newcommand{\Z}{\mathbb{Z}}
\DeclareMathOperator*{\argmin}{argmin}
\renewcommand{\d}{{\,\mathrm{d}}} 
\newcommand{\dist}{\mathrm{dist}}
\newcommand{\V}{\mathbf{V}}
\newcommand{\Y}{\mathbf{Y}}
\newcommand{\energy}{\mathcal{W}} 
\newcommand{\Qenergy}{\mathcal{Q}}
\newcommand{\pathenergy}{\mathcal{E}}
\newcommand{\Pathenergy}{\mathbf{E}}
\newcommand{\splineenergy}{\mathcal{F}}
\newcommand{\Splineenergy}{\mathbf{F}}
\newcommand{\constraintEnergy}{\mathcal{C}}
\newcommand{\metric}{g}
\newcommand{\manifold}{\mathcal{M}}
\newcommand{\x}{u}
\newcommand{\y}{y}
\newcommand{\cov}{{\tfrac{D}{\d t}}} 
\newcommand{\missing}[1]{{\color{magenta} {\bf [missing:}  {#1} {\bf ]}}}
\newcommand{\comment}[1]{{\color{cyan} {\bf [comment:}  {#1} {\bf ]}}}
\newcommand{\notinclude}[1]{}
\newcommand{\beq}{\begin{equation*}}
\newcommand{\eeq}{\end{equation*}}
\newcommand{\beqn}{\begin{equation}}
\newcommand{\eeqn}{\end{equation}}
\newcommand{\beqa}{\begin{eqnarray*}}
\newcommand{\eeqa}{\end{eqnarray*}}
\newcommand{\beqan}{\begin{align}}
\newcommand{\eeqan}{\end{align}}
\newcommand{\tgl}{{\mathrm{tgl}}}
\DeclareRobustCommand\onedot{\futurelet\@let@token\@onedot}
\def\@onedot{\ifx\@let@token.\else.\null\fi\xspace}
\def\eg{\emph{e.g}\onedot} 
\def\ie{\emph{i.e}\onedot}
\def\etal{\emph{et al}\onedot}
\def\namedlabel#1#2{\begingroup#2\def\@currentlabel{#2}\phantomsection\label{#1}\endgroup}
\definecolor{uniblau}{HTML}{004291}
\definecolor{bigsblau}{HTML}{365079}
\definecolor{bigsblau50}{HTML}{9CA7BC}
\definecolor{bigsblau25}{HTML}{CDD3DD}
\definecolor{uniorangedark}{HTML}{E6B400}
\definecolor{uniorange}{HTML}{FFCB0E}
\definecolor{uniorange!50}{HTML}{FFE586}
\definecolor{uniwhite}{HTML}{FFF2C2}
\definecolor{hcmgruen}{HTML}{567877}
\definecolor{hcmgruen50}{HTML}{AFBDBE}
\definecolor{hcmgruen25}{HTML}{D7DEDE}
\definecolor{himgrau}{HTML}{626566}
\definecolor{himgrau75}{HTML}{949592}
\definecolor{himgrau50}{HTML}{C5C4BE}
\definecolor{himgrau25}{HTML}{F5F5F5}
\definecolor{textgrau}{HTML}{000000}
\definecolor{black}{HTML}{000000}
\definecolor{white}{HTML}{FFFFFF}
\definecolor{sfbrot}{HTML}{CB4154}
\definecolor{lightgray}{gray}{0.9}
\colorlet{hcmrot}{sfbrot}
\colorlet{red}{sfbrot}
\colorlet{blue}{uniblau}
\colorlet{greyblue}{bigsblau}
\colorlet{hcmgelb}{uniorange}
\colorlet{yellow}{uniorange}
\colorlet{tafelgruen}{hcmgruen}
\colorlet{green}{hcmgruen}
\newcommand{\ym}{{\y_{k-1}}}
\newcommand{\yp}{{\y_{k+1}}}
\newcommand{\yt}{{\widetilde{\y}_{k}}}
\DeclareMathOperator*{\Gammalim}{\Gamma - \lim}
\DeclareMathOperator*{\Gammalimsup}{\Gamma - \limsup}
\newcommand{\revision}[1]{{#1}}
\newcommand{\changed}[1]{\textcolor{red}{#1}}
\definecolor{myOrange}{RGB}{255, 169, 87 }
\definecolor{myGreen}{RGB}{180, 255, 162  }
\begin{document}


\title{Variational time discretization of Riemannian splines}
\author{Behrend Heeren, Martin Rumpf, Benedikt Wirth}

\maketitle

\begin{abstract}
We investigate a generalization of cubic splines to Riemannian manifolds.
Spline curves are defined as minimizers of the spline energy---a combination of the Riemannian path energy and 
the time integral of the squared covariant derivative of the path velocity---under suitable interpolation conditions.
A variational time discretization for the spline energy leads to a constrained optimization problem over discrete paths on the manifold.
Existence of continuous and discrete spline curves is established using the direct method in the calculus of variations. 
Furthermore, the convergence of discrete spline paths to a continuous spline curve
follows from the $\Gamma$-convergence of the discrete to the continuous spline energy.
Finally, selected example settings are discussed, including splines on embedded finite-dimensional manifolds,
on a high-dimensional manifold of discrete shells with applications in surface processing, 
and on the infinite-dimensional shape manifold of viscous rods. 

\end{abstract}


\section{Introduction}\label{sec:intro}
In this paper we investigate a variational time discretization of spline curves on Riemannian manifolds.
To this end we extend the time-discrete geodesic calculus developed in~\cite{RuWi12b}.
The approach has already been presented without analysis in the context of the space of discrete shells in~\cite{HeRuSc16}, and
related classical interpolation and approximation tools in this shell space were discussed in \cite{HuPeRu17}.
Here, we investigate existence, regularity, and convergence properties of the approach under general assumptions, which are slightly stronger than 
those required in \cite{RuWi12b}. We further show examples of discrete spline curves 
on embedded two-dimensional manifolds in $\R^3$, on the space of discrete shells, and on an infinite-dimensional space 
of viscous rods.\medskip

Recently, Riemannian calculus on shape spaces has attracted a lot of attention.  
It in particular allows to transfer many important concepts from classical geometry to these usually high or even infinite-dimensional spaces.
Prominent examples with a full-fledged geometric theory are spaces of planar curves with a curvature-based metric \cite{MiMu04}, 
an elastic metric \cite{SrJaJo06} or Sobolev-type metrics \cite{ChKePo05,MiMu07,SuYeMe07}.
Geodesic paths in shape space can be considered as geometrically or physically natural
morphs from one shape into another.
Geodesic paths can be computed in closed form only for few nontrivial application-oriented Riemannian spaces (\eg \cite{YoMiSh08,SuMeSo11}).

In image processing the large deformation diffeomorphic metric mapping (LDDMM) framework proved to be a powerful tool 
underpinned with the rigorous mathematical theory
of diffeomorphic flows. In fact, Dupuis \etal \cite{DuGrMi98} showed that the resulting flow is actually a flow of diffeomorphisms.
In \cite{HaZaNi09}, Hart \etal exploited the optimal control perspective to the LDDMM model with the motion field as the underlying control.
Vialard \etal \cite{ViRiRu12a,ViRiRu12} studied methods from optimal control theory
to accurately estimate this initial momentum and to relate it to the Hamiltonian formulation of the geodesic 

In the context of geometry processing Kilian et al.~\cite{KiMiPo07}
studied geodesics in the space of triangular surfaces to interpolate between two
given poses.  The underlying metric is derived from the in-plane membrane
distortion.  Since this pioneering paper a variety of other Riemannian metrics
on the space of surfaces have been investigated
\cite{LiShDi10,KuKlDi10,BaBr11}.
In~\cite{HeRuWa12,HeRuSc14} a metric was proposed that takes the full elastic
responses including bending distortion into account.  
Brandt et al.~\cite{BrTyHi16} proposed an accelerated
scheme for the computation of geodesic paths in this shell space. 
\medskip
 
In Euclidean space cubic splines are known as minimizers of the total squared acceleration,  
due to a classical result by de Boor \cite{Bo63}
Analogously, \emph{Riemannian cubic splines} were introduced by Noakes et
al.~\cite{NoHePa89} as stationary paths of the integrated squared covariant derivative
of the velocity.  Subsequently, Camarinha et al.~\cite{CaSiCr01} proved a local
optimality condition and Giambo and Giannoni~\cite{GiGi02} established a 
global existence results. More recently, Trouv\'{e} and Vialard~\cite{TrVi12} studied 
spline interpolation on Riemannian manifolds with applications to 
time-indexed sequences of landmarks of 2D or 3D shapes.  Hinkle et al.~\cite{HiMuFl12}
investigated higher order Riemannian polynomials to perform polynomial
regression on Riemannian manifolds. 

Minimizing curve energies in an ambient space subject to the 
restriction of the curve to the manifold led to extrinsic variational formulations of splines. 
Wallner~\cite{Wa04} showed existence of minimizers in this setup for
finite dimensional manifolds, and Pottmann and Hofer~\cite{PoHo05} proved that
these minimizers are $C^2$.  
As an alternative to variational formulations 
interpolatory \emph{subdivision schemes} have been investigated on manifolds
exploiting the fact that subdivision schemes in Euclidean space are mostly based
on repeated local averages \cite{Dy02}. Wallner and
Dyn~\cite{WaDy05} showed that the resulting Riemannian cubic subdivision scheme
yields $C^1$ curves.  Recently, Wallner~\cite{Wa14} improved the regularity results for 
linear four-point scheme and other
univariate interpolatory schemes.  
\medskip

Here, we consider the variational definition and variational time discretization of Riemannian splines.
Let us briefly describe the main ideas and contributions of this work.
To define spline curves on a Riemannian manifold $\manifold$ in a variational way we follow the approach by Noakes \etal \cite{NoHePa89}. 
Splines are introduced as smooth curves $\y : [0, 1] \rightarrow \manifold$  that minimize the \emph{spline energy}
$$\splineenergy[\y] = \int_0^1   g_{\y(t)}\left(\cov\dot \y,\cov\dot \y\right) \d t$$ subject to some interpolation conditions and suitable 
boundary conditions (see Definitions\,\ref{def:energies} \& \ref{def:splineInterpolation}). Here $g$ denotes the Riemannian metric and $\cov v$ the covariant derivative of a vector field $v$ along the curve $\y$. 
We will show (Theorem\,\ref{thm:nonexistence}) that minimizers might not exist for this spline energy.
To overcome this problem we regularize the spline energy by combining it with the path energy and define a regularized spline curve as the minimizer of this new energy.
Those regularized spline curves can now be shown to exist via the direct method (Theorem\,\ref{thm:ExistenceContinuous}),
where the involved lower semi-continuity and the coercivity of the functional are rather intricate to show.
In fact, the lower semi-continuity imposes a quite strong requirement on the underlying Riemannian metric $g$, namely that any of its spatially nonconstant and nonquadratic components be continuous under weak convergence (see Definition\,\ref{def:admissibleMetric}).
In the same theorem we also show interior $C^{2,\frac12}$-regularity of spline interpolations, adapting classical elliptic regularity theory.

The discretization of (regularized) Riemannian spline curves will again be based on a (now discrete) variational model.
As a motivation consider the situation in Euclidean space,
in which the covariant derivative of the velocity field $v=\dot \y$ of a curve $\y: [0,1] \to \R^d$ 
coincides with $\ddot\y$. 
Given a uniform sampling $\y_k = \y(t^k)$ for $t^k = k\tau$, $k = 0,\ldots,K$, and time step $\tau = 1/K$,
the acceleration $\ddot \y(t^k)$ can be approximated by central second order difference quotients,
$\ddot \y(t^k)  \approx  \frac{2\y(t^k) - \y(t^{k-1}) - \y(t^{k+1})}{\tau^2}$.
Introducing the function $\energy[\y, \widetilde{\y}] = \|\y - \widetilde{\y}\|^2$ we thus obtain 
\begin{equation*}\label{eq:secOrderDiffQuotient}
  \left\| \ddot \y(t^k) \right\|^2 
  \approx 4K^4\, \left\| \y_k -\frac{ \y_{k-1} + \y_{k+1}}{2} \right\|^2 = 4K^4\, \energy[\y_k,\widetilde\y_k]\,,
\end{equation*}
where $\widetilde\y_k=\tfrac12 (\y_{k+1}+\y_{k-1})$ is the midpoint between $\y_{k-1}$ and $\y_{k+1}$
and one easily verifies that $\widetilde\y_k = \argmin_{\y}  \energy[\y_{k-1},\y] +  \energy[\y,\y_{k+1}]$.
The simple numerical quadrature $\int_0^1 f(t) \d t \approx  \tau \,\sum_{k=1}^{K-1} f(t^k)$ now yields
a straightforward approximation of the elastic functional in Euclidean space,
\begin{equation*}
\splineenergy[\y]
=\int_0^1 \|\ddot \y(t)\|^2 \d t 
\approx 4K^3 \sum_{k=1}^{K-1}  \energy[\y_k,\widetilde\y_k] \,.
\end{equation*}
Therefore, we define a discrete splines  as a $(K+1)$-tuples $(\y_0, \ldots, \y_K)\in\manifold^{K+1}$ that minimize the \emph{discrete spline energy}
\begin{equation*}\label{eq:dSpline}
F^K[\y_0, \ldots, \y_K] = 4K^3\, \sum_{k=1}^{K-1}  \energy[\y_k, \widetilde{\y}_k]
\qquad\text{with }\widetilde{\y}_k = \arg \min_\y \Big(  \energy[\y_{k-1},\y] +  \energy[\y,\y_{k+1}] \Big)
\end{equation*}
subject to appropriate interpolation and boundary conditions (see Definitions\,\ref{def:discreteEnergies} \& \ref{def:discreteSplineInterpolation}).
On Riemanian manifolds we consider $\energy[\y,\widetilde\y]$ to be (an approximation of) the squared Riemannian distance between $\y$ and $\widetilde\y$.
Then $F^K[\y_0, \ldots, \y_K]$ turns into a functional which variationally describes discrete Riemannian splines, where 
$\widetilde\y_k=\argmin_\y\energy[\y_{k-1},\y] +  \energy[\y,\y_{k+1}]$ is the (approximate) geodesic midpoint of $\y_{k-1}$ and $\y_{k+1}$.
As in the continuous case, the discrete spline energy is regularized by adding a small amount of a discrete path energy.
Mimicking the approach and the conditions of the continuous setting, we prove existence of minimizers of this regularized discrete spline energy (Theorem\,\ref{thm:ExistenceDiscrete}).

Finally we prove that the (regularized) discrete spline energy is indeed an approximation of the (regularized) continuous spline energy in the sense of $\Gamma$-convergence (Theorem\,\ref{thm:GammaConvergence}).
Here the major effort lies in deriving the consistency of the discrete spline energy with minimal regularity requirement.
Ultimately, as a consequence we also prove that discrete spline interpolations converge against continuous spline interpolations (Theorem\,\ref{thm:equicoercivity}).
\medskip

The paper is  organized as follows.
At first we recall in Section\,\ref{sec:splines}
the Riemanian path energy and define a Riemannian spline energy under suitable assumptions on the 
Riemannian manifolds, which also admit a class of infinite-dimensional Hilbert manifolds.
Existence of energy minimizers is also investigated.
In Section\,\ref{sec:splinesdiscrete} we develop the variational time discretization for Riemannian splines
and prove existence of discrete spline curves.
The approach we follow here and expand towards Riemannian splines 
is based on the concept and analysis of the discrete Riemannian calculus in \cite{RuWi12b}.
In Section\,\ref{sec:gamma} the convergence of discrete splines to continuous splines is justified via $\Gamma$-convergence.
Finally, in Section\,\ref{sec:numerics} we discuss some applications and show numerical results in the case of embedded finite-dimensional manifolds, 
a high-dimensional manifold of discrete shells, and the infinite-dimensional manifold of viscous rods.  

\section{Geodesics and splines on a Riemannian manifold}\label{sec:splines}

In this section we briefly recapitulate the notion of geodesics on (possibly infinite-dimensional) Riemannian manifolds
and introduce a definition of cubic splines on those manifolds as curves minimizing a particular spline energy.
We then analyse the well-posedness of this definition using variational techniques.

To be mathematically precise, we first fix an abstract setting in which we shall work, and review a few differential geometric concepts needed to state the spline energy. The following definition of an admissible metric coincides with that in \cite{RuWi12b} except that we additionally require a particular splitting of $g$ into a compact and a quadratic component.

\begin{definition}[Admissible manifold]\label{def:admissibleManifold}
Let $\V$ be a separable Hilbert space that is compactly embedded in a real Banach space $\Y$, \revision{\ie the identity mapping $\mathrm{id}: \V \to \Y$ is a compact linear operator}.  
Then we call $\manifold\equiv\V$ an \emph{admissible} (Hilbert) manifold, thus
\begin{equation*}
\manifold=\V\hookrightarrow\Y\,.
\end{equation*}
\end{definition}

\begin{definition}[Admissible metric]\label{def:admissibleMetric}
A Riemannian metric $g:\bigcup_{y \in \manifold} \left(\{y\} \times T_\y\manifold\times T_\y\manifold\right)\to\R$ on the admissible manifold $\manifold$ shall be called \emph{admissible}
if it can be extended to a function $g:\Y\times\V\times\V\to\R$ of the form
$$
g_\y(v,w) =  g^c_\y(v,w) + \Qenergy(v,w)
$$
for some compact part $g^c$, which depends on the position $\y$, and a quadratic part $\Qenergy$,
where the following hypotheses shall be satisfied for all $v\in\V$.
\begin{enumerate}[label=(\roman*)]
\item\label{enm:boundedness_gc} $g^c$ is symmetric in the last two arguments and $g^c_\y(v,v) \leq C^\ast \|v\|^2_\Y$ for some constant $C^\ast$.
\item\label{enm:compactness} $g^c$ is twice differentiable with bounded derivatives as a function $g^c:\Y\to\Y'\otimes\Y'$.
\item\label{enm:boundedness_Q} $\Qenergy$ is symmetric positive semi-definite and bilinear on $\V\times \V$ with $\Qenergy(v,v) \leq C^{\ast\ast} \|v\|^2_\V$ for some constant $C^{\ast\ast}$\,.
\item\label{enm:coercivity} $g$ is uniformly coercive with respect to the $\V$ norm, \ie $ c^\ast \|v\|^2_\V \leq  g_\y(v,v)$ for some $c^\ast>0$.
\end{enumerate}
Here, $\V'$ and $\Y'$ denote the dual spaces to $\V$ and $\Y$, and $\V'\otimes\V'$ and $\Y'\otimes\Y'$ are equipped with the topology induced by the injective cross norm.
\end{definition}
\begin{remark}[Weaker differentiability condition]
Condition \ref{enm:compactness} can be relaxed to only require bounded first derivatives of $g^c$ as a function $g^c:\Y\to\Y'\otimes\Y'$ and bounded second derivatives of $g^c$ as a function $g^c:\Y\to\V'\otimes\V'$.
All arguments remain valid in this case without modifications.
\end{remark}
\begin{remark}[Modulus of continuity]
As a direct consequence of \ref{enm:compactness} there exists a strictly increasing
continuous function $\beta$ with $\beta(0)=0$ such that
\begin{equation*}
| g^c_\y(v,v) - g^c_{\widetilde{\y}}(v,v)| \leq \beta(\| \y - \widetilde{\y}\|_\Y) \|v\|^2_\Y\text{ for all }\y,\, \widetilde{\y}  \in \Y\text{ and all }v \in \V.
\end{equation*}
\end{remark}
\begin{remark}[Finite-dimensional admissible Riemannian manifolds]
As the simplest example, $d$-dimensional manifolds $\manifold$ parameterized over $\R^d$ with a smooth metric $g:\R^d\times\R^d\times\R^d$ are admissible
with $\Y=\V=\R^d$ and $g^c=g$ as well as $\Qenergy=0$.
\end{remark}
\begin{remark}[Generalized admissible manifolds]\label{rem:manifoldBoundary}
Our admissible manifolds have no boundary and necessarily have the same topology as the underlying Hilbert space $H$.
However, some results can be carried over to manifolds with different topology or with boundary,
essentially by reduction to the case of admissible manifolds via local charts.
We will make corresponding comments when discussing our shape space examples in Section\,\ref{sec:numerics}.
\end{remark}
Given an admissible Riemannian manifold $(\manifold,g)$, let us next introduce the Christoffel operator $\Gamma_\y: T_\y\manifold\times T_\y\manifold \to T_\y\manifold$ by
\beqn\label{eq:Gamma}
2 g_\y(\Gamma_\y(v,w),z) = \left(D_\y g_\y\right)(w)(v,z) - \left(D_\y g_\y\right)(z)(v,w) + \left(D_\y g_\y\right)(v)(w,z) \,,
\eeqn
where $D_\y$ denotes the derivative with respect to $\y$ and thus $D_\y g_\y = D_\y g_\y^c$ under the above assumptions.
Symmetry of the metric $g$ implies symmetry of the Christoffel operator.
\begin{remark}[Implications of metric decomposition]
The decomposition of $g$ into $g^c$ and $\Qenergy$ will be necessary to establish weak continuity of the associated Christoffel operator (see Lemma \ref{thm:weakcontGamma}), which is required for an existence result of Riemannian splines.
For a general $g_\y$, which is uniformly bounded and positive definite on $\V\times \V$ for all $\y \in \Y$ but nonlinear in $\y$,
the right-hand side of \eqref{eq:Gamma} will in general not be weakly continuous jointly in $v$ and $w$ on infinite-dimensional spaces $\V$.
\end{remark}
The covariant derivative $\cov w$ of a tangential vector field $w:[0,1]\to\V$ along a path $\y:[0,1]\to\manifold$ with $w(t)\in T_{\y(t)}\manifold$ now can be defined via
\beqn\label{eq:cov}
g_{\y(t)}\left(\cov w(t), z\right) = g_{\y(t)}\left(\dot w(t), z\right)+ g_{\y(t)}(\Gamma_{\y(t)}(w(t),\dot \y(t)),z)
\qquad\text{for all }z\in T_{\y(t)}\manifold\,,
\eeqn
where a dot denotes differentiation with respect to time $t$.
Note that the covariant derivative along curves allows to define a connection $\nabla$ on the manifold.
Indeed, for $v\in T_\y\manifold$ and a tangent vector field $\tilde w:\manifold\to\V$ with $\tilde w(\widetilde{\y})\in T_{\widetilde{\y}}\manifold$ for all $\widetilde{\y}\in\manifold$ we define
\beqn\label{eq:connection}
\nabla_v\tilde w = \cov(\tilde w\circ\y)(0)\,,
\eeqn
where $t\mapsto \y(t)$ is any path with $\y(0)=\y$ and $\dot \y(0) = v\in T_\y\manifold$.

Now we are in a position to define the central energies and to state the spline interpolation problems.
\begin{definition}[Path and spline energy]\label{def:energies}
The \emph{path energy} $\pathenergy$ and the \emph{spline energy} $\splineenergy$ are given as
\begin{align}
\pathenergy[\y] &= \int_0^1 \metric_{\y(t)}\left( \dot \y(t), \dot \y(t) \right) \d t\,, \label{eq:pathenergy}\\
\splineenergy[\y] &= \int_0^1   g_{\y(t)}\left(\cov\dot \y(t),\cov\dot \y(t)\right) \d t\,,\label{eq:splineenergy}
\end{align}
both defined for curves $\y:[0,1]\to\manifold$.
The \emph{(regularized) spline energy} is defined as $\splineenergy^\sigma=\splineenergy+\sigma\pathenergy$ for some $\sigma\geq0$.
\end{definition}

Minimizers of the path energy with fixed end points $\y(0)$ and $\y(1)$ are called geodesics, and they can be shown to exist for any given $\y(0),\y(1)\in\manifold$ \cite{RuWi12b}.
Our interest here is not the interpolation between only two points $\y(0),\y(1)\in\manifold$, though.
Rather we aim to find curves $\y:[0,1]\to\manifold$ minimizing the path or spline energy under a set of $I>2$ interpolation constraints
\begin{equation}\label{eq:IC}
\y(t_i) = \bar \y_i\, , \quad i= 1, \ldots, I,
\end{equation}
for fixed and pairwise different 
$t_i\in[0,1]$ and $\bar\y_i\in\manifold$, $i=1,\ldots,I$, with $t_1<\ldots<t_I$.
In addition we may impose one of the following boundary conditions,
\begin{align}
 &\text{natural b.\,c.:}&& \cov\dot \y(0) = \cov\dot \y(1) = 0,\label{eqn:naturalBC}\\
 &\text{Hermite b.\,c.:}&& \dot \y(0) = v_0, \quad \dot \y(1) = v_1 \quad \text{ for given }v_0\in T_{\y(0)}\manifold,v_1 \in T_{\y(1)}\manifold\,,\label{eqn:HermiteBC}\\
 &\text{periodic b.\,c.:}&& \y(0) = \y(1), \quad \dot \y(0) = \dot \y(1)\,.\label{eqn:periodicBC}
\end{align}
In case of the Hermite boundary condition we in addition assume that $t_0=0$ and $t_I=1$ so that $\y(0)$ and $\y(1)$ are also prescribed.
In case of the periodic boundary condition, on the other hand, we impose in addition $t_1\neq0$ or $t_I\neq1$ since otherwise the interpolation task would be overdetermined.
\begin{definition}[Geodesic and spline interpolation]\label{def:splineInterpolation}
For given data points $t_i\in[0,1]$ and $\bar\y_i\in\manifold$, $i=1,\ldots,I$,
a \emph{piecewise geodesic interpolation} $\y$ is defined as a minimizer of the path energy under the interpolation constraints \eqref{eq:IC},
\begin{equation*}
\y\in\argmin\{\pathenergy[\widetilde{\y}]\,|\,\widetilde{\y}:[0,1]\to\manifold \text{ with } \widetilde \y \in W^{1,2}((0,1);\V)  \text{ and \eqref{eq:IC}}\}\,,
\end{equation*}
while we define a \emph{spline interpolation} $\y$ as a minimizer of the (regularized) spline energy under \eqref{eq:IC} and an additional boundary condition,
\begin{equation*}
\y\in\argmin\{\splineenergy^\sigma[\widetilde{\y}]\,|\,\widetilde{\y}:[0,1]\to\manifold \text{ with } \widetilde \y \in W^{2,2}((0,1);\V)  \text{ as well as \eqref{eq:IC} and one of \eqref{eqn:naturalBC}-\eqref{eqn:periodicBC}}\}\,.
\end{equation*}
\end{definition}

Before we analyse the well-posedness of the above definitions, a few remarks are in order.

\begin{remark}[Euclidean space]
In the Euclidean case $\manifold=\V=\Y=\R^m$, the metric represents the Euclidean inner product $g_\y(v,w)=v\cdot w$, and the covariant derivative simplifies to $\cov\dot \y(t)=\ddot\y(t)$.
In that case it is easy to see that piecewise geodesic equals piecewise linear interpolation
and that spline interpolation with $\sigma=0$ is unique (due to the result by de Boor \cite{Bo63}) and coincides with standard cubic spline interpolation.
\end{remark}

\begin{remark}[Well-posedness]
The existence of geodesics between two points by \cite{RuWi12b} immediately implies existence of a piecewise geodesic interpolation.
Likewise, spline interpolation with $\sigma=0$ is known to have a unique solution in linear spaces $\manifold$ \cite{Bo63}.
However, below we shall see that on nonlinear Riemannian manifolds $\manifold$, existence of a spline interpolation can in general only be guaranteed for $\sigma>0$.
\end{remark}

\begin{remark}[Relation between geodesic and spline]
The Euler--Lagrange equation of a geodesic interpolation $\y$ reads
\begin{equation} \label{eq:ELpathenergy}
0
= \partial_\y \pathenergy[\y](\vartheta)
= \int_0^1 (D_\y g_\y)(\vartheta) (\dot \y,\dot \y) + 2 g_\y(\dot \y, \dot \vartheta) \d t
= \int_0^1 (D_\y g_\y)(\vartheta) (\dot \y,\dot \y) - 2 (D_\y g_\y)(\dot \y)(\dot \y, \vartheta) - 2 g_\y (\ddot \y,  \vartheta)\d t
\end{equation}
for all smooth tangent vector fields $t\mapsto\vartheta(t)\in T_{\y(t)}\manifold$ with $\vartheta(t_i)=0$, $i=1,\ldots,I$, where in the last step we integrated by parts.
By the fundamental lemma of the calculus of variations we thus arrive at
\begin{equation*}
g_\y(\ddot \y, \vartheta)
=  -(D_\y g_\y)(\dot \y)(\dot \y,  \vartheta)  + \tfrac12(D_\y g_\y)(\vartheta) (\dot \y,\dot \y)
= g_\y(-\Gamma_\y(\dot \y, \dot \y),\vartheta)
\end{equation*}
for all $\vartheta$.
Thus, the necessary condition for $\y$ to be piecewise geodesic is that the velocity field $\dot \y$ is parallel along $\y$, that is,
\beq
\cov \dot \y = 0\,.
\eeq
In physical terms $\cov \dot \y$ is the acceleration along the path.  
The spline energy therefore penalizes any deviation from zero acceleration.
As a consequence, geodesic and spline interpolation coincide for just two interpolation points $\y(0),\y(1)\in\manifold$.
\end{remark}

\begin{remark}[Natural boundary conditions] 
Since a spline interpolation $\y$ may initially only be expected to have Sobolev-regularity $W^{2,2}$ in time, the natural boundary condition \eqref{eqn:naturalBC} does not make sense a priori.
However, as the name implies, it is the natural condition on $\y$ that arises if we impose no boundary condition at all (which is how we shall interpret \eqref{eqn:naturalBC} in our analysis).
Indeed, let $\y:[0,1]\to\manifold$ be a spline interpolation without any boundary condition and $t\mapsto\vartheta(t)\in T_{\y(t)}\manifold$ be a smooth perturbation of $\y$ which is only nonzero near $t=0$ and $t=1$.
The spline energy can be rewritten as 
\begin{equation}
\label{eq:spline2}
\splineenergy[\y] = \int_0^1  g_\y(\ddot \y, \ddot \y) + 2 g_\y(\ddot \y, \Gamma_\y(\dot \y,\dot \y)) +g_\y( \Gamma_\y(\dot \y,\dot \y), \Gamma_\y(\dot \y,\dot \y))\d t\,.
\end{equation}
Hence, the Euler--Lagrange equation for a regularized spline curve takes the form
\begin{alignat}{2}
0 = \partial_\y\splineenergy^\sigma[\y](\vartheta) &= \sigma\partial_\y\pathenergy[\y](\vartheta) + 
\int_0^1 && 2 g_\y(\ddot \vartheta, \ddot \y + \Gamma_\y(\dot \y,\dot \y))+ 2 g_\y(\ddot \y,2\Gamma_\y(\dot \vartheta,\dot \y)+(D_y \Gamma_\y)(\vartheta)(\dot \y,\dot \y))\nonumber\\
&&& + (D_\y g_\y)(\vartheta) (\ddot \y, \ddot \y + 2 \Gamma_\y(\dot \y,\dot \y)) \nonumber\\
&&& + 2g_\y(\Gamma_\y(\dot \y,\dot \y),2\Gamma_\y(\dot \vartheta,\dot \y)+(D_y \Gamma_\y)(\vartheta)(\dot \y,\dot \y)) \nonumber\\
&&&+ (D_\y g_\y)(\vartheta)( \Gamma_\y(\dot \y,\dot \y), \Gamma_\y(\dot \y,\dot \y))\d t \nonumber\\
&= \sigma\partial_\y\pathenergy[\y](\vartheta) + 
\int_0^1 && 2 g_\y(\ddot \vartheta,\ddot \y + \Gamma_\y(\dot \y,\dot \y)) + h(\y,\dot \y, \ddot \y, \vartheta, \dot \vartheta) \d t \label{eq:ELSpline}
\end{alignat}
for some function $h:\Y \times\V\times\V \times\Y \times \V\to\R$ and for all test functions $\vartheta$.
Above, $(D_\y \Gamma_\y)$ denotes the variation with respect to the (implicit) argument $\y$, and we used that $\Gamma_\y$ is bilinear in its arguments.
Applying integration by parts we obtain
\begin{multline*}
0= \sigma\partial_\y\pathenergy[\y](\vartheta) +\int_0^1 h(\y,\dot \y, \ddot \y, \vartheta, \dot \vartheta)-2g_\y(\dot\vartheta,\dddot\y+(D_\y\Gamma_\y)(\dot\y)(\dot\y,\dot\y)+2\Gamma_\y(\ddot\y,\dot\y))-2(D_\y g_\y)(\dot\y)(\dot\vartheta,\cov\dot\y) \d t \\
+  \left[2g_{y(t)}(\dot\vartheta(t),\cov\dot\y(t))\right]_{t=0}^{t=1}\,,
\end{multline*}
where we used the relation $\frac\d{\d t}\cov\dot\y=\dddot\y+(D_\y\Gamma_\y)(\dot\y)(\dot\y,\dot\y)+2\Gamma_\y(\ddot\y,\dot\y)$ which is obtained from differentiating the definition of $\cov\dot\y$.
Since $\vartheta$ is arbitrary and in particular $\dot\vartheta(0),\dot\vartheta(1)$ may take any value in $T_{\y(0)}\manifold$ and $T_{\y(1)}\manifold$, respectively,
we must have $\cov\dot\y=0$ at $t=0$ and $t=1$.
\end{remark}

\begin{remark}[Periodic boundary conditions]
Imposing periodic boundary conditions is equivalent to defining the curve $\y$ and its (regularized) spline energy $\splineenergy^\sigma$ over $S^1$ instead of the interval $(0,1)$,
in which case time differentiation $\dot\y$ just has to be reinterpreted as differentiation with respect to the angle variable, scaled by $2\pi$.
Indeed, under the identification of $(0,1)$ with $S^1$ via the mapping $t\mapsto(\cos(2\pi t),\sin(2\pi t))$,
the set $\{\y\in W^{2,2}((0,1);\V)\,|\,\y(0)=\y(1),\dot\y(0)=\dot\y(1)\}$ coincides with the function space $W^{2,2}(S^1;\V)$
so that the domain of the regularized spline energy on $(0,1)$ with periodic boundary conditions and the domain of the regularized spline energy on $S^1$ coincide.
\end{remark}

We next show that spline interpolations with $\sigma=0$ do not exist in general, but that spline interpolation with positive $\sigma$ is indeed well-posed.

\begin{lemma}[Nonexistence of Riemannian splines]\label{thm:nonexistence} Let $\manifold$ be any manifold with a closed geodesic curve $C$ and a point $\bar\y_1\in C$ such that any locally geodesic curve connecting $\bar\y_1$ with itself lies inside $C$.
Then, minimizers of $\splineenergy^0[\y]=\splineenergy[\y]$ under the interpolation contraints \eqref{eq:IC} do not exist in general.
\end{lemma}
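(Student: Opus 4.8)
The plan is to keep the manifold $\manifold$, its closed geodesic $C$ and the distinguished point $\bar\y_1\in C$ abstract (they are given by hypothesis) and to exhibit concrete interpolation data $\{(t_i,\bar\y_i)\}_{i=1}^{3}$, subject only to \eqref{eq:IC} (no boundary condition, so the natural situation \eqref{eqn:naturalBC}), for which the infimum $m:=\inf\splineenergy$ over admissible curves is approached but never attained. I would place $\bar\y_1$ as the central interpolation point and choose the two remaining points $\bar\y_2,\bar\y_3$ \emph{off} $C$, positioned so that no single geodesic runs through all three and so that any competitor of small energy is forced to change its velocity direction at $\bar\y_1$. The mechanism to be exploited is that $C$ carries zero spline energy: along a constant-speed parametrization of $C$ one has $\cov\dot\y\equiv0$, so $C$ can supply connecting pieces at no cost and can realize the unavoidable reorientation only in a degenerate limit.

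For the upper bound I would construct a recovery sequence $(\y_n)\subset W^{2,2}((0,1);\V)$ obeying \eqref{eq:IC} with $\splineenergy[\y_n]\to m$. Here the curves spend ever longer stretches travelling along $C$ near $\bar\y_1$ and concentrate the necessary change of direction into short time windows in which the speed tends to zero. Since the integrand $g_\y(\cov\dot\y,\cov\dot\y)$ scales like $(\text{curvature})^2\times(\text{speed})^4$ for the turning contribution, slowing down suppresses the cost of reorientation while $C$ provides the remaining motion for free. I would track these contributions to obtain $\splineenergy[\y_n]\to m$, check that each $\y_n$ is admissible, and record that the sequence degenerates: its velocity at $\bar\y_1$ tends to a vector tangent to $C$ and its path energy $\pathenergy[\y_n]$ blows up, so no $W^{2,2}$-limit is available.

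For nonattainment I would argue by contradiction. If some $\y^\ast$ realized $m$, then, using the embedding $W^{2,2}((0,1);\V)\hookrightarrow C^1$ together with the rigidity forced by matching the recovery value $m$, the minimizer $\y^\ast$ would have to be piecewise geodesic with velocity continuous at $\bar\y_1$; resolving the interpolation at $\bar\y_1$ in a $C^1$ fashion then produces a nontrivial locally geodesic curve from $\bar\y_1$ to itself. By hypothesis this loop lies inside $C$, whence the velocity of $\y^\ast$ at $\bar\y_1$ is tangent to $C$, contradicting the fact that the adjacent geodesic pieces must run to the off-$C$ points $\bar\y_2,\bar\y_3$. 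Hence $m$ is not attained, and minimizers of $\splineenergy^0$ do not exist for this data.

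The step I expect to be the main obstacle is making the upper and lower bounds quantitatively compatible: one must show both that the reorientation cost can genuinely be driven down to $m$ by using $C$ (the recovery sequence) and that no admissible curve can reach $m$ (the rigidity/loop argument), and it is precisely here that the assumption ``every geodesic loop at $\bar\y_1$ lies inside $C$'' is indispensable, as it forbids the cheap $C^1$ reconnection that would otherwise yield a genuine minimizer. This is exactly the breakdown of coercivity and lower semicontinuity of $\splineenergy$—the path energy $\pathenergy[\y_n]$ is uncontrolled along the degenerating sequence—that the regularized functional $\splineenergy^\sigma=\splineenergy+\sigma\pathenergy$ with $\sigma>0$ is later designed to repair.
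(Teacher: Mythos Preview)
Your approach differs fundamentally from the paper's and contains a genuine gap in both the upper and lower bounds.

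For the recovery sequence: you correctly note that the \emph{normal} part of $\cov\dot\y$ scales like $\kappa^2 v^4$, but you neglect the \emph{tangential} part $\dot v^2$. Slowing the speed from some $v_0>0$ down to $\epsilon$ and back along a geodesic costs at least $\int\dot v^2\,\d t\gtrsim v_0^2/T$ on a time window of length $T$, and this does not vanish. Since $\bar\y_2,\bar\y_3\notin C$, the curve must cover the fixed geodesic distances $d(\bar\y_2,\bar\y_1)$ and $d(\bar\y_1,\bar\y_3)$ within the bounded interval $[0,1]$; any speed profile that touches zero therefore carries a strictly positive $\int\dot v^2$ contribution, uniformly in $n$. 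Looping along $C$ does not help here, because $C$ connects $\bar\y_1$ only to itself and contributes nothing to the mandatory off-$C$ travel. The upshot is that your construction does \emph{not} drive $\splineenergy[\y_n]$ to zero (nor to any value below the ordinary spline minimum), so you have no mechanism forcing nonattainment.

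For the nonattainment argument: you assert that a minimizer $\y^\ast$ with value $m$ would be piecewise geodesic. This is only true if $m=0$, since a minimizer with $\splineenergy[\y^\ast]>0$ satisfies the fourth-order spline Euler--Lagrange equation, not the geodesic equation, and is generically not locally geodesic anywhere. As you have not established $m=0$ (and, by the previous paragraph, with off-$C$ data one expects $m>0$ and attained), the loop argument invoking the hypothesis on geodesic self-connections at $\bar\y_1$ never gets off the ground.

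The paper's proof avoids both issues by placing \emph{all} interpolation points on $C$ and choosing one interpolation time $t_2=r$ irrational. The competing curves then stay entirely on $C$, so the problem reduces to a one-dimensional Euclidean spline problem for the winding coordinate; Dirichlet's approximation theorem (choosing winding numbers $m,n$ with $m+\tfrac12-rn$ arbitrarily small) yields $\inf\splineenergy=0$. Any curve with $\splineenergy=0$ is a genuine geodesic, hence (by the hypothesis) a constant-speed multiple cover of $C$, and such a curve cannot pass through the antipodal point at the irrational time $r$. The closed geodesic is thus used not as a ``free parking lot'' for turning, but as the carrier of the entire minimizing sequence.
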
 
\begin{proof}
It suffices to provide a counterexample.
Initially, consider $\manifold$ to be a cylinder in $\R^3$ of infinite length and perimeter $1$.
(Note that this manifold actually is not admissible in the sense of Definition\,\ref{def:admissibleManifold},
but the spline energy is nevertheless defined on curves $\y:[0,1]\to\manifold$, and the example of a cylinder only serves to illustrate the general argument.
Furthermore, Remark\,\ref{rem:problematicManifolds} will give an example of a manifold admissible in the sense of Definition\,\ref{def:admissibleManifold} and satisfying the conditions of this lemma.)
Let $t_1=0$, $t_2=r\in(0,1)\setminus\Q$, $t_3=1$, and choose an arbitrary point $\bar\y_1=\bar\y_3\in\manifold$. 
Let $\bar\y_2\in\manifold$ be the point opposite $\bar\y_1$.

Now we have $\inf_\y\splineenergy[\y]=0$.
Indeed, let us arclength-parameterize the circle through $\bar\y_1,\bar\y_2,\bar\y_3$ by a $1$-periodic function $\xi:\R\to\manifold$ with $\xi(0)=\bar\y_1$
and consider the Euclidean cubic spline $x:[0,1]\to\R$ with $x(t_1)=x_1=0$, $x(t_2)=x_2=m+\frac12$, and $x(t_3)=x_3=n$ for some $m,n\in\N$.
Obviously, $\y=\xi\circ x$ is a curve on $\manifold$ satisfying \eqref{eq:IC}, and its spline energy equals
\begin{equation*}
\splineenergy[\y]
=\int_0^1|\ddot x(t)|^2\d t
=\frac{3((x_2-x_1)(t_3-t_1)-(x_3-x_1)(t_2-t_1))^2}{(t_3-t_2)^2(t_3-t_1)(t_2-t_1)^2}
=\frac{3(m+\frac12-rn)^2}{(1-r)^2r^2}\,,
\end{equation*}
where we used that the energy of a Euclidean cubic spline can be explicitly computed.
By Dirichlet's approximation theorem there exist $m,n\in\Z$ that make the above arbitrarily small.

However, there is no curve $\y$ with $\splineenergy[\y]=0$.
Indeed, such a curve would satisfy $\cov\dot\y=0$, which on the cylinder results in a regular helix with constant speed.
Since $\y(0)=\y(1)$, the helix is degenerate and winds round the circle at constant speed so that necessarily $\y(t)=\xi(mt)$ for some $m\in\Z$.
However, the preimage of $\bar\y_2$ under $\y$ does not contain $r$ so that \eqref{eq:IC} is violated, $r\notin\y^{-1}(\bar\y_2)=\{\frac1{2m},\frac3{2m},\ldots,\frac{2m-1}{2m}\}$.

This construction can easily be transferred onto a general manifold with a closed geodesic, where the circle is replaced by the closed geodesic and
the interpolation conditions are chosen correspondingly.
Indeed, the infimum of the spline energy on an analogous sequence of curves, now mapping onto the closed geodesic, vanishes.
Hence, any minimizer, if it exists, must be a (local) geodesic characterized by $\cov \dot \y = 0$ and fulfilling the interpolation conditions. 
The above argument shows that this is impossible.
\end{proof}

\begin{remark}[Examples of admissible manifolds with the property from Lemma\,\ref{thm:nonexistence}]\label{rem:problematicManifolds}
The above-used class of manifolds with closed geodesics also contains manifolds which are \revision{infinitely smooth and globally homeomorphic to Euclidean space, and whose distance metric is equivalent to the Euclidean metric in the sense that one can be bounded by the other up to a constant factor}.
Indeed, we can cut a cylinder segment out of an infinite cylinder and close one end smoothly with something like a spherical cap, while the other one is smoothly blended into the plane $\R^2$.
\end{remark}

Before, we prove existence of minimizers in the case $\sigma>0$, let us establish a weak continuity result for the Christoffel operator, which will imply weak lower semi-continuity of the (regularized) spline energy.
\begin{lemma}[Weak continuity of the Christoffel operator]\label{thm:weakcontGamma}
On an admissible Riemannian manifold $(\manifold,g)$ the Christoffel operator is weakly continuous in the sense
\begin{equation*}
\Gamma_{\y_k}(v_k,w_k) \to \Gamma_\y(v,w)\text{ in }\V\text{ as }k\to\infty
\end{equation*}
for $\y_k \to \y$ strongly in $\Y$ and $(v_k,w_k) \rightharpoonup (v,w)$ weakly in $\V\times \V$.
In more detail,
\begin{multline}\label{eqn:ChristoffelConvergence}
\|\Gamma_{\y_k}(v_k,w_k) - \Gamma_\y(v,w)\|_\V\\
\leq C\left(\|w_k\|_\Y\|v_k-v\|_\Y+\|w_k-w\|_\Y\|v\|_\Y+\|\y_k-\y\|_\Y\|w\|_\V\|v\|_\V+\|\y_k-\y\|_\Y\|\Gamma_{\y_k}(v_k,w_k)\|_\Y\right)
\end{multline}
for some constant $C>0$ only depending on $c^\ast$ and the derivative bounds on $g^c$ from Definition\,\ref{def:admissibleMetric}.
\end{lemma}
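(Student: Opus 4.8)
The plan is to prove the quantitative estimate \eqref{eqn:ChristoffelConvergence} directly; the qualitative weak-continuity statement then drops out at the end. Throughout I would write $a=\Gamma_{\y_k}(v_k,w_k)$, $b=\Gamma_\y(v,w)$ and $\delta=a-b$, and abbreviate the trilinear expression occurring in \eqref{eq:Gamma} by $\Lambda_\y(u;p,q):=(D_\y g_\y)(u)(p,q)$. Since the quadratic part $\Qenergy$ does not depend on $\y$, we have $D_\y g_\y=D_\y g^c_\y$, so by Definition\,\ref{def:admissibleMetric}\ref{enm:compactness} the form $\Lambda$ is bounded, $|\Lambda_\y(u;p,q)|\le M\|u\|_\Y\|p\|_\Y\|q\|_\Y$, and Lipschitz in its base point, $|\Lambda_{\y_k}(u;p,q)-\Lambda_\y(u;p,q)|\le L\|\y_k-\y\|_\Y\|u\|_\Y\|p\|_\Y\|q\|_\Y$, where $M,L$ are the first- and second-derivative bounds of $g^c$. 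For the same reason $g_\y-g_{\y_k}=g^c_\y-g^c_{\y_k}$ satisfies $|(g_\y-g_{\y_k})(p,q)|\le M\|\y_k-\y\|_\Y\|p\|_\Y\|q\|_\Y$.

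The central step will invoke coercivity \emph{at the base point $\y$}: by \ref{enm:coercivity}, $c^\ast\|\delta\|_\V^2\le g_\y(\delta,\delta)=g_\y(a,\delta)-g_\y(b,\delta)$. I would then rewrite $g_\y(a,\delta)=g_{\y_k}(a,\delta)+(g_\y-g_{\y_k})(a,\delta)$ and expand both $2g_{\y_k}(a,\delta)$ and $2g_\y(b,\delta)$ via the defining identity \eqref{eq:Gamma} evaluated with test vector $z=\delta$ at the respective base points $\y_k$ and $\y$. This rewrites $2c^\ast\|\delta\|_\V^2$ as a sum of three structural differences of the form $\Lambda_{\y_k}(\cdot\,;\cdot,\delta)-\Lambda_\y(\cdot\,;\cdot,\delta)$ (with argument tuples $(w_k,v_k)$ versus $(w,v)$, and $(\delta;v_k,w_k)$ versus $(\delta;v,w)$) plus the single metric-difference term $2(g_\y-g_{\y_k})(a,\delta)$. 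Choosing coercivity at $\y$ rather than at $\y_k$ is what makes this last term carry the factor $\|a\|_\Y=\|\Gamma_{\y_k}(v_k,w_k)\|_\Y$, i.e.\ exactly the fourth term of \eqref{eqn:ChristoffelConvergence}. I expect this bookkeeping choice, together with the ordering of the telescoping below, to be the main obstacle.

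Each structural difference I would telescope in the order ``first change $v_k\to v$ keeping $w_k$, then $w_k\to w$ keeping $v$, finally move the base point $\y_k\to\y$.'' Applying boundedness of $\Lambda$ to the first two increments and the base-point Lipschitz bound to the last yields contributions controlled by $M\|w_k\|_\Y\|v_k-v\|_\Y\|\delta\|_\Y$, by $M\|w_k-w\|_\Y\|v\|_\Y\|\delta\|_\Y$, and by $L\|\y_k-\y\|_\Y\|w\|_\Y\|v\|_\Y\|\delta\|_\Y$; the term $\Lambda(v;w,\delta)$ and the middle term $\Lambda(\delta;v,w)$ decompose identically up to relabeling. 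Every contribution carries exactly one factor $\|\delta\|_\Y$, so using the continuous embedding $\V\hookrightarrow\Y$ to bound $\|\delta\|_\Y\le C_0\|\delta\|_\V$ and $\|w\|_\Y\|v\|_\Y\le C_0^2\|w\|_\V\|v\|_\V$, I can factor out one power of $\|\delta\|_\V$ and divide, obtaining \eqref{eqn:ChristoffelConvergence} with a constant $C$ depending only on $c^\ast$, $M$, $L$ and the embedding constant $C_0$.

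For the weak-continuity conclusion I would argue as follows. Compactness of $\V\hookrightarrow\Y$ turns $(v_k,w_k)\rightharpoonup(v,w)$ in $\V\times\V$ into $v_k\to v$ and $w_k\to w$ strongly in $\Y$, while $\|\y_k-\y\|_\Y\to0$ by hypothesis; the prefactors $\|w_k\|_\Y$ remain bounded since weakly convergent sequences are bounded. To keep the fourth prefactor bounded, I would note the uniform estimate $\|\Gamma_{\y_k}(v_k,w_k)\|_\V\le\tfrac{3MC_0}{2c^\ast}\|v_k\|_\Y\|w_k\|_\Y$, obtained by the same coercivity-plus-boundedness argument applied to $z=\Gamma_{\y_k}(v_k,w_k)$ in \eqref{eq:Gamma}. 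Consequently the right-hand side of \eqref{eqn:ChristoffelConvergence} tends to $0$, which gives $\Gamma_{\y_k}(v_k,w_k)\to\Gamma_\y(v,w)$ strongly in $\V$.
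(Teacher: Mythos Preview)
Your argument is correct and follows the same route as the paper: both proofs use coercivity of $g$ at the base point $\y$, rewrite $g_\y(a,\delta)=g_{\y_k}(a,\delta)+(g_\y-g_{\y_k})(a,\delta)$, expand $2g_{\y_k}(a,\delta)$ and $2g_\y(b,\delta)$ via \eqref{eq:Gamma}, and telescope the resulting trilinear differences in the same order. The paper packages the three trilinear terms into linear functionals $l_k,l$ and estimates $\|l_k-l\|_{\V'}$, while you keep them explicit, but this is purely notational. The only substantive difference is in the concluding convergence step: the paper extracts uniform boundedness of $\|\Gamma_{\y_k}(v_k,w_k)\|_\V$ by bootstrapping from the already established inequality \eqref{eqn:ChristoffelConvergence} via a $\limsup$ argument, whereas you invoke the direct a~priori bound $\|\Gamma_{\y_k}(v_k,w_k)\|_\V\le\tfrac{3MC_0}{2c^\ast}\|v_k\|_\Y\|w_k\|_\Y$ (which the paper itself uses later in the proof of Lemma\,\ref{thm:energyLSC}). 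Your route here is shorter and equally valid.
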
 
\begin{proof}
For $z\in\V$ let us define 
\begin{align*}
l_k(z) &= \left(D_\y g^c_{\y_k}\right)(w_k)(v_k,z) - \left(D_\y g^c_{\y_k}\right)(z)(v_k,w_k) + \left(D_\y g^c_{\y_k}\right)(v_k)(w_k,z)\\
l(z) &= \left(D_\y g^c_\y\right)(w)(v,z) - \left(D_\y g^c_\y\right)(z)(v,w) + \left(D_\y g^c_\y\right)(v)(w,z)
\end{align*}
and note that
\begin{equation*}
\|l_k-l\|_{\V'}
\leq\|D_\y g^c\|\left(\|w_k\|_\Y\|v_k-v\|_\Y+\|w_k-w\|_\Y\|v\|_\Y\right)+\|D^2_\y g^c\|\|\y_k-\y\|_\Y\|w\|_\V\|v\|_\V\,,
\end{equation*}
where $\|D_\y g^c\|$ and $\|D_\y^2 g^c\|$ shall denote the bounds from Definition\,\ref{def:admissibleMetric}\ref{enm:compactness} on the first and second derivative of $g^c$.
Indeed, we have
\begin{align*}
&\left|\left(D_\y g^c_{\y_k}\right)(w_k)(v_k,z)-\left(D_\y g^c_\y\right)(w)(v,z)\right|\\
&\leq\left|\left(D_\y g^c_{\y_k}\right)(w_k)(v_k-v,z)\right|+\left|\left(D_\y g^c_{\y_k}\right)(w_k-w)(v,z)\right|+\left|\left(D_\y g^c_{\y_k}-D_\y g^c_\y\right)(w)(v,z)\right|\\
&\leq\|D_\y g^c\|\|w_k\|_\Y\|v_k-v\|_\Y\|z\|_\Y+\|D_\y g^c\|\|w_k-w\|_\Y\|v\|_\Y\|z\|_\Y+\|D_\y^2 g^c\|\|\y_k-\y\|_\Y\|w\|_\Y\|v\|_\V\|z\|_\V\,,
\end{align*}
and analogous estimates are readily derived for the other terms in $l_k$, which results in the desired bound for $\|l_k-l\|_{\V'}$.

By definition of the Christoffel operator and Definition\,\ref{def:admissibleMetric} we have
$2g_{\y_k}(r_k,z)= l_k(z)$ and $2g_\y(r,z)= l(z)$ for $r_k=\Gamma_{\y_k}(v_k,w_k)$ and $r=\Gamma_{\y}(v,w)$
so that with Definition\,\ref{def:admissibleMetric}\ref{enm:coercivity} we obtain
\begin{align*}
\|r_k-r\|_\V^2 &\leq \frac1{c^\ast} g_\y(r_k-r,r_k-r) \\
& = \frac1{c^\ast} \left(g_{\y_k}(r_k, r_k-r)-g_\y(r,r_k-r) + \left(g_\y-g_{\y_k}\right) (r_k,r_k-r)\right)\\
&\leq \frac1{c^\ast} \left(\tfrac12(l_k-l)(r_k-r) + \|D_\y g^c\| \|\y_k-\y\|_\Y \|r_k\|_\Y \|r_k-r\|_\Y\right)\,,
\end{align*}
which implies $\|r_k-r\|_\V \leq \frac1{c^\ast} (\tfrac12\|l_k-l\|_{\V'} + \|D_\y g^c\|\|\y_k-\y\|_\Y\|r_k\|_\V)$.
Inserting the previous bound on $\|l_k-l\|_{\V'}$ yields \eqref{eqn:ChristoffelConvergence}.
If we can show uniform boundedness of $\|r_k\|_\Y$,
then the desired convergence $r_k\to r$ follows from \eqref{eqn:ChristoffelConvergence} and the strong convergence $(v_k,w_k)\to(v,w)$ in $\Y\times\Y$ due to the compactness of the embedding $\V\hookrightarrow\Y$.
However, inequality \eqref{eqn:ChristoffelConvergence} together with the inverse triangle inequality implies
$\|r_k\|_\V-\|r\|_\V\leq(1+\|r_k\|_\Y)o(1)\leq(1+\|r_k\|_\V)o(1)$,
where $o(1)$ indicates a term converging to zero.
Consequently we must have $\limsup_{k\to\infty}\|r_k\|_\V\leq\|r\|_\V$ and thus $\limsup_{k\to\infty} \|r_k-r\|_\V =0$ , as required.
\end{proof}

\begin{lemma}[Continuity properties of spline energy]\label{thm:energyLSC}
For $(\manifold,g)$ admissible, the regularized spline energy $\splineenergy^\sigma$ is lower semi-continuous under weak and continuous under strong convergence in $W^{2,2}((0,1);\V)$.
\end{lemma}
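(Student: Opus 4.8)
The plan is to reduce, in both cases, the metric-weighted integrands to genuinely convex coercive quadratic forms evaluated at the \emph{limiting} position $\y$, absorbing the position dependence of the metric into remainders that vanish because of the strong $\Y$-convergence provided by compactness. Throughout I use the identity $\cov\dot\y=\ddot\y+\Gamma_\y(\dot\y,\dot\y)$, which is immediate from \eqref{eq:cov} with $w=\dot\y$, so that, abbreviating $a:=\Gamma_\y(\dot\y,\dot\y)$ and $u:=\cov\dot\y=\ddot\y+a$, the spline energy reads $\splineenergy[\y]=\int_0^1 g_\y(u,u)\d t$. The same abbreviations with subscript $k$ are used for a sequence $\y_k$.

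First I would record the compactness consequences of a weakly convergent sequence $\y_k\weakly\y$ in $W^{2,2}((0,1);\V)$. Since in one dimension $W^{2,2}((0,1);\V)\hookrightarrow C^{1,1/2}([0,1];\V)$, the $\y_k$ are bounded in $C^{1,1/2}([0,1];\V)$; together with the compact embedding $\V\hookrightarrow\Y$ and Arzel\`a--Ascoli this yields $\y_k\to\y$ and $\dot\y_k\to\dot\y$ \emph{strongly} in $C^0([0,1];\Y)$, while $\ddot\y_k\weakly\ddot\y$ weakly in $L^2((0,1);\V)$ and $\dot\y\in C^0([0,1];\V)$ is bounded; the weakness resides only in the second derivative. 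Next I apply the pointwise-in-$t$ estimate \eqref{eqn:ChristoffelConvergence} with $v_k=w_k=\dot\y_k(t)$: using the uniform bound $\|a_k\|_\V\le C\|\dot\y_k\|_\Y^2$ (which follows from the definition of $\Gamma$, the bound on $D_\y g^c$, and coercivity \ref{enm:coercivity}), the boundedness of $\dot\y$ in $C^0(\V)$, and the uniform convergences in $C^0(\Y)$, every term on the right of \eqref{eqn:ChristoffelConvergence} tends to zero uniformly in $t$. Hence $a_k\to a$ strongly in $C^0([0,1];\V)$, and therefore $u_k=\ddot\y_k+a_k\weakly u$ weakly in $L^2((0,1);\V)$.

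The heart of the lower semi-continuity proof, and the main obstacle, is that $g^c$ is only $\Y$-bounded and need not be positive, so $\int_0^1 g^c_{\y(t)}(v,v)\d t$ is not convex and cannot be treated on its own. The remedy is to freeze the position and recombine with the position-independent part $\Qenergy$: writing $g_{\y_k}=g_\y+(g^c_{\y_k}-g^c_\y)$ one obtains
\begin{equation*}
\splineenergy[\y_k]=\int_0^1 g_\y(u_k,u_k)\d t+\int_0^1\big(g^c_{\y_k}-g^c_\y\big)(u_k,u_k)\d t.
\end{equation*}
The second integral is bounded in modulus by $\beta(\|\y_k-\y\|_{C^0(\Y)})\int_0^1\|u_k\|_\Y^2\d t$ via the modulus of continuity $\beta$; since $\|u_k\|_{L^2(\V)}$ is bounded and $\beta\to0$, it vanishes. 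In the first integral the position is now fixed at $\y$, so $v\mapsto\int_0^1 g_{\y(t)}(v(t),v(t))\d t$ is a continuous quadratic form on $L^2((0,1);\V)$ that is convex because coercivity \ref{enm:coercivity} renders each $g_{\y(t)}$ positive semi-definite; hence it is weakly lower semi-continuous, giving $\liminf_k\int_0^1 g_\y(u_k,u_k)\d t\ge\int_0^1 g_\y(u,u)\d t=\splineenergy[\y]$. An analogous but simpler argument handles $\pathenergy$: its compact part converges strongly since $\dot\y_k\to\dot\y$ in $C^0(\Y)$ and $g^c$ is $\Y$-continuous, while its $\Qenergy$-part is weakly lower semi-continuous on $L^2(\V)$. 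Summing proves $\liminf_k\splineenergy^\sigma[\y_k]\ge\splineenergy^\sigma[\y]$.

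Finally, for strong continuity I take $\y_k\to\y$ in $W^{2,2}((0,1);\V)$, so additionally $\ddot\y_k\to\ddot\y$ strongly in $L^2(\V)$; the estimate \eqref{eqn:ChristoffelConvergence} again gives $a_k\to a$ in $C^0([0,1];\V)$, whence $u_k\to u$ strongly in $L^2((0,1);\V)$. Splitting $g_{\y_k}(u_k,u_k)-g_\y(u,u)=(g^c_{\y_k}-g^c_\y)(u_k,u_k)+g_\y(u_k-u,u_k+u)$ and integrating, the first term is controlled by $\beta(\|\y_k-\y\|_{C^0(\Y)})$ times a bounded quantity and the second by $\|u_k-u\|_{L^2(\V)}\|u_k+u\|_{L^2(\V)}$ using the $\V$-boundedness of $g$; both tend to zero, so $\splineenergy[\y_k]\to\splineenergy[\y]$. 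The same reasoning yields $\pathenergy[\y_k]\to\pathenergy[\y]$, and hence continuity of $\splineenergy^\sigma$ under strong convergence. I expect the only genuinely delicate point to be the recombination step in the third paragraph, since it is precisely the decomposition $g=g^c+\Qenergy$ from Definition\,\ref{def:admissibleMetric} that makes the otherwise nonconvex, merely $\Y$-bounded quadratic form amenable to weak lower semi-continuity.
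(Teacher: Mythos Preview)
Your proof is correct and follows essentially the same approach as the paper: you freeze the metric at the limit curve $\y$ via the splitting $g_{\y_k}=g_\y+(g^c_{\y_k}-g^c_\y)$, invoke the Christoffel estimate \eqref{eqn:ChristoffelConvergence} to upgrade $a_k\to a$ to strong $C^0([0,1];\V)$ convergence, and then use weak lower semi-continuity of the positive quadratic form $v\mapsto\int_0^1 g_{\y(t)}(v,v)\d t$ on $L^2((0,1);\V)$. The only minor differences are cosmetic (the paper cites Mazur's lemma and \cite[Thm.\,4.1]{RuWi12b} for the path energy, whereas you argue directly), and your identity for the strong-convergence case is in fact slightly cleaner than the paper's.
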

\begin{proof}
First recall that Sobolev space $W^{2,2}\equiv W^{2,2}((0,1);\V)$ embeds continuously into $C^{1,\frac12}([0,1];\V)$ and compactly into $C^{1,\alpha}([0,1];\Y)$ for any $\alpha\in(0,\frac12)$ by the Arzel\`a--Ascoli Theorem.
Furthermore, we notice that the Christoffel operator is bilinear and bounded, $\|\Gamma_\y(v,w)\|_\V\leq\frac32\frac{\|D_\y g^c\|}{c^\ast}\|v\|_\Y\|w\|_\Y$, where $\|D_\y g^c\|$ is the bound from Definition\,\ref{def:admissibleMetric}\ref{enm:compactness} on the first derivative of $g^c$ (just insert $z=\Gamma_\y(v,w)$ in \eqref{eq:Gamma})
so that $\|\Gamma_\y(\dot\y,\dot\y)\|_\V\leq\frac32\frac{\|D_\y g^c\|}{c^\ast}\|\dot\y\|_\Y^2$.

Next consider a weakly converging sequence $\y_k\rightharpoonup\y$ in $W^{2,2}$.
By the above remarks we may assume in addition that  $\y_k\to\y$ strongly in $C^{1,\alpha}([0,1];\Y)$.
We now show $\liminf_{k\to\infty}\splineenergy[\y_k]+\sigma\pathenergy[\y_k]\geq\splineenergy[\y]+\sigma\pathenergy[\y]$.
Indeed, the lower semi-continuity of $\pathenergy$ is shown in \cite[Thm.\,4.1]{RuWi12b}.
The lower semi-continuity of $\splineenergy$ can be seen as follows.
Abbreviating $\Gamma_k = \Gamma_{\y_k}(\dot\y_k,\dot\y_k)$ as well as $\Gamma =\Gamma_{\y}(\dot\y,\dot\y)$,
estimate \eqref{eqn:ChristoffelConvergence} turns into
\begin{equation*}
\|\Gamma_k-\Gamma\|_\V
\leq C\left(\|\dot\y_k\|_\Y\|\dot\y_k-\dot\y\|_\Y+\|\dot\y_k-\dot\y\|_\Y\|\dot\y\|_\Y+\|\y_k-\y\|_\Y\|\dot\y\|_\V^2+\|\y_k-\y\|_\Y\|\Gamma_k\|_\Y\right)\,.
\end{equation*}
The uniform convergence $\y_k\to\y$ and $\dot\y_k\to\dot\y$ in $\Y$ as well as the boundedness of $\dot\y$ in $\V$ due to $\y\in C^{1,\frac12}([0,1];\V)$ and the uniform boundedness of $\|\Gamma_k\|_\Y\leq\frac32\frac{\|D_\y g^c\|}{c^\ast}\|\dot\y_k\|_\Y^2$
now imply the strong convergence $\Gamma_k\to\Gamma$ in $L^\infty((0,1);\V)$.
Now we consider the splitting 
\begin{equation*}
\splineenergy[\y_k]=\int_0^1\metric_{\y_k}(\ddot \y_k+\Gamma_k,\ddot\y_k+\Gamma_k)\d t = \int_0^1\metric_{\y}(\ddot \y_k+\Gamma_k,\ddot\y_k+\Gamma_k)\d t +  \int_0^1(\metric_{\y_k}-\metric_{\y})(\ddot \y_k+\Gamma_k,\ddot\y_k+\Gamma_k)\d t\,.
\end{equation*}
The weak convergence of $\ddot \y_k+\Gamma_k$ to $\ddot \y+\Gamma$ in $L^2\equiv L^2((0,1);\V)$ and the fact that 
$g_\y$ is a positive definite quadratic form on $\V$ implies the lower semi-continuity of the first integral on the right-hand side via Mazur's lemma.
The second integral vanishes in the limit due to the strong convergence $\y_k\to\y$ in $C^0([0,1];\Y)$ and the boundedness of $\ddot \y_k+\Gamma_k$ in $L^2$.
From this the requested lower semi-continuity $\liminf_{k\to\infty}\splineenergy[\y_k]\geq\splineenergy[\y]$ follows.

If instead $\y_k\to\y$ strongly in $W^{2,2}$, then the continuity of $\pathenergy$ along this sequence follows immediately from the strong convergence in $C^{1,\frac12}([0,1];\V)$.
Furthermore, following the same argument as above we obtain $\ddot\y_k+\Gamma_k\to\ddot\y+\Gamma$ strongly in $L^2$. Thus
\begin{multline*}
\left|\splineenergy[\y_k]-\splineenergy[\y]\right|
=\left|\int_0^1\metric_{\y_k}((\ddot \y_k+\Gamma_k)+(\ddot\y+\Gamma),(\ddot\y_k+\Gamma_k)-(\ddot\y+\Gamma))\d t\right|\\
\leq(CC^\ast+C^{\ast\ast})\|(\ddot \y_k+\Gamma_k)+(\ddot\y+\Gamma)\|_{L^2}\|(\ddot \y_k+\Gamma_k)-(\ddot\y+\Gamma)\|_{L^2}\to0
\quad\text{as }k\to\infty\,,
\end{multline*}
where we used the bound on the metric $g$ from Definition\,\ref{def:admissibleMetric} and $C$ is the embedding constant of $\V\hookrightarrow\Y$.
\end{proof}

Now we are in the position to prove existence and regularity of spline interpolations.
\begin{theorem}[Existence of spline interpolations]\label{thm:ExistenceContinuous}
For $\sigma>0$ and $(\manifold,g)$ admissible
there exists a spline interpolation $\y$ of \eqref{eq:IC} under natural, Hermite, or periodic boundary conditions in the Sobolev space $W^{2,2}((0,1);\V)$.
Furthermore,
$\y$ is in $C^{2,\frac12}([\delta, 1-\delta];\V)$ for every $\delta \in (0,\tfrac12)$.
\end{theorem}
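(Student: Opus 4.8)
The plan is to obtain existence by the direct method in the calculus of variations and interior regularity by testing the Euler--Lagrange identity and running an elliptic bootstrap, reducing everything to the covariant acceleration $w:=\cov\dot\y=\ddot\y+\Gamma_\y(\dot\y,\dot\y)$.

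\emph{Existence.} Since $\manifold=\V$ is a linear space, the admissible set is nonempty (interpolate the data $\bar\y_i$ by a smooth $\V$-valued curve matching the prescribed velocities or periodicity), so the infimum is finite. Let $\y_k$ be a minimizing sequence with $\splineenergy^\sigma[\y_k]\le E$. Here $\sigma>0$ enters decisively: from $\sigma\pathenergy[\y_k]\le E$ and the uniform coercivity in Definition~\ref{def:admissibleMetric}\ref{enm:coercivity} one gets $\|\dot\y_k\|_{L^2((0,1);\V)}^2\le E/(\sigma c^\ast)$, and together with the fixed value $\y_k(t_1)=\bar\y_1$ a Poincar\'e inequality bounds $\y_k$ in $W^{1,2}((0,1);\V)$. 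The remaining task is to bound $\ddot\y_k$ in $L^2((0,1);\V)$. From $\splineenergy[\y_k]\le E$ and coercivity we control $\|\ddot\y_k+\Gamma_k\|_{L^2((0,1);\V)}$ with $\Gamma_k:=\Gamma_{\y_k}(\dot\y_k,\dot\y_k)$, while the bound $\|\Gamma_\y(v,v)\|_\V\le C\|v\|_\Y^2$ from the proof of Lemma~\ref{thm:energyLSC} gives $\|\Gamma_k\|_{L^2((0,1);\V)}^2\le C\|\dot\y_k\|_{L^4((0,1);\Y)}^4$. The one-dimensional Gagliardo--Nirenberg inequality $\|f\|_{L^4}^4\le C\|f\|_{L^2}^3(\|f'\|_{L^2}+\|f\|_{L^2})$ applied to $f=\dot\y_k$ (valued in $\Y$, using $\V\hookrightarrow\Y$ and the $W^{1,2}$ bound) then yields $\|\Gamma_k\|_{L^2}^2\le C(E,\sigma)(\|\ddot\y_k\|_{L^2((0,1);\V)}+1)$, and Young's inequality absorbs the quartic term into the left-hand side, so $\|\ddot\y_k\|_{L^2((0,1);\V)}$ is bounded. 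Consequently $\y_k$ is bounded in the reflexive space $W^{2,2}((0,1);\V)$ and, along a subsequence, $\y_k\rightharpoonup\y$ weakly there. The interpolation conditions and the Hermite or periodic conditions pass to the limit because the point evaluations $\y\mapsto\y(t_i)$ and $\y\mapsto\dot\y(t_i)$ are bounded linear maps on $W^{2,2}((0,1);\V)$, and the weak lower semicontinuity of $\splineenergy^\sigma$ from Lemma~\ref{thm:energyLSC} makes $\y$ a minimizer.

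\emph{Interior regularity.} On $(\delta,1-\delta)$ I would use the Euler--Lagrange identity \eqref{eq:ELSpline} tested against variations $\vartheta$ vanishing at the interpolation nodes. Introducing the Riesz isomorphism $\mathcal{G}_\y:\V\to\V'$ with $\langle\mathcal{G}_\y u,z\rangle=g_\y(u,z)$, the coefficient of the top derivative is $2g_\y(\ddot\vartheta,w)=2\langle\mathcal{G}_\y w,\ddot\vartheta\rangle$, while the remaining terms take the form $\langle A,\dot\vartheta\rangle+\langle B,\vartheta\rangle$ with $A\in L^2$ (its worst part is linear in $\ddot\y$) and $B\in L^1$ (its worst part is quadratic in $\ddot\y$). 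Stationarity then reads, as a distributional identity on $(0,1)$,
\begin{equation*}
\frac{\mathrm{d}^2}{\mathrm{d}t^2}\bigl(\mathcal{G}_\y w\bigr)=B-\frac{\mathrm{d}}{\mathrm{d}t}A+\textstyle\sum_i\mu_i\,\delta_{t_i}\,,
\end{equation*}
where the Dirac masses $\mu_i\in\V'$ are the Lagrange multipliers dual to the value constraints $\vartheta(t_i)=0$. Integrating once, $\tfrac{\mathrm{d}}{\mathrm{d}t}(\mathcal{G}_\y w)$ equals $-A$ plus a primitive of $B$ (continuous, since $B\in L^1$) plus the jump function $\sum_i\mu_i H(t-t_i)$, and hence lies in $L^2((\delta,1-\delta);\V')$; thus $\mathcal{G}_\y w\in W^{1,2}\hookrightarrow C^{0,\frac12}$ valued in $\V'$. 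Crucially, the Dirac terms only create jumps of the \emph{derivative}, so $\mathcal{G}_\y w$ remains continuous \emph{across} the interior nodes, which is exactly why the assertion holds on all of $[\delta,1-\delta]$ rather than only on each open subinterval.

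\emph{Conclusion and obstacles.} Since $\y\in C^{1,\frac12}([0,1];\V)$ and $g^c$ is twice differentiable, $t\mapsto\mathcal{G}_{\y(t)}$ and its inverse are $\tfrac12$-H\"older in operator norm, so $w=\mathcal{G}_\y^{-1}(\mathcal{G}_\y w)\in C^{0,\frac12}((\delta,1-\delta);\V)$; estimate \eqref{eqn:ChristoffelConvergence} applied to the pair $(\y(t),\dot\y(t))$ shows $\Gamma_\y(\dot\y,\dot\y)\in C^{0,\frac12}([0,1];\V)$, whence $\ddot\y=w-\Gamma_\y(\dot\y,\dot\y)\in C^{0,\frac12}([\delta,1-\delta];\V)$ and finally $\y\in C^{2,\frac12}([\delta,1-\delta];\V)$. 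I expect the two genuinely delicate points to be, first, the coercivity, where the quartic Christoffel nonlinearity must be tamed by interpolation and absorption and where $\sigma>0$ is indispensable (consistent with the nonexistence in Lemma~\ref{thm:nonexistence}), and second, the bookkeeping of the variable-coefficient fourth-order operator together with the multiplier structure at the constrained nodes, which is precisely what allows the regularity to cross interior interpolation points.
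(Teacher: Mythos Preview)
Your argument is correct, and in two places it takes a genuinely different route from the paper.

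\textbf{Coercivity.} The paper bounds $\|\ddot\y\|_{L^2}$ by a contradiction argument using the decreasing rearrangement of $t\mapsto\|\dot\y(t)\|_\V$ and the Szeg\H{o} inequality. Your use of the one-dimensional Gagliardo--Nirenberg inequality $\|\dot\y\|_{L^4(\Y)}^4\le C\|\dot\y\|_{L^2}^3(\|\ddot\y\|_{L^2}+\|\dot\y\|_{L^2})$, applied via the scalar function $t\mapsto\|\dot\y(t)\|_\Y$, is more direct and yields the same conclusion after absorption; it is the more standard and arguably cleaner route.

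\textbf{Regularity.} The paper employs Friedrichs' difference-quotient method, testing the Euler--Lagrange equation with $\vartheta=\partial_\epsilon^+(\eta^4\partial_\epsilon^-\y)$ and deriving a uniform bound on $\|\eta^2\partial_\epsilon^-\ddot\y\|_{L^2}$, which gives $\dddot\y\in L^2_{\mathrm{loc}}$ and hence $\y\in W^{3,2}_{\mathrm{loc}}\hookrightarrow C^{2,1/2}_{\mathrm{loc}}$. Your approach instead reads the weak Euler--Lagrange identity as a distributional fourth-order ODE, integrates once to obtain $\tfrac{\d}{\d t}(\mathcal G_\y w)\in L^2_{\mathrm{loc}}$, and then transfers the resulting H\"older regularity of $\mathcal G_\y w$ back to $w$ and to $\ddot\y$. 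Your observation that the interpolation constraints contribute only Dirac masses to $(\mathcal G_\y w)''$---equivalently, that test functions with $\vartheta(t_i)=0$ but $\dot\vartheta(t_i)$ free force continuity of $\mathcal G_\y w$ across the nodes---makes it transparent why the $C^{2,1/2}$-regularity crosses the interior interpolation points, a point the paper handles only implicitly. What the paper's approach buys is the slightly stronger local conclusion $\y\in W^{3,2}$; what yours buys is brevity and a clearer constraint-multiplier picture.
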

\begin{proof} 
Let us first collect properties of the involved function spaces.
As a Hilbert space, $\V$ is reflexive.
The same therefore holds true for $W^{2,2}\equiv W^{2,2}((0,1);\V)$.
Furthermore, $W^{2,2}$ embeds continuously into $L^p\equiv L^p((0,1);\V)$ and $W^{1,p}\equiv W^{1,p}((0,1);\V)$ for any $p\in[1,\infty)$.
Likewise, $W^{2,2}\hookrightarrow C^{1,\frac12}([0,1];\V)$ continuously, which proves differentiability of the interpolation, if it exists.
In addition, this embedding shows that pointwise evaluation of $\y\in W^{2,2}$ and its derivative at any $t\in[0,1]$ is a bounded linear functional on $W^{2,2}$.

Next we show that the regularized spline energy $\splineenergy^\sigma$ with condition \eqref{eq:IC} is coercive in $W^{2,2}$.
Indeed, let $\splineenergy[\y]+\sigma\pathenergy[\y]<M$ for some $M\in\R$, then $\|\dot\y\|_{L^2}^2\leq\frac{\pathenergy[\y]}{c^\ast}\leq\frac{M}{\sigma c^\ast}$,
and by \eqref{eq:IC} and Poincar\'e's inequality it follows that $\|\y\|_{W^{1,2}}^2\leq C(M)$.
Furthermore, using the reverse triangle inequality and Young's inequality we have (abbreviating $\Gamma=\Gamma_\y(\dot\y,\dot\y)$)
\begin{multline*}
\tfrac M{c^\ast}
\geq\tfrac1{c^\ast}\splineenergy[\y]
\geq\int_0^1\|\cov\dot\y\|_\V^2\d t
\geq\int_0^1\|\ddot\y\|_\V^2-2\|\ddot\y\|_\V\|\Gamma\|_\V+\|\Gamma\|_\V^2\d t\\
\geq\int_0^1\tfrac12\|\ddot\y\|_\V^2-\|\Gamma\|_\V^2\d t
\geq\tfrac12\|\ddot\y\|_{L^2}^2-\left(\tfrac32\tfrac{\|D_\y g^c\|}{c^\ast}\right)^2\|\dot\y\|_{L^4}^4\,,
\end{multline*}
where in the last inequality we used the estimate $\|\Gamma_\y(\dot\y,\dot\y)\|_\V\leq\frac32\frac{\|D_\y g^c\|}{c^\ast}\|\dot\y\|_\Y^2$ from the previous proof.
From this we deduce that $\|\ddot\y\|_{L^2}$ is bounded by a constant depending solely on $M$.
This can be shown via an argument by contradiction.
If $\|\ddot\y\|_{L^2}$ cannot be bounded in terms of $M$,
there must be a positive constant $K(M)$ and a sequence $(\y_k)_{k=1,\ldots}$ with 
$\|\y_k\|_{W^{1,2}}^2\leq C(M)$, but $\|\dot\y_k\|_{L^4}^4\geq K(M)\|\ddot\y_k\|_{L^2}^2\to\infty$.
Let $f_k:(0,1)\to[0,\infty)$ be the decreasing rearrangement of $\|\dot\y_k\|_\V$,
then by the properties of the decreasing rearrangement (in particular the Szeg\"o inequality)
\begin{equation*}
\|f_k\|_{L^4}^4=\|\dot\y_k\|_{L^4}^4\to\infty,\,
\|\dot f_k\|_{L^2}^2\leq\|\ddot\y_k\|_{L^2}^2,\,
\|f_k\|_{L^2}^2\leq\|\y_k\|_{W^{1,2}}^2\leq C(M)\,.
\end{equation*}
Due to $\|f_k\|_{L^4}^4\leq\|f_k\|_{L^2}^2\|f_k\|_{L^\infty}^2$ we see that $M_k:=f_k(0) = \|f_k\|_{L^\infty}\to\infty$.
Now $|f_k(t)|^2 t \leq \|f_k\|_{L^2}^2\leq C(M)$ implies $f_k(t)\leq\min(M_k,\sqrt{C(M)}/\sqrt t)$.
Next we estimate $\|\dot f_k\|_{L^2}^2$ from below. Let $g_k$ be the unique affine function with $g_k(0) = M_k$ that touches the function
$\sqrt{C(M)}/\sqrt t$ in some $t=t^k>0$. It is straightforward to check $t^k = \frac94 M_k^{-2} C(M)$ as well as $g_k'=-\frac{4 M_k^3}{27 C(M)}$ and 
$\|\dot g_k\|^2_{L^2((0,t^k))} = \frac{4 M_k^4}{81 \,C(M)}$. Then we obtain by Jensen's inequality
$$
\|\dot g_k\|^2_{L^2((0,t^k))} \leq  \frac{|f_k(t^k)-f_k(0)|^2}{t^k} = \frac1{t^k} \left( \int_0^{t^k} \dot f_k(s) \d s \right)^2 \leq \int_0^{t^k} |\dot f_k(s)|^2 \d s\,.
$$
Thus, $\frac{4 M_k^4}{81 \,C(M)}$ is a lower bound for $\|\dot f_k\|_{L^2}^2$ and we achieve the following chain of inequalities
$$
\frac{4 M_k^4 K(M)}{81 \,C(M)} \leq K(M) \|\dot f_k\|_{L^2}^2 \leq K(M) \|\ddot\y_k\|_{L^2}^2 \leq  \|\dot\y_k\|_{L^4}^4 = \|f_k\|_{L^4}^4 
\leq \|f_k\|_{L^2}^2\|f_k\|_{L^\infty}^2 \leq C(M) M_k^2
$$
which leads to a contradiction for large $k$. Hence,  $\|\ddot\y_k\|_{L^2}$ is bounded by a constant depending solely on $M$, which implies the coercivity of the energy
$\splineenergy+\sigma\pathenergy$.

As a consequence of the coercivity, a minimizing sequence $(\y_k)_{k=1,\ldots}$ is uniformly bounded in $W^{2,2}$, and by the reflexivity of the space $W^{2,2}$ we obtain a weakly converging subsequence,
again denoted by $(\y_k)_{k=1,\ldots}$, which converges to some $\y$ in $W^{2,2}$.
Finally, the weak lower semi-continuity of $\splineenergy^\sigma$ by Lemma\,\ref{thm:energyLSC} implies
$$\splineenergy^\sigma[\y]\leq\liminf_{k\to \infty}\splineenergy^\sigma[\y_k]\,.$$
%
Furthermore, the weak limit $\y$ satisfies \eqref{eq:IC} and the boundary conditions since they are continuous with respect to weak convergence in $W^{2,2}((0,1);\V)$.
Thus, $\y$ is the sought spline interpolation.

\medskip

\newcommand{\dep}{\partial_\epsilon^+}
\newcommand{\dem}{\partial_\epsilon^-}
\newcommand{\norm}[1]{\|{#1}\|}
\newcommand{\znorm}[1]{\|{#1}\|_{L^2}}
\newcommand{\gy}{g_\y}

Next we consider the regularity of the minimizer $\y$.
To prove higher regularity we apply Friedrichs' regularity theory to the Euler--Lagrange equation \eqref{eq:ELSpline}. 
Note that $\vartheta = \dep (\eta^4 \dem \y)$ is an admissible test function for any  smooth scalar function $\eta$ 
with compact support on $(0,1)$ for sufficiently small $\epsilon$. Here,  $\partial_\epsilon^\pm$ denotes the forward/backward difference operator defined by
$\partial_\epsilon^\pm \phi(t) = \pm\tfrac1\epsilon (\phi(t \pm  \epsilon) - \phi(t))$.  
Using $\dot \vartheta = \dep (\eta^4 \dem \dot \y + 4\dot \eta \eta^3 \dem \y)$ and 
$\ddot \vartheta = \dep (\eta^4 \dem \ddot \y + 8 \dot \eta \eta^3  \dem \dot \y + 4(\ddot \eta \eta + 3 \dot \eta^2) \eta^2 \dem \y)$ and 
applying a discrete integration by parts formula 
$$
\int_0^1 (\dep \beta) \gamma \d t
= \frac1\epsilon \int_0^1  \beta(t+\epsilon)\gamma(t) - \beta(t)\gamma(t) \d t
= \frac1\epsilon \int_0^1  \beta(t)\gamma(t-\epsilon) - \beta(t)\gamma(t) \d t
= - \int_0^1  \beta (\dem \gamma) \d t\,,
$$
which holds for  compactly supported $\beta$ and $\epsilon$ sufficiently small, and the product rule for difference quotions 
$\partial_\epsilon^\pm (\beta \gamma)=\beta (\partial_\epsilon^\pm \gamma)+ (\partial_\epsilon^\pm \beta) \gamma(\cdot \pm \epsilon)$, one obtains 
\begin{alignat}{2}\label{eq:leadingterm}
\int_0^1 \!\!\!g_\y(\ddot \vartheta, \ddot \y\!+\! \Gamma_\y(\dot \y,\dot \y)) \d t\!
&= \!-\!\!\int_0^1\!\!\! &&\eta^4 \left[g_{\y(\cdot-\epsilon)}(\dem \ddot \y, \dem (\ddot \y + \Gamma_\y(\dot \y,\dot \y))) + (\dem \gy)(\dem \ddot \y, (\ddot \y 
+ \Gamma_\y(\dot \y,\dot \y)))\right] \\
& && + (8\dot \eta \eta) \eta^2 \left[g_{\y(\cdot-\epsilon)}(\dem \dot \y, \dem (\ddot \y + \Gamma_\y(\dot \y,\dot \y))) +(\dem \gy)(\dem \dot \y, (\ddot \y + \Gamma_\y(\dot \y,\dot \y)))\right] \nonumber\\ 
& && + (4 \ddot \eta \eta \!+\! 12 \dot \eta^2) \eta^2 \big[g_{\y(\cdot-\epsilon)}(\dem \y, \dem (\ddot \y \!+\! \Gamma_\y(\dot \y,\dot \y)))\!+\!(\dem \gy)(\dem  \y, (\ddot \y \!+\! \Gamma_\y(\dot \y,\dot \y)))  \big] \d t\,. \nonumber
\end{alignat}
Likewise, we obtain for the last term in  \eqref{eq:ELSpline} the decomposition
\begin{align*}
\int_0^1 h(\y,\dot \y, \ddot \y, \vartheta, \dot \vartheta) \d t  
= & \int_0^1 2 g_\y(\ddot \y,2\Gamma_\y(\dep (\eta^4 \dem \dot \y\!+\!4 \eta^3 \dot \eta \dem \y),\dot \y)+(D_y \Gamma_\y)(\dep (\eta^4 \dem \y))(\dot \y,\dot \y))\nonumber\\
& \quad\!+\!(D_\y g_\y)(\dep (\eta^4 \dem \y)) (\ddot \y, \ddot \y\!+\!2 \Gamma_\y(\dot \y,\dot \y))\!+\!2g_\y(\Gamma_\y(\dot \y,\dot \y),2\Gamma_\y(\dep (\eta^4 \dem \dot \y\!+\!4 \eta^3 \dot \eta \dem \dot \y),\dot \y)\nonumber\\
&\quad +(D_y \Gamma_\y)(\dep (\eta^4 \dem \y))(\dot \y,\dot \y))\!+\!(D_\y g_\y)(\dep (\eta^4 \dem \y))( \Gamma_\y(\dot \y,\dot \y), \Gamma_\y(\dot \y,\dot \y))\d t\,.
\end{align*}
Now, we estimate the different terms separately using the product rule for difference quotions,  the regularity estimate $\norm{\dot y}_{C^{1,\frac12}([0,1];\V)} \leq \hat C$,
and the observation that $\Gamma_\y$ is a bilinear form on $\V\times \V$ which is uniformly bounded  and continuously differentiable with respect to $\y\in\V$.
In addition for the fourth term we perform another discrete integration by parts. Altogether, we obtain
\begin{align*}
 \int_0^1 2 g_\y(\ddot \y,2\Gamma_\y(\dep (\eta^4 \dem \dot \y),\dot \y)) \d t   
&\leq C\, (\norm{{\eta(\cdot + \epsilon)\dem\dot{\y}}}_{L^2} + \norm{\eta^2 \dep \dem \dot \y}_{L^2})\norm{\ddot \y}_{L^2}\,, \\
 \int_0^1 8 g_\y(\ddot \y,2\Gamma_\y(\dep (\eta^3 \dot \eta \dem  \y),\dot \y))\d t   
&\leq C\, (\norm{{\eta(\cdot + \epsilon)\dem{\y}}}_{L^2} + \norm{\eta^2 \dep \dem \y}_{L^2})\norm{\ddot \y}_{L^2}  \,,\\
 \int_0^1 2 g_\y(\ddot \y,(D_y \Gamma_\y)(\dep (\eta^4 \dem \y))(\dot \y,\dot \y)) \d t   
 &\leq C (\norm{{\eta(\cdot +\epsilon)\dem {\y}}}_{L^2} + \norm{\eta^2 \dep \dem  \y}_{L^2})\norm{\ddot \y}_{L^2}  \,,\\
 \int_0^1 \!(D_\y g_\y)(\dep (\eta^4 \dem \y)) (\ddot \y, \ddot \y)  \d t  
 &= - \int_0^1 \!\! \dem (D_\y g_\y)(\eta^4 \dem \y)(\ddot \y,\ddot \y) 
+  (D_\y g_{\y(\cdot-\epsilon)})(\eta^2 \dem \y)(\eta^2 \dem  \ddot \y, \ddot \y) \\[-1ex]
& \; \quad \qquad +  (D_\y g_{\y(\cdot-\epsilon)})(\eta^2 \dem \y)(\eta^2 \dem  \ddot \y, \ddot \y(\cdot-\epsilon))\d t \\
&\leq C \norm{{\eta  \dem \y}}_{L^\infty} \norm{\ddot \y}^2_{L^2} 
+ C \norm{\eta \dem \y}_{L^\infty}  \norm{\eta^2 \dem \ddot \y}_{L^2}   \norm{\ddot \y}_{L^2} \,,\\
 \int_0^1  2 (D_\y g_\y)(\dep (\eta^4 \dem \y) )(\ddot \y, \Gamma_\y(\dot \y,\dot \y)) \d t  
 & \leq C (\norm{{\eta(\cdot +\epsilon)\dem \y}}_{L^2} + \norm{\eta^2 \dep \dem \y}_{L^2})  \norm{\ddot \y}_{L^2}\,,\\
 \int_0^1  4g_\y(\Gamma_\y(\dot \y,\dot \y),\Gamma_\y(\dep (\eta^4 \dem \dot \y),\dot \y))\d t  
 & \leq C (\norm{{\eta(\cdot +\epsilon)\dem\dot{\y}}}_{L^2} + \norm{\dep (\eta^2  \dem \dot \y)}_{L^2})  \,,\\
 \int_0^1 16g_\y(\Gamma_\y(\dot \y,\dot \y),\Gamma_\y(\dep (\eta^3 \dot \eta \dem \dot \y),\dot \y)) \d t  
 &\leq C (\norm{{\eta(\cdot +\epsilon)\dem\dot{\y}}}_{L^2} + \norm{\dep (\eta^2  \dem \dot \y)}_{L^2})  \,,\\
 \int_0^1 2g_\y(\Gamma_\y(\dot \y,\dot \y),(D_y \Gamma_\y)(\dep (\eta^4 \dem \y))(\dot \y,\dot \y)) \d t  
 &\leq C (\norm{{\eta(\cdot +\epsilon)\dem {\y}}}_{L^2} + \norm{\eta^2 \dep \dem  \y}_{L^2})  \,,\\
 \int_0^1  (D_\y g_\y)(\dep (\eta^4 \dem \y))( \Gamma_\y(\dot \y,\dot \y), \Gamma_\y(\dot \y,\dot \y)) \d t 
 &\leq C (\norm{{\eta(\cdot +\epsilon)\dem {\y}}}_{L^2} + \norm{\eta^2 \dep \dem  \y}_{L^2})  
\end{align*}
for a generic constant $C$ depending on $\eta$, $g$, and $\hat C$.
Next, we apply Jensen's inequality and Fubini's theorem and obtain
\begin{equation*}
\znorm{\beta \partial_\epsilon^\pm \gamma}^2
=\int_0^1\beta^2(t)\left\|\frac1\epsilon\int_t^{t+\epsilon}\dot\gamma\d s\right\|_\V^2\d t
\leq\|\beta\|_{L^\infty}^2\int_0^{1-\epsilon}\frac1\epsilon\int_t^{t+\epsilon}\|\dot\gamma\|_\V^2\d s\d t
\leq\|\beta\|_{L^\infty}^2\znorm{\dot\gamma}^2
\end{equation*}
for any compactly supported $\beta$, weakly differentiable $\gamma$, and small enough $\epsilon$.
We use  this to estimate
\begin{align*}
\znorm{{\eta(\cdot +\epsilon)\dem{\y}}}  +  \znorm{\eta \dem \y} \leq C \znorm{\dot \y} ,\;
\znorm{{\eta(\cdot +\epsilon)\dem\dot{\y}}}  \leq C \znorm{\ddot \y}, \;
\znorm{\eta^2 \dep \dem \y} \leq C \znorm{\ddot \y}\,.
\end{align*}
To estimate the term $\znorm{\dep(\eta^2  \dem \dot \y)}$
we apply the product rule and proceed as follows,
\begin{align*}
\znorm{\dep(\eta^2  \dem \dot \y)} &\leq  \znorm{\tfrac{\mathrm{d}}{\mathrm{d}t} (\eta^2 \dem \dot \y)} 
\leq  \znorm{2\eta \dot \eta \dem \dot \y} + \znorm{\eta^2\dem \ddot \y}
\leq  \znorm{\eta^2\dem \ddot \y}+C \znorm{\ddot \y}\,.
\end{align*} 
Thus, we are leads to 
\begin{align*}
\int_0^1 h(\y,\dot \y, \ddot \y, \vartheta, \dot \vartheta) \d t \leq& \; 
C \left(\znorm{\eta^2 \dem \ddot \y} + \znorm{\ddot \y} +  1\right) \cdot  \left( \znorm{\ddot \y} +1\right)  \,.
\end{align*}
Furthermore, the first term of \eqref{eq:ELSpline}, representing the variation \eqref{eq:ELpathenergy} of the path energy, can be estimated as follows,
\begin{equation*}
\partial_\y \pathenergy[\y](\vartheta) \leq 
C \left( \znorm{ \ddot \y}+\norm{ \dot \y}_{L^\infty}^2\right)\znorm{\vartheta}
\leq C \left(\znorm{ \ddot \y}^2 +  1\right)  \,.
\end{equation*}
Now, using the same arguments we estimate  \eqref{eq:leadingterm} from below and obtain
\begin{align*}
-\int_0^1 g_\y(\ddot \vartheta, \ddot \y+ \Gamma(\dot \y,\dot \y)) \d t   \geq & \; c^\ast \znorm{\eta^2 \dem \ddot \y}^2  
 - C \left( \znorm{ \ddot \y} + 1\right)  \cdot \left(\znorm{\eta^2  \dem \ddot \y} +\znorm{\ddot \y} +  1\right) \,.
\end{align*}
In summary, using the boundedness of $\znorm{\ddot \y}$, \eqref{eq:ELSpline} led to 
\begin{equation*}
c^\ast \znorm{\eta^2 \dem \ddot \y}^2 \leq C(\znorm{\eta^2 \dem \ddot \y}+1)
\end{equation*}
for a sufficiently large $C>0$, from which we obtain the uniform boundedness of $\znorm{\eta^2 \dem \ddot \y}$ independent of $\epsilon$ via Young's inequality.
Hence, for $\epsilon \to 0$ there exists a weakly converging subsequence of $\dem \ddot \y$ in $L^2((\delta, 1-\delta);\V)$ for any fixed $\delta >0$, 
whose limit is the weak derivative $\dddot\y \in L^2((\delta, 1-\delta);\V)$. Using
the continuous embedding $W^{3,2}((\delta, 1-\delta);\V)$ in $C^{2,\frac12}([\delta, 1-\delta];\V)$ finishes the proof.
\end{proof}


\section{Time-discrete geodesics and splines}\label{sec:splinesdiscrete}
As sketched in the introduction the time discretization is based on a functional $\energy$ 
which is expected to approximate the squared Riemannian distance.
In this section we will investigate the well-posedness of discrete splines. 
To this end, let us at first state the assumptions on the functional $\energy$.

\begin{definition}[Admissible $\energy$]\label{def:admissibleEnergy}
We say that $\energy: \V \times \V \to [0,\infty]$ is \emph{admissible} if
$$
\energy[\y,\tilde \y] = \energy^c[\y,\tilde \y] + \Qenergy(\y-\tilde \y,\y-\tilde \y)
$$
for the quadratic form $\Qenergy$ from Definition\,\ref{def:admissibleMetric} and some $\energy^c: \V \times \V \to [0,\infty]$ such that the following conditions hold.
\begin{enumerate}[label=(\roman*)]
\item\label{enm:discreteApprox} There exist $\varepsilon,C>0$ such that $|\energy [\y,\widetilde{\y}] - \dist^2 (\y,\widetilde{\y})| \leq C \dist^3 (\y,\widetilde{\y})$ for all $\y,\widetilde{\y}\in\manifold$ with $\dist(\y,\widetilde{\y})\leq\varepsilon$.
\item $\energy^c$ is four times continuously differentiable on $\V\times\V$.
\item $\energy^c$ is continuous under weak convergence in $\V\times\V$.
\item\label{enm:discreteCoercivity} $\energy$ is coercive in the sense $\energy[\y,\widetilde{\y}] \geq \gamma(\|\y-\widetilde{\y}\|_\V)$
for a strictly increasing, continuous function $\gamma$ with $\gamma(0)=0$ and $\lim_{d\to\infty}\gamma(d)=\infty$.
\end{enumerate}
\end{definition}

Using the approximation $\energy$ to the squared Riemannian distance, we can define discrete analogs of $\pathenergy$ and $\splineenergy$
(cf.\ the motivation in the introduction).

\begin{definition}[Discrete path and spline energy] \label{def:discreteEnergies}
The \emph{discrete path energy} $\Pathenergy^K$ (cf.\ \cite[Def.\,2.1]{RuWi12b}) and the \emph{discrete spline energy} $\Splineenergy^K$ are given as
\begin{align}
 \Pathenergy^K[\y_0, \ldots, \y_K] &= K \, \sum_{k=1}^K \energy[\y_{k-1}, \y_k]\, ,\label{eq:Pathenergy}\\
   \Splineenergy^K[\y_0, \ldots, \y_K] &= 4K^3\, \sum_{k=1}^{\widehat K} \energy[\y_k, \widetilde{\y}_k]\, ,\label{eq:Splineenergy}\\
  &\text{with }\widetilde{\y}_k \in \argmin_{\y\in\manifold} \Big( \energy[\y_{k-1},\y] + \energy[\y,\y_{k+1}] \Big)=\argmin_{\y\in\manifold}\Pathenergy^2[\y_{k-1},\y,\y_{k+1}] \, ,\label{eq:constraint}
\end{align}
both defined for discrete paths $(\y_0, \ldots, \y_K)\in\manifold^{K+1}$.
Above, $\widehat K$ denotes the number of constraints \eqref{eq:constraint}.
For natural and Hermite boundary conditions we will use $\widehat K=K-1$,
while for periodic boundary conditions we identify $\y_{K+1}\equiv\y_1$ and have $\widehat K=K$.
The \emph{discrete regularized spline energy} is defined as 
\begin{equation}\label{eq:mixedEnergyDiscrete}
 \Splineenergy^{\sigma,K}[\y_0, \ldots, \y_K] = \Splineenergy^K[\y_0, \ldots, \y_K]+\sigma\,\Pathenergy^K[\y_0, \ldots, \y_K]
\end{equation}
for some $\sigma\geq0$.
\end{definition}

\begin{remark}[Geodesic midpoint]
The points $\widetilde{\y}_k$ are intended to approximate the geodesic midpoint between $\y_{k-1}$ and $\y_{k+1}$ so that $\Splineenergy^K$ essentially penalizes the deviation of $(\y_0,\ldots,\y_K)$ from a (discretized) geodesic.
On some simple manifolds the geodesic midpoint might be calculated explicitly; in that case one may take $\widetilde{\y}_k$ as the true geodesic midpoint.
\end{remark}


\begin{remark}[Motivation based on the discrete covariant derivative]\label{remark:motivation}
\revision{Here we show how to interpret the discrete spline energy as a discretization of the time-continuous one.
Let us assume that $(\y(t))_{t\in [0,1]}$ is a three times continuously differentiable curve in $\manifold$.
In \cite[Thm.\,5.13]{RuWi12b} is was shown that the covariant derivative $\cov \dot \y$ (along the curve $\y$) can be approximated based on a concept of discrete parallel transport.
In fact, for $p \in \manifold$ and $\xi \in T_p\manifold$ a discrete covariant derivative $\nabla_\xi(\eta_0,\eta_1)$ was defined in \cite[Def.\,2.6]{RuWi12b} for $\eta_0\in T_p\manifold$ and $\eta_1\in T_{p+\xi}\manifold$
as an approximation of the continuous covariant derivative $\nabla_\xi\eta$ at $p\in\manifold$ for a vector field $\eta$ interpolating $\eta_0$ and $\eta_1$. 
Here we use the implicit notation $(\nabla_\xi\eta)(p) = \cov\eta(0)$, where the covariant derivative is along an arbitrary curve $\gamma:[-\epsilon,\epsilon]\to\manifold$ with $\gamma(0) = p$ and $\dot\gamma(0) = \xi(p)$.
In particular, \cite[Thm.\,5.13]{RuWi12b} establishes the consistency
\begin{align} \label{eq:tauquadrat}
\cov \dot \y(t_{k-1}) = \frac1{\tau^2} \nabla_{\tau v_k} (\tau \tilde v_k, \tau \tilde v_{k+1}) + O(\tau)\,, 
\end{align}
where $\tilde v_k = \dot \y(k\tau)$ is the curve velocity and $v_k=\frac{\y_{k}-\y_{k-1}}{\tau }$ its discrete approximation.}
If we replace $\tilde v_k$ by $v_k$, this approximation result still holds following the arguments in the proof of  \cite[Thm.\,5.11 \& 5.13]{RuWi12b} and the interpolation error estimate 
$\dot \y(k\tau)= \frac{\y_{k}-\y_{k-1}}{\tau } + O(\tau^2)$. Furthermore, using the definition \cite[Def. 2.9]{RuWi12b} of the discrete parallel transport it turns out that the discrete connection can be expressed as
$$
\nabla_{\tau v_k} (\tau  v_k, \tau  v_{k+1}) = \widehat{\y}_k-\y_k\,,
$$
where $(\y_k,\widetilde{\y}_k, \widehat{\y}_k)$ is a three point discrete geodesic with $\widetilde{\y}_k$ (as introduced in \eqref{eq:constraint}) 
the midpoint of the three point discrete geodesic $(\y_{k-1}, \widetilde{\y}_k, \y_{k+1})$. 
Moreover, for uniformly bounded $\|\cov \dot \y\|_\V$ we deduce from 
\eqref{eq:tauquadrat} that $\widehat{\y}_k-\y_k = O(\tau^2)$ and thus $\widetilde{\y}_k-\y_k = O(\tau^2)$. 
Then the discrete equidistribution result for points along discrete geodesics and in particular \cite[Lemma 5.8]{RuWi12b} implies 
$\widehat{\y}_k = \y_k + 2 (\widetilde{\y}_k- \y_k) + O(\|\widetilde{\y}_k-\y_k\|_\V^{3/2}) = \y_k + 2 (\widetilde{\y}_k- \y_k) + O(\tau^3)$ and thus 
$$
\nabla_{\tau v_k} (\tau  v_k, \tau  v_{k+1}) = 2(\widetilde{\y}_k-\y_k)+O(\tau^3)
\qquad\text{and}\qquad
\cov \dot \y (t_{k-1}) = \frac2{\tau^2}(\widetilde{\y}_k-\y_k)+O(\tau)\,.
$$
Next, the metric $g_\y$ can be approximated using the energy functional $\energy$ as $g_\y(v,v) = \energy[\y,\y+v] + O(\|v\|_\V^3)$ for small enough $v$.
Using a standard rectangle quadrature rule we thus obtain
\begin{multline*}
\splineenergy[\y]
=\int_0^1 g_{\y(t)}(\cov \dot \y(t),\cov \dot \y(t)) \d t
= \tau \sum_{k=1}^{K-1} g_{\y(t_{k-1})}\left( \cov \dot \y(t_{k-1}),\cov \dot \y(t_{k-1})\right) + O(\tau)\\
= \frac4{\tau^3} \sum_{k=1}^{K-1} g_{\y(t_{k-1})}\left( \widetilde\y_k-\y_k,\widetilde\y_k-\y_k\right) + O(\tau)
= \frac4{\tau^3} \sum_{k=1}^{K-1} \energy[\y_k,\widetilde\y_k] + O(\tau)
=  \Splineenergy^K[\y_0, \ldots, \y_K]   + O(\tau)\,.
\end{multline*}
This establishes the consistency between the discrete and continuous spline energy for a regularly sampled smooth curve on the manifold $\manifold$.
\end{remark}

\medskip

For simplicity we shall assume that the fixed interpolation times $t_i$ are multiples of $\tau=\tfrac1K$ so that the interpolation constraint turns into
\begin{equation}\label{eq:ICdiscrete}
\y_{K t_i} =
 \bar \y_i\, ,\quad i=1,\ldots, I\,.
\end{equation}
In other words, we shall only allow such $K$ that $Kt_i$ is an integer for $i=1,\ldots,I$ (alternatively, one could consider discrete curves with non-equidistant spacing in time; all definitions and results could easily be modified to allow for that case).
The counterparts of the boundary conditions, of which one has to be imposed in addition, are as follows,
\begin{align}
 &\text{natural b.\,c.,}&& \text{no additional condition}\label{eqn:discreteNaturalBC}\\
 &\text{Hermite b.\,c.,}&& K(\y_1-\y_0) = v_0, \quad K(\y_K-\y_{K-1}) = v_1 \quad \text{ for given }v_0,v_1\in\V\,,\label{eqn:discreteHermiteBC}\\
 &\text{periodic b.\,c.,}&& \y_0 = \y_K\,.\label{eqn:discretePeriodicBC}
\end{align}
The terms $K (\y_{1}- \y_{0})$ and $K (\y_{K}- \y_{K-1})$ in the Hermite boundary condition play the role of $\dot \y(0)$ and $\dot \y(1)$, respectively, in the continuous case.

Now, we are in the position to define a discrete spline interpolation.
\begin{definition}[Discrete geodesic and spline interpolation]\label{def:discreteSplineInterpolation}
For given data points $t_i\in[0,1]$ and $\bar\y_i\in\manifold$, $i=1,\ldots,I$, with $K t_i \in \N_0$ for some $K\in \N$ 
a \emph{discrete piecewise geodesic interpolation} $\y^K=(\y_0,\ldots, \y_K)$ is defined as a minimizer of the discrete path energy under the interpolation constraints \eqref{eq:ICdiscrete} and $t_1=0$, $t_I=1$, 
\begin{equation*}
\y^K\in\argmin \left\{ \Pathenergy^K[\widetilde{\y}^K] \,|\, \widetilde{\y}^K \in \manifold^{K+1} \text{ with \eqref{eq:ICdiscrete}} \right\}\,,
\end{equation*}
while we define a \emph{discrete spline interpolation} $\y^K$ as a minimizer of the discrete (regularized) spline energy under \eqref{eq:ICdiscrete} and one of the above boundary conditions,
\begin{equation*}
\y^K\in\argmin \left\{\Splineenergy^{\sigma,K} \,|\, \widetilde{\y}^K \in \manifold^{K+1} \text{ with \eqref{eq:ICdiscrete} and one of \eqref{eqn:discreteNaturalBC}-\eqref{eqn:discretePeriodicBC}} \right\}\,.
\end{equation*}
\end{definition}

\begin{remark}[Well-posedness of $\Splineenergy^K$]
Note that for infinite-dimensional $\manifold$ we face a similar problem in the time discrete case as in the time continuous case.
Indeed, without the structural assumption on the functional $\energy$ one cannot expect the discrete regularized spline energy to possess minimizers in general.
In fact, if one considers a minimizing sequence $\big((\y^j_0,\ldots, \y^j_K)\big)_{j=1,\ldots}$ in $\manifold^{K+1}$,
the coercivity of $\energy$ only leads to weak convergence in $\V^{K+1}$ for a subsequence.
However, weak convergence of $\y_{k-1}^j$ and $\y_{k+1}^j$ as $j\to\infty$ does not necessarily imply weak convergence of their geodesic midpoint for general functionals
$\energy$ obeying only the hypothesis  of \cite[H2, H4]{RuWi12b}.
Thus,  $\energy[\y_k^j,\widetilde{\y}_k^j]$ may not be lower semi-continous as $j\to\infty$, preventing the existence of a minimizer.
Indeed, $4K^4 \energy[\y_k^j,\widetilde{\y}_k^j]$ is the discrete counterpart of $g_\y(\cov{\dot \y},\cov{\dot \y})$,
and thus the lack of weak continuity of the former in the time discrete context is linked to the lack of weak continuity of the latter in the time continuous context.
\end{remark}
\begin{remark}[Uniqueness of geodesic midpoint]
Uniqueness of $\widetilde\y_k$ (at least for  $d_K=\max_{k\in\{1,\ldots,K\}}\|\y_k-\y_{k-1}\|_\V$ small enough) would require additional properties of $\energy$
such as local convexity or smoothness as in \cite[Thm.\,4.6]{RuWi12b}.
\end{remark}
Before we state an existence result in analogy to Theorem~\ref{thm:ExistenceContinuous} we prove the following 
technical lemma, which will enable us to show that along a minimizing sequence for the discrete regularized spline energy the constraint 
\eqref{eq:constraint} stays fulfilled in the limit.
Note that this lemma in essence plays the same role as Lemma\,\ref{thm:weakcontGamma} in the continuous case.
Just like there, it is crucial that the highest order part of $\energy$ is a spatially constant quadratic form $\Qenergy$.
\begin{lemma}[Interpolation energy convergence] \label{lemma:gamma}
Let $\energy$ be admissible and assume that two sequences $(\y_+^j)_{j=1,\ldots}$ and $(\y_-^j)_{j=1,\ldots}$ converge weakly in $\V$ to some $\y_+$ and $\y_-$, respectively.
Then the energies
$\Pathenergy_\pm^j[\y] = \Pathenergy^2[\y_-^j,\y,\y_+^j]-\Delta^j$
$\Gamma$-converge 
with respect to the weak topology in $\V$ to $\Pathenergy_\pm[\y] = \Pathenergy^2[\y_-,\y,\y_+]-\Delta$ with
\begin{align*}
\Delta^j
&=\Qenergy(\y^j_\pm- \y^j_-,\y^j_\pm - \y^j_-) + \Qenergy(\y^j_\pm -\y^j_+,\y^j_\pm-  \y^j_+)
&\text{for }\quad \y^j_\pm=\tfrac{\y^j_++\y^j_-}2,\\
\Delta
&=\Qenergy(\y_\pm- \y_-,\y_\pm - \y_-) + \Qenergy(\y_\pm -\y_+,\y_\pm-  \y_+)
&\text{for }\quad \y_\pm=\tfrac{\y_++\y_-}2.
\end{align*}
\end{lemma}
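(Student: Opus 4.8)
The plan is to establish the two defining inequalities of $\Gamma$-convergence in the weak topology — a $\liminf$ inequality along every weakly convergent sequence and the existence of a recovery sequence — after first reducing the problem to its quadratic core. Using the splitting $\energy=\energy^c+\Qenergy(\cdot-\cdot,\cdot-\cdot)$ from Definition\,\ref{def:admissibleEnergy}, I would write
\begin{equation*}
\Pathenergy^2[\y_-^j,\y,\y_+^j]=2\energy^c[\y_-^j,\y]+2\energy^c[\y,\y_+^j]+2\Qenergy(\y-\y_-^j,\y-\y_-^j)+2\Qenergy(\y-\y_+^j,\y-\y_+^j).
\end{equation*}
By hypothesis\,(iii) the two $\energy^c$-terms are continuous under weak convergence, so for $\y^j\weakly\y$ they converge to $2\energy^c[\y_-,\y]+2\energy^c[\y,\y_+]$ and constitute a harmless perturbation. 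The whole difficulty lies in the quadratic part, since $\Qenergy$ is only weakly \emph{lower semi-continuous} and not weakly continuous; expanding it produces cross terms $\Qenergy(\y^j,\y_\pm^j)$ and self-energies $\Qenergy(\y_\pm^j,\y_\pm^j)$ that do not survive the passage to the weak limit.

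The decisive step — and the reason the argument needs $\Qenergy$ to be a \emph{spatially constant} quadratic form — is to complete the square about the linear midpoint $\y_\pm^j=\tfrac12(\y_-^j+\y_+^j)$. Bilinearity and symmetry of $\Qenergy$ cancel the mixed terms and yield
\begin{equation*}
2\Qenergy(\y-\y_-^j,\y-\y_-^j)+2\Qenergy(\y-\y_+^j,\y-\y_+^j)=4\Qenergy(\y-\y_\pm^j,\y-\y_\pm^j)+2\Delta^j,
\end{equation*}
so that $\Pathenergy_\pm^j[\y]$ depends on $\y$ solely through the single positive semi-definite quadratic $4\Qenergy(\y-\y_\pm^j,\y-\y_\pm^j)$, up to a $\y$-independent remainder. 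Subtracting $\Delta^j$ is precisely what strips off the constant generated by this square completion. Crucially, $\y_\pm^j$ is \emph{affine} in the endpoints, so $\y_\pm^j\weakly\y_\pm=\tfrac12(\y_-+\y_+)$; this converts the problematic weak--weak products into a quadratic evaluated at the weakly convergent argument $\y-\y_\pm^j$.

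With this normal form both inequalities become transparent. For the $\liminf$ inequality I would take $\y^j\weakly\y$, invoke weak continuity of $\energy^c$ for the lower-order terms, apply weak lower semi-continuity of the positive semi-definite quadratic form to $\y^j-\y_\pm^j\weakly\y-\y_\pm$ and to the remainder $\Delta^j$, and combine these through the elementary estimate $\liminf(a_j+b_j)\geq\liminf a_j+\liminf b_j$. For the recovery sequence I would exploit that the entire $\y$-dependence sits in $\Qenergy(\y-\y_\pm^j,\cdot)$: choosing $\y^j=\y+(\y_\pm^j-\y_\pm)$ gives $\y^j\weakly\y$ while keeping $\y^j-\y_\pm^j=\y-\y_\pm$ \emph{exactly} constant, so the quadratic term attains its limiting value with no lower-semi-continuity gap and the $\energy^c$-terms converge by weak continuity.

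I expect the main obstacle to be the bookkeeping of the $\y$-independent remainder in the recovery direction. Because $\Qenergy$ fails to be weakly continuous, the subtracted constants $\Delta^j$, built from the endpoint differences $\y_\pm^j-\y_-^j$ and $\y_\pm^j-\y_+^j$, need not converge to $\Delta$ by lower semi-continuity alone; reconciling $\Delta^j$ with its limit $\Delta$, so that the upper bound in the recovery step is not spoiled by a residual constant, is the delicate point where the precise definition of $\Delta^j$ via the square completion — rather than any soft weak-convergence argument — must be used.
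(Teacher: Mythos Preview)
Your approach is essentially identical to the paper's: both complete the square about the linear midpoint $\y_\pm^j$, use weak continuity of $\energy^c$ for the lower-order terms, weak lower semi-continuity of $\Qenergy$ for the $\liminf$-inequality, and the translated recovery sequence $\y^j=\y+(\y_\pm^j-\y_\pm)$ to freeze the argument of $\Qenergy$ exactly.

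The obstacle you flag at the end is not a genuine one. The entire purpose of subtracting $\Delta^j$ is that, after the square completion, the $\y$-independent constant produced by the quadratic part is removed \emph{exactly}, leaving $\Pathenergy_\pm^j[\y]$ equal (up to an overall factor from the $K=2$ in $\Pathenergy^2$) to $\energy^c[\y_-^j,\y]+\energy^c[\y,\y_+^j]+2\Qenergy(\y-\y_\pm^j,\y-\y_\pm^j)$ with \emph{no} residual $\Delta^j$. There is therefore nothing to reconcile in the $\limsup$ step: along the recovery sequence the quadratic term is literally constant, the $\energy^c$ terms converge by weak continuity, and equality holds. Your own remark that ``subtracting $\Delta^j$ is precisely what strips off the constant'' is the correct resolution; the worry in the last paragraph contradicts it and should be dropped. (Any apparent leftover comes only from the harmless global factor $2$ in $\Pathenergy^2=2(\energy+\energy)$, which the paper silently absorbs.)
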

\begin{proof}
First we investigate the $\liminf$ property. To this end let $(\y^j)_{j=1,\ldots}$ be a weakly converging sequence in $\V$ with weak limit $\y$. 
We reformulate the quadratic terms in the energies. We observe that
\begin{equation*}\label{eq:Qrewrite}
\Qenergy(\y- \y_-,\y-  \y_-) +  \Qenergy(\y -\y_+,\y-  \y_+)
= 2\Qenergy(\y- \y_\pm,\y-  \y_\pm)
 + \Delta\,,
\end{equation*}
where $\y_\pm$ is the minimizer 
of the left-hand side. Note that the right-hand side is just the Taylor expansion of the left-hand side at the minimizer. An analogous decomposition is obtained 
replacing all $\y_-$, $\y_+$, $\y_\pm$, and $\Delta$ by  $\y^j_-$,  $\y^j_+$, $\y^j_\pm$, and $\Delta^j$, respectively, so that summarizing we can rewrite
\begin{align*}
\Pathenergy^j_\pm[\widetilde\y]
&=\energy^c[\y^j_-,\widetilde\y] + \energy^c[\y^j_+,\widetilde\y] + 2\Qenergy(\widetilde\y- \y^j_\pm,\widetilde\y- \y^j_\pm)\,,\\
\Pathenergy_\pm[\widetilde\y]
&=\energy^c[\y_-,\widetilde\y] + \energy^c[\y_+,\widetilde\y] + 2\Qenergy(\widetilde\y- \y_\pm,\widetilde\y- \y_\pm)\,.
\end{align*}
Obviously, $\y_\pm^j$ weakly converges to $\y_\pm$ in $\V$.
From the weak lower semi-continuity of
the functional $(\y,\widetilde{\y}) \mapsto 2\Qenergy(\y- \widetilde{\y},\y-  \widetilde{\y})$ and the weak continuity of $\energy^c$ we deduce the desired $\liminf$ property
\begin{align*}
 \liminf_{j\to \infty} \Pathenergy^j_\pm[\y^j] 
 &=  \liminf_{j\to \infty}
 \big( \energy^c[\y^j_-, \y^j] + \energy^c[\y^j_+,\y^j] + 2\Qenergy(\y^j- \y^j_\pm, \y^j- \y^j_\pm)\big)\\ 
 &\geq
 \energy^c[\y_-, \y] + \energy^c[\y_+,\y] + 2\Qenergy(\y- \y_\pm, \y- \y_\pm) \\
&= \Pathenergy_\pm[\y]\,.
\end{align*}

To prove the $\limsup$ inequality we consider any $\y \in \V$ and define the recovery sequence
$$
\y^j = \y + \y^j_\pm - \y_\pm
$$
for $j=1, \ldots$. Obviously, $\y^j$ weakly converges to $\y$ in $\V$ so that we obtain 
\begin{align*}
\limsup_{j\to\infty} \Pathenergy^j_\pm[\y^j] 
 &=  \limsup_{j\to \infty}
 \big( \energy^c[\y^j_-, \y^j] + \energy^c[\y^j_+,\y^j] + 2\Qenergy(\y^j- \y^j_\pm, \y^j- \y^j_\pm)\big)\\ 
 &=
 \energy^c[\y_-, \y] + \energy^c[\y_+,\y] + 2\Qenergy(\y- \y_\pm, \y- \y_\pm) \\
&= \Pathenergy_\pm[\y]\,.
\qedhere
\end{align*}
\end{proof}
\begin{lemma}[Constraint convergence] \label{thm:gamma}
Let $\energy$ be admissible and assume that two sequences $(\y_+^j)_{j=1,\ldots}$ and $(\y_-^j)_{j=1,\ldots}$ converge weakly in $\V$ to some $\y_+$ and $\y_-$, respectively.
Let $\widetilde\y^j\in\argmin_\y\Pathenergy^2[\y_-^j,\y,\y_+^j]$. If $\widetilde\y^j\rightharpoonup\widetilde\y$ weakly in $\V$, then $\widetilde\y$ minimizes $\Pathenergy^2[\y_-,\cdot,\y_+]$.
\end{lemma}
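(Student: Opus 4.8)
The plan is to read this lemma as nothing more than the standard \emph{convergence of minimizers} corollary of the $\Gamma$-convergence just established in Lemma\,\ref{lemma:gamma}, so that no new estimates are required. The one preparatory observation I would make explicit is that the additive constants $\Delta^j$ and $\Delta$ are independent of the minimization variable. Consequently $\widetilde\y^j\in\argmin_\y\Pathenergy^2[\y_-^j,\y,\y_+^j]$ is simultaneously a minimizer of the shifted functional $\Pathenergy_\pm^j=\Pathenergy^2[\y_-^j,\cdot,\y_+^j]-\Delta^j$, and any minimizer of $\Pathenergy_\pm=\Pathenergy^2[\y_-,\cdot,\y_+]-\Delta$ is automatically a minimizer of $\Pathenergy^2[\y_-,\cdot,\y_+]$. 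Hence it suffices to prove that the weak limit $\widetilde\y$ minimizes $\Pathenergy_\pm$.

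To this end I would invoke both halves of the $\Gamma$-convergence from Lemma\,\ref{lemma:gamma}, throughout with respect to the weak topology of $\V$. The $\liminf$ inequality applied along the given weakly convergent sequence yields $\Pathenergy_\pm[\widetilde\y]\leq\liminf_{j\to\infty}\Pathenergy_\pm^j[\widetilde\y^j]$. For an arbitrary competitor $\y\in\V$ the recovery-sequence (\ie $\limsup$) half provides some $\y^j\rightharpoonup\y$ with $\limsup_{j\to\infty}\Pathenergy_\pm^j[\y^j]\leq\Pathenergy_\pm[\y]$. Combining these with the minimality $\Pathenergy_\pm^j[\widetilde\y^j]\leq\Pathenergy_\pm^j[\y^j]$, valid for every $j$, gives the chain
\begin{equation*}
\Pathenergy_\pm[\widetilde\y]\leq\liminf_{j\to\infty}\Pathenergy_\pm^j[\widetilde\y^j]\leq\limsup_{j\to\infty}\Pathenergy_\pm^j[\y^j]\leq\Pathenergy_\pm[\y]\,.
\end{equation*}
Since $\y\in\V$ is arbitrary, $\widetilde\y$ minimizes $\Pathenergy_\pm$, and re-adding the constant $\Delta$ shows that $\widetilde\y$ minimizes $\Pathenergy^2[\y_-,\cdot,\y_+]$, as claimed.

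The point I want to stress is that there is no genuine obstacle here: this is precisely the \emph{easy half} of the fundamental theorem of $\Gamma$-convergence, and crucially it does \emph{not} require any equi-coercivity, because the convergence $\widetilde\y^j\rightharpoonup\widetilde\y$ is supplied as an explicit hypothesis rather than extracted from a compactness argument. The only places demanding a little care are bookkeeping ones, namely keeping the weak topology consistent at every step and making sure the $j$-dependent shifts $\Delta^j$ — which are genuinely needed in Lemma\,\ref{lemma:gamma} to isolate the weakly lower semi-continuous quadratic part $\Qenergy$ — are cancelled correctly when passing between the shifted functionals $\Pathenergy_\pm^j,\Pathenergy_\pm$ and the unshifted path energies $\Pathenergy^2$.
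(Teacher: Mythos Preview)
Your proposal is correct and follows exactly the paper's approach: the paper also observes that the constants $\Delta^j,\Delta$ do not affect the minimizer sets, and then simply cites the standard convergence-of-minimizers property of $\Gamma$-convergence (referencing Braides), whereas you spell out that standard chain of inequalities explicitly. There is no substantive difference.
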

\begin{proof}
Note that the sets of minimizers for $\Pathenergy^2[\y_-^j,\cdot,\y_+^j]$ and $\Pathenergy^2[\y_-,\cdot,\y_+]$ coincide with the sets of minimizers for $\Pathenergy^j_\pm$ and $\Pathenergy_\pm$ (from the previous lemma), respectively.
The result on convergence of minimizers is now a standard property (see \cite[Thm.\,1.21]{Br02}) of the $\Gamma$-convergence from the previous lemma.
\end{proof}
\begin{theorem}[Existence of discrete spline interpolations] \label{thm:ExistenceDiscrete}
For $\sigma>0$ and $\energy$ admissible
there exists a discrete spline interpolation $\y^K=(\y_0, \ldots, \y_K)\in\manifold^{K+1}$ of \eqref{eq:ICdiscrete}
under discrete natural, Hermite, or periodic boundary conditions.
\end{theorem}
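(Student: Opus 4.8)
The plan is to establish existence by the direct method in the calculus of variations, closely paralleling the proof of Theorem~\ref{thm:ExistenceContinuous} but working in the finite product $\V^{K+1}$ and replacing the weak continuity of the Christoffel operator (Lemma~\ref{thm:weakcontGamma}) by the constraint convergence result (Lemma~\ref{thm:gamma}). First I would fix one of the boundary conditions and take a minimizing sequence $(\y^{K,j})_{j=1,\ldots}$, $\y^{K,j}=(\y_0^j,\ldots,\y_K^j)$, for $\Splineenergy^{\sigma,K}$ subject to the interpolation constraints \eqref{eq:ICdiscrete} and one of \eqref{eqn:discreteNaturalBC}-\eqref{eqn:discretePeriodicBC}. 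I would first note that for each admissible path the midpoint problem \eqref{eq:constraint} does admit a minimizer, since $\y\mapsto\energy[\y_{k-1},\y]+\energy[\y,\y_{k+1}]$ is coercive and weakly lower semi-continuous on $\V$ (by Definition~\ref{def:admissibleEnergy}\ref{enm:discreteCoercivity} together with the splitting $\energy=\energy^c+\Qenergy(\cdot-\cdot,\cdot-\cdot)$), so each member of the minimizing sequence carries well-defined midpoints $\widetilde\y_k^j$.

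Next I would extract uniform $\V$-bounds. Since $\sigma>0$, a finite energy bound $\Splineenergy^{\sigma,K}[\y^{K,j}]\le M$ forces $\Pathenergy^K[\y^{K,j}]\le M/\sigma$, hence $\energy[\y_{k-1}^j,\y_k^j]\le M/(\sigma K)$ for every $k$; by the coercivity $\energy[\cdot,\cdot]\ge\gamma(\|\cdot-\cdot\|_\V)$ and the fact that $\gamma$ is strictly increasing with $\gamma\to\infty$, this bounds all consecutive differences $\|\y_k^j-\y_{k-1}^j\|_\V$ uniformly. Telescoping from a fixed interpolation point $\y_{Kt_i}^j=\bar\y_i$ then bounds every $\|\y_k^j\|_\V$. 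Likewise $\Splineenergy^K[\y^{K,j}]\le M$ gives $\energy[\y_k^j,\widetilde\y_k^j]\le M/(4K^3)$, so coercivity bounds $\|\y_k^j-\widetilde\y_k^j\|_\V$ and hence the midpoints $\widetilde\y_k^j$ uniformly in $\V$.

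Since $\V$ is a separable Hilbert space, I can then pass to a subsequence along which all $\y_k^j\weakly\y_k$ and all $\widetilde\y_k^j\weakly\widetilde\y_k$ weakly in $\V$. The crux, and the step I expect to be the main obstacle, is to verify that the limiting midpoints remain admissible, i.e.\ that $\widetilde\y_k\in\argmin_\y\Pathenergy^2[\y_{k-1},\y,\y_{k+1}]$, so that the constraint \eqref{eq:constraint} survives the weak limit. This is precisely what Lemma~\ref{thm:gamma} delivers, applied with $\y_-^j=\y_{k-1}^j$ and $\y_+^j=\y_{k+1}^j$: it is here that the structural hypothesis that the top-order part of $\energy$ is the spatially constant quadratic form $\Qenergy$ is indispensable, since it underlies the $\Gamma$-convergence of Lemma~\ref{lemma:gamma} and prevents the midpoint from escaping under mere weak convergence. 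With admissibility of the limit midpoints in hand, weak lower semi-continuity of the energy follows term by term: each summand $\energy[\cdot,\cdot]=\energy^c[\cdot,\cdot]+\Qenergy(\cdot-\cdot,\cdot-\cdot)$ is weakly lower semi-continuous because $\energy^c$ is weakly continuous (Definition~\ref{def:admissibleEnergy}) while $z\mapsto\Qenergy(z,z)$ is convex and strongly continuous, hence weakly lower semi-continuous; summing over $k$ shows that both $\Pathenergy^K$ and $\Splineenergy^K$ are weakly lower semi-continuous along the sequence.

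Finally I would observe that the interpolation constraints \eqref{eq:ICdiscrete} and each of the boundary conditions \eqref{eqn:discreteNaturalBC}-\eqref{eqn:discretePeriodicBC} are linear (or void) and therefore closed under weak convergence, so the weak limit $\y^K=(\y_0,\ldots,\y_K)$ together with its midpoints $(\widetilde\y_k)$ is an admissible configuration. Combining the lower semi-continuity estimate $\Splineenergy^{\sigma,K}[\y^K]\le\liminf_{j\to\infty}\Splineenergy^{\sigma,K}[\y^{K,j}]$ with the minimizing property of the sequence shows that $\y^K$ attains the infimum, and is thus the desired discrete spline interpolation.
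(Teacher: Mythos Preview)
Your proposal is correct and follows essentially the same route as the paper's proof: a direct-method argument in $\V^{K+1}$ using coercivity of $\energy$ (via $\sigma>0$ and the discrete path energy) to bound a minimizing sequence, reflexivity of $\V$ to extract weak limits, Lemma~\ref{thm:gamma} to ensure the midpoint constraints \eqref{eq:constraint} pass to the limit, and weak lower semi-continuity of each summand of $\Splineenergy^{\sigma,K}$ to conclude. The paper additionally makes explicit that the infimum is finite by exhibiting a comparison path and invoking \cite[Thm.\,4.3]{RuWi12b} for existence of the midpoints, which you cover by your observation that the midpoint problem admits a minimizer for every admissible path.
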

\begin{proof}
Let $\big((\y^j_0,\ldots, \y^j_K)\big)_{j=1,\ldots}$ be a minimizing sequence in $\manifold^{K+1}$ satisfying \eqref{eq:ICdiscrete},
and let the corresponding auxiliary variables be given by $(\widetilde{\y}^j_1,\ldots, \widetilde{\y}^j_{\widehat K})_{j=1,\ldots}$.
Now we consider an arbitrary comparison path $(\widehat{\y}_0,\ldots, \widehat{\y}_K)\in\manifold^{K+1}$ with $\hat  \y_{Kt_i} = \bar \y_i$, $i=1,\ldots, I$, and satisfying the boundary conditions.
Thus,  the energy on the minimizing sequence is bounded by
\[F^K[\widehat{\y}_0,\ldots, \widehat{\y}_K]+\sigma E^K[\widehat{\y}_0,\ldots, \widehat{\y}_K]\,,\]
which is finite since corresponding solutions of the constraint problems \eqref{eq:constraint} are known to exist \cite[Thm.\,4.3]{RuWi12b}.
Consequently, $\energy[\y^j_{k-1},\y^j_{k}]$ and $\energy[\y^j_k,\widetilde{\y}^j_{k}]$ are uniformly bounded for all $k$ and $j$.
Hence,  $\|\y^j_k\|_\V$ and $\|\widetilde{\y}^j_k\|_\V$ must be uniformly bounded due to the coercivity of $\energy$.
Due to the reflexivity of $\V$ 
there are subsequences, still denoted $(\y^j_0,\ldots, \y^j_K)$ and $(\widetilde{\y}^j_1,\ldots, \widetilde{\y}^j_{\widehat K})$,
which weakly converge in $\V$ to some $(\y_0,\ldots, \y_K)$ and $(\widetilde{\y}_0,\ldots, \widetilde{\y}_{\widehat K})$.
It is straightforward to see that  the limit path $(\y_0,\ldots, \y_K)$ fulfills the discrete boundary conditions and satisfies the interpolation constraint \eqref{eq:ICdiscrete}.
Furthermore, by Lemma\,\ref{thm:gamma}, $(\widetilde{\y}_0,\ldots, \widetilde{\y}_{\widehat K})$ satisfy \eqref{eq:constraint}.
Finally, by the weak lower semi-continuity of $\energy$ in both arguments, the functionals $\Splineenergy^K$ and $\Pathenergy^K$ are weakly lower semi-continuous  
along the sequences $(\y^j_1,\ldots, \y^j_{K})\rightharpoonup(\y_1,\ldots, \y_{K})$ and $(\widetilde{\y}^j_1,\ldots, \widetilde{\y}^j_{\widehat K})\rightharpoonup(\widetilde{\y}_1,\ldots, \widetilde{\y}_{\widehat K})$ so that we obtain
\begin{equation*}
\Splineenergy^K[\y_0, \ldots, \y_K]+\sigma\Pathenergy^K[\y_0, \ldots, \y_K]
\leq\liminf_{j\to\infty} \Splineenergy^K[\y_0^j, \ldots, \y_K^j]+\sigma\Pathenergy^K[\y_0^j, \ldots, \y_K^j]\,.
\end{equation*}
This proves that $(\y_0, \ldots, \y_K)$ minimizes $\Splineenergy^{\sigma,K}$ under \eqref{eq:ICdiscrete} and the chosen boundary condition.
\end{proof}

\section{$\Gamma$-convergence of the spline energy}\label{sec:gamma}
In this section we prove the $\Gamma$-convergence of the discrete regularized spline energy $\Splineenergy^{\sigma,K}$ 
to the continuous one $\splineenergy^\sigma$ as $K\to\infty$,
which justifies discrete spline interpolation as approximation of continuous spline interpolation.
In order to prove such a result we need to be able to compare $\Splineenergy^{\sigma,K}$ and $\splineenergy^\sigma$ as functionals.
For this reason we use a suitable interpolation to identify discrete with continuous curves 
so that we can rewrite the discrete energy $\Splineenergy^{\sigma,K}$ as a functional on continuous curves.
To this end, unless we consider periodic boundary conditions we define $\eta_{(\y_0,\ldots,\y_K)}:[0,1]\to\V$ as the cubic Hermite interpolation on intervals $[t^{k-\frac12}, t^{k+\frac12}]$ and an affine interpolation on 
$[0,t^{\frac12}]$ and $[t^{K-\frac12}, 1]$ with $t^{k\pm \frac12} = (k\pm \frac12) \tau$ and $\tau = \frac1K$,
\begin{equation*}
\eta_{(\y_0,\ldots,\y_K)}(t)=\begin{cases}
\y_0+(\y_1-\y_0)\frac{t}{\tau}&\text{if }t\in[0,t^{1/2}]\,,\\
\frac{\y_{k-1}+\y_k}2+(\y_k-\y_{k-1})\frac{t-t^{k-1/2}}{\tau}+(\y_{k+1}-2\y_k+\y_{k-1})\frac{(t-t^{k-1/2})^2}{2\tau^2}&\text{if }t\in[t^{k-1/2},t^{k+1/2}]\,,\\
\y_{K-1}+(\y_K-\y_{K-1})\frac{t-t^{K-1}}{\tau}&\text{if }t\in[t^{K-1/2},1]\,.
\end{cases}
\end{equation*}
In case of periodic boundary conditions, we shall instead use the simpler definition
\begin{equation*}\textstyle
\eta_{(\y_0,\ldots,\y_K)}(t)=
\frac{\y_{k-1}+\y_k}2+(\y_k-\y_{k-1})\frac{t-t^{k-1/2}}{\tau}+(\y_{k+1}-2\y_k+\y_{k-1})\frac{(t-t^{k-1/2})^2}{2\tau^2}\quad\text{for }t\in[t^{k-1/2},t^{k+1/2}]
\end{equation*}
with $k=1,\ldots,K$ and the convention $K+1\equiv1$ and $[t^{K-1/2},t^{K+1/2}]\equiv[0,t^{1/2}]\cup[t^{K-1/2},1]$ for notational convenience.
Note that the curve $\eta_{(\y_0,\ldots,\y_K)}$ lies in $C^1([0,1];\V)$.
With this interpolation at hand, the continuous representation of the discrete spline energy for $y\in W^{2,2}((0,1);\V)$ is given by
\begin{equation*}
\splineenergy^{\sigma,K}[\y]=\begin{cases}\Splineenergy^{\sigma,K}[\y_0,\ldots,\y_K]&\text{if }\y=\eta_{(\y_0,\ldots,\y_K) }
\text{ for some }(\y_0,\ldots,\y_K)\in\V^{K+1}\,,\\\infty&\text{else.}\end{cases}
\end{equation*}
\changed{
We will} also sometimes need to pass from a continuous to a discrete curve.
In detail, given $\y\in W^{2,2}((0,1);\V)$ we shall consider the discrete curve $(\y(t^0),\y(t^1),\ldots,\y(t^K))$.
With this discrete curve we also define 
\begin{equation*}
\eta_\y^K=\eta_{(\y(t^0),\ldots,\y(t^K))}\,.
\end{equation*}
In words, $\eta_\y^K$ is obtained from $\y$ by first evaluating $\y$ at regularly spaced points and then smoothly interpolating the midpoints in between.
%
In what follows we will show that 
\begin{itemize}
\item $\eta_\y^K\to\y$ strongly in $W^{2,2}((0,1);\V)$ for $\y\in C^3([0,1];\V)$ (Lemmas\,\ref{thm:weakConvergence} and \ref{thm:strongConvergence}),
\item for $y\in W^{2,2}((0,1);\V)$ there is a constant $\delta>0$ such that $d_K=\max_{k\in\{1,\ldots,K\}}\|\y_k-\y_{k-1}\|_\V<\delta$ with $y_k = y(t^k)$ implies
$$|\splineenergy^\sigma[\eta_\y^K]-\Splineenergy^{\sigma,K}[\y_0,\ldots,\y_K]|\leq f(\|\eta_\y^K\|_{W^{2,2}})/\sqrt K$$
for some increasing function $f$ (Lemmas\,\ref{thm:pathEnergyEstimate} and \ref{thm:splineEnergyEstimate}),
\item $C^3$-smooth curves on $\manifold$ are dense in $W^{2,2}((0,1);\manifold)$ (Lemma\,\ref{thm:dense}).
\end{itemize}
The $\Gamma$-convergence of the discrete against the continuous regularized spline energy will then follow quite automatically in Theorem\,\ref{thm:GammaConvergence}.

\begin{remark}[Alternative choices for $\eta_\y^K$]
Alternatively, one might also define $\eta_\y^K$ as the spline interpolation of $(\y(t^0),\ldots,\y(t^K))$, that is, a possibly non-unique curve which minimizes $\splineenergy^\sigma$ for fixed $\y(t^i)$, $i=0,\ldots,K$.
In that case one automatically has $\splineenergy^\sigma[\y]\geq\splineenergy^\sigma[\eta_\y^K]$, which in the proof of the $\limsup$-inequality would later render Lemma\,\ref{thm:strongConvergence} unnecessary.
With that choice, the $\Gamma$-convergence proof would have to be performed along the lines of the $\Gamma$-convergence proof for the discrete path energy in \cite[Thm.\,4.7]{RuWi12b}.
Note that there the chosen topology was $L^2((0,1);\y)$, but one could just as well choose the energy topology (in case of \cite[Thm.\,4.7]{RuWi12b} the weak $W^{1,2}((0,1);\V)$ topology
and in our case here the weak $W^{2,2}((0,1);\V)$ topology).
\end{remark}

To simplify the exposition, in the following Lemmas\,\ref{thm:weakConvergence}-\ref{thm:dense} and Theorems\,\ref{thm:GammaConvergence}-\ref{thm:equicoercivity} we will not explicitly treat the case of periodic boundary conditions.
The reader can readily assure herself that for periodic boundary conditions all statements and arguments remain true under the obvious modifications.
In particular, the interval $(0,1)$ will everywhere have to be replaced by the circle $S^1$.
\begin{lemma}[Weak convergence of interpolations]\label{thm:weakConvergence}
Let $\y\in W^{2,2}((0,1);\V)$, then $\eta_\y^K\rightharpoonup\y$ weakly in $W^{2,2}((0,1);\V)$.
\end{lemma}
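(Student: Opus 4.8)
The plan is to deduce weak $W^{2,2}$-convergence from two ingredients: a uniform bound $\|\eta_\y^K\|_{W^{2,2}((0,1);\V)}\le C\|\y\|_{W^{2,2}((0,1);\V)}$, and a strong convergence in a weaker topology that pins down the limit. Since $W^{2,2}((0,1);\V)$ is reflexive (it is built on the Hilbert space $\V$), the uniform bound yields weakly convergent subsequences, and it then suffices to show that every such subsequential limit equals $\y$; the full sequence converges weakly to $\y$ by the usual subsequence principle.

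For the uniform bound the only delicate term is the second derivative. On each interior interval $[t^{k-1/2},t^{k+1/2}]$ the interpolant $\eta_\y^K$ is quadratic with constant second derivative $\ddot\eta_\y^K=\tau^{-2}(\y_{k+1}-2\y_k+\y_{k-1})$, where $\y_k=\y(t^k)$, while on the two affine end pieces $\ddot\eta_\y^K=0$. Writing the second difference as the double integral $\y_{k+1}-2\y_k+\y_{k-1}=\int_{t^{k-1}}^{t^k}\int_s^{s+\tau}\ddot\y(r)\,\d r\,\d s$ and applying the Cauchy--Schwarz inequality over this region of measure $\tau^2$ gives $\int_{t^{k-1/2}}^{t^{k+1/2}}\|\ddot\eta_\y^K\|_\V^2\,\d t\le\int_{t^{k-1}}^{t^{k+1}}\|\ddot\y\|_\V^2\,\d r$. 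Summing over $k$ and using that almost every $r$ lies in at most two of the overlapping windows $[t^{k-1},t^{k+1}]$ yields $\|\ddot\eta_\y^K\|_{L^2}^2\le 2\|\ddot\y\|_{L^2}^2$. The analogous (and easier) estimates for $\eta_\y^K$ and $\dot\eta_\y^K$, which are local averages of $\y$ and $\dot\y$ plus lower-order second-difference contributions, give the full bound $\|\eta_\y^K\|_{W^{2,2}}\le C\|\y\|_{W^{2,2}}$.

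To identify the limit I would establish strong convergence $\eta_\y^K\to\y$ in $L^\infty((0,1);\V)$. On each subinterval $\eta_\y^K(t)$ is an affine combination $\sum_i c_i(t)\,\y_{k+i}$ of the three nearest samples whose coefficients $c_{-1}=\tfrac12(1-a)^2$, $c_0=\tfrac12+a-a^2$, $c_1=\tfrac12 a^2$ (with $a=(t-t^{k-1/2})/\tau\in[0,1]$) are nonnegative and sum to one, so $\eta_\y^K(t)-\y(t)=\sum_i c_i(t)(\y_{k+i}-\y(t))$ with each $\|\y_{k+i}-\y(t)\|_\V\le 2\tau\|\dot\y\|_{L^\infty((0,1);\V)}$; the embedding $W^{2,2}((0,1);\V)\hookrightarrow C^{1,1/2}([0,1];\V)$ guarantees $\|\dot\y\|_{L^\infty((0,1);\V)}<\infty$, whence $\|\eta_\y^K-\y\|_{L^\infty((0,1);\V)}\le 2\tau\|\dot\y\|_{L^\infty}\to0$ (the affine end pieces are treated identically). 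Now let $\eta_\y^{K_j}\rightharpoonup z$ be any weakly convergent subsequence in $W^{2,2}$; by the compact embedding $W^{2,2}((0,1);\V)\hookrightarrow C^{1,\alpha}([0,1];\Y)$ recalled in the proof of Lemma\,\ref{thm:energyLSC} it converges to $z$ strongly in $C^0([0,1];\Y)$, while the uniform estimate forces $\eta_\y^{K_j}\to\y$ in $C^0([0,1];\Y)$; since $\V\hookrightarrow\Y$ is injective this gives $z=\y$. As every subsequential limit is $\y$, the entire sequence satisfies $\eta_\y^K\rightharpoonup\y$ in $W^{2,2}((0,1);\V)$.

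The main obstacle is the second-derivative estimate: one must exploit the exact structure of the quadratic interpolant so that $\ddot\eta_\y^K$ is literally a local average of $\ddot\y$, the bounded-overlap summation being precisely what prevents a spurious loss of a factor of $K$. A secondary point to keep in mind is that $\eta_\y^K$ does not interpolate $\y$ exactly at the nodes $t^k$---one computes $\eta_\y^K(t^k)=\y_k+\tfrac18(\y_{k+1}-2\y_k+\y_{k-1})$---but this discrepancy is of the same order as the interpolation error already estimated above and therefore affects neither the bound nor the identification of the limit.
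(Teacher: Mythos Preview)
Your proof is correct. The main difference from the paper lies in how you bound the second-derivative seminorm $|\eta_\y^K|_{W^{2,2}}$. The paper invokes de Boor's classical result that the natural cubic spline through $\y_{k-1},\y_k,\y_{k+1}$ minimises the $W^{2,2}$-seminorm among all interpolants, computes explicitly that this minimum equals $\tfrac{3}{2\tau^3}\|\y_{k-1}-2\y_k+\y_{k+1}\|_\V^2$, and after summing the overlapping windows obtains $|\eta_\y^K|_{W^{2,2}}^2\le\tfrac43|\y|_{W^{2,2}}^2$. Your approach---writing the second difference as $\int_{t^{k-1}}^{t^k}\int_s^{s+\tau}\ddot\y\,\d r\,\d s$ and applying Cauchy--Schwarz together with the bounded-overlap summation---is more elementary, avoids the de Boor lemma altogether, and yields the slightly weaker constant $2$ in place of $\tfrac43$; for the purposes of this lemma the constant is irrelevant. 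For the identification of the weak limit both arguments rest on the same observation that $\eta_\y^K(t)$ is a convex combination of three neighbouring samples; your version makes the uniform convergence in $\V$, with rate $O(\tau)$, more explicit than the paper's bare appeal to pointwise convergence.
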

\begin{proof}
Let us denote by $|\cdot|_{W^{n,2}}$ the $W^{n,2}((0,1);\V)$-seminorm, then
\begin{equation*}
|\eta_\y^K|_{W^{2,2}}^2=K^3\sum_{k=1}^{K-1}\|\y_{k-1}-2\y_k+\y_{k+1}\|_V^2\,.
\end{equation*}
Furthermore, due to the classical result by de Boor \cite{Bo63} the cubic spline $s:[0,2\tau]\to \R^2$ which interpolates $\y_{k-1},\y_k,\y_{k+1}$ at times $0,\tau,2\tau$ with natural boundary conditions
minimizes the $W^{2,2}((0,2\tau);\V)$-seminorm among all interpolating  curves in $W^{2,2}((0,2\tau);\V)$. It is easy to check that this cubic spline is given by 
\begin{equation*}
z(t)=\begin{cases}
\frac{t^3(\y_{k+1}-2\y_k+\y_{k-1})}{4\tau^3}-\frac{t(\y_{k+1}-6\y_k+5\y_{k-1})}{4\tau} + \y_{k-1},&t\in[0,\tau],\\[0.5ex]
-\frac{(t^3-6t^2\tau)(\y_{k+1}-2\y_k+\y_{k-1})}{4\tau^3} - \frac{t(7\y_{k+1}-18\y_k+11\y_{k-1})}{4\tau} + \frac{\y_{k+1}-2\y_k+3\y_{k-1}}{2},&t\in[\tau,2\tau],
\end{cases}
\end{equation*}
and satisfies
\begin{equation*}
|z|_{W^{2,2}}^2=\tfrac3{2\tau^3}\|\y_{k-1}-2\y_k+\y_{k+1}\|_\V^2\,.
\end{equation*}
Thus, the squared $W^{2,2}((t^{k-1},t^{k+1});\V)$-seminorm of $\y$ on any interval $[t^{k-1},t^{k+1}]$ is no smaller than $\tfrac3{2\tau^3}\|\y_{k-1}-2\y_k+\y_{k+1}\|_\V^2$ so that
\begin{equation*}
|\y|_{W^{2,2}}^2\geq\tfrac3{4\tau^3}\sum_{k=1}^{K-1}\|\y_{k-1}-2\y_k+\y_{k+1}\|_\V^2=\tfrac34|\eta_\y^K|_{W^{2,2}}^2\,.
\end{equation*}
Hence, $|\eta_\y^K|_{W^{2,2}}$ is uniformly bounded.
Moreover, it is easily verified that 
\begin{align*}
|\eta_\y^K|_{W^{1,2}}^2
&= \frac{\|\y_1\!-\!\y_0\|_\V^2}{2\tau}+\frac{\|\y_K\!-\!\y_{K-1}\|_\V^2}{2\tau} + \sum_{k=1}^{K-1} \frac1{6\tau} \left(3 \|\y_{k+1}\!-\!\y_{k}\|_\V^2 + 3 \|\y_{k}\!-\!\y_{k-1}\|_\V^2 - \|\y_{k+1}\!-\!2\y_k \!+\! \y_{k-1}\|_\V^2\right)\\
&\leq \frac{C}{\tau} \sum_{k=1}^{K}\|\y_k-\y_{k-1}\|_\V^2\leq C |\y|_{W^{1,2}}^2\,,
\end{align*}
so that $|\eta_\y^K|_{W^{1,2}}$ is uniformly bounded. 
The identity $\eta_\y^K(0)=\y(0)$ thus implies via Poincar\'e's inequality that $\|\eta_\y^K\|_{W^{1,2}}$ and therefore also $\|\eta_\y^K\|_{W^{2,2}}$ is uniformly bounded.
Consequently, every subsequence contains a weakly converging subsequence.
Now one readily verifies that every point $\eta_\y^K(t)$, $t\in[t^{k-1/2},t^{k+1/2}]$, is a convex combination of $\y_{k-1},\y_k,\y_{k+1}$.
As a result, $\eta_\y^K$ converges pointwise against $\y$.
Hence the limit of any weakly converging subsequence must coincide with $\y$,
and since the limit is the same for all subsequences, the whole sequence converges weakly against $\y$.
\end{proof}

In what follows we shall use the short form $|\cdot|$ for $\|\cdot\|_\V$.

\begin{lemma}[Strong convergence of interpolations]\label{thm:strongConvergence}
Let $\y\in C^3([0,1];\V)$, then $\eta_\y^K\to\y$ strongly in $W^{2,2}((0,1);\V)$.
\end{lemma}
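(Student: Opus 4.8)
The plan is to upgrade the weak $W^{2,2}$ convergence of Lemma~\ref{thm:weakConvergence} to strong convergence by estimating the three seminorms of the difference $\eta_\y^K-\y$ directly via Taylor expansion, exploiting that the derivatives $\dot\y,\ddot\y,\dddot\y:[0,1]\to\V$ are bounded and uniformly continuous because $\y\in C^3([0,1];\V)$. Since $W^{2,2}((0,1);\V)$ is a Hilbert space, it suffices to show $\|\eta_\y^K-\y\|_{L^2}\to0$, $|\eta_\y^K-\y|_{W^{1,2}}\to0$, and $|\eta_\y^K-\y|_{W^{2,2}}\to0$ separately.

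The central and hardest piece is the second-derivative seminorm. On each interior interval $[t^{k-1/2},t^{k+1/2}]$ the interpolant is quadratic, so $\ddot\eta_\y^K$ equals the constant $\tau^{-2}(\y_{k+1}-2\y_k+\y_{k-1})$, whereas on the two affine boundary intervals $[0,t^{1/2}]$ and $[t^{K-1/2},1]$ it vanishes. A second-order Taylor expansion of $\y$ about $t^k$ with remainder gives $\tau^{-2}(\y_{k+1}-2\y_k+\y_{k-1})=\ddot\y(t^k)+O(\tau)$ uniformly in $k$, the error being controlled by $\sup|\dddot\y|$; note that only the $C^3$ hypothesis is used, so the error is merely $O(\tau)$, which nonetheless suffices. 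Combining this with $|\ddot\y(t)-\ddot\y(t^k)|\le\tau\sup|\dddot\y|$ for $t$ in the interval yields $\|\ddot\eta_\y^K-\ddot\y\|_{L^\infty((t^{1/2},t^{K-1/2});\V)}=O(\tau)$, hence an $O(\tau^2)$ contribution to the squared seminorm from the interior. On the two boundary intervals, of total length $\tau$, one bounds $\int|\ddot\y|^2\d t=O(\tau)$ since $\ddot\eta_\y^K=0$ there. Adding both contributions gives $|\eta_\y^K-\y|_{W^{2,2}}^2=O(\tau)\to0$.

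The lower-order seminorms are handled analogously but more easily. Expanding $\y$ about the interval midpoint $t^{k-1/2}$ gives $\tau^{-1}(\y_k-\y_{k-1})=\dot\y(t^{k-1/2})+O(\tau^2)$, and inserting the second-difference estimate into the explicit formula for $\dot\eta_\y^K$ shows $|\dot\eta_\y^K(t)-\dot\y(t)|=O(\tau)$ uniformly, so $|\eta_\y^K-\y|_{W^{1,2}}^2=O(\tau^2)\to0$. For the values, the pointwise observation that each $\eta_\y^K(t)$ is a convex combination of $\y_{k-1},\y_k,\y_{k+1}$ agreeing with $\y(t)$ to second order gives $\|\eta_\y^K-\y\|_{L^\infty((0,1);\V)}=O(\tau^2)$, whence $\|\eta_\y^K-\y\|_{L^2}\to0$. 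Summing the three estimates yields $\eta_\y^K\to\y$ strongly in $W^{2,2}((0,1);\V)$.

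I expect the main obstacle to be the bookkeeping around the boundary: because the interpolant is only affine on $[0,t^{1/2}]$ and $[t^{K-1/2},1]$, its second derivative there is wrong (zero rather than $\approx\ddot\y$), so this region contributes at the worse rate $O(\tau)$ instead of $O(\tau^2)$; one must verify that this still vanishes and does not spoil strong convergence. A secondary point requiring care is that all remainder estimates must be taken in the $\V$-norm and be uniform in $k$, which is legitimate precisely because $C^3([0,1];\V)$ supplies uniformly continuous, $\V$-bounded derivatives—one cannot appeal to the compact embedding into $C^{1,\alpha}([0,1];\Y)$ here, since that controls only the weaker $\Y$-topology.
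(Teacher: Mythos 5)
Your proof is correct, and it reaches the conclusion by a more direct route than the paper. The shared core is identical: the Taylor estimate $\tau^{-2}(\y_{k+1}-2\y_k+\y_{k-1})=\ddot\y(t^k)+O(\tau\|\y\|_{C^3})$ on the interior intervals, together with the observation that the two affine end intervals, of total length $\tau$, contribute only $O(\tau)$ because $\ddot\eta_\y^K$ vanishes there. Where you differ is in how this estimate is converted into strong convergence. The paper keeps the weak convergence of Lemma~\ref{thm:weakConvergence} and only shows convergence of the seminorms, $|\eta_\y^K|_{W^{2,2}}\to|\y|_{W^{2,2}}$, by bounding $\bigl||\y|_{W^{2,2}}^2-|\eta_\y^K|_{W^{2,2}}^2\bigr|$ via the factorization $\|a\|^2-\|b\|^2=\langle a-b,a+b\rangle$; strong convergence then follows from the Radon--Riesz property of the Hilbert space $L^2((0,1);\V)$ applied to the second derivatives. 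You instead bound $\|\eta_\y^K-\y\|_{W^{2,2}}$ directly, seminorm by seminorm. Your version is self-contained (it does not actually need Lemma~\ref{thm:weakConvergence} as an input), it treats the lower-order terms explicitly rather than delegating them to the weak-convergence statement, and it yields a quantitative rate $\|\eta_\y^K-\y\|_{W^{2,2}}=O(K^{-1/2})$, dominated, as you correctly anticipate, by the boundary intervals where the interpolant is merely affine. The price is somewhat more bookkeeping; the paper's route is shorter on the page. Your closing remark that all remainders must be measured in the $\V$-norm using only the $C^3([0,1];\V)$ hypothesis, and that the compact embedding into $C^{1,\alpha}([0,1];\Y)$ is of no help here, is exactly the right point of care.
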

\begin{proof}
From the previous lemma we already have weak convergence, so it remains to show that $|\eta_\y^K|_{W^{2,2}}\to|\y|_{W^{2,2}}$ as $K\to\infty$.
However, using Taylor expansion we obtain
\begin{align*}
\left||\y|_{W^{2,2}}^2\!-\!|\eta_\y^K|_{W^{2,2}}^2\right|
&=\left|\sum_{k=1}^K\int_{t^{k-1}}^{t^k}|\ddot\y|^2\d t
-\sum_{k=1}^{K-1}\int_{t^{k-1}}^{t^k}K^4|\y(t^{k+1})-2\y(t^k)+\y(t^{k-1})|^2\d t\right|\\
&\leq\tau\|\y\|_{C^2}^2+\!\sum_{k=1}^{K-1}\int_{t^{k-1}}^{t^k}\left|\ddot\y-\frac{\y(t^{k+1})\!-\!2\y(t^k)\!+\!\y(t^{k-1})}{\tau^2}\right|\left|\ddot\y+\frac{\y(t^{k+1})\!-\!2\y(t^k)\!+\!\y(t^{k-1})}{\tau^2}\right|\d t\\
&\leq\tau\|\y\|_{C^2}^2+\left(2\|\y\|_{C^2}+C\tau\|\y\|_{C^3}\right)\sum_{k=1}^{K-1}\int_{t^{k-1}}^{t^k}\left|\ddot\y-\frac{\y(t^{k+1})\!-\!2\y(t^k)\!+\!\y(t^{k-1})}{\tau^2}\right| \d t\\
&\leq\tau\|\y\|_{C^2}^2+\left(2\|\y\|_{C^2}+C\tau\|\y\|_{C^3}\right) C \tau \|\y\|_{C^3}
\mathop{\longrightarrow}_{K\to\infty}0
\end{align*}
for some constant $C<\infty$, which implies the strong convergence.
\end{proof}

\begin{lemma}[Path energy estimate]\label{thm:pathEnergyEstimate}
Setting $\y_k=\y(t^k)$, $k=0,\ldots,K$, if $d_K=\max_{k\in\{1,\ldots,K\}}|\y_k-\y_{k-1}|<\varepsilon\,$ for $\,\varepsilon$ from Definition\,\ref{def:admissibleEnergy},
then we have $$|\pathenergy[\eta_\y^K]-\Pathenergy^K[\y_0,\ldots,\y_K]|\leq C\frac{\|\eta_\y^K\|_{W^{2,2}}^3+\|\eta_\y^K\|_{W^{2,2}}^2}{\sqrt K}\,,$$
where the constant $C>0$ only depends on the metric $g$ and the function $\energy$.
\end{lemma}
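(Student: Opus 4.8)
The key structural observation is that the interpolation $\eta:=\eta_\y^K$ is aligned with the discrete data exactly at the cell midpoints. Since $t^{k-1/2}=(k-\tfrac12)\tau$ is the midpoint of the cell $[t^{k-1},t^k]$ (with $\tau=\tfrac1K$), the definition of $\eta$ gives $\eta(t^{k-1/2})=\tfrac12(\y_{k-1}+\y_k)=:m_k$ and $\dot\eta(t^{k-1/2})=v_k$, where $v_k:=\tfrac1\tau(\y_k-\y_{k-1})$. Writing $\phi(t):=g_{\eta(t)}(\dot\eta(t),\dot\eta(t))$ for the integrand of $\pathenergy[\eta]$, this means $\phi(t^{k-1/2})=g_{m_k}(v_k,v_k)=\tfrac1{\tau^2}g_{m_k}(\y_k-\y_{k-1},\y_k-\y_{k-1})$. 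The plan is therefore to (i) replace each summand $\energy[\y_{k-1},\y_k]$ by $\tau^2\phi(t^{k-1/2})$ up to a cubic remainder, so that $\Pathenergy^K$ becomes precisely the midpoint-rule quadrature $\tau\sum_{k=1}^K\phi(t^{k-1/2})$ of $\pathenergy[\eta]=\int_0^1\phi\,\d t$, and then (ii) estimate that quadrature error.

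For step (i), Definition\,\ref{def:admissibleEnergy}\ref{enm:discreteApprox} (applicable since $d_K<\varepsilon$) together with the standard second-order expansion of the squared Riemannian distance about the midpoint gives $\energy[\y_{k-1},\y_k]=g_{m_k}(\y_k-\y_{k-1},\y_k-\y_{k-1})+R_k=\tau^2\phi(t^{k-1/2})+R_k$ with $|R_k|\le C\|\y_k-\y_{k-1}\|_\V^3=C\tau^3\|v_k\|_\V^3$; here I use that $\dist(\y_{k-1},\y_k)$ and $\|\y_k-\y_{k-1}\|_\V$ are comparable by the coercivity \ref{enm:coercivity} and the $\V$-boundedness of $g$. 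To sum the remainder I note that $W^{2,2}((0,1);\V)\hookrightarrow C^1([0,1];\V)$ yields $\|\dot\eta\|_{L^\infty}\le C\|\eta\|_{W^{2,2}}$, hence $d_K=\tau\max_k\|v_k\|_\V\le C\tau\|\eta\|_{W^{2,2}}$, while the seminorm bound established in the proof of Lemma\,\ref{thm:weakConvergence} gives $\tau\sum_k\|v_k\|_\V^2\le C\|\eta\|_{W^{2,2}}^2$. Consequently $K\sum_k|R_k|\le C\tau^2\sum_k\|v_k\|_\V^3\le C\,d_K\bigl(\tau\sum_k\|v_k\|_\V^2\bigr)\le C\|\eta\|_{W^{2,2}}^3/K$.

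It remains to bound the midpoint-quadrature error $|\int_0^1\phi\,\d t-\tau\sum_{k=1}^K\phi(t^{k-1/2})|$. Because $\eta\in C^1$ with $\dot\eta$ piecewise affine and $\ddot\eta$ piecewise constant, and because $g^c$ is $C^1$ in its base point, $\phi$ is Lipschitz on each cell with $\dot\phi=(D_\y g^c_\eta)(\dot\eta)(\dot\eta,\dot\eta)+2g_\eta(\ddot\eta,\dot\eta)\in L^1$ (the spatially constant part $\Qenergy$ contributes no base-point term). The elementary midpoint estimate $|\int_{t^{k-1}}^{t^k}\phi\,\d t-\tau\phi(t^{k-1/2})|\le\tfrac\tau2\int_{t^{k-1}}^{t^k}|\dot\phi|\,\d t$, summed over $k$, bounds the quadrature error by $\tfrac\tau2\int_0^1|\dot\phi|\,\d t$. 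Estimating $\int_0^1|\dot\phi|\,\d t\le\|D_\y g^c\|\,\|\dot\eta\|_{L^\infty}\|\dot\eta\|_{L^2}^2+C\|\ddot\eta\|_{L^2}\|\dot\eta\|_{L^2}\le C(\|\eta\|_{W^{2,2}}^3+\|\eta\|_{W^{2,2}}^2)$ (Cauchy--Schwarz on the second term, the metric bounds from Definition\,\ref{def:admissibleMetric} throughout) then yields $|\pathenergy[\eta]-\Pathenergy^K[\y_0,\ldots,\y_K]|\le C(\|\eta\|_{W^{2,2}}^3+\|\eta\|_{W^{2,2}}^2)/K$, which is even stronger than the claimed rate $1/\sqrt K$.

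The two genuinely delicate points are the uniform control of the cubic remainder---which hinges on the $L^\infty$ velocity bound from the Sobolev embedding together with the hypothesis $d_K<\varepsilon$---and the fact that $\phi$ is merely $W^{1,1}$ rather than $C^1$, since $\ddot\eta$ jumps across cell boundaries, so the midpoint error must be taken in the Sobolev form $\tfrac\tau2\int|\dot\phi|$. The exact midpoint alignment $\eta(t^{k-1/2})=m_k$, $\dot\eta(t^{k-1/2})=v_k$ is what turns the reduced discrete energy into a bona fide quadrature of $\pathenergy[\eta]$ and removes any need to freeze the metric's base point separately.
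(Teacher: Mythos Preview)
Your argument is correct and in fact yields the sharper rate $O(1/K)$ rather than the paper's $O(1/\sqrt K)$. The approach, however, is genuinely different from the paper's.

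The paper expands $\energy[\y_{k-1},\y_k]$ around the left endpoint $\y_{k-1}$, writes the continuous integrand with base point $\y_{k-1}$ and velocity $v_k$, and then controls the mismatch $|\dot\eta(t)-v_k|$ pointwise via the embedding $W^{2,2}\hookrightarrow C^{1,1/2}$, obtaining only $|\dot\eta(t)-v_k|\le\|\eta\|_{C^{1,1/2}}\tau^{1/2}$; this H\"older exponent is precisely the source of the $1/\sqrt K$. You instead expand around the midpoint $m_k$ and exploit the exact identities $\eta(t^{k-1/2})=m_k$, $\dot\eta(t^{k-1/2})=v_k$, so that after step~(i) the discrete energy becomes the midpoint rule for $\int_0^1\phi\,\d t$. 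The midpoint-rule remainder $\tfrac\tau2\int_0^1|\dot\phi|\,\d t$ then only needs $\dot\phi\in L^1$, which follows from $\ddot\eta\in L^2$ and $\dot\eta\in L^\infty$ via Cauchy--Schwarz; no pointwise control of $\ddot\eta$ is required. Thus your argument trades the coarse $C^{0,1/2}$ estimate on the velocity for the finer structural information that $\dot\eta$ is piecewise affine with $L^2$ second derivative, and this buys a full power of $\tau$.

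Two minor points: the bound $\tau\sum_k\|v_k\|_\V^2\le C\|\eta\|_{W^{2,2}}^2$ is indeed contained in the explicit computation of $|\eta|_{W^{1,2}}^2$ in the proof of Lemma\,\ref{thm:weakConvergence} (one reads off both directions from that formula), and the midpoint expansion $\dist^2(\y_{k-1},\y_k)=g_{m_k}(\y_k-\y_{k-1},\y_k-\y_{k-1})+O(|\y_k-\y_{k-1}|^3)$ differs from the paper's left-endpoint version only by a term $(g_{m_k}-g_{\y_{k-1}})(\y_k-\y_{k-1},\y_k-\y_{k-1})=O(|\y_k-\y_{k-1}|^3)$ coming from the Lipschitz bound on $g^c$.
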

\begin{proof}
Note that the extension of the metric $g$ and the energy $\energy$ onto $\V$ actually allows to interpret all of $\V$ as a Riemannian manifold
so that $\pathenergy[\eta_\y^K]$ and $\Pathenergy^K[\y_0,\ldots,\y_K]$ are well-defined even if $\eta_\y^K([0,1]),\{\y_0,\ldots,\y_K\}\not\subset\manifold$ and we obtain
\begin{align*}
\pathenergy[\eta_\y^K]&-\Pathenergy^K[\y_0,\ldots,\y_K]
=\sum_{k=1}^K\left(\int_{t^{k-1}}^{t^k}\metric_{\eta_\y^K}(\dot\eta_\y^K,\dot\eta_\y^K)\d t-K\energy(\y_{k-1},\y_k)\right)\\
&=\sum_{k=1}^K\left(\int_{t^{k-1}}^{t^k}\metric_{\eta_\y^K}(\dot\eta_\y^K,\dot\eta_\y^K)\d t-K\dist^2(\y_{k-1},\y_k)+KO(|\y_k-\y_{k-1}|^3)\right)\\
&=\sum_{k=1}^K\left(\int_{t^{k-1}}^{t^k}\metric_{\eta_\y^K}(\dot\eta_\y^K,\dot\eta_\y^K)\d t-K\metric_{\y_{k-1}}(\y_k-\y_{k-1},\y_k-\y_{k-1})+KO(|\y_k-\y_{k-1}|^3)\right)\\
&=\sum_{k=1}^K\left(\int_{t^{k-1}}^{t^k}(\metric_{\eta_\y^K}\!-\!\metric_{\y_{k-1}})\left(\frac{\y_k\!-\!\y_{k-1}}{\tau},\frac{\y_k\!-\!\y_{k-1}}{\tau}\right)\d t+O\big( \tau^\frac32 \|\eta_\y^K\|^2_{W^{2,2}}
\big)+KO(|\y_k\!-\!\y_{k-1}|^3)\right)\,,
\end{align*}
where in the last line we used $|\frac{\y_{k}-\y_{k-1}}{\tau}-\dot\eta_\y^K(t)|\leq|\frac{\y_{k}-\y_{k-1}}{\tau}-\dot\eta_\y^K(t^{k-1/2})|+\|\eta_\y^K\|_{C^{1,\frac12}}\tau^{1/2}=\|\eta_\y^K\|_{C^{1,\frac12}}\tau^{1/2}\leq O(\|\eta_\y^K\|_{W^{2,2}}\tau^{1/2})$.
This also implies $|\y_k-\y_{k-1}|\leq|\dot\eta_\y^K(t^{k-1/2})|\tau+O(\|\eta_\y^K\|_{W^{2,2}}\tau^{3/2})=O(\|\eta_\y^K\|_{W^{2,2}}\tau)$ so that we obtain
\begin{align*}
\pathenergy[\eta_\y^K]-\Pathenergy^K[\y_0,\ldots,\y_K]
&=\sum_{k=1}^K\left(\int_{t^{k-1}}^{t^k}\int_0^1 (D_\y\metric)_{s \y_{k-1} + (1-s)\eta_\y^K(t)}\d s \,(\eta_\y^K(t)-\y_{k-1})\left(\frac{\y_k-\y_{k-1}}{\tau},\frac{\y_k-\y_{k-1}}{\tau}\right)\d t\right)\\
& \quad +O(\|\eta_\y^K\|_{W^{2,2}}^3\tau+\|\eta_\y^K\|^2_{W^{2,2}}
\sqrt\tau)\\
&=O(\|\eta_\y^K\|_{W^{2,2}}^3\tau+\|\eta_\y^K\|_{W^{2,2}}^2\sqrt\tau)\,,
\end{align*}
where we used the boundedness of the metric derivative.
\end{proof}

\begin{lemma}[Spline energy estimate]\label{thm:splineEnergyEstimate}
For $\y:[0,1]\to\V$ set $\y_k=\y(t^k)$, $k=0,\ldots,K$.
If $d_K=\max_{k\in\{1,\ldots,K\}}|\y_k-\y_{k-1}|$ is small enough and $\|\eta_\y^K\|_{L^\infty}$ is bounded uniformly in $K$, then
$$|\splineenergy^\sigma[\eta_\y^K]-\Splineenergy^{\sigma,K}[\y_0,\ldots,\y_K]|\leq f(\|\eta_\y^K\|_{W^{2,2}})/\sqrt K$$
for some increasing function $f$ which only depends on the metric $g$, the function $\energy$, and the bound on $\|\eta_\y^K\|_{L^\infty}$.
\end{lemma}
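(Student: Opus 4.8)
The plan is to split $\splineenergy^\sigma=\splineenergy+\sigma\pathenergy$ and $\Splineenergy^{\sigma,K}=\Splineenergy^K+\sigma\Pathenergy^K$ and to treat the two pieces separately. For the path-energy part, Lemma~\ref{thm:pathEnergyEstimate} already gives $|\sigma\pathenergy[\eta_\y^K]-\sigma\Pathenergy^K[\y_0,\ldots,\y_K]|\le C\sigma(\|\eta_\y^K\|_{W^{2,2}}^3+\|\eta_\y^K\|_{W^{2,2}}^2)/\sqrt K$, which is already of the asserted form. Hence it remains to bound $|\splineenergy[\eta_\y^K]-\Splineenergy^K[\y_0,\ldots,\y_K]|$, and I will argue that this spline-specific difference is of higher order in $\tau=1/K$, so that the rate $1/\sqrt K$ is in fact inherited from the regularizing path energy. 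Throughout I abbreviate $\eta=\eta_\y^K$, $\Delta_k=\y_{k+1}-2\y_k+\y_{k-1}$ and $\delta_k=\tfrac12(\y_{k+1}-\y_{k-1})$, and record the elementary identities $\ddot\eta\equiv\Delta_k/\tau^2$ on $I_k:=[t^{k-1/2},t^{k+1/2}]$ (which has $t^k$ as its midpoint), $\dot\eta(t^k)=\delta_k/\tau$, and $\eta(t^k)=\y_k+\tfrac18\Delta_k$. On the two outer affine pieces $\ddot\eta=0$, so they contribute only $O(\tau)$ to $\splineenergy[\eta]$ and may be ignored.

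The first and decisive step is to make the heuristic of Remark~\ref{remark:motivation} quantitative at the minimal regularity available here. Using that $\energy^c$ is four times continuously differentiable (Definition~\ref{def:admissibleEnergy}), with derivative bounds that are uniform on the ball fixed by the assumed bound on $\|\eta\|_{L^\infty}$, I will Taylor-expand the optimality condition for the discrete midpoint $\widetilde\y_k\in\argmin_\y(\energy[\y_{k-1},\y]+\energy[\y,\y_{k+1}])$ about $\y_k$. Together with $\energy[\y_k,\widetilde\y_k]=g_{\y_k}(\widetilde\y_k-\y_k,\widetilde\y_k-\y_k)+O(|\widetilde\y_k-\y_k|^3)$, which combines Definition~\ref{def:admissibleEnergy}\ref{enm:discreteApprox} with the expansion of the squared distance in Remark~\ref{remark:motivation}, and the discrete-geodesic equidistribution estimates of \cite{RuWi12b}, this yields
\begin{equation*}
\widetilde\y_k-\y_k=\tfrac{\tau^2}{2}\,w(t^k)+e_k,\qquad w:=\cov\dot\eta=\ddot\eta+\Gamma_{\eta}(\dot\eta,\dot\eta),
\end{equation*}
with a remainder $e_k$ of higher order than $\tfrac{\tau^2}{2}w(t^k)$. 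The point is then to control all resulting remainder sums by $\|\eta\|_{W^{2,2}}$ alone; since $\ddot\eta$ is only piecewise constant, the pointwise factors $|\ddot\eta_k|$ appearing in the estimates must be tamed through the inverse inequality $\|\ddot\eta\|_{L^\infty}\le\tau^{-1/2}\|\ddot\eta\|_{L^2}$, valid because $\ddot\eta$ takes the constant value $\Delta_k/\tau^2$ on each interval of length $\tau$. In this way, e.g., the cubic contribution $4K^3\sum_k|\widetilde\y_k-\y_k|^3=O(\tau^3\sum_k|\ddot\eta_k|^3)$ becomes $O(\tau^{3/2}\|\eta\|_{W^{2,2}}^3)$, and the cross term $4K^{-1}\sum_k g_{\y_k}(w(t^k),e_k)$ is handled analogously.

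It then remains to recognize $\Splineenergy^K=4K^3\sum_k\energy[\y_k,\widetilde\y_k]=\tau\sum_k g_{\y_k}(w(t^k),w(t^k))+(\text{higher order})$ as a midpoint quadrature for $\splineenergy[\eta]=\sum_k\int_{I_k}g_{\eta(t)}(w(t),w(t))\d t$. I will compare the two by (i) freezing the metric, exploiting that only $g^c$ is position dependent with bounded derivative (Definition~\ref{def:admissibleMetric}\ref{enm:compactness}) while $|\eta(t)-\y_k|_\Y\le C\|\eta\|_{W^{2,2}}\tau$ on $I_k$, the position discrepancy $\eta(t^k)-\y_k=\tfrac18\Delta_k$ being absorbed by the same inverse estimate; and (ii) replacing $w(t)$ by $w(t^k)$, noting that the parts of the integrand odd in $(t-t^k)$ cancel over the symmetric interval $I_k$, so that only genuinely second-order remainders survive. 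Each contribution reduces to a sum $\tau^a\sum_k|\ddot\eta_k|^b$ (times $L^\infty$-bounded powers of $\dot\eta$ and $\eta$) and is estimated by $\sum_k\tau|\ddot\eta_k|^2\le\|\eta\|_{W^{2,2}}^2$ and the inverse estimate, giving a total of order $\tau$. Collecting the path-energy bound of order $\tau^{1/2}$ with the spline-specific bound of order at most $\tau^{1/2}$ (in fact higher) then produces the claim, with an increasing $f$ depending only on $g$, $\energy$, and the bound on $\|\eta\|_{L^\infty}$.

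The main obstacle is the first step, namely establishing $\widetilde\y_k-\y_k=\tfrac{\tau^2}{2}w(t^k)+e_k$ with an $e_k$ whose accumulated effect is controlled purely in terms of $\|\eta\|_{W^{2,2}}$. The discrete covariant-derivative consistency of \cite{RuWi12b} is formulated for $C^3$ curves, whereas the interpolant $\eta_\y^K$ is only piecewise quadratic; the smooth-curve Taylor arguments must therefore be replayed interval by interval, and the pointwise derivative norms that naturally arise must be converted back into $L^2$-quantities via the inverse estimate. Verifying that this conversion costs no more than the $\tau^{1/2}$ already expended on the path energy — so that no spatially nonconstant, nonquadratic term of $\energy$ spoils the rate — is the crux of the argument.
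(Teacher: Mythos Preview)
Your approach is essentially that of the paper: split off the path energy (handled by Lemma~\ref{thm:pathEnergyEstimate}), then establish $\widetilde\y_k-\y_k=\tfrac{\tau^2}{2}\cov\dot\eta(t^k)+e_k$ via a second-order Taylor expansion of the optimality condition for $\widetilde\y_k$, and finally compare $\tau\sum_k g(w(t^k),w(t^k))$ with $\splineenergy[\eta]$ while summing the remainders using only $\|\eta\|_{W^{2,2}}$. The paper centres the Taylor expansion at $\widetilde\y_k$ rather than $\y_k$ (so that the diagonal identities $\energy_{,22}=-\energy_{,21}=2g$ and $2\energy_{,212}+\energy_{,222}-\energy_{,221}=-(\energy_{,112}+\energy_{,221})$ directly produce the Christoffel term), and it does not exploit midpoint symmetry in the quadrature step; it simply bounds $|\eta(t)-\widetilde\y_k|$, $|\dot\eta(t)-\tfrac1\tau(\widetilde\y_k-\y_{k\pm1})|$ and the like, then sums using $\sum_k\tau|\ddot\eta_k|^2=|\eta|_{W^{2,2}}^2$ and $\sum_k\tau|\ddot\eta_k|\le|\eta|_{W^{2,2}}$, which is equivalent to your inverse-inequality device.

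One correction: the spline-specific difference is \emph{not} of higher order than $\sqrt\tau$. The midpoint estimate $\widetilde\y_k=\tfrac12(\y_{k-1}+\y_{k+1})+O(d_K^{3/2})$ from \cite[Lemma~5.5]{RuWi12b} injects a term of size $d_K^{3/2}/\tau\le\|\eta\|_{W^{2,2}}^{3/2}\sqrt\tau$ into $e_k/\tau^2$, and after pairing with $|\cov\dot\eta|$ and summing this produces a genuine $\sqrt\tau$ contribution---so the rate is not ``inherited from the regularizing path energy'' alone. (Also, the cross term is $4K\sum_k g_{\y_k}(w(t^k),e_k)$, not $4K^{-1}$.) This does not affect your conclusion, since $\sqrt\tau$ is precisely what is claimed.
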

\begin{proof} 
The estimate for the path energy follows from the previous lemma, only the estimate for the spline energy remains to be shown.

In the following estimates, the $O$-notation stands for a term whose constant only involves bounds of the (higher) derivatives of $\metric$.
Using that $\metric_{\widetilde{\y}_k} = \frac12 \energy_{,11}[\widetilde{\y}_k,\widetilde{\y}_k] 
=  \frac12 \energy_{,22}[\widetilde{\y}_k,\widetilde{\y}_k]= - \frac12 \energy_{,21}[\widetilde{\y}_k,\widetilde{\y}_k]$ (see \cite[Lemma 4.6]{RuWi12b}; an index $i$ after a comma shall denote differentiation with respect to the $i$\textsuperscript{th} argument)
and thus also $D_\y\metric_{\widetilde{\y}_k} = \frac12 (\energy_{,221}[\widetilde{\y}_k,\widetilde{\y}_k] +\energy_{,222}[\widetilde{\y}_k,\widetilde{\y}_k] )$
we get 
\begin{align}
& \metric_{\eta_\y^K}(\tfrac{D}{\d t}\dot\eta_\y^K,\psi)
=\metric_{\eta_\y^K}(\ddot\eta_\y^K,\psi)+\metric_{\eta_\y^K}(\Gamma_{\eta_\y^K}(\dot\eta_\y^K,\dot\eta_\y^K),\psi)\nonumber\\
&=\metric_{\eta_\y^K}(\ddot\eta_\y^K,\psi)+D_\y\metric_{\eta_\y^K}(\dot\eta_\y^K)(\dot\eta_\y^K,\psi)-\tfrac12D_\y\metric_{\eta_\y^K}(\psi)(\dot\eta_\y^K,\dot\eta_\y^K)\nonumber\\
&=\metric_{\widetilde{\y}_k}(\ddot\eta_\y^K,\psi)+O(|\widetilde{\y}_k-\eta_\y^K||\ddot\eta_\y^K||\psi|)+D_\y\metric_{\widetilde{\y}_k}(\dot\eta_\y^K)(\dot\eta_\y^K,\psi)\nonumber\\
& \quad -\tfrac12D_\y\metric_{\widetilde{\y}_k}(\psi)(\dot\eta_\y^K,\dot\eta_\y^K)+O(|\widetilde{\y}_k-\eta_\y^K||\dot\eta_\y^K|^2|\psi|)\nonumber\\
&=\tfrac12\energy_{,22}[\widetilde{\y}_k,\widetilde{\y}_k](\ddot\eta_\y^K,\psi)+\tfrac12\{\energy_{,221}+\energy_{,222}\}[\widetilde{\y}_k,\widetilde{\y}_k](\dot\eta_\y^K,\psi,\dot\eta_\y^K)\nonumber\\
&\quad-\tfrac14\{\energy_{,221}+\energy_{,222}\}[\widetilde{\y}_k,\widetilde{\y}_k](\dot\eta_\y^K,\dot\eta_\y^K,\psi)+O(|\eta_\y^K-\widetilde{\y}_k|(|\ddot\eta_\y^K|+|\dot\eta_\y^K|^2)|\psi|)\nonumber\\
&=\tfrac12\energy_{,22}[\widetilde{\y}_k,\widetilde{\y}_k](\ddot\eta_\y^K,\psi)-\tfrac14\{\energy_{,112}\!+\!\energy_{,221}\}[\widetilde{\y}_k,\widetilde{\y}_k](\dot\eta_\y^K,\dot\eta_\y^K,\psi)
+O(|\eta_\y^K\!-\!\widetilde{\y}_k|(|\ddot\eta_\y^K|\!+\!|\dot\eta_\y^K|^2))|\psi|\,. \label{eq:first}
\end{align}
In the last line we first used
$\energy_{,221}[\widetilde{\y}_k,\widetilde{\y}_k](\dot\eta_\y^K,\psi,\dot\eta_\y^K) = \energy_{,212}[\widetilde{\y}_k,\widetilde{\y}_k](\dot\eta_\y^K,\dot\eta_\y^K,\psi)$
due to Schwarz' theorem and then the identity
$$
2 \energy_{,212} +  \energy_{,222} -  \energy_{,221}  =   - 2 \energy_{,112} +  \energy_{,112} -  \energy_{,221}  = -(\energy_{,112} +  \energy_{,221} )
$$
at $(\widetilde{\y}_k,\widetilde{\y}_k)$ (cf.\ the transformations following \cite[(6.16)]{RuWi12b}). 
Next we use the necessary optimality conditions for $\widetilde{\y}_k$,
\begin{equation*}
\label{ELtildeyk}
0=\energy_{,2}[\y_{k-1},\widetilde{\y}_k](\psi) + \energy_{,1}[\widetilde{\y}_k,\y_{k+1}](\psi)\,.
\end{equation*}
Applying second order Taylor expansion and exploiting $\energy_{,1}[\widetilde{\y}_k,\widetilde{\y}_k]=\energy_{,2}[\widetilde{\y}_k,\widetilde{\y}_k]=0$ (see \cite[Lemma 4.6]{RuWi12b}) as well as Schwarz' theorem and the above-mentioned identities between the second derivatives of $\energy$ we obtain 
\begin{align}
0&=   \energy_{,21}[\widetilde{\y}_k,\widetilde{\y}_k](\psi,\y_{k-1}-\widetilde{\y}_k)+\energy_{,12}[\widetilde{\y}_k,\widetilde{\y}_k](\psi,\y_{k+1}-\widetilde{\y}_k)\nonumber\\
& \quad  +\tfrac12\left(\energy_{,211}[\widetilde{\y}_k,\widetilde{\y}_k](\psi,\widetilde{\y}_k\!-\!\y_{k-1},\widetilde{\y}_k\!-\!\y_{k-1})\!+\!\energy_{,122}[\widetilde{\y}_k,\widetilde{\y}_k](\psi,\y_{k+1}\!-\!\widetilde{\y}_k,\y_{k+1}\!-\!\widetilde{\y}_k)\right)\! +\! \mathcal{R}(\y_{k-1},\y_{k+1},\widetilde{\y}_k,\psi)\nonumber\\
& =  -\energy_{,11}[\widetilde{\y}_k,\widetilde{\y}_k](\psi,\y_{k-1}-\widetilde{\y}_k)-\energy_{,22}[\widetilde{\y}_k,\widetilde{\y}_k](\psi,\y_{k+1}-\widetilde{\y}_k)\nonumber\\
& \quad  +\tfrac12\left(\energy_{,112}[\widetilde{\y}_k,\widetilde{\y}_k](\widetilde{\y}_k\!-\!\y_{k-1},\widetilde{\y}_k\!-\!\y_{k-1},\psi)\!+\!\energy_{,221}[\widetilde{\y}_k,\widetilde{\y}_k](\y_{k+1}\!-\!\widetilde{\y}_k,\y_{k+1}\!-\!\widetilde{\y}_k,\psi)\right)\nonumber\\
&\quad+\mathcal{R}(\y_{k-1},\y_{k+1},\widetilde{\y}_k,\psi)\,, \label{eq:second}
\end{align}
where the remainder of the Taylor expansion satisfies $\mathcal{R}(\y_{k-1},\y_{k+1},\widetilde{\y}_k,\psi) \leq  C(|\y_{k-1}-\widetilde{\y}_k|^3+|\y_{k-1}-\widetilde{\y}_k|^3) |\psi|$
(cf.\ \cite[(6.19)]{RuWi12b} with the replacement of $\y_k^c$ by $\widetilde{\y}_k$ and $\y_k+\xi_k$ by $\y_{k+1}$).

In passing, let us remark here that the alternative choice $\widetilde{\y}_k=\frac{\y_{k-1}+\y_{k+1}}2$ instead of \eqref{eq:constraint} would violate the above equation and in general produces error terms no smaller than $O(|\y_{k+1}-\y_{k-1}|^2|\psi|)$,
which is too large to obtain a consistent approximation of the spline energy.

The constant $C$ depends on fourth derivatives of $\energy$ along the line segments  $[\y_{k-1},\widetilde{\y}_k]$ and $[\y_{k+1},\widetilde{\y}_k]$.
Since $\widetilde{\y}_k$ approximates $\frac{\y_{k-1}+\y_{k+1}}2$ for $d_K$ small enough \cite[Lemma\,5.5]{RuWi12b},
we have $C\leq\|\energy\|_{C^4(B\times B)}$ for $B\subset\V$ a ball around the origin with radius $2\|\eta_\y^K\|_{L^\infty}\leq2\|\eta_\y^K\|_{W^{2,2}}$,
so $C$ is an increasing function of $\|\eta_\y^K\|_{W^{2,2}}$.
Subtracting $K^2$ times \eqref{eq:second} from twice \eqref{eq:first} yields 
(after replacing $\energy_{,11}[\widetilde{\y}_k,\widetilde{\y}_k]=\energy_{,22}[\widetilde{\y}_k,\widetilde{\y}_k]=2\metric_{\widetilde{\y}_k}$)
\begin{align}
2\metric_{\eta_\y^K}(\tfrac{D}{\d t}\dot\eta_\y^K,\psi)
=&2\metric_{\widetilde{\y}_k}(\ddot\eta_\y^K-\tfrac{\y_{k+1}-2\widetilde{\y}_k+\y_{k-1}}{\tau^2},\psi)\nonumber\\
&-\tfrac12\big(\energy_{,112}[\widetilde{\y}_k,\widetilde{\y}_k](\dot\eta_\y^K,\dot\eta_\y^K,\psi)-\energy_{,112}[\widetilde{\y}_k,\widetilde{\y}_k](\tfrac{\widetilde{\y}_k-\y_{k-1}}\tau,\tfrac{\widetilde{\y}_k-\y_{k-1}}\tau,\psi)\nonumber\\
&\qquad+\energy_{,221}[\widetilde{\y}_k,\widetilde{\y}_k](\dot\eta_\y^K,\dot\eta_\y^K,\psi)-\energy_{,221}[\widetilde{\y}_k,\widetilde{\y}_k](\tfrac{\y_{k+1}-\widetilde{\y}_k}\tau,\tfrac{\y_{k+1}-\widetilde{\y}_k}\tau,\psi)\big)\nonumber\\
&+O\left(|\eta_\y^K-\widetilde{\y}_k|(|\ddot\eta_\y^K|+|\dot\eta_\y^K|^2)+\tfrac{|\y_{k-1}-\widetilde{\y}_k|^3}{\tau^2}+\tfrac{|\y_{k+1}-\widetilde{\y}_k|^3}{\tau^2}\right)|\psi|\nonumber\\
=&2\metric_{\widetilde{\y}_k}(2\tfrac{\widetilde{\y}_k-\y_k}{\tau^2},\psi) 
-\tfrac12 \energy_{,112}[\widetilde{\y}_k,\widetilde{\y}_k](\dot\eta_\y^K+\tfrac{\widetilde{\y}_k-\y_{k-1}}\tau,\dot\eta_\y^K-\tfrac{\widetilde{\y}_k-\y_{k-1}}\tau,\psi) \nonumber\\
& -\tfrac12 \energy_{,221}[\widetilde{\y}_k,\widetilde{\y}_k](\dot\eta_\y^K+\tfrac{\y_{k+1}-\widetilde{\y}_k}\tau,\dot\eta_\y^K-\tfrac{\y_{k+1}-\widetilde{\y}_k}\tau,\psi)\big)\nonumber\\
&+O\left(|\eta_\y^K-\widetilde{\y}_k|(|\ddot\eta_\y^K|+|\dot\eta_\y^K|^2)+\tfrac{|\y_{k-1}-\widetilde{\y}_k|^3}{\tau^2}+\tfrac{|\y_{k+1}-\widetilde{\y}_k|^3}{\tau^2}\right) |\psi|\nonumber\\
=&2\metric_{\eta_\y^K}(2\tfrac{\widetilde{\y}_k-\y_k}{\tau^2},\psi)\nonumber\\
&+O\Big(|\eta_\y^K-\widetilde{\y}_k|\big(|\ddot\eta_\y^K|+|\dot\eta_\y^K|^2+\tfrac{|\widetilde{\y}_k-\y_k|}{\tau^2}\big)+\tfrac{|\y_{k-1}-\widetilde{\y}_k|^3}{\tau^2}+\tfrac{|\y_{k+1}-\widetilde{\y}_k|^3}{\tau^2}\nonumber\\
&\; \; \qquad+\left(|\dot\eta_\y^K|+\tfrac{|\y_{k+1}-\widetilde{\y}_k|}\tau+\tfrac{|\y_{k-1}-\widetilde{\y}_k|}\tau\right)\left(|\dot\eta_\y^K-\tfrac{\widetilde{\y}_k-\y_{k-1}}\tau|+|\dot\eta_\y^K-\tfrac{\y_{k+1}-\widetilde{\y}_k}\tau|\right)\Big)|\psi|\,,
\label{eqn:covDerivEstimate}
\end{align}
where we used the fact $h(a,a,\psi)-h(b,b,\psi)=h(a+b,a-b,\psi)$ for any trilinear $h$ which is symmetric in its first two arguments.
Let us estimate the different components of the remainder terms.
Without writing down the dependence on $t$ explicitly, we consider $t\in[t^{k-1/2},t^{k+1/2}]$ throughout.
Note that $$d_K=\max_{k\in\{1,\ldots,K\}}|\tau\dot\eta_\y^K(t^{k-1/2})| \leq\tau\|\dot\eta_\y^K\|_{L^\infty}\leq\tau\|\eta_\y^K\|_{W^{2,2}}\,.$$
Using that $d_K^{3/2}\leq d_K$ for $d_K$ small enough, $|\dot\eta_\y^K|\leq\|\eta_\y^K\|_{C^{1,1/2}}\leq\|\eta_\y^K\|_{W^{2,2}}$,
and that $\widetilde{\y}_k=\tfrac{\y_{k+1}+\y_{k-1}}2+O(d_K^{3/2})$ by \cite[Lemma\,5.5]{RuWi12b} for $d_K$ small enough,
we obtain the different estimates
\begin{align*}
|\eta_\y^K-\y_k|
&=\left|\frac{\y_k-\y_{k-1}}\tau(t-t^{k-1/2})+\frac{\y_{k-1}-2\y_k+\y_{k+1}}{\tau^2}\frac{(t-t^{k-1/2})^2}2+O(d_K)\right|\\
&\leq\left|\frac{\y_k-\y_{k-1}}\tau\right|\tau+\left|\frac{\y_{k-1}-\y_k}\tau+\frac{\y_{k+1}-\y_k}\tau\right|\frac\tau2+O(d_K)
=O(d_K)\leq O(\tau\|\eta_\y^K\|_{W^{2,2}})\,,\\
|\eta_\y^K-\widetilde{\y}_k|
&\leq|\eta_\y^K-\y_k| + |\y_k-\tfrac{\y_{k+1}+\y_{k-1}}2  | + |\tfrac{\y_{k+1}+\y_{k-1}}2- \widetilde{\y}_k| \leq O(d_K)+O(d_K)+O(d_K^{3/2})\\
&=O(d_K)\leq O(\tau\|\eta_\y^K\|_{W^{2,2}})\,,\\
|\y_{k+1}-\widetilde{\y}_k|&=|\tfrac{\y_{k-1}-\y_{k+1}}2+O(d_K^{3/2})|\leq O(d_K)\leq O(\tau\|\eta_\y^K\|_{W^{2,2}})\,,\\
|\y_{k-1}-\widetilde{\y}_k|&=|\tfrac{\y_{k+1}-\y_{k-1}}2+O(d_K^{3/2})|\leq O(d_K)\leq O(\tau\|\eta_\y^K\|_{W^{2,2}})\,,\\
|\dot\eta_\y^K-\tfrac{\widetilde{\y}_k-\y_{k-1}}\tau|
&=\left|\tfrac{\y_k-\y_{k-1}}\tau+\tfrac{\y_{k+1}-2\y_k+\y_{k-1}}{\tau^2}(t-t^{k-1/2})-\tfrac{(\y_{k+1}+\y_{k-1})/2-\y_{k-1}}\tau\right|+O(d_K^{3/2}/\tau)\\
&=\left|\tfrac{\y_{k+1}-2\y_k+\y_{k-1}}{\tau^2}\right|\cdot|t-t^{k-1/2}-\tfrac\tau2|+O(\|\eta_\y^K\|_{W^{2,2}}^{3/2}\sqrt\tau)
=O(|\ddot\eta_\y^K|\tau+\|\eta_\y^K\|_{W^{2,2}}^{3/2}\sqrt\tau)\,.
\end{align*}
In the last estimate we used that $\ddot\eta_\y^K$ is constant on $[t^{k-1/2},t^{k+1/2}]$. Analogously, we get  
$$|\dot\eta_\y^K-\tfrac{\y_{k+1}-\widetilde{\y}_k}\tau|=O(|\ddot\eta_\y^K|\tau+\|\eta_\y^K\|_{W^{2,2}}^{3/2}\sqrt\tau)\,.$$
Since \eqref{eqn:covDerivEstimate} holds for all $\psi\in\V$, the above estimates for $\tau\leq1$ imply 
\begin{equation*}
\tfrac{D}{\d t}\dot\eta_\y^K=2\tfrac{\widetilde{\y}_k-\y_k}{\tau^2}+O\left(\tilde f(\|\eta_\y^K\|_{W^{2,2}})\sqrt\tau+\tau\|\eta_\y^K\|_{W^{2,2}}(|\ddot\eta_\y^K|+\tfrac{|\widetilde{\y}_k-\y_k|}{\tau^2})\right)
\end{equation*}
for some increasing function $\tilde f$.
This implies $\tfrac{|\widetilde{\y}_k-\y_k|}{\tau^2}=O(|\tfrac{D}{\d t}\dot\eta_\y^K|+\tilde f(\|\eta_\y^K\|_{W^{2,2}})\sqrt\tau+\tau\|\eta_\y^K\|_{W^{2,2}}|\ddot\eta_\y^K|)$ for any $t\in[t^{k-1/2},t^{k+1/2}]$ if $\tau$ is small enough depending on $\|\eta_\y^K\|_{W^{2,2}}$.
Furthermore, from the definition of $\tfrac{D}{\d t}\dot\eta_\y^K$ it follows that $$|\tfrac{D}{\d t}\dot\eta_\y^K|=|\ddot\eta_\y^K|+O(|\dot\eta_\y^K|^2)=|\ddot\eta_\y^K|+O(\|\eta_\y^K\|_{W^{2,2}}^2)$$ so that altogether
\begin{equation*}
\tfrac{D}{\d t}\dot\eta_\y^K=2\tfrac{\widetilde{\y}_k-\y_k}{\tau^2}+O\left(\tilde f(\|\eta_\y^K\|_{W^{2,2}})\sqrt\tau+\tau\|\eta_\y^K\|_{W^{2,2}}|\ddot\eta_\y^K|\right)\,,
\end{equation*}
potentially after adapting $\tilde f$.
Therefore, using once again that $\ddot\eta_\y^K$ is constant on $[t^{k-1/2},t^{k+1/2}]$ we achieve
\begin{align*}
\sum_{k=1}^{K-1}\int_{t^{k-1/2}}^{t^{k+1/2}}\metric_{\eta_\y^K}(\tfrac{D}{\d t}\dot\eta_\y^K,\tfrac{D}{\d t}\dot\eta_\y^K)\d t
=&\sum_{k=1}^{K-1}\int_{t^{k-1/2}}^{t^{k+1/2}} 
\metric_{\eta_\y^K}\left(2 \tfrac{\widetilde{\y}_k-\y_k}{\tau^2},2 \tfrac{\widetilde{\y}_k-\y_k}{\tau^2}\right)  
+ 2 \metric_{\eta_\y^K}\left(\tfrac{D}{\d t}\dot\eta_\y^K- 2\tfrac{\widetilde{\y}_k-\y_k}{\tau^2},\tfrac{D}{\d t}\dot\eta_\y^K\right) \\
& \qquad \qquad \qquad \; - \metric_{\eta_\y^K}\left(\tfrac{D}{\d t}\dot\eta_\y^K - 2 \tfrac{\widetilde{\y}_k-\y_k}{\tau^2},\tfrac{D}{\d t}\dot\eta_\y^K - 2 \tfrac{\widetilde{\y}_k-\y_k}{\tau^2}\right)\d t\\
=&\sum_{k=1}^{K-1}\tfrac4{\tau^4}\int_{t^{k-1/2}}^{t^{k+1/2}}\metric_{\eta_\y^K}(\widetilde{\y}_k-\y_k,\widetilde{\y}_k-\y_k)\d t\\
&+\sum_{k=1}^{K-1}O\Big(\left[\tilde f(\|\eta_\y^K\|_{W^{2,2}})\sqrt\tau+\tau\|\eta_\y^K\|_{W^{2,2}}|\ddot\eta_\y^K(t^k)|\right]\int_{t^{k-1/2}}^{t^{k+1/2}}|\tfrac{D}{\d t}\dot\eta_\y^K|\d t\\
& \qquad \qquad \;\; + \left[\tilde f(\|\eta_\y^K\|_{W^{2,2}})\sqrt\tau+\tau\|\eta_\y^K\|_{W^{2,2}}|\ddot\eta_\y^K(t^k)|\right]^2 \tau \Big)\,.
\end{align*}
Simple Taylor expansion now yields
\begin{align*}
\energy[\y_k,\widetilde{\y}_k]
&=\energy[\y_k,\y_k]+\energy_{,2}[\y_k,\y_k](\widetilde{\y}_k-\y_k)+\tfrac12\energy_{,22}[\y_k,\y_k](\widetilde{\y}_k-\y_k,\widetilde{\y}_k-\y_k)+O(|\widetilde{\y}_k-\y_k|^3)\\
&=\metric_{\y_k}(\widetilde{\y}_k-\y_k,\widetilde{\y}_k-\y_k)+O(|\widetilde{\y}_k-\y_k|^3)\\
&=\metric_{\eta_\y^K}(\y_k-\widetilde{\y}_k,\y_k-\widetilde{\y}_k)+O(|\widetilde{\y}_k-\y_k|^3+|\eta_\y^K-\y_k||\widetilde{\y}_k-\y_k|^2)\\
&=\metric_{\eta_\y^K}(\y_k-\widetilde{\y}_k,\y_k-\widetilde{\y}_k)+O((|\widetilde{\y}_k-\eta_\y^K|+|\eta_\y^K-\y_k|)|\widetilde{\y}_k-\y_k|^2)\\
&=\metric_{\eta_\y^K}(\y_k-\widetilde{\y}_k,\y_k-\widetilde{\y}_k)+O\left(\tfrac{|\widetilde{\y}_k-\y_k|^2}{\tau^4}\|\eta_\y^K\|_{W^{2,2}}\tau^5\right)\\
&=\metric_{\eta_\y^K}(\y_k-\widetilde{\y}_k,\y_k-\widetilde{\y}_k)+O((|\ddot\eta_\y^K|+\|\eta_\y^K\|_{W^{2,2}}^2)^2\|\eta_\y^K\|_{W^{2,2}}\tau^5)\,,
\end{align*}
where the constants only depend on the third derivative of $\energy$ and the first derivative of $\metric$. Here, 
we used that $\tfrac{|\widetilde{\y}_k-\y_k|}{\tau^2}=|\tfrac{D}{\d t}\dot\eta_\y^K|+O\left(\tilde f(\|\eta_\y^K\|_{W^{2,2}})\sqrt\tau+\tau\|\eta_\y^K\|_{W^{2,2}}|\ddot\eta_\y^K|\right)=O(|\ddot\eta_\y^K|+\|\eta_\y^K\|_{W^{2,2}}^2)$ for sufficiently small $\tau$ depending on $\|\eta_\y^K\|_{W^{2,2}}$.
Finally, we obtain
\begin{align*}
&\int_{t^{1/2}}^{t^{K-1/2}}\metric_{\eta_\y^K}(\tfrac{D}{\d t}\dot\eta_\y^K,\tfrac{D}{\d t}\dot\eta_\y^K)\d t\\
&=\frac{4}{\tau^3}\sum_{k=1}^{K-1}\energy[\y_k,\widetilde{\y}_k]
+\sum_{k=1}^{K-1}O\Big(
\left[\tilde f(\|\eta_\y^K\|_{W^{2,2}})\sqrt\tau+\tau\|\eta_\y^K\|_{W^{2,2}}|\ddot\eta_\y^K|\right]\textstyle\int_{t^{k-1/2}}^{t^{k+1/2}}|\tfrac{D}{\d t}\dot\eta_\y^K|\d t\Big)\\
&\qquad \qquad\qquad\qquad\qquad\qquad\quad\;+ \left[\tilde f(\|\eta_\y^K\|_{W^{2,2}})\sqrt\tau+\tau\|\eta_\y^K\|_{W^{2,2}}|\ddot\eta_\y^K|\right]^2 \tau 
+\tau^2\|\eta_\y^K\|_{W^{2,2}}(|\ddot\eta_\y^K|+\|\eta_\y^K\|_{W^{2,2}}^2)^2\Big)\\
&=\frac{4}{\tau^3}\sum_{k=1}^{K-1}\energy[\y_k,\widetilde{\y}_k]+\sum_{k=1}^{K-1}O\Big(\left(\tilde f(\|\eta_\y^K\|_{W^{2,2}})\sqrt\tau
+\tau\|\eta_\y^K\|_{W^{2,2}}|\ddot\eta_\y^K|\right)\tau(|\ddot\eta_\y^K|+\|\eta_\y^K\|_{W^{2,2}}^2)\\
&\qquad\qquad\qquad\qquad\qquad\qquad\;\; + \tilde f^2(\|\eta_\y^K\|_{W^{2,2}})\tau^2 +\tau^2\|\eta_\y^K\|_{W^{2,2}}(|\ddot\eta_\y^K|+\|\eta_\y^K\|_{W^{2,2}}^2)^2\Big)\\
&=\frac{4}{\tau^3}\sum_{k=1}^{K-1}\energy[\y_k,\widetilde{\y}_k]+\sum_{k=1}^{K-1}O\Big(\hat f(\|\eta_\y^K\|_{W^{2,2}})(\tau^{3/2}+\tau^{3/2}|\ddot\eta_\y^K|+\tau^2|\ddot\eta_\y^K|^2)\Big)
\end{align*}
for some increasing function $\hat f$.
Noting $\sum_{k=1}^{K-1}\tau|\ddot\eta_\y^K|^2=|\eta_\y^K|_{W^{2,2}}^2$
and $\sum_{k=1}^{K-1}\tau|\ddot\eta_\y^K|=|\eta_\y^K|_{W^{2,1}}\leq|\eta_\y^K|_{W^{2,2}}$ we arrive at
\begin{equation*}
\int_{t^{1/2}}^{t^{K-1/2}}\metric_{\eta_\y^K}(\tfrac{D}{\d t}\dot\eta_\y^K,\tfrac{D}{\d t}\dot\eta_\y^K)\d t
=\frac{4}{\tau^3}\sum_{k=1}^{K-1}\energy[\y_k,\widetilde{\y}_k]+f(\|\eta_\y^K\|_{W^{2,2}})\sqrt\tau
\end{equation*}
for some increasing function $f$.
Finally, it is straightforward to show
$$
\int_{[0,t^{1/2}]\cup[t^{K-1/2},1]}\metric_{\eta_\y^K}(\tfrac{D}{\d t}\dot\eta_\y^K,\tfrac{D}{\d t}\dot\eta_\y^K)\d t \leq
C (\|\eta_\y^K\|^2_{W^{2,2}})\tau\,.\qedhere
$$
\end{proof}

\begin{lemma}[Density of smooth curves]\label{thm:dense}
The set $C^3([0,1];\manifold)$ is dense in $W^{2,2}((0,1);\V)$.
Likewise, the set of curves in $C^3([0,1];\manifold)$ satisfying \eqref{eq:IC} is dense among the curves in $W^{2,2}((0,1);\manifold)$ satisfying \eqref{eq:IC}.
Finally, the set 
$$
C_H=\{\y\in C^3([0,1];\manifold)\,|\,\text{\eqref{eq:IC} and }\exists\delta>0:\y(t)=
\bar\y_1+tv_0\text{ on }(0,\delta)\text{ and }\y(t)=\bar\y_I-tv_1\text{ on }(1-\delta,1)\}
$$ for given $v_0\in T_{\bar\y_1}\manifold,v_1\in T_{\bar\y_I}\manifold$
is dense in $\{\y\in W^{2,2}((0,1);\manifold)\,|\,\y(0)=\bar\y_1,\y(1)=\bar\y_I,\dot\y(0)=v_0,\dot\y(1)=v_1\}$.
\end{lemma}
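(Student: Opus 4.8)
The plan is to exploit the fact that, by Definition~\ref{def:admissibleManifold}, the manifold coincides \emph{as a set} with the Hilbert space, $\manifold\equiv\V$, so that curves into $\manifold$ are nothing but $\V$-valued curves and every value automatically lies in $\manifold$. Hence no charts are needed, and all three claims reduce to classical density statements for the Bochner--Sobolev space $W^{2,2}((0,1);\V)$, refined so as to respect the constraints. Throughout I would fix a symmetric scalar mollifier $\rho_\epsilon$; since convolution in time with a scalar kernel commutes with the $\V$-structure, $\rho_\epsilon*\y$ is again $\V$-valued, lies in $C^\infty([0,1];\V)\subset C^3([0,1];\manifold)$, and satisfies $\rho_\epsilon*\y\to\y$ in $W^{2,2}$ after an affine extension of $\y$ across the endpoints. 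This settles the first, unconstrained claim.

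For the second claim I would first mollify as above to obtain $C^3$ curves $\y_\epsilon\to\y$ in $W^{2,2}$, which however destroy the pointwise conditions \eqref{eq:IC}. I would then correct them: fix smooth scalar bump functions $\phi_1,\dots,\phi_I$ with $\phi_i(t_j)=\delta_{ij}$ and replace $\y_\epsilon$ by $\y_\epsilon+\sum_{i=1}^I\phi_i\,(\bar\y_i-\y_\epsilon(t_i))$. Because $W^{2,2}((0,1);\V)\hookrightarrow C^{0}([0,1];\V)$ continuously, the defects $\bar\y_i-\y_\epsilon(t_i)\to0$ in $\V$, so the correction tends to $0$ in $W^{2,2}$ while restoring \eqref{eq:IC} exactly; the corrected curve is again in $C^3$ and takes values in $\manifold=\V$.

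The genuinely delicate point is the third claim, where in addition to \eqref{eq:IC} I must produce curves that are \emph{affine near the endpoints} with exactly the prescribed value and slope. Here a naive multiplicative cutoff fails: forcing $\y$ to equal $a_0(t):=\bar\y_1+tv_0$ on a shrinking neighbourhood $(0,\delta)$ via a cutoff $\lambda_\delta$ produces second-derivative terms $\lambda_\delta''\,(\y-a_0)$ and $\lambda_\delta'\,(\dot\y-v_0)$ whose $L^2$-norms are only $O(1)$ --- this is precisely the critical scaling of $W^{2,2}$ against $C^{1,\frac12}$. The remedy is to \emph{not} localise the transition. Writing $r=\y-a_0$, the embedding $W^{2,2}\hookrightarrow C^{1,\frac12}$ gives $|\dot r(t)|\le Ct^{1/2}$ and $|r(t)|\le Ct^{3/2}$, so at $t=\delta$ the value and slope mismatches are only $O(\delta^{3/2})$ and $O(\delta^{1/2})$, and $\|r\|_{W^{2,2}(0,\delta)}\to0$. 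I would therefore set $\hat\y=a_0$ on $(0,\delta)$ and glue it to $\y$ by a cubic Hermite correction $b$ spread over a \emph{fixed}-length interval $(\delta,\delta+L)$, matching $-r(\delta),-\dot r(\delta)$ at $\delta$ and vanishing to first order at $\delta+L$ (so that $\hat\y\in C^1\cap W^{2,2}$ globally). Its second derivative is $O(\delta^{1/2}/L)$, whence its squared $W^{2,2}$-norm is $O(\delta/L)\to0$; combined with $\|r\|_{W^{2,2}(0,\delta)}\to0$ this yields $\hat\y\to\y$ in $W^{2,2}$, with $\hat\y$ affine near $0$ and (by the symmetric construction) near $1$, still satisfying \eqref{eq:IC} as soon as $L$ is chosen smaller than the distance of the endpoints to the nearest interior interpolation node.

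Finally I would smooth $\hat\y$ while preserving these features: since $\hat\y$ is affine near the endpoints it extends affinely across them, so mollification leaves it affine on a slightly smaller endpoint neighbourhood and, by symmetry of $\rho_\epsilon$ (which reproduces affine functions exactly), preserves $\y(0)=\bar\y_1,\dot\y(0)=v_0$ and likewise at $t=1$; a final interior bump-correction as in the second claim, with bumps supported away from the endpoints, restores \eqref{eq:IC} at the cost of a $W^{2,2}$-error tending to $0$. The resulting $C^\infty\subset C^3$ curve then lies in $C_H$ and approximates $\y$. I expect the affine-near-endpoint modification of the third claim to be the main obstacle, the interpolation corrections being routine consequences of the continuous embedding into $C^0$.
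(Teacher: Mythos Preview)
Your proposal is correct and takes a genuinely different route from the paper. The paper approximates $\y$ by the cubic spline interpolation $z_\y^K$ through $\y(t^0),\ldots,\y(t^K)$ in the finite-dimensional span $\V^K$, invokes de~Boor's minimality result to get $|z_\y^K|_{W^{2,2}}\le|\y|_{W^{2,2}}$, deduces weak and then strong $W^{2,2}$-convergence, and finally mollifies inside $\V^K$; for the constrained statements it simply says the splines are required to pass through the $\bar\y_i$ and, for $C_H$, are ``adapted to just start and end with a sufficiently small linear segment''. You instead use direct time-mollification of the $\V$-valued curve (after affine extension across the endpoints), followed by a Lagrange-type bump correction $\sum_i\phi_i(\bar\y_i-\y_\epsilon(t_i))$ to restore \eqref{eq:IC}. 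For the third claim you give an explicit endpoint flattening that exploits $W^{2,2}\hookrightarrow C^{1,1/2}$ to control the value and slope mismatch at $t=\delta$ by $O(\delta^{3/2})$ and $O(\delta^{1/2})$, then spreads the Hermite blend over a \emph{fixed} interval of length $L$ so that its $W^{2,2}$-cost is $O(\delta/L)\to0$; this is precisely the delicate point the paper handles only in one sentence. Your approach is more self-contained (no appeal to de~Boor) and makes the endpoint construction fully transparent; the paper's approach is shorter and stays within the spline framework already used elsewhere. One small remark: as literally stated, the target set in the third claim of the lemma omits the interior interpolation constraints of \eqref{eq:IC}, which would make the density claim false since $W^{2,2}$-convergence forces $C^0$-convergence; your argument (correctly) treats the target curves as satisfying all of \eqref{eq:IC}, which is also what is needed in the subsequent $\Gamma$-convergence proof.
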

\begin{proof}
First we show that piecewise cubic polynomials are dense in $W^{2,2}((0,1);\V)$.
To this end let us consider  a curve $\y\in W^{2,2}((0,1);\V)$ and define $z_\y^K$ as the cubic spline interpolation of $\y(t^0),\ldots,\y(t^K)$ with $t^k=\frac kK$.
In more detail, if without loss of generality we assume $\y(t^0)=0$ and we set $\V^K=\mathrm{span}(\y(t^1),\ldots,\y(t^K))$,
then $z_\y^K$ is defined as the standard cubic spline through $\y(t^0),\ldots,\y(t^K)$ in the finite-dimensional linear space $\V^K$.
By the already mentioned result by de Boor \cite{Bo63}, $z_\y^K$ minimizes $\splineenergy[z]=\int_0^1|\ddot z(t)|^2\d t$ among all interpolating curves $z:(0,1)\to\V^K$
and therefore also among all interpolating curves $z:(0,1)\to\V$.
Therefore, $|z_\y^K|_{W^{2,2}}\leq|\y|_{W^{2,2}}$ for all $K$.
Since $z_\y^K(0)=\y(0)$ and $z_\y^K(1)=\y(1)$ are fixed we may apply Poincar\'e's inequality to derive that the norms $\|z_\y^K\|_{W^{2,2}}$ are bounded uniformly in $K$.
Thus, from any subsequence for $K\to\infty$ we can extract a weakly converging further subsequence, which must converge to $\y$ since it also converges pointwise to $\y$.
Therefore, $z_\y^K\rightharpoonup\y$ weakly in $W^{2,2}((0,1);\V)$.
The strong convergence $z_\y^K\to\y$ now follows from $|\y|_{W^{2,2}}\leq\liminf_{K\to\infty}|z_\y^K|_{W^{2,2}}\leq|\y|_{W^{2,2}}$,
where the first inequality represents the weak lower semi-continuity of the seminorm.

Next we imply that smooth curves are dense in $W^{2,2}((0,1);\V)$.
Indeed, by standard mollifying arguments for curves in finite-dimensional spaces, any $z_\y^K$ can be smoothed
to yield a function $\tilde z_\y^K\in C^\infty([0,1];\V^K)\subset C^\infty([0,1];\V)$ with $\|\tilde z_\y^K- z_\y^K\|_{W^{2,2}}<\frac1K$
so that $\tilde z_\y^K\to\y$ in $W^{2,2}((0,1);\V)$.

The argument can readily be adapted to the case in which the interpolation condition \eqref{eq:IC} has to be satisfied in addition.
Indeed, the cubic splines are now simply required to also interpolate the points $\bar\y_i$ at times $t_i$, $i=1,\ldots,I$,
and the curve mollification in the finite-dimensional space $\V^K$ is performed such that it does not violate \eqref{eq:IC}.
Similar modifications can be performed to achieve the desired density of the set $C_H$;
indeed, now the cubic interpolations are adapted to just start and end with a sufficiently small linear segment $\bar\y_1+tv_0$ and $\bar\y_I-tv_1$,
and the mollification again is performed so as to still keep a short linear segment near both curve ends.
\end{proof}

We are finally in the position to prove the $\Gamma$-convergence of the discrete spline energy to the continuous one.
At this point we shall also take account of the constraint that the continuous and discrete curve be in $\manifold$
and satisfy the interpolation condition \eqref{eq:IC} as well as one of the boundary conditions \eqref{eqn:naturalBC}-\eqref{eqn:periodicBC}
(\eqref{eq:ICdiscrete} and one of \eqref{eqn:discreteNaturalBC}-\eqref{eqn:discretePeriodicBC} for the discrete curve).
To this end we introduce the indicator functions of the corresponding constraints as
\begin{align*}
\mathcal I[\y]&=\begin{cases}
0&\text{if }\y\text{ satisfies \eqref{eq:IC} and the chosen boundary condition,}\\
\infty&\text{else,}
\end{cases}\\
\mathbf I^K[\y_0,\ldots,\y_K]&=\begin{cases}
0&\text{if }(\y_0,\ldots,\y_K)\text{ satisfies \eqref{eq:ICdiscrete} and the chosen boundary condition,}\\
\infty&\text{else,}
\end{cases}\\
\mathcal I^K[\y]&=\begin{cases}
\mathbf I^K[\y_0,\ldots,\y_K]&\text{if }\y=\eta_{(\y_0,\ldots,\y_K)}\text{ for some }(\y_0,\ldots,\y_K)\in\V^{K+1}\,,\\
\infty&\text{else.}
\end{cases}
\end{align*}
With regard to our convention for \eqref{eq:ICdiscrete} concerning the compatibility of the interpolation times $t_i$ and the number $K$ of discrete points along a discrete curve,
we shall in the following and without explicit mention always interpret $K\to\infty$ as a sequence of integers approaching infinity such that $Kt_i$ is integer for $i=1,\ldots,I$.

\begin{theorem}[$\Gamma$-limit of discrete regularized spline energy]\label{thm:GammaConvergence}
Let $(\manifold,g)$ as well as $\energy$ be admissible and $\sigma>0$.
With respect to weak convergence in $W^{2,2}((0,1);\V)$
we have $\Gammalim_{K\to\infty}\splineenergy^{\sigma,K}+\mathcal I^K=\splineenergy^\sigma+\mathcal I$.
\end{theorem}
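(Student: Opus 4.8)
The plan is to verify the two defining $\Gamma$-convergence inequalities with respect to weak convergence in $W^{2,2}((0,1);\V)$: the $\liminf$-inequality for every weakly convergent sequence, and the existence of a recovery sequence attaining the energy in the $\limsup$. The workhorse throughout is the consistency estimate of Lemma~\ref{thm:splineEnergyEstimate} (with Lemma~\ref{thm:pathEnergyEstimate} for the path part), which is already proven; the remaining work is to feed the right curves into it and to pass the interpolation constraint \eqref{eq:IC}/\eqref{eq:ICdiscrete} and the boundary conditions to the limit, i.e.\ to handle the indicators $\mathcal I$, $\mathcal I^K$, $\mathcal I$. Following the stated convention I treat only the natural and Hermite cases, the periodic one being identical after replacing $(0,1)$ by $S^1$. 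A recurring elementary fact I will use is that at a node one has $\eta_{(\y_0,\ldots,\y_K)}(t^k)=\y_k+\tfrac18(\y_{k+1}-2\y_k+\y_{k-1})$ and on $[t^{k-1/2},t^{k+1/2}]$ the first difference satisfies $\y_k-\y_{k-1}=\tau\dot\eta_{(\y_0,\ldots,\y_K)}(t^{k-1/2})$.

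For the $\liminf$-inequality I would take $\y^K\rightharpoonup\y$ and assume $\liminf_K(\splineenergy^{\sigma,K}[\y^K]+\mathcal I^K[\y^K])<\infty$, otherwise there is nothing to prove. Passing to a subsequence realizing the $\liminf$ with finite values forces $\y^K=\eta_{(\y_0^K,\ldots,\y_K^K)}$ with the tuple obeying \eqref{eq:ICdiscrete} and the discrete boundary condition. From Lemma~\ref{thm:weakConvergence}, $|\eta_{(\y_0^K,\ldots,\y_K^K)}|_{W^{2,2}}^2=K^3\sum_k\|\y_{k+1}^K-2\y_k^K+\y_{k-1}^K\|_\V^2$, so the uniform $W^{2,2}$-bound gives $\|\y_{k+1}^K-2\y_k^K+\y_{k-1}^K\|_\V=O(K^{-3/2})$; together with the uniform $C^{1,\frac12}([0,1];\V)$-bound this yields $\y^K(t_i)=\bar\y_i+O(K^{-3/2})\to\bar\y_i$ in $\V$ and, since pointwise evaluation is weakly continuous, $\y(t_i)=\bar\y_i$; in the Hermite case the exact discrete condition $K(\y_1^K-\y_0^K)=\dot\y^K(t^{1/2})=v_0$ and the uniform convergence $\dot\y^K\to\dot\y$ in $C^0([0,1];\Y)$ give $\dot\y(0)=v_0$ (and analogously at $t=1$), so $\mathcal I[\y]=0$. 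For the energy I apply Lemma~\ref{thm:splineEnergyEstimate} to any curve whose nodal samples are the $\y_k^K$ (its statement depends on the curve only through these samples and through $\|\eta_\y^K\|_{W^{2,2}}=\|\y^K\|_{W^{2,2}}$); its hypotheses hold because $d_K=\max_k\tau\|\dot\y^K(t^{k-1/2})\|_\V\to0$ and $\|\y^K\|_{L^\infty}$ is uniformly bounded. This gives $\splineenergy^{\sigma,K}[\y^K]=\Splineenergy^{\sigma,K}[\y_0^K,\ldots,\y_K^K]=\splineenergy^\sigma[\y^K]+O(1/\sqrt K)$, whence by the weak lower semi-continuity of $\splineenergy^\sigma$ (Lemma~\ref{thm:energyLSC}) $\liminf_K\splineenergy^{\sigma,K}[\y^K]=\liminf_K\splineenergy^\sigma[\y^K]\geq\splineenergy^\sigma[\y]$.

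For the recovery sequence, assuming $\splineenergy^\sigma[\y]+\mathcal I[\y]<\infty$, I would use the density Lemma~\ref{thm:dense} to pick $\y^{(n)}\to\y$ strongly in $W^{2,2}$ with $\y^{(n)}\in C^3([0,1];\manifold)$ satisfying \eqref{eq:IC} (and, in the Hermite case, $\y^{(n)}\in C_H$, i.e.\ affine with the prescribed velocities near the endpoints). For each fixed $n$, Lemmas~\ref{thm:weakConvergence}--\ref{thm:strongConvergence} give $\eta_{\y^{(n)}}^K\to\y^{(n)}$ strongly in $W^{2,2}$ as $K\to\infty$, so $\splineenergy^\sigma[\eta_{\y^{(n)}}^K]\to\splineenergy^\sigma[\y^{(n)}]$ by continuity under strong convergence (Lemma~\ref{thm:energyLSC}), and Lemma~\ref{thm:splineEnergyEstimate} then yields $\splineenergy^{\sigma,K}[\eta_{\y^{(n)}}^K]=\Splineenergy^{\sigma,K}[\y^{(n)}(t^0),\ldots,\y^{(n)}(t^K)]\to\splineenergy^\sigma[\y^{(n)}]$. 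Moreover the sampled tuple satisfies \eqref{eq:ICdiscrete} (because $t_i=t^{Kt_i}$) and, for $K$ large, the discrete Hermite condition exactly on the affine end segments, so $\mathcal I^K[\eta_{\y^{(n)}}^K]=0$ eventually. Since also $\splineenergy^\sigma[\y^{(n)}]\to\splineenergy^\sigma[\y]$ as $n\to\infty$ and $\eta_{\y^{(n)}}^K\to\y^{(n)}\to\y$ in $W^{2,2}$, a standard diagonalization (Attouch's lemma, see e.g.\ \cite{Br02}) produces $n(K)\to\infty$ with $\y^K:=\eta_{\y^{(n(K))}}^K\to\y$ strongly (hence weakly) in $W^{2,2}$, $\mathcal I^K[\y^K]=0$ for large $K$, and $\limsup_K\splineenergy^{\sigma,K}[\y^K]\leq\splineenergy^\sigma[\y]$. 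Combining the two inequalities yields $\Gammalim_{K\to\infty}(\splineenergy^{\sigma,K}+\mathcal I^K)=\splineenergy^\sigma+\mathcal I$.

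The main obstacle is the $\liminf$-step. In contrast to the recovery sequence, the competitors $\y^K$ are only weakly convergent and need not be smooth, so the consistency estimate must be invoked with only the uniform $W^{2,2}$ (hence $C^{1,\frac12}$) bound available; the two points that make this work are that Lemma~\ref{thm:splineEnergyEstimate} depends on the curve solely through its nodal samples and $\|\eta_\y^K\|_{W^{2,2}}$, and that $d_K\to0$ follows from the uniform velocity bound. One must also be careful that the nodal values of $\eta_{(\y_0^K,\ldots,\y_K^K)}$ differ from the $\y_k^K$ by the $O(K^{-3/2})$ second difference $\tfrac18(\y_{k+1}^K-2\y_k^K+\y_{k-1}^K)$, which is precisely why the interpolation constraint is recovered only in the limit and not exactly, and why the argument relies on the quantitative decay of these second differences rather than on exact interpolation.
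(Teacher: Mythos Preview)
Your proof is correct and follows essentially the same approach as the paper's: the $\liminf$-inequality is obtained by feeding the weakly convergent $\y^K=\eta_{(\y_0^K,\ldots,\y_K^K)}$ into Lemma~\ref{thm:splineEnergyEstimate} (which, as you correctly observe, depends only on the nodal samples and on $\|\eta_\y^K\|_{W^{2,2}}$) and then invoking the weak lower semi-continuity of $\splineenergy^\sigma$ from Lemma~\ref{thm:energyLSC}, while the $\limsup$-inequality goes through $C^3$-approximants (in $C_H$ for Hermite), the strong convergence $\eta_{\y^{(n)}}^K\to\y^{(n)}$ from Lemma~\ref{thm:strongConvergence}, and the same consistency estimate. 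The only cosmetic differences are that the paper deduces $d_K\to0$ from the coercivity of $\energy$ together with the discrete path-energy bound (rather than from the uniform $C^{1,\frac12}$ velocity bound), bounds $|\y^K(t_i)-\bar\y_i|$ by $d_K$ via the convex-combination structure of $\eta_{(\cdot)}$ (rather than by the $O(K^{-3/2})$ second-difference estimate), and phrases the $\limsup$-density step as the standard extension of the $\Gamma$-$\limsup$ bound from a dense set via strong continuity of $\splineenergy^\sigma$, rather than as an explicit diagonalization.
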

\begin{proof} We have to prove the two defining properties of $\Gamma$-convergence, namely, the $\liminf$- and the $\limsup$-inequality.\medskip

\noindent\underline{\emph{$\liminf$-inequality:}}
Let $\y^K\rightharpoonup\y$ in $W^{2,2}((0,1);\V)$, we need to show $\liminf_{K\to\infty}\splineenergy^{\sigma,K}[\y^K]+\mathcal I^K[\y^K]\geq\splineenergy^\sigma[\y]+\mathcal I[\y]$.
Upon taking a subsequence, we may replace the $\liminf$ by a $\lim$ and may assume $\splineenergy^{\sigma,K}[\y^K]+\mathcal I^K[\y^K]<C$ for some constant $C<\infty$ and uniformly for all $K$ (otherwise there is nothing to show).
Thus we have $\y^K=\eta_{(\y_0^K,\ldots,\y_K^K)}$ for some sequence $(\y_0^K,\ldots,\y_K^K)$ of $(K+1)$-tuples in $\manifold$.
Since the discrete path energy is part of $\splineenergy^{\sigma,K}$,
the bound $\splineenergy^{\sigma,K}(\y^K)<C$ together with Definition\,\ref{def:admissibleEnergy}\ref{enm:discreteCoercivity} implies that
$$d_K=\max_{k\in\{1,\ldots,K\}}|\y_k^K-\y_{k-1}^K|\leq\gamma^{-1}(\pathenergy^K[\y^K]/K)\leq\gamma^{-1}(\tfrac{C}{\sigma K})\,,$$ 
which converges to zero as $K\to\infty$.

We next show $\mathcal I[\y]=0$.
Note that by construction $\y^K(t)=\eta_{(\y_0^K,\ldots,\y_K^K)}(t)$ lies for any $t\in(0,1)$ in the convex hull of three consecutive points $\y_{k-1}^K,\y_k^K,\y_{k+1}^K$.
Thus, for each $i=1,\ldots,I$ we have $|\y^K(t_i)-\bar\y_i|=|\y^K(t_i)-\y_{Kt_i}^K|\leq d_K$ so that the limit $\y$ satisfies \eqref{eq:IC}.
Furthermore, the limit $\y$ satisfies the continuous counterpart of the chosen discrete boundary condition.
Indeed, for natural and periodic boundary conditions this is trivial (recall that for periodic boundary conditions all arguments are performed for $(0,1)$ replaced by $S^1$),
and for Hermite boundary conditions we have $\dot\y^K(0)=K(\y_1^K-\y_0^K)=v_0$ and $\dot\y^K(1)=v_1$,
which implies that $\y$ satisfies the continuous Hermite boundary conditions.

To conclude, by the weak lower semi-continuity of $\splineenergy^\sigma$ from Lemma\,\ref{thm:energyLSC} and by Lemma\,\ref{thm:splineEnergyEstimate} we have
\begin{equation*}
\splineenergy^\sigma[\y]+\mathcal I[\y]
=\splineenergy^\sigma[\y]
\leq\liminf_{K\to\infty}\splineenergy^\sigma[\y^K]
\leq\liminf_{K\to\infty}\splineenergy^{\sigma,K}[\y^K]+f(\|\y^K\|_{W^{2,2}})/\sqrt K
\leq\liminf_{K\to\infty}\splineenergy^{\sigma,K}[\y^K]+\mathcal I^K[\y^K]\,,
\end{equation*}
where we have used the uniform boundedness of $\|\y^K\|_{W^{2,2}}$ due to the weak convergence of $\y^K$.\medskip

\noindent\underline{\emph{$\limsup$-inequality:}}
Let $\y\in C^3([0,1];\manifold)$ with finite energy
(in the case of Hermite boundary conditions we require additionally $\y\in C_H$)
and choose as the recovery sequence $\y^K = \eta_\y^K$.
By definition we have $\mathcal I^K[\y^K]=0$.
As $K\to\infty$, we have $d_K=\max_{k\in\{1,\ldots,K\}}|\y(t^k)-\y(t^{k-1})|\to0$ as well as $\eta_\y^K\to\y$ strongly in $W^{2,2}((0,1);\V)$ by Lemma\,\ref{thm:strongConvergence}.
Thus, by the strong $W^{2,2}((0,1);\V)$-continuity of $\splineenergy^\sigma$ from Lemma\,\ref{thm:energyLSC} and by Lemma\,\ref{thm:splineEnergyEstimate} we have
\begin{equation*}
\splineenergy^\sigma[\y]+\mathcal I[\y]
=\splineenergy^\sigma[\y]
=\lim_{K\to\infty}\splineenergy^\sigma[\eta_\y^K]
\geq\limsup_{K\to\infty}\splineenergy^{\sigma,K}[\eta_\y^K]-f(\|\eta_\y^K\|_{W^{2,2}})/\sqrt K
=\limsup_{K\to\infty}\splineenergy^{\sigma,K}[\eta_\y^K]+\mathcal I^K[\y^K]\,,
\end{equation*}
where we have used the uniform boundedness of $\|\y^K\|_{W^{2,2}}$ due to the strong convergence of $\eta_{\y}^K$.
Thus, we obtain $\Gammalimsup_{K\to\infty}\splineenergy^{\sigma,K}+\mathcal I^K\leq\splineenergy^\sigma+\mathcal I$ on $C^3([0,1];\manifold)$ (on $C_H$ in case of Hermite boundary conditions).
By Lemma\,\ref{thm:dense} we have that $C^3([0,1];\manifold)\cap\mathop{\mathrm{dom}}\mathcal I$ \; ($C_H\cap\mathop{\mathrm{dom}}\mathcal I=C_H$ in case of Hermite boundary conditions) is dense in $W^{2,2}((0,1);\V)\cap\mathop{\mathrm{dom}}\mathcal I$.
Hence, the strong $W^{2,2}((0,1);\V)$-continuity of $\splineenergy^\sigma$
implies
$\Gammalimsup_{K\to\infty}\splineenergy^{\sigma,K}+\mathcal I^K\leq\splineenergy^\sigma+\mathcal I$ on all of $W^{2,2}((0,1);\V)$.
\end{proof}

\begin{remark}[Mosco convergence]
Since the recovery sequence converges strongly in the above proof, the $\Gamma$-limit actually even is a Mosco limit.
\end{remark}

Finally we show that discrete spline interpolations converge against continuous ones.
This is a well-known immediate consequence of the equicoercivity of the discrete spline energies, whose proof parallels the coercivity proof performed for Theorem\,\ref{thm:ExistenceContinuous}.

\begin{theorem}[Equicoercivity]\label{thm:equicoercivity}
Let $(\manifold,g)$ as well as $\energy$ be admissible and $\sigma>0$.
Any sequence $\y^K$ with $\splineenergy^{\sigma,K}[\y^K]+\mathcal I^K[\y^K]$ uniformly bounded contains a subsequence converging weakly in $W^{2,2}((0,1);\V)$.
As a consequence, any sequence of minimizers for $\splineenergy^{\sigma,K}+\mathcal I^K$ contains a subsequence converging weakly to a minimizer of $\splineenergy^\sigma+\mathcal I$.
\end{theorem}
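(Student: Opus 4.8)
The second claim is the standard consequence of $\Gamma$-convergence together with equicoercivity: granting the asserted precompactness, the Mosco/$\Gamma$-limit from Theorem\,\ref{thm:GammaConvergence} forces every weak accumulation point of a sequence of minimizers of $\splineenergy^{\sigma,K}+\mathcal I^K$ to minimize $\splineenergy^\sigma+\mathcal I$ (fundamental theorem of $\Gamma$-convergence, \cite[Thm.\,1.21]{Br02}). So the whole task reduces to the equicoercivity, and this is where I would spend all the effort. Fix a sequence $\y^K$ with $\splineenergy^{\sigma,K}[\y^K]+\mathcal I^K[\y^K]\le M$ for all $K$; finiteness forces $\mathcal I^K[\y^K]=0$, hence $\y^K=\eta_{(\y_0^K,\ldots,\y_K^K)}$ for a discrete curve obeying \eqref{eq:ICdiscrete} and the chosen boundary condition, with $\Splineenergy^{\sigma,K}[\y_0^K,\ldots,\y_K^K]\le M$. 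The plan is to bound $\|\y^K\|_{W^{2,2}}$ uniformly in $K$; weak compactness in the reflexive space $W^{2,2}((0,1);\V)$ then yields the claim. I would obtain this in two stages, paralleling the coercivity step of Theorem\,\ref{thm:ExistenceContinuous}.

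\emph{First stage ($W^{1,2}$ and $L^\infty$ bounds).} From $\sigma\Pathenergy^K[\y^K]\le M$ each summand obeys $\energy[\y_{k-1}^K,\y_k^K]\le\tfrac M{\sigma K}$, so coercivity (Definition\,\ref{def:admissibleEnergy}\ref{enm:discreteCoercivity}) gives $d_K=\max_k\|\y_k^K-\y_{k-1}^K\|_\V\le\gamma^{-1}(\tfrac M{\sigma K})\to0$. For $K$ large $d_K<\varepsilon$, whence $\energy[\y_{k-1}^K,\y_k^K]\ge\tfrac12\dist^2\ge\tfrac{c^\ast}2\|\y_k^K-\y_{k-1}^K\|_\V^2$ by Definition\,\ref{def:admissibleEnergy}\ref{enm:discreteApprox} and coercivity of $g$. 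Summation yields $\sum_k\|\y_k^K-\y_{k-1}^K\|_\V^2\le\tfrac{2M}{c^\ast\sigma K}$, so the bound $|\eta_{(\y_0,\ldots,\y_K)}|_{W^{1,2}}^2\le\tfrac C\tau\sum_k\|\y_k-\y_{k-1}\|_\V^2$ from the proof of Lemma\,\ref{thm:weakConvergence} controls $|\y^K|_{W^{1,2}}$. Together with \eqref{eq:ICdiscrete} (which pins $\y^K$ to within $d_K$ of $\bar\y_1$) and Poincaré's inequality this gives $\|\y^K\|_{W^{1,2}}\le C(M)$, and the one-dimensional embedding $W^{1,2}((0,1);\V)\hookrightarrow C^0([0,1];\V)$ upgrades this to a $K$-uniform bound $\|\y^K\|_{L^\infty}\le C(M)$.

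\emph{Second stage ($W^{2,2}$ bound).} The uniform $L^\infty$-bound is precisely what breaks the apparent circularity. The increasing function in Lemma\,\ref{thm:splineEnergyEstimate} depends on $\|\y^K\|_{W^{2,2}}$ only because the relevant derivative bounds of $\energy$ and $g$ are taken on a ball of radius $\sim\|\y^K\|_{L^\infty}$; the uniform $L^\infty$-bound therefore turns it into a genuine constant $C(M)$, and keeping the error of that lemma in its unpackaged form it reads $C(M)\bigl(\tau^{1/2}+\tau^{1/2}\|\ddot\y^K\|_{L^2}+\tau\|\ddot\y^K\|_{L^2}^2\bigr)$. On the other hand, exactly as in the proof of Theorem\,\ref{thm:ExistenceContinuous}, $\splineenergy[\y^K]\ge c^\ast\int_0^1\|\cov\dot\y^K\|_\V^2\d t\ge c^\ast\bigl(\tfrac12\|\ddot\y^K\|_{L^2}^2-C\|\dot\y^K\|_{L^4}^4\bigr)$, using $\|\Gamma_{\y^K}(\dot\y^K,\dot\y^K)\|_\V\le\tfrac32\tfrac{\|D_\y g^c\|}{c^\ast}\|\dot\y^K\|_\Y^2$ and $\V\hookrightarrow\Y$. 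Since $\splineenergy[\y^K]\le\splineenergy^\sigma[\y^K]\le M+\text{error}$, combining the two, absorbing the $\tau\|\ddot\y^K\|_{L^2}^2$-term into the leading term for $K$ large and the $\tau^{1/2}\|\ddot\y^K\|_{L^2}$-term by Young's inequality, I obtain for all large $K$ an inequality of the form $\tfrac{c^\ast}8\|\ddot\y^K\|_{L^2}^2-c^\ast C\|\dot\y^K\|_{L^4}^4\le M'$.

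Since $\y^K$ is a genuine $W^{2,2}$-curve satisfying \eqref{eq:ICdiscrete} with $\|\y^K\|_{W^{1,2}}\le C(M)$, the decreasing-rearrangement (Szeg\"o inequality) contradiction argument run verbatim in the proof of Theorem\,\ref{thm:ExistenceContinuous} applies to $\y^K$ and bounds $|\y^K|_{W^{2,2}}=\|\ddot\y^K\|_{L^2}$ in terms of $M'$; with the first stage this is the desired uniform $W^{2,2}$-bound, and reflexivity extracts the weakly convergent subsequence. I expect the second stage to be the main obstacle: for a merely energy-bounded (rather than smooth) discrete curve one has only $d_K=O(K^{-1/2})$, so crude estimates of the geodesic-midpoint deviation $\widetilde\y_k-\tfrac12(\y_{k-1}^K+\y_{k+1}^K)$ entering the consistency error blow up. The remedy is to retain the cancellations forced by the optimality condition for $\widetilde\y_k$ (as already exploited inside Lemma\,\ref{thm:splineEnergyEstimate}) and to use the $K$-uniform $L^\infty$-bound so that no constant depends on $\|\y^K\|_{W^{2,2}}$; the remaining dangerous term is quadratic in $\|\ddot\y^K\|_{L^2}$ but carries a factor $\tau$ and is therefore absorbed for $K$ large.
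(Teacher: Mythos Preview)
Your first stage and the final reduction to the Szeg\H{o}/rearrangement argument of Theorem\,\ref{thm:ExistenceContinuous} match the paper exactly. The divergence is entirely in how you execute the second stage, and there the circularity you flag is not actually broken by the $L^\infty$-bound.

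Your central claim is that the increasing function $f$ in Lemma\,\ref{thm:splineEnergyEstimate} depends on $\|\y^K\|_{W^{2,2}}$ \emph{only} through derivative bounds of $\energy,g$ on an $L^\infty$-ball, so that the consistency error unpacks as $C(M)(\tau^{1/2}+\tau^{1/2}\|\ddot\y^K\|_{L^2}+\tau\|\ddot\y^K\|_{L^2}^2)$. This is not what the proof of Lemma\,\ref{thm:splineEnergyEstimate} gives. Throughout that proof the pointwise quantity $|\dot\eta_\y^K|$ enters (e.g.\ in $d_K\le\tau\|\dot\eta_\y^K\|_{L^\infty}$ and in the midpoint estimate $|\dot\eta_\y^K-\tfrac{\widetilde\y_k-\y_{k-1}}\tau|=O(|\ddot\eta_\y^K|\tau+\|\eta_\y^K\|_{W^{2,2}}^{3/2}\sqrt\tau)$), and $\|\dot\eta_\y^K\|_{L^\infty}$ is controlled only via $W^{2,2}\hookrightarrow C^1$, \emph{not} by any combination of $\|\y^K\|_{L^\infty}$ and $\|\y^K\|_{W^{1,2}}$. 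Tracing the estimates, the prefactor $\hat f$ in the final displayed bound of that proof already carries powers of $\|\eta_\y^K\|_{W^{2,2}}$ well above two (terms like $\tau\|\eta_\y^K\|_{W^{2,2}}^5$ survive after summation). With an error containing $\tau^{1/2}$ or $\tau$ times a super-quadratic power of $\|\ddot\y^K\|_{L^2}$, your absorption step fails: for $p>2$ the inequality $\tfrac{c^\ast}{8}x^2-C\tau^{1/2}x^p\le M'$ places no bound on $x$. Note also that the smallness threshold on $\tau$ inside the proof of Lemma\,\ref{thm:splineEnergyEstimate} itself depends on $\|\eta_\y^K\|_{W^{2,2}}$, so you cannot even invoke the lemma without the bound you seek.

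The paper avoids passing through the continuous energy altogether and proves the discrete analogue of the coercivity inequality \emph{directly}. Using $\energy[\y_k,\widetilde\y_k]\ge\tfrac{c^\ast}{2}\|\widetilde\y_k-\y_k\|_\V^2$ and the splitting $2(\widetilde\y_k-\y_k)=(\y_{k-1}-2\y_k+\y_{k+1})-(\y_{k-1}-2\widetilde\y_k+\y_{k+1})$, the key step is to show
\[
\|\y_{k-1}-2\widetilde\y_k+\y_{k+1}\|_\V\le C\big(\|\y_k-\y_{k-1}\|_\V+\|\y_{k+1}-\y_k\|_\V\big)^2
\]
by Taylor-expanding the optimality condition $\energy_{,2}[\y_{k-1},\widetilde\y_k]+\energy_{,1}[\widetilde\y_k,\y_{k+1}]=0$ around $(\widetilde\y_k,\widetilde\y_k)$; this is the exact discrete analogue of $\|\Gamma_\y(\dot\y,\dot\y)\|_\V\le C\|\dot\y\|_\V^2$, and its constant involves only third derivatives of $\energy$ on an $L^\infty$-ball---no $C^1$ information about the curve is needed. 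Summation then yields $\Splineenergy^K[\y^K]\ge\tfrac{c^\ast}{4}\big(|\y^K|_{W^{2,2}}^2-5C|\y^K|_{W^{1,4}}^4\big)$ after the elementary comparison $\sum_k\tau\|\tfrac{\y_k-\y_{k-1}}\tau\|_\V^4\le5|\y^K|_{W^{1,4}}^4$, and from here the rearrangement argument you cite finishes the job. So the ``cancellations forced by the optimality condition'' you allude to are indeed the heart of the matter, but they must be invoked to get a clean quadratic-vs-quartic discrete inequality, not filtered through the full consistency machinery of Lemma\,\ref{thm:splineEnergyEstimate}.
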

\begin{proof}
Let $\y^K$ be a sequence with $\splineenergy^{\sigma,K}[\y^K]+\mathcal I^K[\y^K]$ uniformly bounded.
Just as in the previous proof we have $\y^K=\eta_{(\y_0^K,\ldots,\y_K^K)}$ for some sequence $(\y_0^K,\ldots,\y_K^K)$ of $(K+1)$-tuples in $\manifold$,
and we may deduce $d_K=\max_{k\in\{1,\ldots,K\}}\|\y_k^K-\y_{k-1}^K\|_\V\to0$.
Furthermore, as in the proof of Lemma\,\ref{thm:weakConvergence} one obtains the bound
\begin{equation*}
|\y^K|_{W^{1,2}}^2
=|\eta_{(\y_0^K,\ldots,\y_K^K)}|_{W^{1,2}}^2
\leq CK\sum_{k=1}^K\|\y_k^K-\y_{k-1}^K\|_\V^2
\end{equation*}
for the $W^{1,2}((0,1);\V)$-seminorm and some fixed constant $C>0$.
Due to Definition\,\ref{def:admissibleMetric}\ref{enm:coercivity} and Definition\,\ref{def:admissibleEnergy}\ref{enm:discreteApprox} in combination with $d_K\to0$ we have
\begin{equation*}
c^\ast\|\y^K_k-\y^K_{k-1}\|_\V^2\leq\dist^2(\y^K_{k-1},\y^K_k)\leq2\energy[\y^K_{k-1},\y^K_k]
\end{equation*}
for $K$ large enough so that with Jensen's inequality we obtain
\begin{equation*}
\frac{\splineenergy^{\sigma,K}[\y^K]}\sigma
\geq\Pathenergy^K[\y_0^K,\ldots,\y_K^K]
=K\sum_{k=1}^K\energy[\y^K_{k-1},\y^K_k]
\geq\frac{c^\ast}2 K\sum_{k=1}^K\|\y^K_k-\y^K_{k-1}\|_\V^2
\geq\frac{c^\ast}{2C}|\y^K|_{W^{1,2}}\,.
\end{equation*}
Therefore, $|\y^K|_{W^{1,2}}$ is uniformly bounded.
With Poincar\'e's inequality and the fact that $\y^K(t^k)$ lies in the convex hull of $\y_{k-1}^K,\y_k^K,\y_{k+1}^K$ so that $\|\y^K(t_i)-\bar\y_i\|_\V\leq d_K\to0$
we obtain uniform boundedness of $\|\y^K\|_{W^{1,2}}$.
It remains to show the boundedness of the seminorm
\begin{equation*}
|\y^K|_{W^{2,2}}^2=|\eta_{(\y_0^K,\ldots,\y_K^K)}|_{W^{2,2}}^2=K^3\sum_{k=1}^{K-1}\|\y_{k-1}^K-2\y_k^K+\y_{k+1}^K\|_V^2\,.
\end{equation*}
\newcommand{\yc}{{\y_k^K}}%
\renewcommand{\yt}{{\widetilde\y_k^K}}%
\renewcommand{\ym}{{\y_{k-1}^K}}%
\renewcommand{\yp}{{\y_{k+1}^K}}%
Let us denote the auxiliary points defined in \eqref{eq:constraint} and belonging to $(\y_0^K,\ldots,\y_K^K)$ by $(\widetilde\y_1^K,\ldots,\widetilde\y_{K-1}^K)$.
Note that by \cite[Lemma\,5.5]{RuWi12b} we have $\|\yt-\frac12(\ym+\yp)\|_\V=O(\|\yp-\ym\|_\V^{3/2})=O(d_K^{3/2})$ for $K$ large enough
so that by the triangle inequality $\|\yt-\yc\|_\V\leq\|\yt-\frac12(\ym+\yp)\|_\V+\frac12\left(\|\ym-\yc\|_\V+\|\yp-\yc\|_\V\right)=O(d_K)$.
Thus we can estimate $\energy[\yc,\yt]$ below by $\frac{c^\ast}2\|\yt-\yc\|_\V^2$ for $K$ large enough and obtain
\begin{align*}
\splineenergy^\sigma[\y^K]
&\geq4K^3\sum_{k=1}^{K-1}\energy[\yc,\yt] \geq\frac{c^\ast}2K^3\sum_{k=1}^{K-1}\|2\yt-2\yc\|_V^2\\
&=\frac{c^\ast}2\sum_{k=1}^{K-1}\tau\left\|\frac{\yp+\ym-2\yc}{\tau^2}-\frac{\yp+\ym-2\yt}{\tau^2}\right\|_\V^2\\
&\geq\frac{c^\ast}2\sum_{k=1}^{K-1}\frac\tau2\left\|\frac{\yp+\ym-2\yc}{\tau^2}\right\|_\V^2-\tau\left\|\frac{\yp+\ym-2\yt}{\tau^2}\right\|_\V^2\\
&=\frac{c^\ast}2\left(\frac12|\y^K|_{W^{2,2}}^2-\sum_{k=1}^{K-1}\tau\left\|\frac{\yp+\ym-2\yt}{\tau^2}\right\|_\V^2\right)\,.
\end{align*}
Apparently, $(\yp+\ym-2\yt)/\tau^2$ is the discrete equivalent to $\Gamma_{\y}(\dot\y,\dot\y)$ from the proof of Theorem\,\ref{thm:ExistenceContinuous}.
We will next show that $\|\yp+\ym-2\yt\|_V\leq C(\|\yc-\ym\|_\V+\|\yp-\y_{k}\|_\V)^2$ for some constant $C>0$
in analogy to the estimate $\|\Gamma_{\y}(\dot\y,\dot\y)\|_\V\leq C\|\dot\y\|_\V^2$.
To this end, consider the second order Taylor expansion as in the proof of  \cite[Thm. 4.10]{RuWi12b},
\begin{align*}
\energy[\ym,\yt] =&  \energy[\yt,\yt] \!+\! \energy_{,1}[\yt,\yt] (\ym\!-\!\yt) \\& + 
\int_0^1 (1\!-\!s) \energy_{,11}[\yt \!+\! s(\ym\!-\!\yt),\yt] (\ym\!-\!\yt,\ym\!-\!\yt) \d s\,, \\
\energy[\yt,\yp] =&  \energy[\yt,\yt] \!+\! \energy_{,2}[\yt,\yt] (\yp\!-\!\yt)  \\& + 
\int_0^1 (1\!-\!s) \energy_{,22}[\yt,\yt \!+\! s(\yp\!-\!\yt)] (\yp\!-\!\yt,\yp\!-\!\yt) \d s\,,
\end{align*}
and use this together with $ \energy[\yt,\yt]=0$ and $\energy_{,1}[\yt,\yt] = \energy_{,2}[\yt,\yt]=0$
to expand the first order condition $0=\energy_{,2}[\ym,\yt]+ \energy_{,1}[\yt,\yp]$ for $\yt$ into
\begin{align*}
0 = \int_0^1 & (1-s)^2 \energy_{,111}[\yt + s(\ym\!-\!\yt),\yt] (\ym\!-\!\yt,\ym\!-\!\yt) \\[-1.5ex]
& + (1\!-\!s) \energy_{,112}[\yt \!+ \!s(\ym\!-\!\yt),\yt] (\ym-\yt,\ym-\yt) \\
&+ 2 (1\!-\!s) \energy_{,11}[\yt \!+ \!s(\ym\!-\!\yt),\yt] (\yt-\ym) \\
&+ (1\!-\!s)^2 \energy_{,222}[\yt,\yt \!+\! s(\yp\!-\!\yt)] (\yp-\yt,\yp-\yt) \\
& + (1\!-\!s) \energy_{,221}[\yt,\yt \!+\! s(\yp\!-\!\yt)] (\yp-\yt,\yp-\yt) \\
&+ 2 (1\!-\!s) \energy_{,22}[\yt, \yt \!+\! s(\yp\!-\!\yt)] (\yt-\yp) \d s\,.
\end{align*}
Using $g_\yt = \frac12 \energy_{,11}[\yt,\yt] = \frac12 \energy_{,22}[\yt,\yt]$  we obtain for the third and the sixth term
\begin{multline*}
 2 \int_0^1   (1-s) \left(\energy_{,11}[\yt + s(\ym-\yt),\yt] (\yt-\ym) + \energy_{,22}[\yt, \yt + s(\yp-\yt)] (\yt-\yp)\right) \d s\\
=  -2 g_\yt(\ym-2\yt+\yp,\cdot) + O(\|\ym-\yt\|_\V^2+\|\yp-\yt\|_\V^2) \,,
\end{multline*}
where the involved constants only depend on the third derivative of $\energy$ on a ball around the origin of radius $\|\y^K\|_{L^\infty}$.
All other terms can as well be estimated by $O(\|\ym-\yt\|_\V^2+\|\yp-\yt\|_\V^2)$. Furthermore, with $\|\yt-\frac12(\ym+\yp)\|_\V=O(\|\yp-\ym\|_\V^{3/2})$ 
(cf.\ again \cite[Lemma\,5.5]{RuWi12b}) we have the straightforward estimate
\begin{equation*}
\|\ym-\yt\|_\V^2+\|\yp-\yt\|_\V^2 \leq  C \|\ym-\yp\|_\V^2  \leq C \left( \|\ym-\yc\|_\V+\|\yp-\yc\|_\V\right)^2
\end{equation*}
for some $C>0$.
Thus, we finally verified the claim and obtain
\begin{equation*}
\splineenergy^\sigma[\y^K]
\geq \frac{c^\ast}4\left(|\y^K|_{W^{2,2}}^2- C \sum_{k=1}^{K}\tau \left\|\frac{\yc-\ym}{\tau} \right\|_\V^4 \right)\,.
\end{equation*}
Next one can readily compute
\begin{multline*}
|\y^K|_{W^{1,4}}^4
=|\eta_{(\y_0^K,\ldots,\y_K^K)}|_{W^{1,4}}^4
=\frac{\|\y_1^K-\y_0^K\|_\V^4}{2\tau^3}+\frac{\|\y_K^K-\y_{K-1}^K\|_\V^4}{2\tau^3}
+\frac{K^3}5\sum_{k=1}^{K-1}\Big(\|\yc-\ym\|_\V^4\\
+\|\yc-\ym\|_\V^3\|\yp-\yc\|_\V
+\|\yc-\ym\|_\V^2\|\yp-\yc\|_\V^2+\|\yc-\ym\|_\V\|\yp-\yc\|_\V^3+\|\yp-\yc\|_\V^4\Big)\\
\geq\frac15\sum_{k=1}^{K}\tau\left\|\frac{\yc-\ym}{\tau} \right\|_\V^4
\end{multline*}
so that the above estimate turns into
\begin{equation*}
\splineenergy^\sigma[\y^K]
\geq \frac{c^\ast}4\left(|\y^K|_{W^{2,2}}^2-5C|\y^K|_{W^{1,4}}^4\right)\,.
\end{equation*}
The left-hand side is uniformly bounded, and in the proof of Theorem \ref{thm:ExistenceContinuous} we have already shown that this together with the uniform bound on $\|\y^K\|_{W^{1,2}}$ implies uniform boundedness of $|\y^K|_{W^{2,2}}$ independent of $K$.
Thus there exists a subsequence, which is weakly converging  in $W^{2,2}$.
\renewcommand{\yt}{{\widetilde\y_k}}
\renewcommand{\ym}{{\y_{k-1}}}
\renewcommand{\yp}{{\y_{k+1}}}

The statement about sequences of minimizers now is a standard consequence of the $\Gamma$-convergence from the previous theorem
and the fact that with an arbitrary smooth $\y$ satisfying the interpolation and boundary conditions we have
$\inf_{\widehat\y}\splineenergy^{\sigma,K}[\widehat\y]+\mathcal I^K[\widehat\y]\leq\splineenergy^{\sigma,K}[\eta_\y^K]$,
which is uniformly bounded.
\end{proof}


\section{Example settings and applications}\label{sec:numerics}
In this section we consider the application to manifolds of increasing complexity,
first to $d$-dimensional surfaces embedded in $\R^m$,
then to the high-dimensional shape manifold of discrete shells,
and finally to the infinite-dimensional shape manifold of viscous rods.

\subsection{Embedded finite-dimensional manifolds}\label{sec:embedded}
Here we consider a closed, smooth $d$-dimensional manifold embedded in $\R^m$, $m>d$ (if $\manifold$ has a boundary, it shall be smooth).
We shall work with a (potentially local) parameterization $\varphi:\R^d\supset U\to\R^m$ of the manifold
so that we can choose $\V=\Y=\R^d$ and identify $\manifold$ (at least locally) with $U$.
The metric then is the metric induced by the embedding space, $$g_\y(v,w)=g_\y^c(v,w)=(D\varphi(\y)v)^T(D\varphi(\y)w)\,,\qquad\Qenergy\equiv0\,.$$
Similarly, the approximation $\energy$ to the squared Riemannian distance can be taken as the squared Euclidean distance in the embedding space,
\begin{equation*}
\energy:\manifold\times\manifold\to\R\,,\quad
\energy[\y_1,\y_2]=|\varphi(\y_1)-\varphi(\y_2)|^2
\end{equation*}
with $|\y|=\sqrt{\y^T\y}$.

\begin{remark}[Applicability of model analysis]\label{rem:proofChangesFiniteDimM}
Unless $U=\R^d$, the manifold is not admissible in the sense of Definition\,\ref{def:admissibleManifold}.
Nevertheless, the model analysis of the previous sections can be carried over to this setting.
Indeed, $\splineenergy^\sigma$ and $\Splineenergy^\sigma$ are invariant with respect to changing the manifold parameterization and thus are well-defined without specifying a particular chart.
Next, the existence of continuous spline interpolations from Theorem\,\ref{thm:ExistenceContinuous} holds
(the regularity estimate only holds for curves not touching the manifold boundary),
since along a minimizing sequence of curves on the embedded manifold we have uniformly bounded path energy and thus path length.
Hence, by Go\l\c{a}b's theorem a subsequence of those curves converges in the Hausdorff sense to a limit curve.
In the existence and regularity analysis we may therefore restrict ourselves to a chart $U$ of a small neighbourhood around that curve,
on which the admissibility conditions on $g$ hold and thus the proof works without further modification.
Likewise, the existence of discrete spline interpolations from Theorem\,\ref{thm:ExistenceDiscrete} holds.
Indeed, here the same proof can directly be performed in $\R^m$ rather than the parameterization domain.
Finally, the $\Gamma$-convergence from Theorem\,\ref{thm:GammaConvergence} holds true as long as the limit curve does not touch the manifold boundary;
indeed, then again we can work inside a chart of a local neighbourhood around the limit curve.
The corresponding convergence of discrete to continuous interpolations from Theorem\,\ref{thm:equicoercivity} can be transferred via the same trick as for existence
(as in the $\Gamma$-convergence we require, though, that the continuous interpolation has positive distance from the manifold boundary $\partial\manifold$):
We consider a sequence of discrete curves with increasing refinement and with distance $\varepsilon$ from $\partial\manifold$ for some $\varepsilon > 0$.
We then observe that the discrete path energy and thus the path length of the continuous representations is bounded,
so we can extract a subsequence converging in the Hausdorff sense, and from that point on we may restrict inside the proof of Theorem\,\ref{thm:equicoercivity} to a chart around a neighbourhood of the limit curve
(which must be at least distance $\varepsilon$ from $\partial\manifold$).
This way we obtain convergence of discrete to continuous spline interpolations under the constraint of staying $\varepsilon$ away from $\partial\manifold$,
and $\varepsilon\to0$ then yields the desired result.
\end{remark}

In what follows, let us explicitly derive the all terms arising in the nonlinear system of equations 
\begin{align}
\partial_{\y_k}\Splineenergy^K[\y_0, \ldots, \y_K] + \sigma \partial_{\y_k}\Pathenergy^K[\y_0, \ldots, \y_K] =0 \label{eq:ELeqFinite}
\end{align}
for all $k \in \{1, \ldots, K\} \setminus \{Kt_i \,|\, i = 1,\ldots, I\}$ which has to be solved when computing a discrete Riemannian spline.
To this end, we use the adjoint calculus.
The constraint for $\widetilde{\y}_k$ can be expressed via the first order optimality conditions 
\begin{align*}
0&=\partial_2{\energy}[\y_{k-1},\widetilde{\y}_k]+\partial_1{\energy}[\widetilde{\y}_k,\y_{k+1}]
\end{align*}
so that, taking the derivative with respect to $\y_{k-1}$ and $\y_{k+1}$, we have
\begin{align*}
0&=\left(\partial_2^2{\energy}[\y_{k-1},\widetilde{\y}_k]+\partial_1^2{\energy}[\widetilde{\y}_k,\y_{k+1}]\right)\partial_{\y_{k-1}}\widetilde{\y}_k+\partial_1\partial_2{\energy}[\y_{k-1},\widetilde{\y}_k]\,,\\
0&=\left(\partial_2^2{\energy}[\y_{k-1},\widetilde{\y}_k]+\partial_1^2{\energy}[\widetilde{\y}_k,\y_{k+1}]\right)\partial_{\y_{k+1}}\widetilde{\y}_k+\partial_2\partial_1{\energy}[\widetilde{\y}_k,\y_{k+1}]
\end{align*}
(above, $\partial_i\partial_j{\energy}[\y_a,\y_b]$ should be interpreted as matrix with $v^T\partial_i\partial_j{\energy}[\y_a,\y_b]w=\partial_i(\partial_j{\energy}[\y_a,\y_b]v)w$).
Now let $p_k$ denote the solution to
\begin{align*}
0&=\left(\partial_2^2{\energy}[\y_{k-1},\widetilde{\y}_k]+\partial_1^2{\energy}[\widetilde{\y}_k,\y_{k+1}]\right)^T p_k+\partial_2{\energy}[\y_k,\widetilde{\y}_k]^T\,.
\end{align*}
We then obtain 
$$\partial_{\y_{k-1}}{\energy}[\y_k,\widetilde{\y}_k]=\partial_2{\energy}[\y_k,\widetilde{\y}_k]\partial_{\y_{k-1}}\widetilde{\y}_k=(\partial_1\partial_2{\energy}[\y_{k-1},\widetilde{\y}_k]^T p_k)^T=(\partial_2\partial_1{\energy}[\y_{k-1},\widetilde{\y}_k]p_k)^T$$
and analogously $\partial_{\y_{k+1}}{\energy}[\y_k,\widetilde{\y}_k]=(\partial_1\partial_2{\energy}[\widetilde{\y}_k,\y_{k+1}]p_k)^T$. 
Therefore,
\begin{align*}
\partial_{\y_k}\Pathenergy^K[\y_0, \ldots, \y_K]
&=K\left(\partial_2{\energy}[\y_{k-1},\y_k]+\partial_1{\energy}[\y_k,\y_{k+1}]\right)\,,\\
\partial_{\y_k}\Splineenergy^K[\y_0, \ldots, \y_K]
&=\frac{4}{\tau^3}\left(\partial_1{\energy}[\y_k,\widetilde{\y}_k]+(\partial_1\partial_2{\energy}[\widetilde{\y}_{k-1},\y_{k}]p_{k-1})^T+(\partial_2\partial_1{\energy}[\y_{k},\widetilde{\y}_{k+1}]p_{k+1})^T\right)
\end{align*}
for all $k/K\notin\{t_1,\ldots,t_I\}$ (where in case of periodic boundary conditions, $k-1$ and $k+1$ have to be interpreted modulo $K$).
In our implementation we solve \eqref{eq:ELeqFinite} using a Quasi-Newton method.
For the sake of completeness we also list here the derivatives of ${\energy}$,
\begin{align*}
{\energy}[\y_1,\y_2]&=|\varphi(\y_1)-\varphi(\y_2)|^2\,,\\
\partial_1{\energy}[\y_1,\y_2]&=\partial_2{\energy}[\y_2,\y_1]=2(\varphi(\y_1)-\varphi(\y_2))^TD\varphi(\y_1)\,,\\
\partial_1^2{\energy}[\y_1,\y_2]&=\partial_2^2{\energy}[\y_2,\y_1]=2D\varphi(\y_1)^TD\varphi(\y_1)+2(\varphi(\y_1)-\varphi(\y_2))^TD^2\varphi(\y_1)\,,\\
\partial_1\partial_2{\energy}[\y_1,\y_2]&=\partial_2\partial_1{\energy}[\y_2,\y_1]=-2D\varphi(\y_2)^TD\varphi(\y_1)\,.
\end{align*}
Example curves are shown in Figure\,\ref{fig:finiteDim}.

\begin{figure}
\centering
\setlength\unitlength{.2\linewidth}
\includegraphics[height=\unitlength,trim=120 70 120 70,clip]{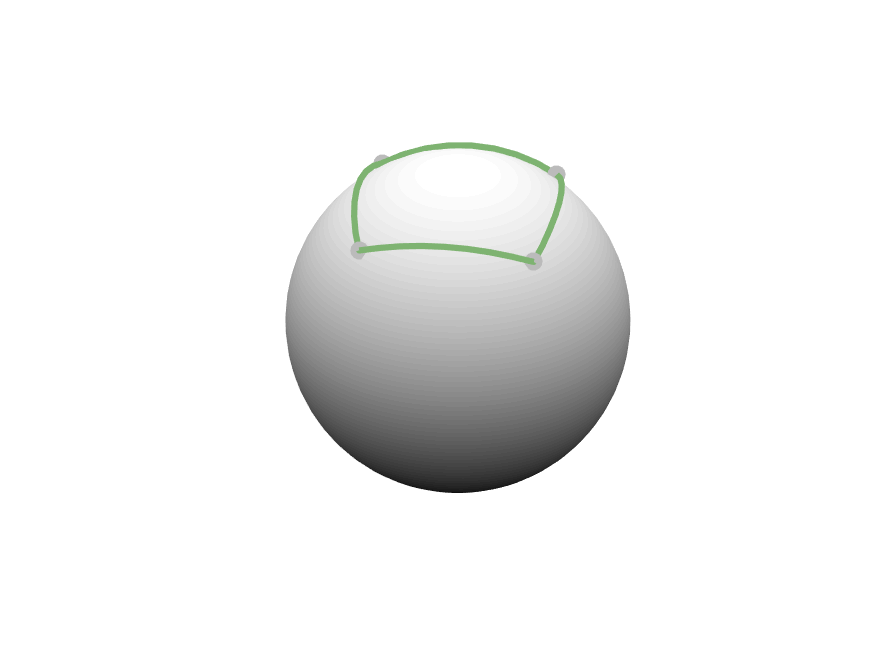}
\includegraphics[height=\unitlength,trim=80 70 80 70,clip]{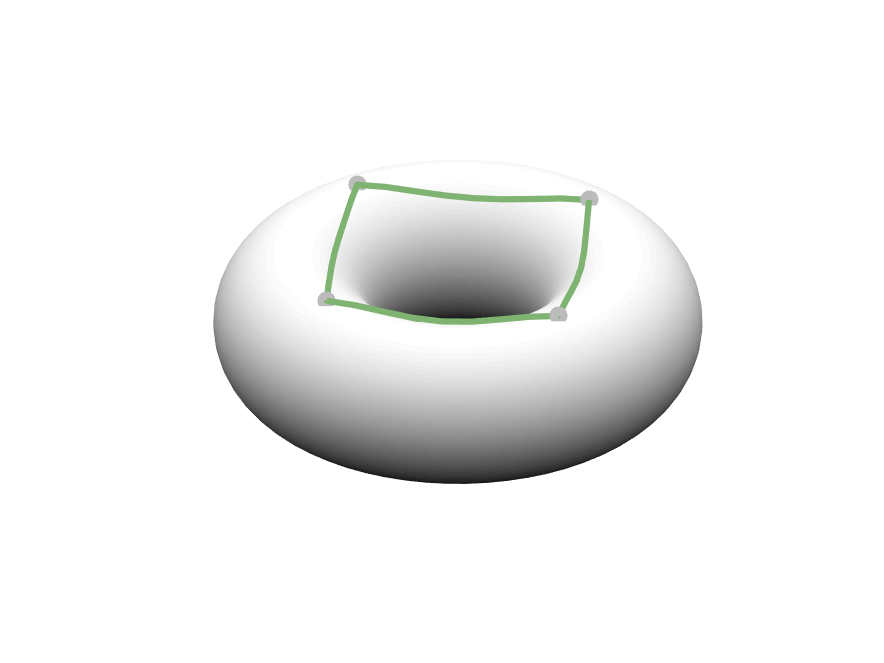}
\includegraphics[height=\unitlength,trim=80 70 80 70,clip]{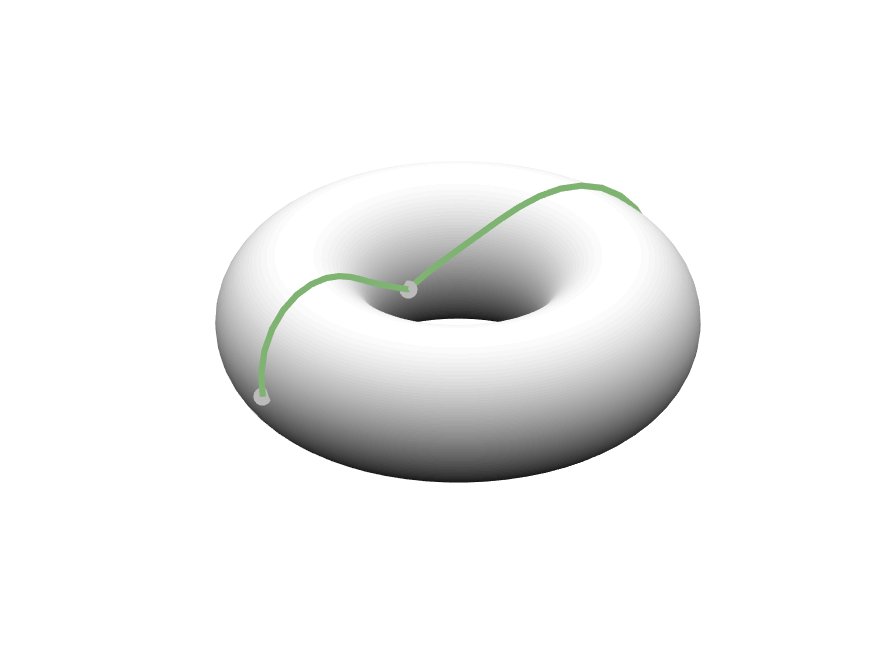}\\
\includegraphics[height=\unitlength,trim=120 70 120 70,clip]{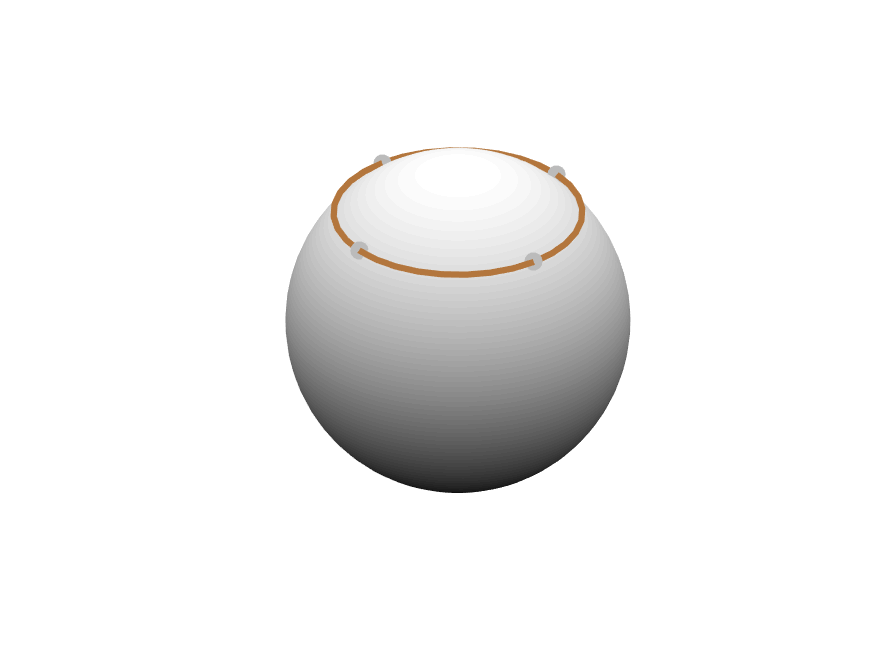}
\includegraphics[height=\unitlength,trim=80 70 80 70,clip]{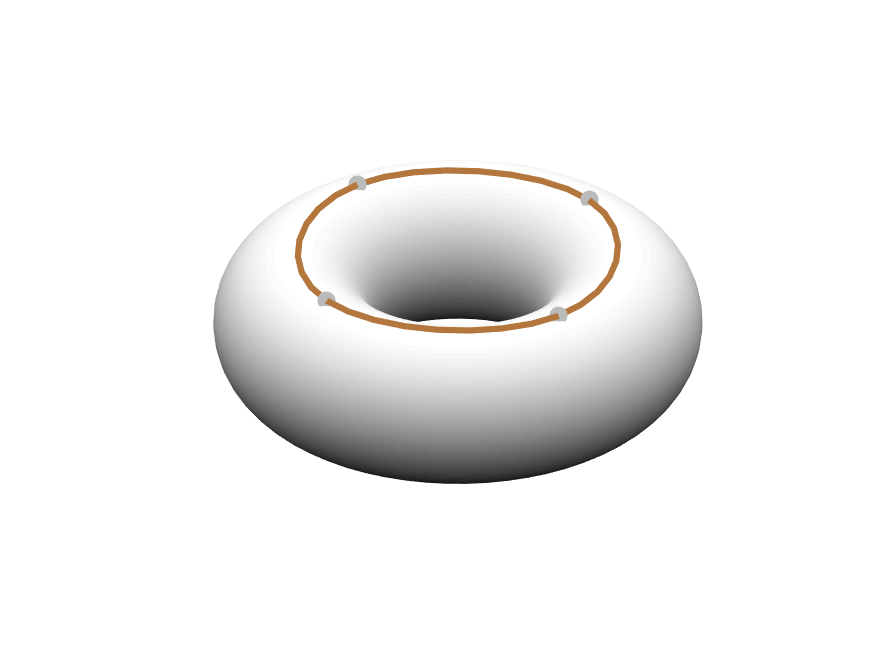}
\includegraphics[height=\unitlength,trim=80 70 80 70,clip]{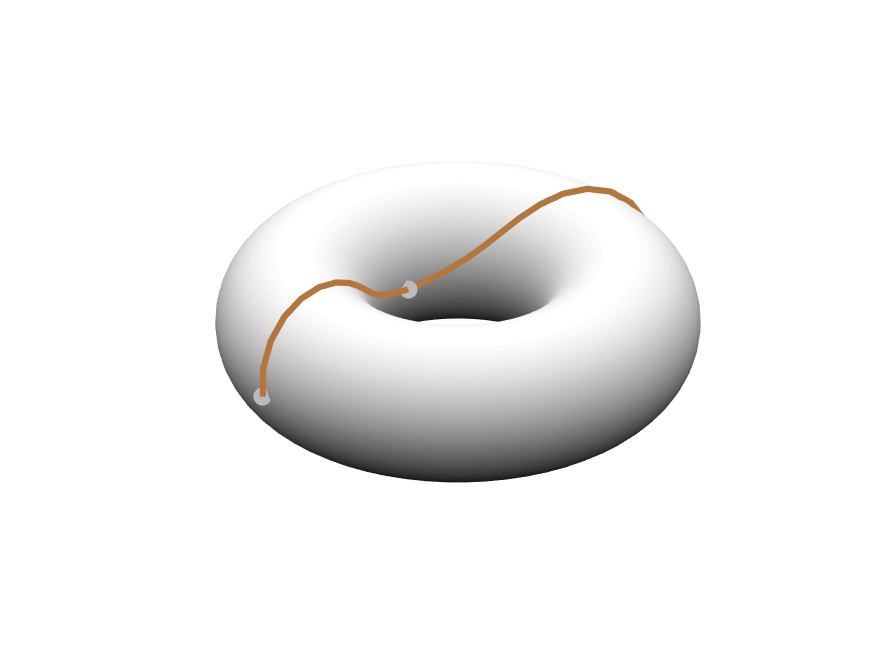}
\caption{Interpolation of given points on a sphere and a torus using a piecewise geodesic (top) and a Riemannian spline (bottom) with $\sigma=0$.
The discretization uses $K=32$ steps along the discrete path.}
\label{fig:finiteDim}
\end{figure}

\subsection{Discrete Shells} \label{sec:shell}
\newcommand{\nSet}{{\mathcal{V}}}
\newcommand{\eSet}{{\mathcal{E}}}
\newcommand{\tSet}{{\mathcal{T}}}
\newcommand{\lenE}{\energy_L}
\newcommand{\dihE}{\energy_\Theta}
\newcommand{\areaE}{\energy_A}

Here we consider the space of discrete shells \cite{GrHiDeSc03}, which is a shape space particularly useful for computer graphics applications.
This shape space is physically motivated (cf.  \cite{HeRuSc14}). A shell is a thin material layer around a mid surface embedded in $\R^3$.
A metric on the space of shells reflects the energy that is dissipated under a plastic deformation of the material layer of the shell.
For thin material layers this physical energy dissipation is composed of an amount due to in-plane stretching of the membrane layer as well as an amount due to bending.
Now \emph{discrete shells} are the discrete counterparts of the shell mid surfaces and consist of triangulated surfaces of fixed connectivity.
The space of discrete shells forms a finite-dimensional manifold which readily fits into the framework introduced before, while the situation is more complicated for continuous shells.
The next section will consider the one-dimensional cousin of continuous shells, the space of rods, 
for which it is a little easier to fit it into our framework.

In words, the \emph{space of discrete shells} is given by all shape regular triangle meshes of same connectivity modulo rigid body motions.
Indeed, given a reference triangulation $\tSet$, a set of nodes $\nSet$ and a set of edges $\eSet$ we define
\begin{multline*}
\manifold
=\{\y:\nSet\to\R^3\,|\,\text{any triangle }\y(T),\,T\in\tSet,\text{ contains a disc of radius }\rho \text{ and has diameter }\leq h,\\
\,\y(v_1)=v_1,\,\y(e_1)||e_1,\,\y(e_1)\wedge\y(e_2)||e_1\wedge e_2\}\,.
\end{multline*}
for fixed $0 < \rho < h$ and $v_1$ one vertex of the mesh, $e||d$ if $e=\alpha d$ for some $\alpha>0$, and $e_1, e_2\in\eSet$ two edges incident to $v_1$.
Above we used that a discrete shell $\y:\nSet\to\R^3$ induces a mapping on edges $e\in\eSet$ and triangles $T\in\tSet$.
The last three conditions in the definition of $\manifold$ just fix a particular location and orientation of the shape.
The \emph{dissipation} between two discrete shells in $\manifold$ splits into a membrane distortion and a bending contribution and is defined as in \cite{HeRuSc14}
\begin{equation*}
\energy[\y,\widetilde{\y}]=\energy^c[\y,\widetilde{\y}]=\zeta\, \energy_{\mathrm{mem}}[\y, \widetilde{\y} ]  + \eta \, \dihE[\y,\widetilde{\y}]
\end{equation*}
with weights $\zeta,\eta>0$.
Here the membrane energy $\energy_{\mathrm{mem}}$ is given by
$$
\energy_{\mathrm{mem}}[\y, \widetilde{\y}] =  \sum_{T \in \tSet} \mathrm{area}(\y(T)) W_{\mathrm{mem}}( \revision{D}(\tilde \y\circ\y^{-1}) ) 
$$
\revision{with the matrix-valued Jacobian $D(\tilde \y\circ\y^{-1})$} and the energy density
\begin{align}\label{eq:MembraneEnergyDensity}
W_{\mathrm{mem}}( A ) =   \dfrac{\mu}{2}\mathrm{tr}\sqrt{A^*A} + \dfrac{\lambda}{4}\det \sqrt{A^*A} - \frac{2\mu+\lambda}{4} \log \det \sqrt{A^*A} \!- \mu - \dfrac{\lambda}{4}\,,
\end{align}
where $\lambda$ and $\mu$ are the Lam\'e constants of a Newtonian constitutive law for the energy dissipation, $A^*$ denotes the adjoint operator of $A$, and $\mathrm{tr} B$ and $\det B$ denote the trace and determinant of $B$ as an endomorphism on the tangent bundle of the triangular shell surface $\y$. Notice that $\det\sqrt{A^*A}$ describes area distortion, while $\mathrm{tr}\sqrt{A^*A}$ measures length distortion.
Obviously the polyconvex function $W_{\mathrm{mem}}( A )$ induces a rigid body motion invariant functional $\energy$, and the identity is its unique minimizer.
The $ \log \det \sqrt{A^*A}$ term penalizes material compression, which in the discrete setting prevents degeneration of triangles.
The bending energy is defined by 
\begin{equation*}
\dihE[\y,\widetilde{\y}] = \sum_{e \in \eSet} l_e[\y]^2 \frac{ (\theta_e[\y] - \theta_e[\widetilde{\y}])^2}{d_e[\y]}\,,
\end{equation*}
where $l_e[\y]$ is the length of the edge $\y(e)$, $\theta_e[\y]$ the dihedral angle between the triangles adjacent to $\y(e)$, and $3d_e[\y]$ the area of those triangles.

The \emph{metric} on the space of discrete shells is defined for $v,w:\nSet\to\R^3$ as the second derivative of the dissipation in directions $v$ and $w$,
\begin{equation*}
g_\y(v,w)=\tfrac12\partial_2^2\energy[\y,\y](v,w)\,.
\end{equation*}

Any discrete shell $\y\in\manifold$ as well as tangent vectors $v\in T_\y\manifold$ can obviously be identified with the corresponding vector in $\R^{3N}$ of nodal values, where $N$ denotes the number of vertices in $\nSet$.
Thus we may choose $\V=\Y=\R^{3N}$.
Note that $\manifold\subset\R^{3N}$ only has a piecewise smooth boundary, however, the framework of admissible manifolds and our previous analysis may readily be extended to include also this case.
That $g_\y$ is positive definite and thus represents a metric has been shown in \cite{HeRuSc14};
that it is admissible in the sense of Definition\,\ref{def:admissibleMetric} follows from the smoothness of $g$ (it is infinitely differentiable) and the compactness of $\manifold$.
Admissibility for $\energy$ follows from the continuity of $\energy$ and \cite[Lemma\,4.6]{RuWi12b}.
Thus we have existence of continuous and discrete spline interpolations, and the discrete ones converge against the continuous ones.
 \begin{figure}[h]
 \centering
 \setlength{\unitlength}{.06\linewidth}%
\begin{picture}(13,6)
\put(0,4.2){\resizebox{\unitlength}{!}{\includegraphics{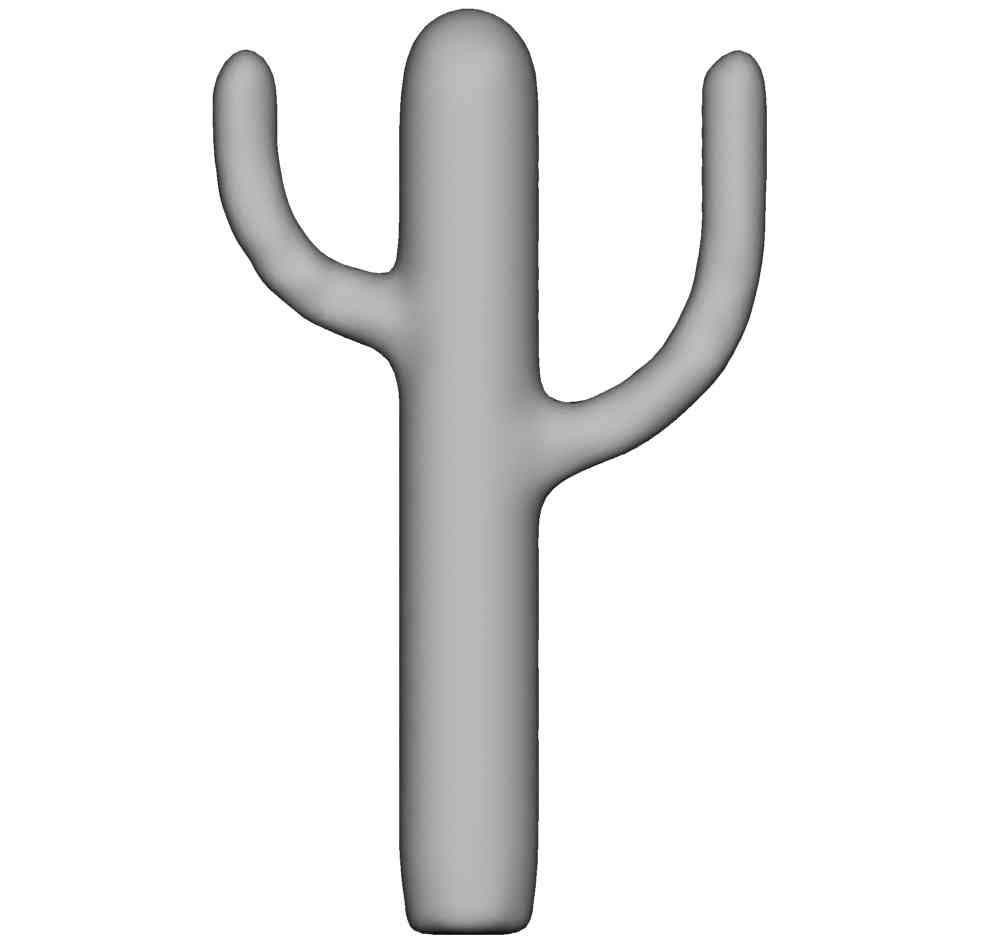}}}
\put(5,4.2){\resizebox{\unitlength}{!}{\includegraphics{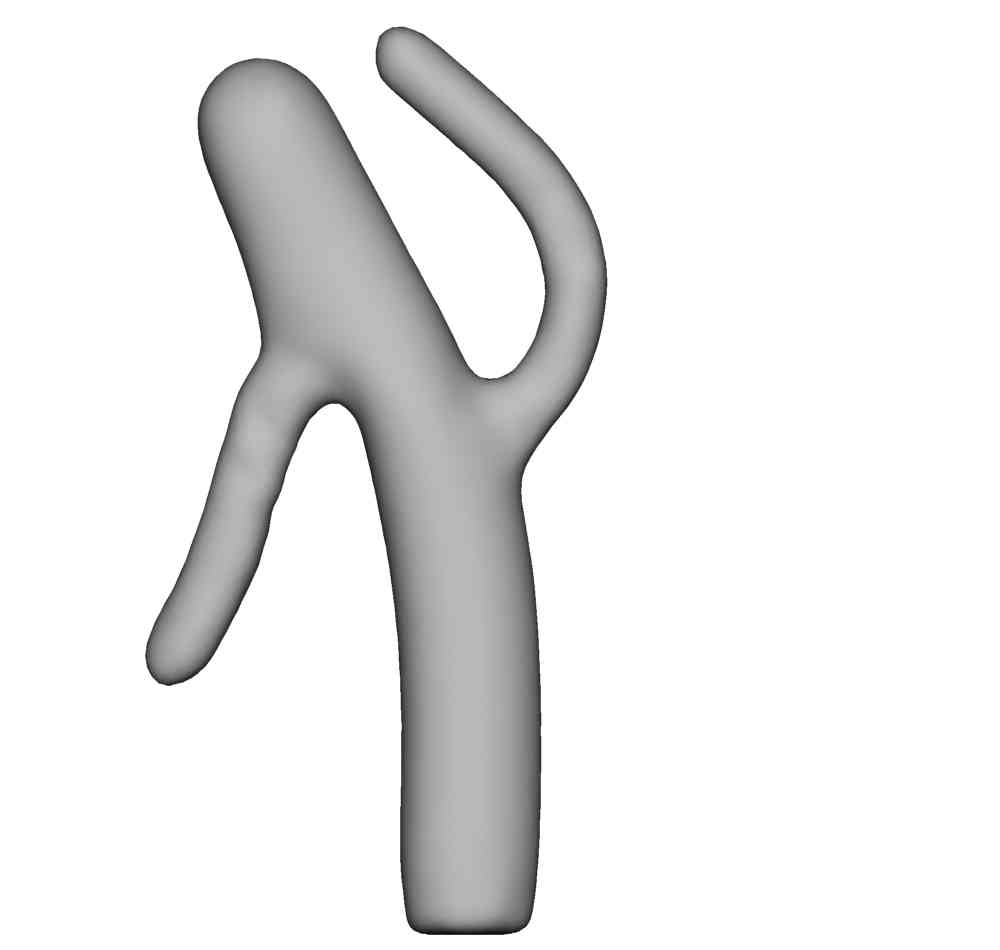}}}
\put(10,4.2){\resizebox{\unitlength}{!}{\includegraphics{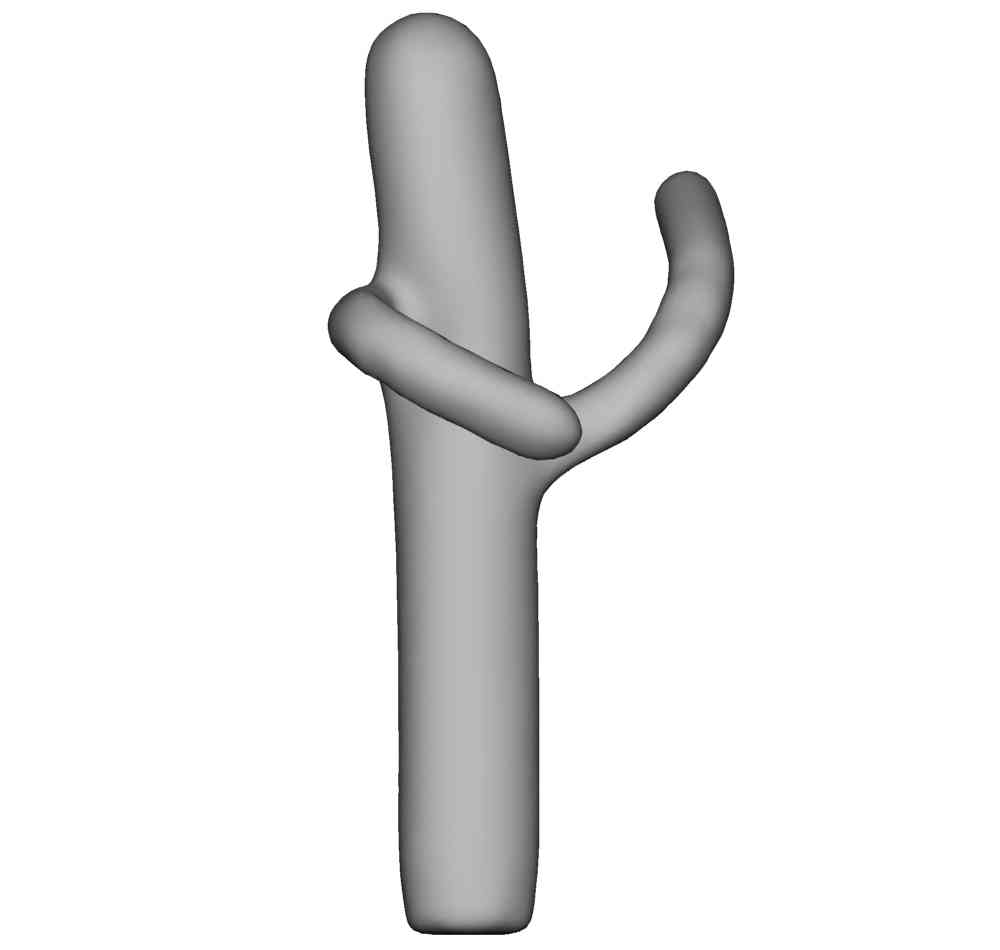}}}
\put(2,3.0){\resizebox{\unitlength}{!}{\includegraphics{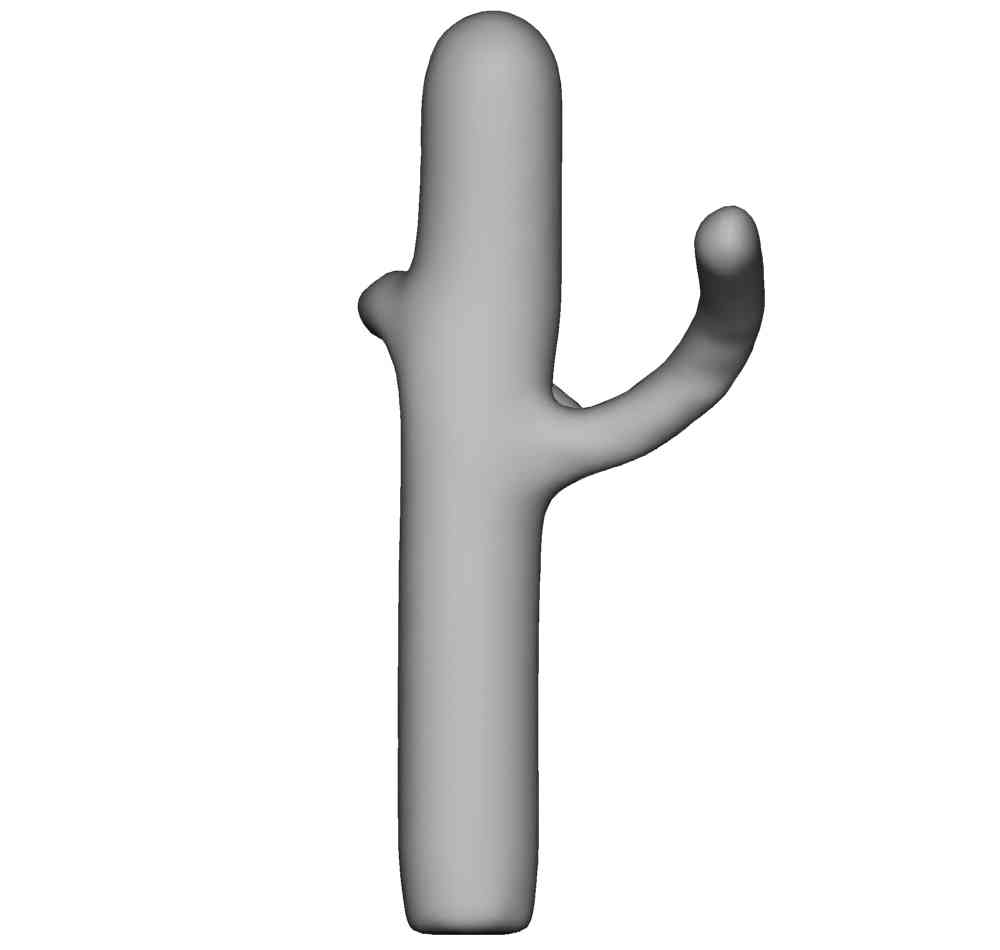}}}
\put(7,3.0){\resizebox{\unitlength}{!}{\includegraphics{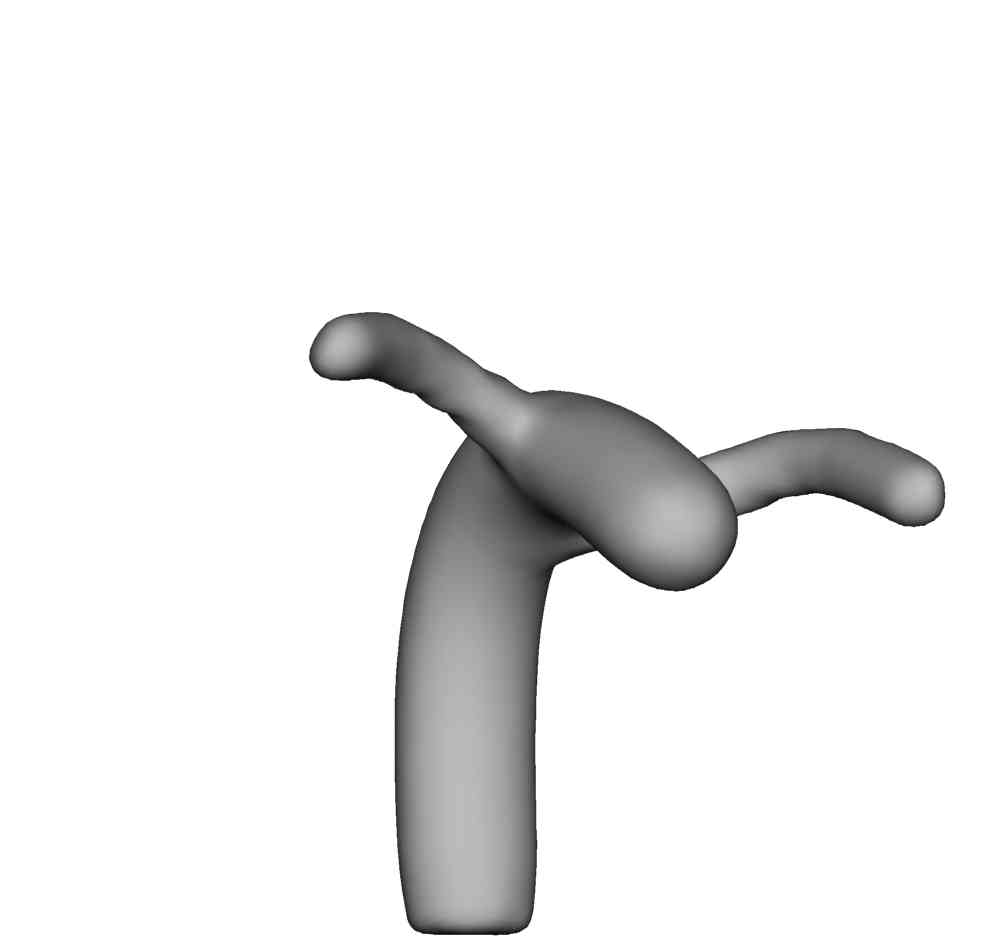}}}
\put(12,3.0){\resizebox{\unitlength}{!}{\includegraphics{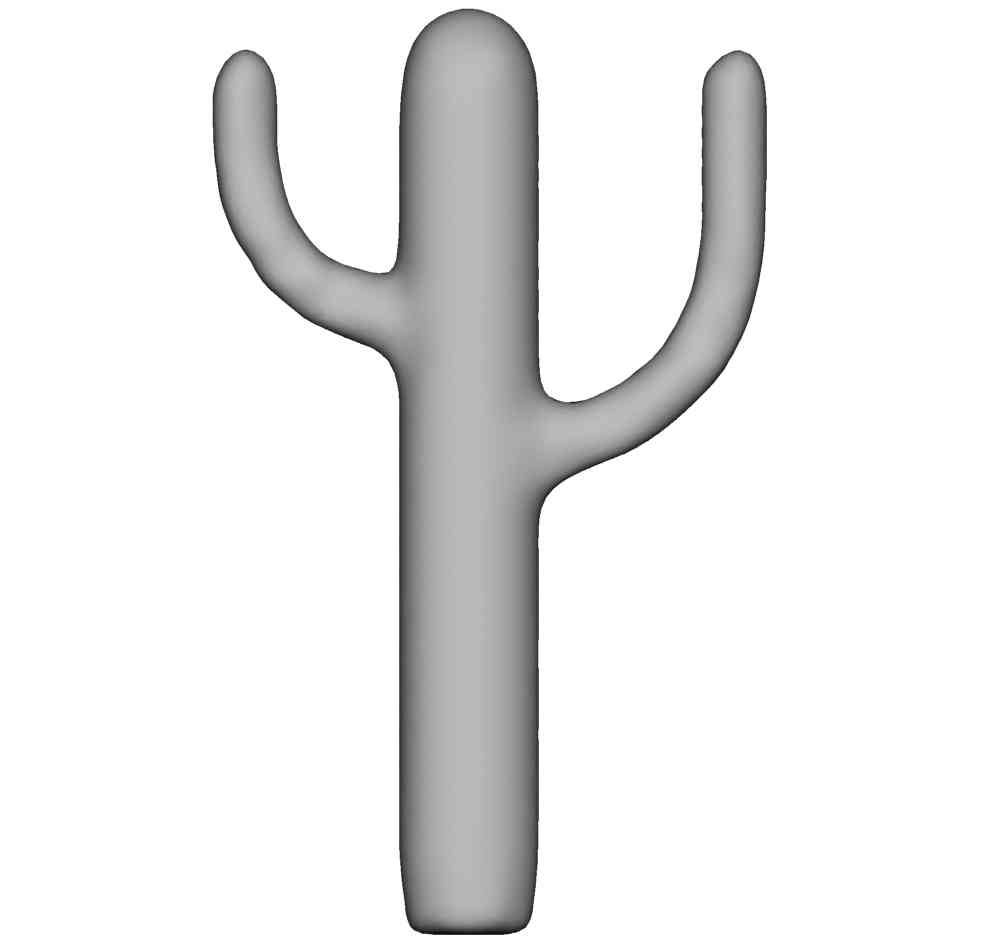}}}
\put(1,4.2){\resizebox{\unitlength}{!}{\includegraphics{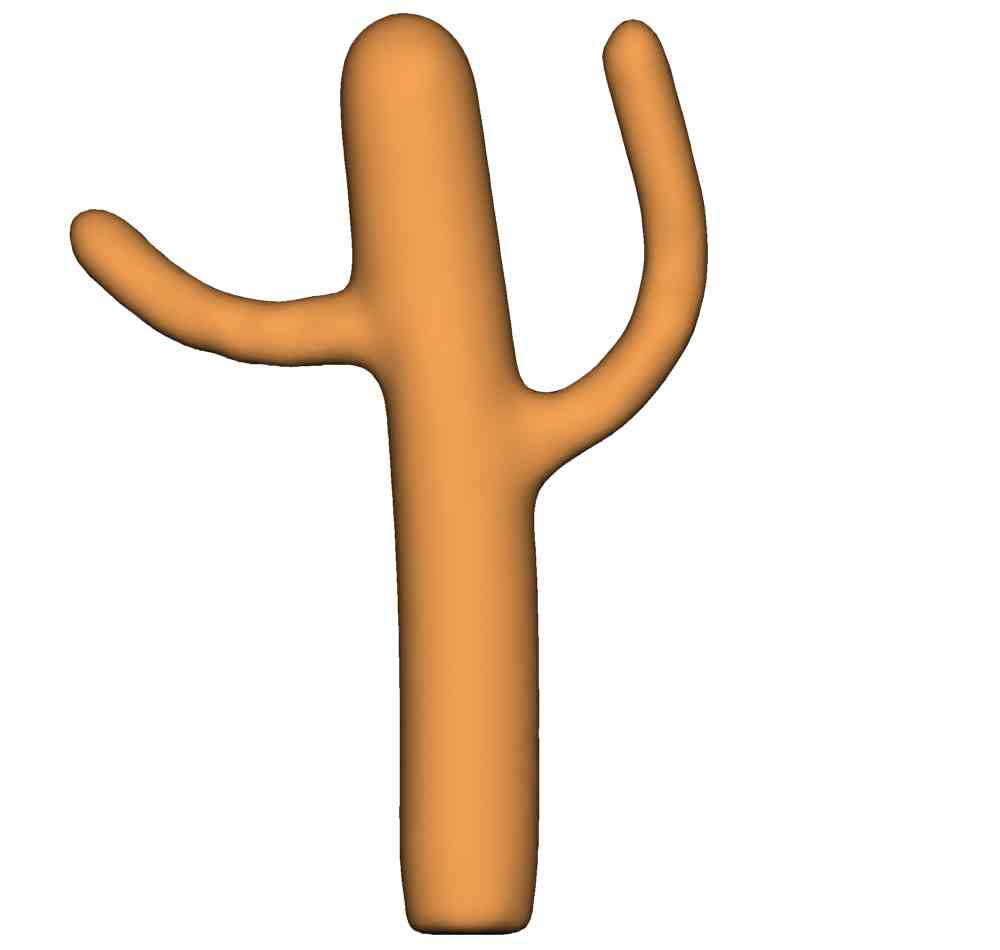}}}
\put(2,4.2){\resizebox{\unitlength}{!}{\includegraphics{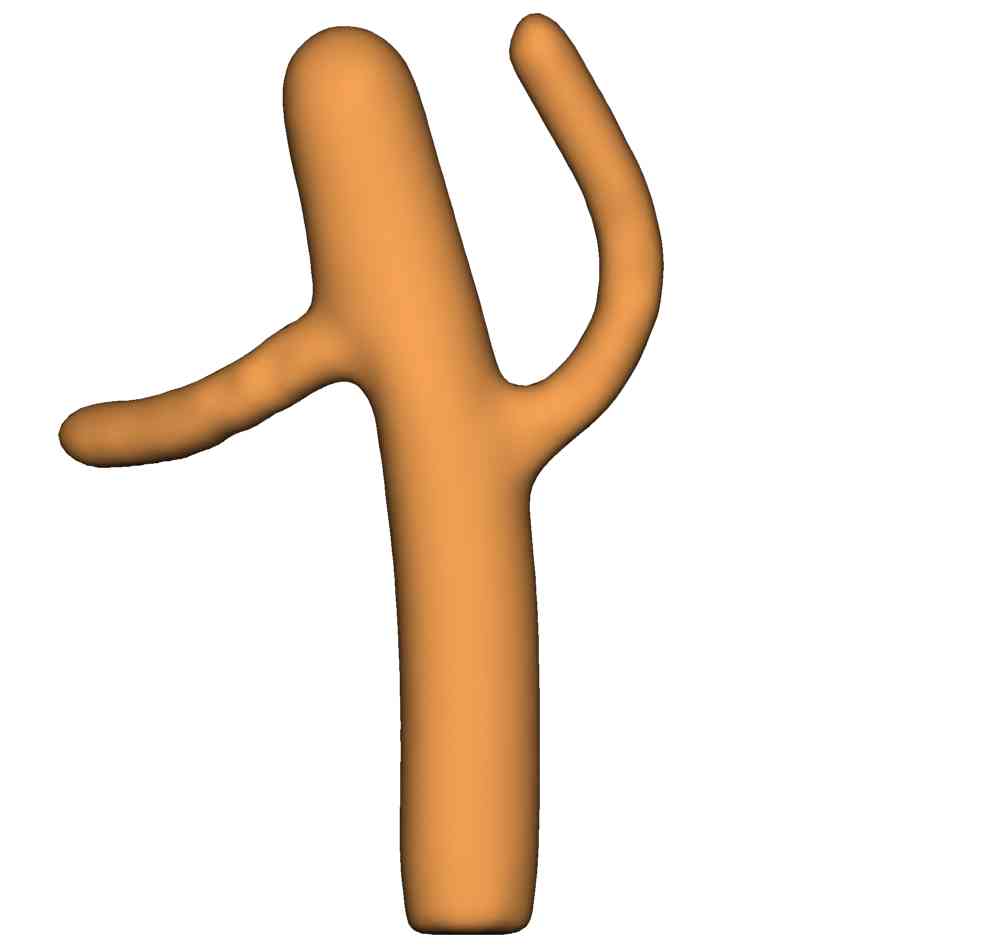}}}
\put(3,4.2){\resizebox{\unitlength}{!}{\includegraphics{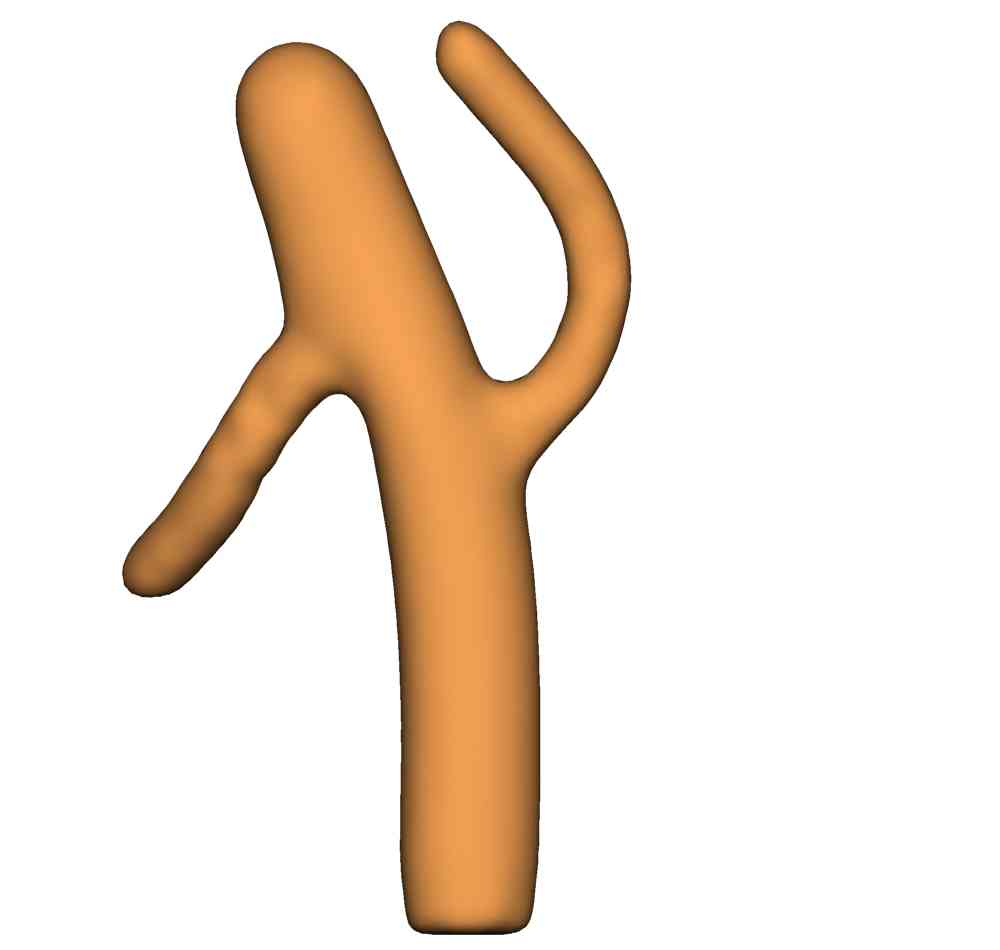}}}
\put(4,4.2){\resizebox{\unitlength}{!}{\includegraphics{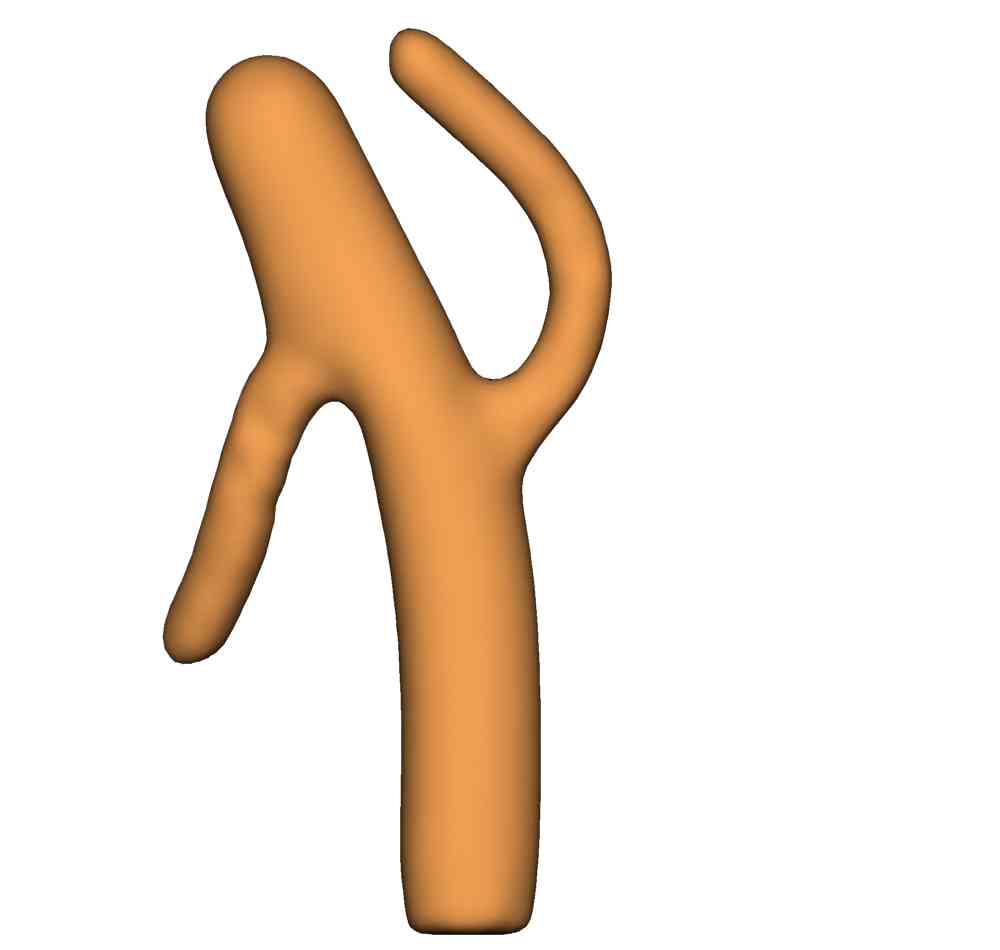}}}
\put(6,4.2){\resizebox{\unitlength}{!}{\includegraphics{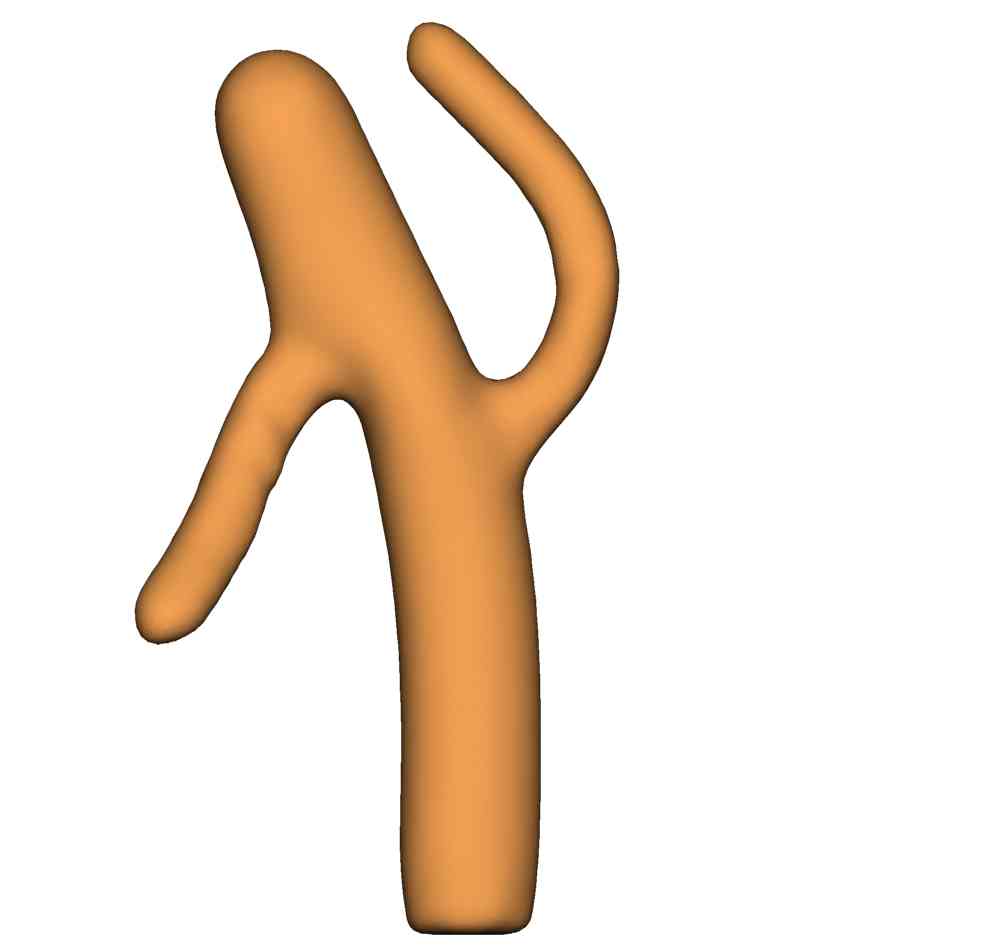}}}
\put(7,4.2){\resizebox{\unitlength}{!}{\includegraphics{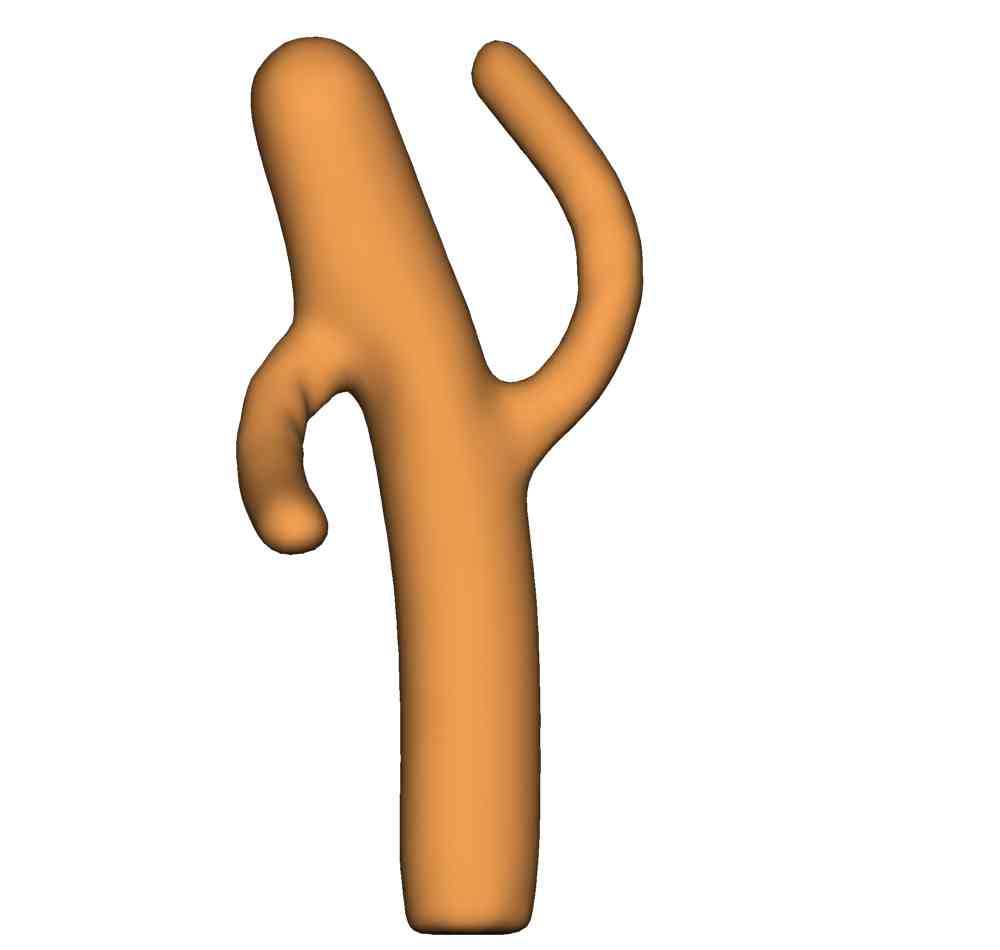}}}
\put(8,4.2){\resizebox{\unitlength}{!}{\includegraphics{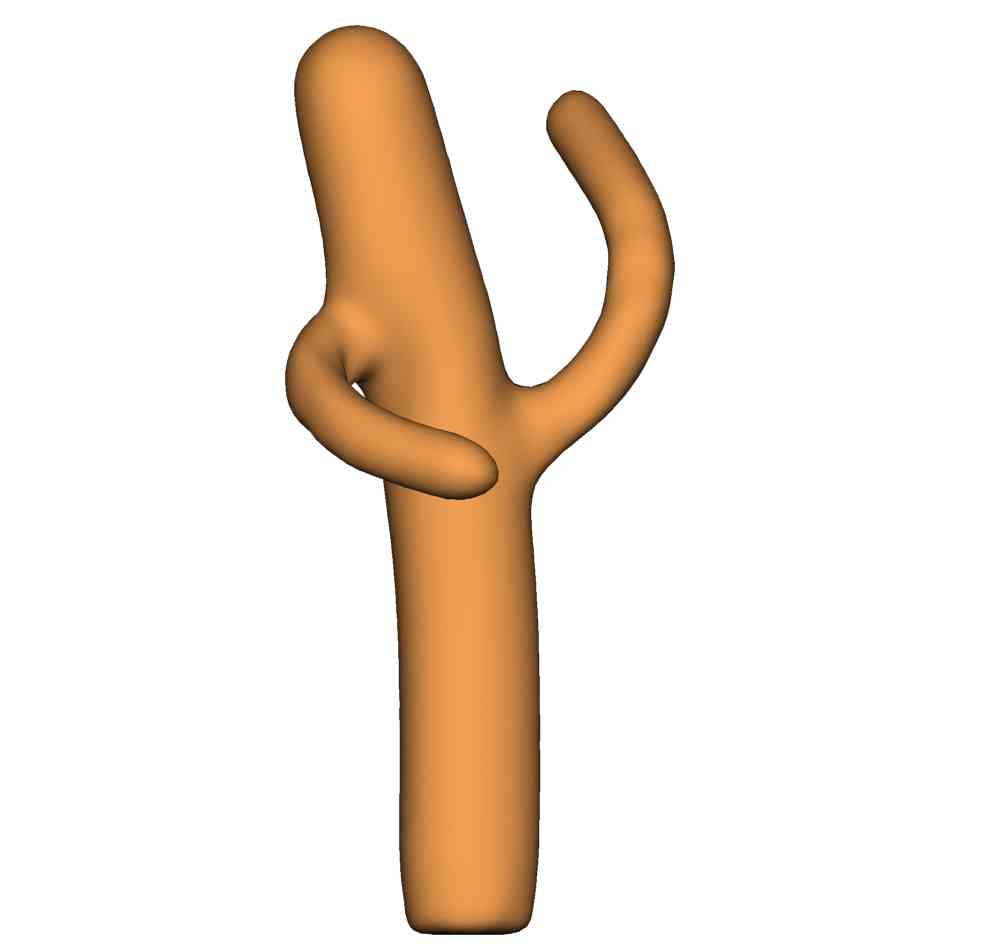}}}
\put(9,4.2){\resizebox{\unitlength}{!}{\includegraphics{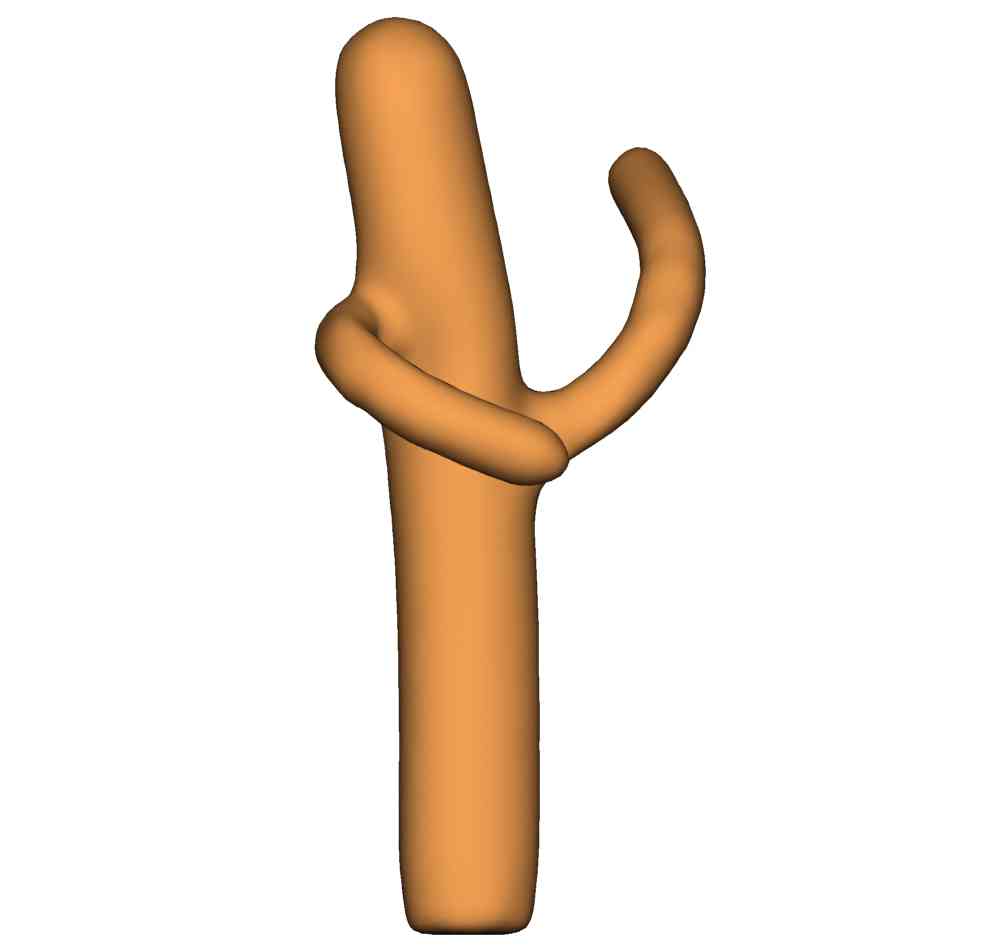}}}
\put(11,4.2){\resizebox{\unitlength}{!}{\includegraphics{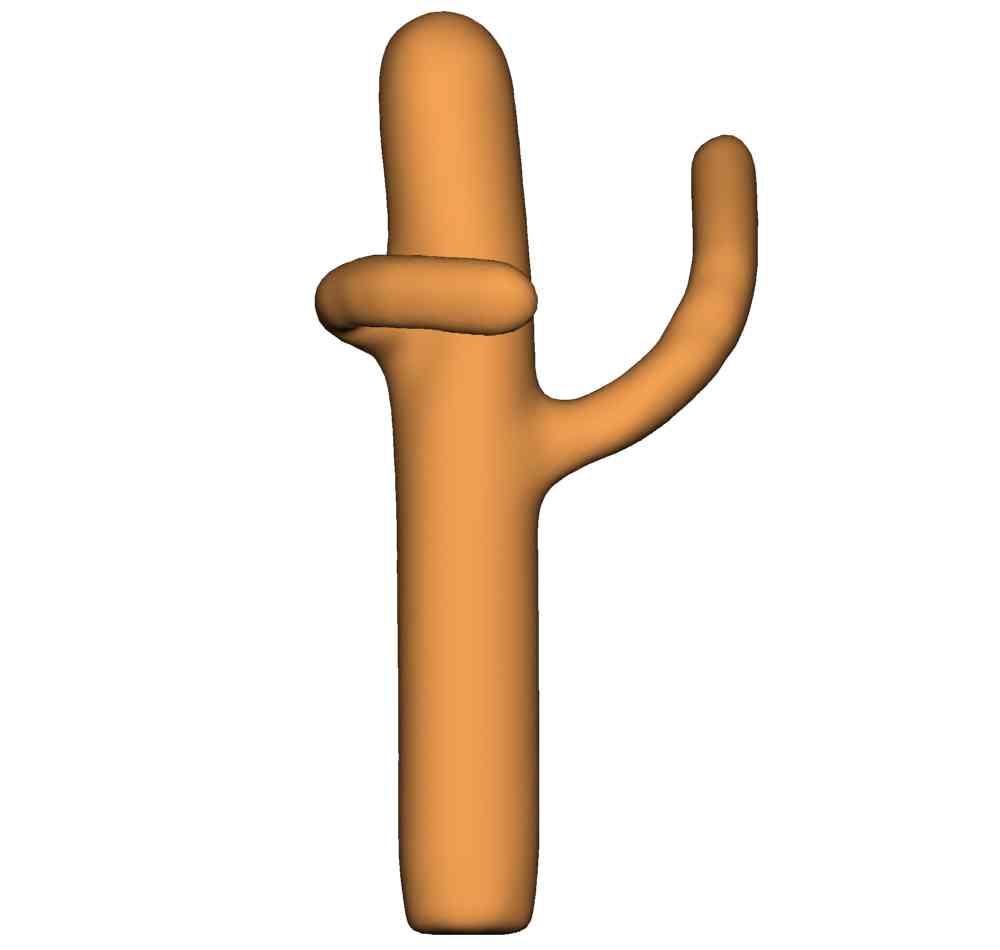}}}
\put(12,4.2){\resizebox{\unitlength}{!}{\includegraphics{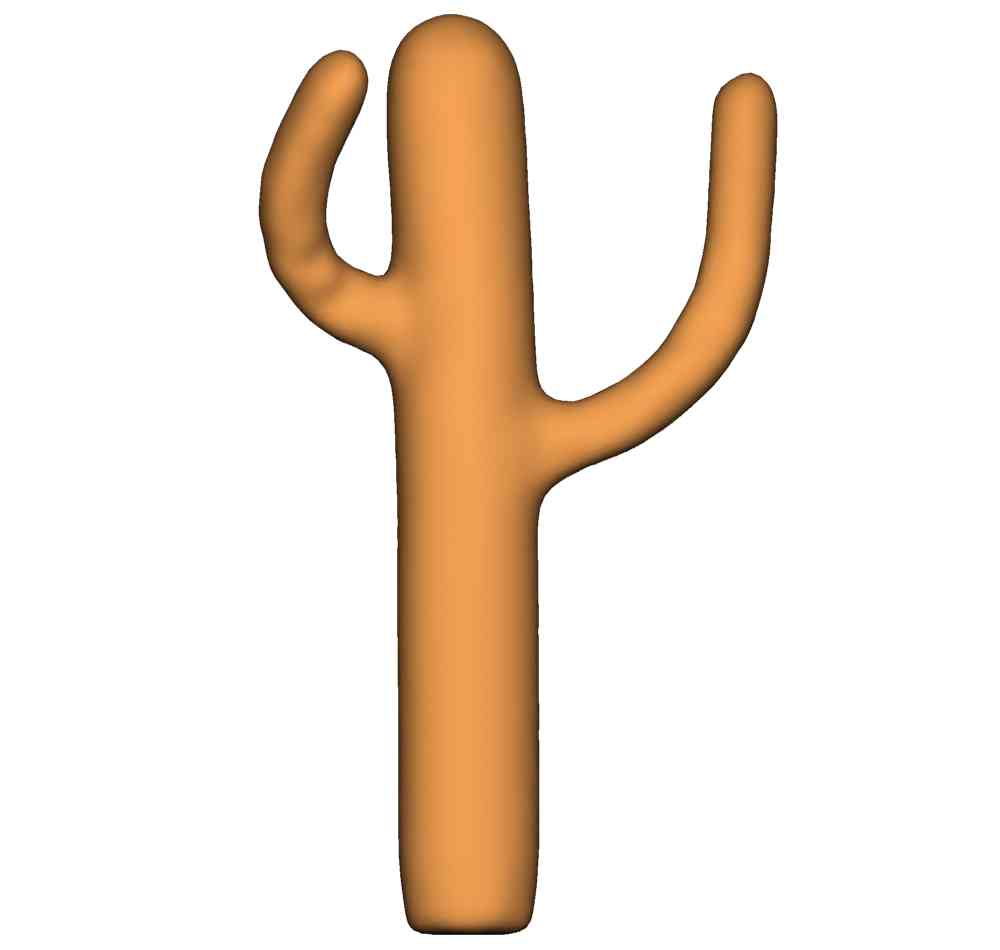}}}
\put(0,3.0){\resizebox{\unitlength}{!}{\includegraphics{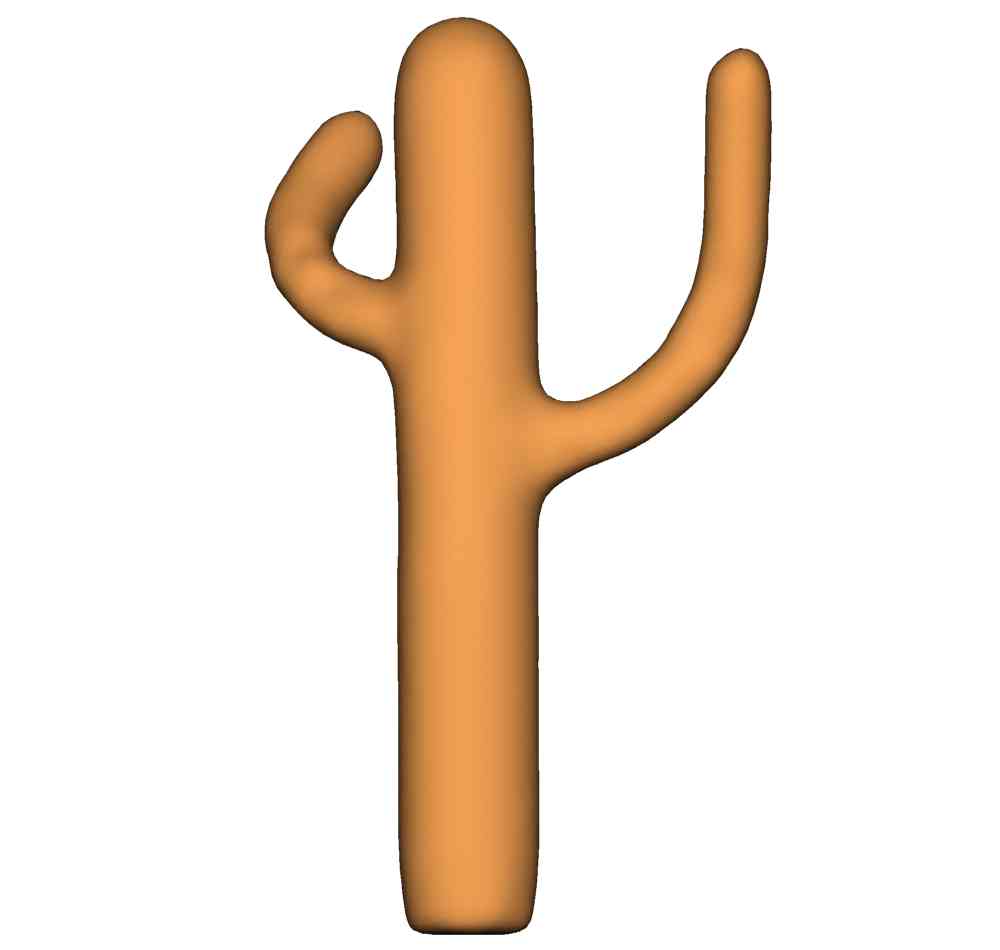}}}
\put(1,3.0){\resizebox{\unitlength}{!}{\includegraphics{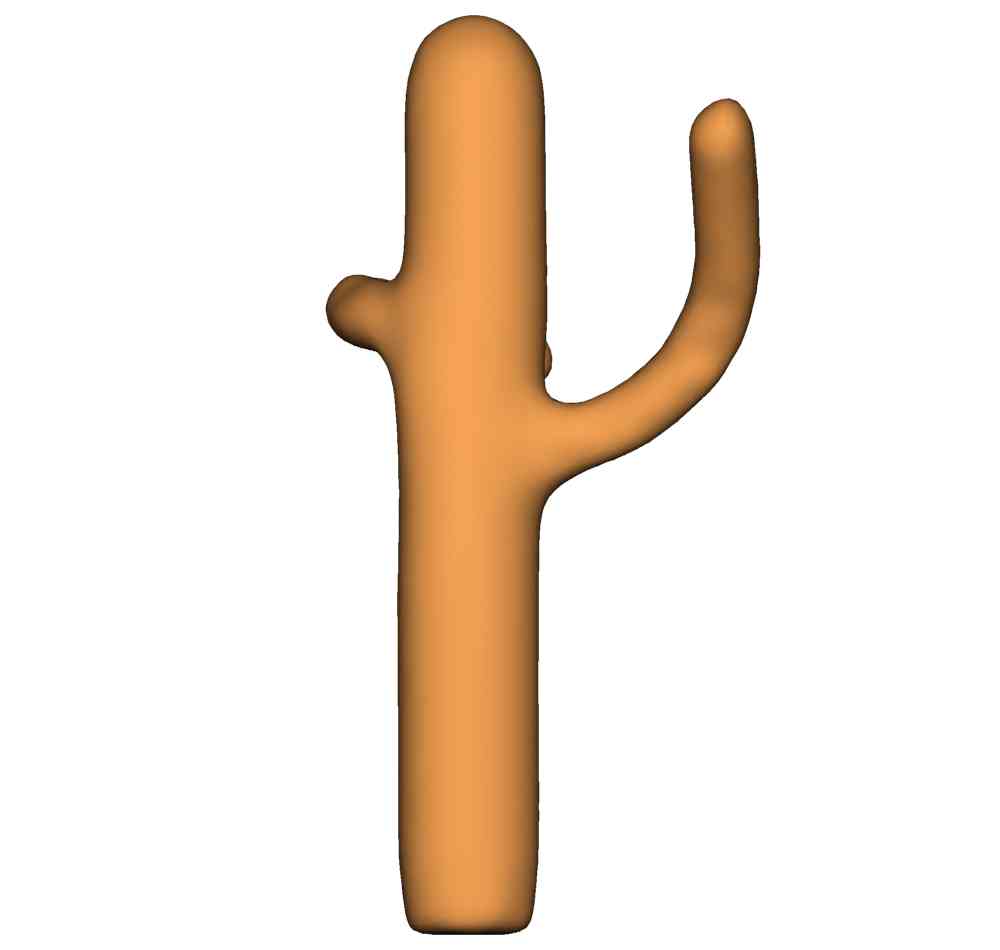}}}
\put(3,3.0){\resizebox{\unitlength}{!}{\includegraphics{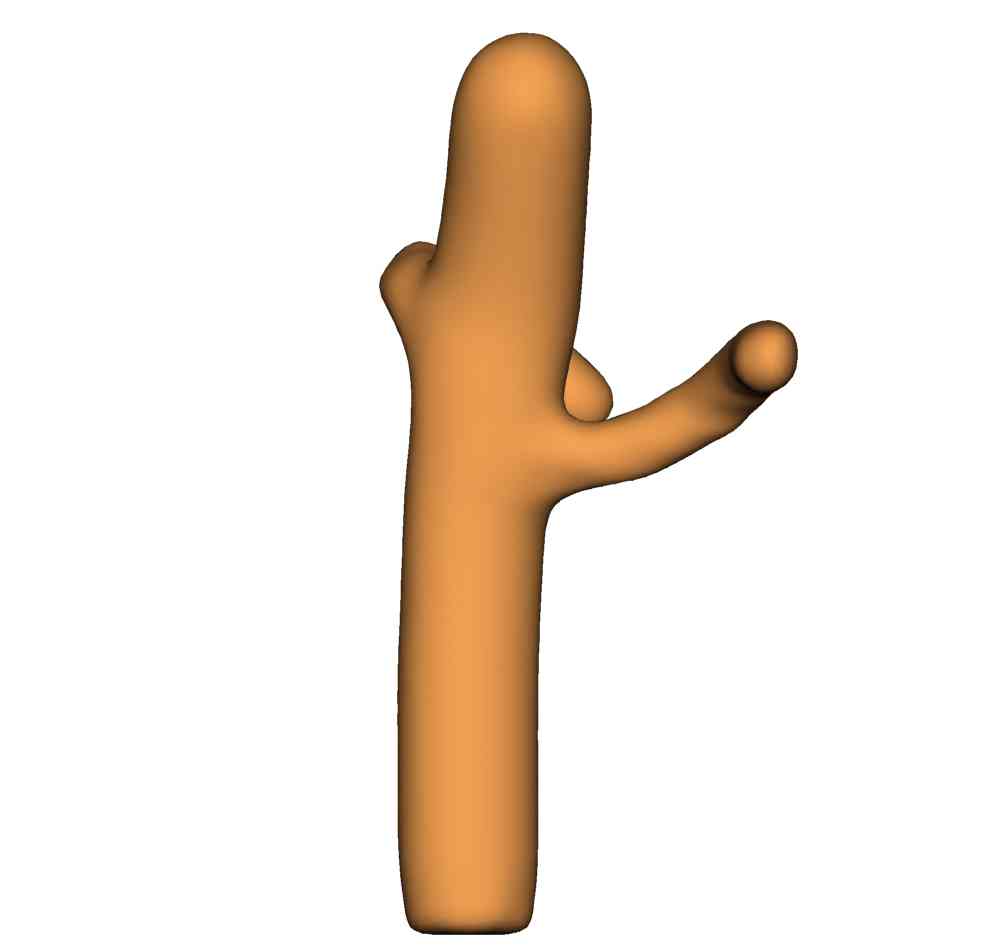}}}
\put(4,3.0){\resizebox{\unitlength}{!}{\includegraphics{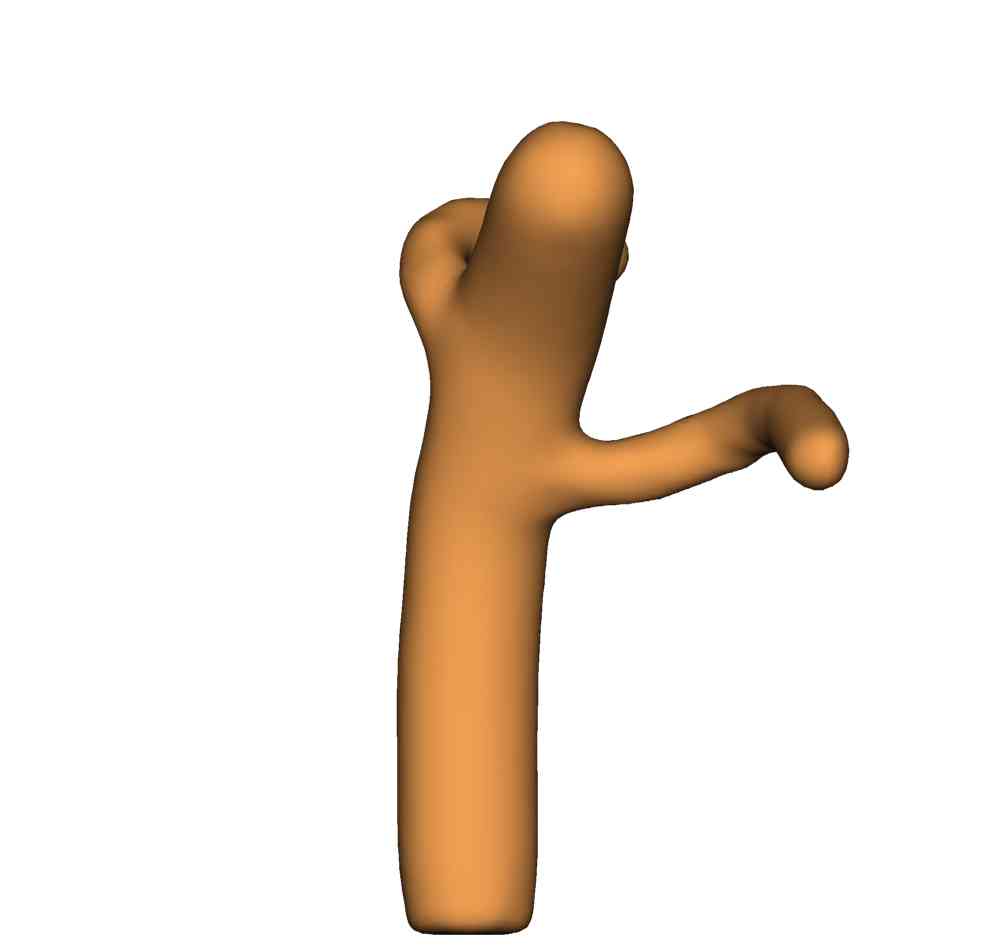}}}
\put(5,3.0){\resizebox{\unitlength}{!}{\includegraphics{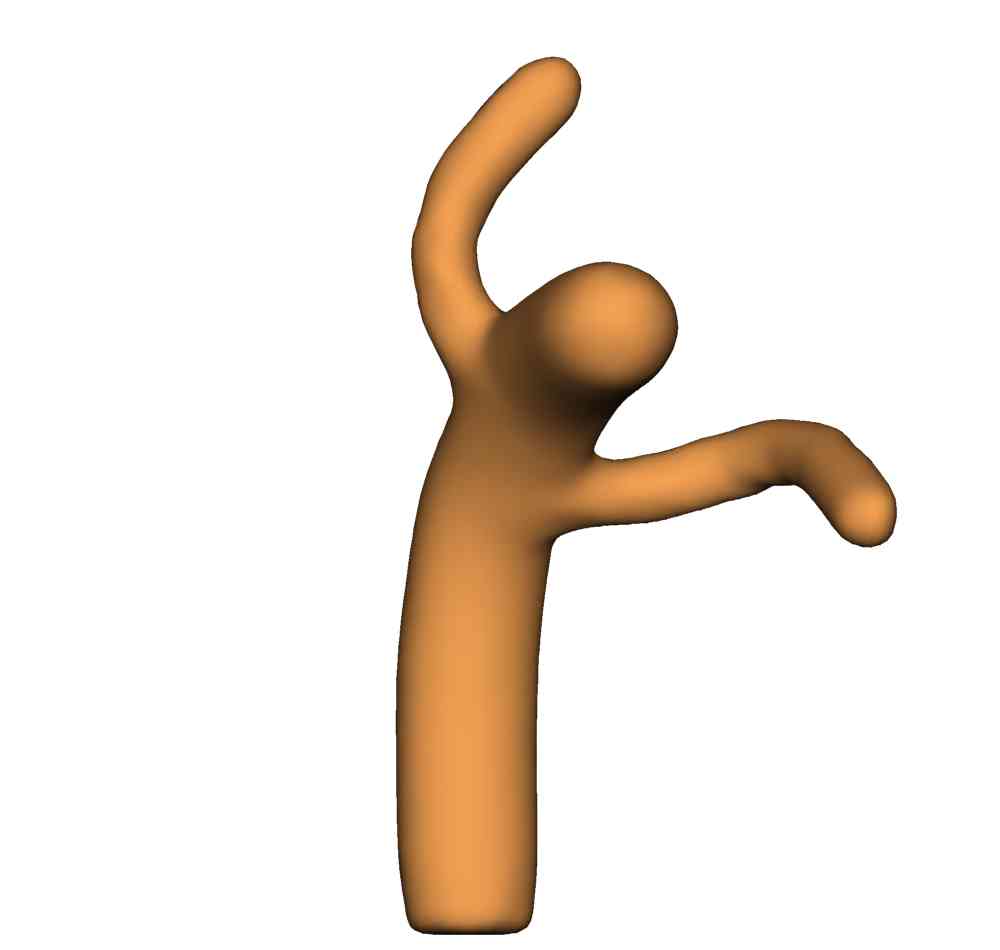}}}
\put(6,3.0){\resizebox{\unitlength}{!}{\includegraphics{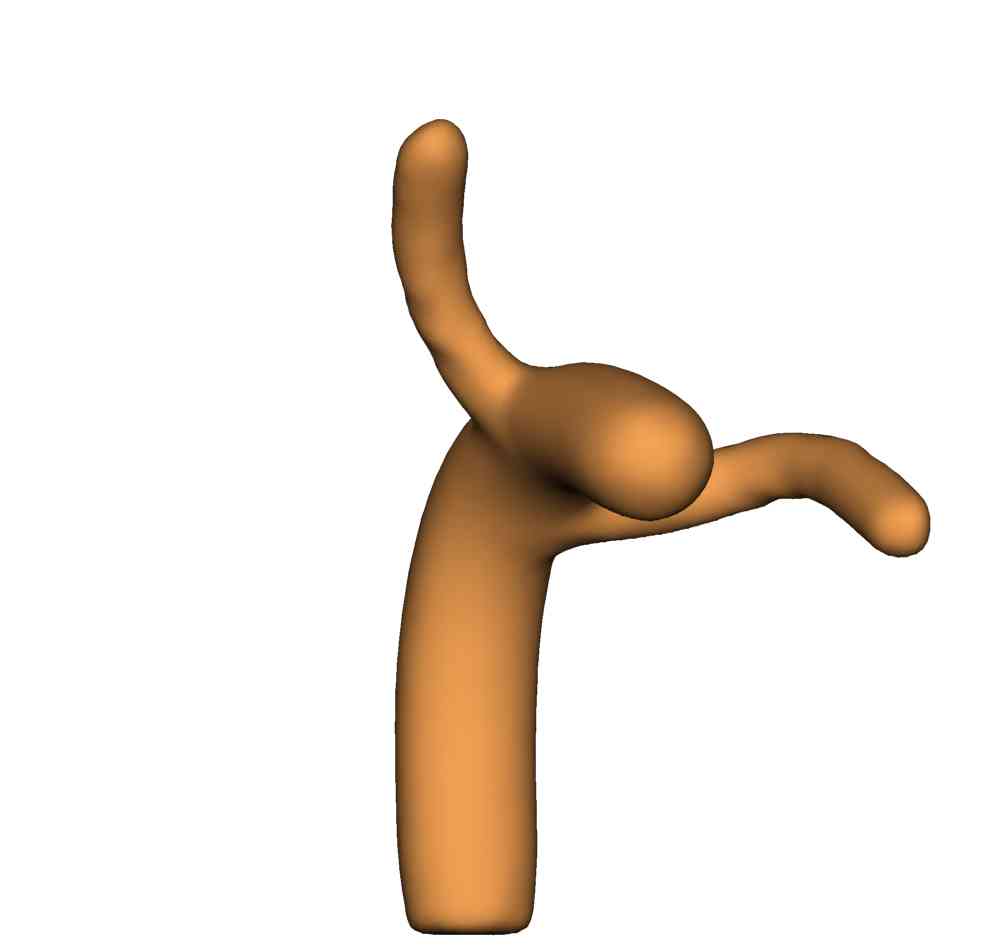}}}
\put(8,3.0){\resizebox{\unitlength}{!}{\includegraphics{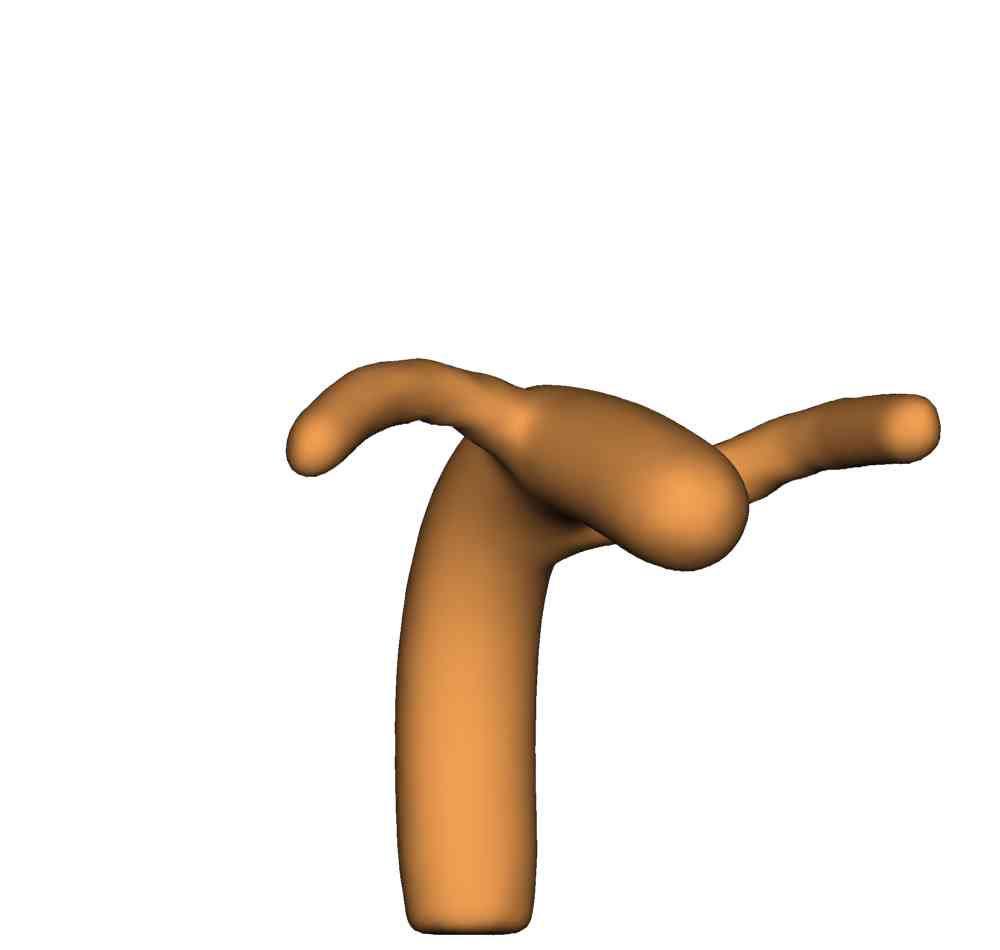}}}
\put(9,3.0){\resizebox{\unitlength}{!}{\includegraphics{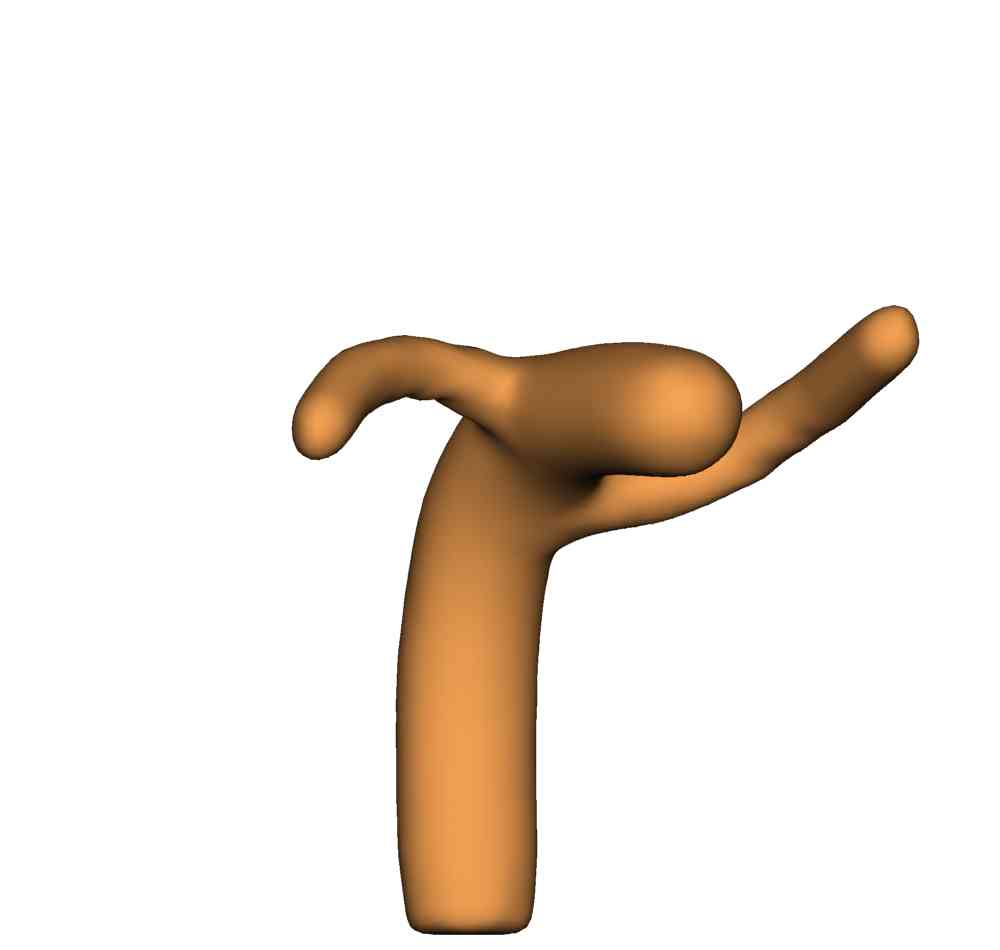}}}
\put(10,3.0){\resizebox{\unitlength}{!}{\includegraphics{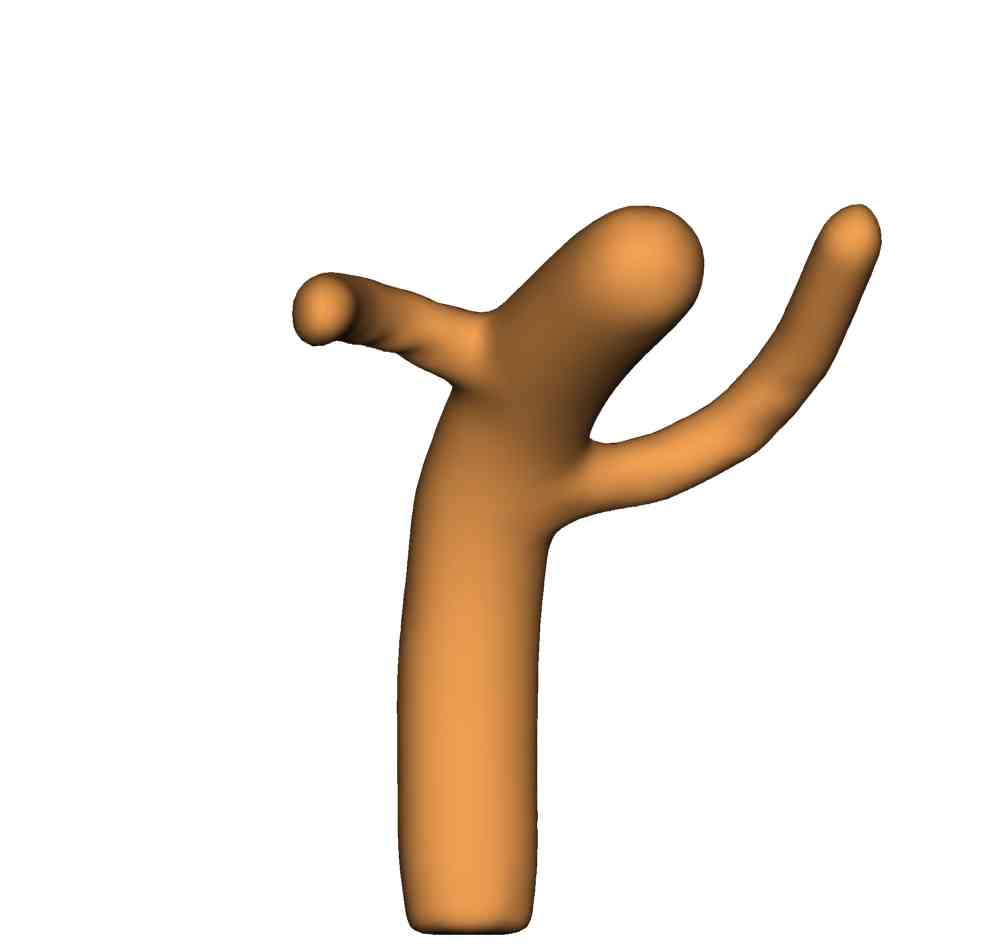}}}
\put(11,3.0){\resizebox{\unitlength}{!}{\includegraphics{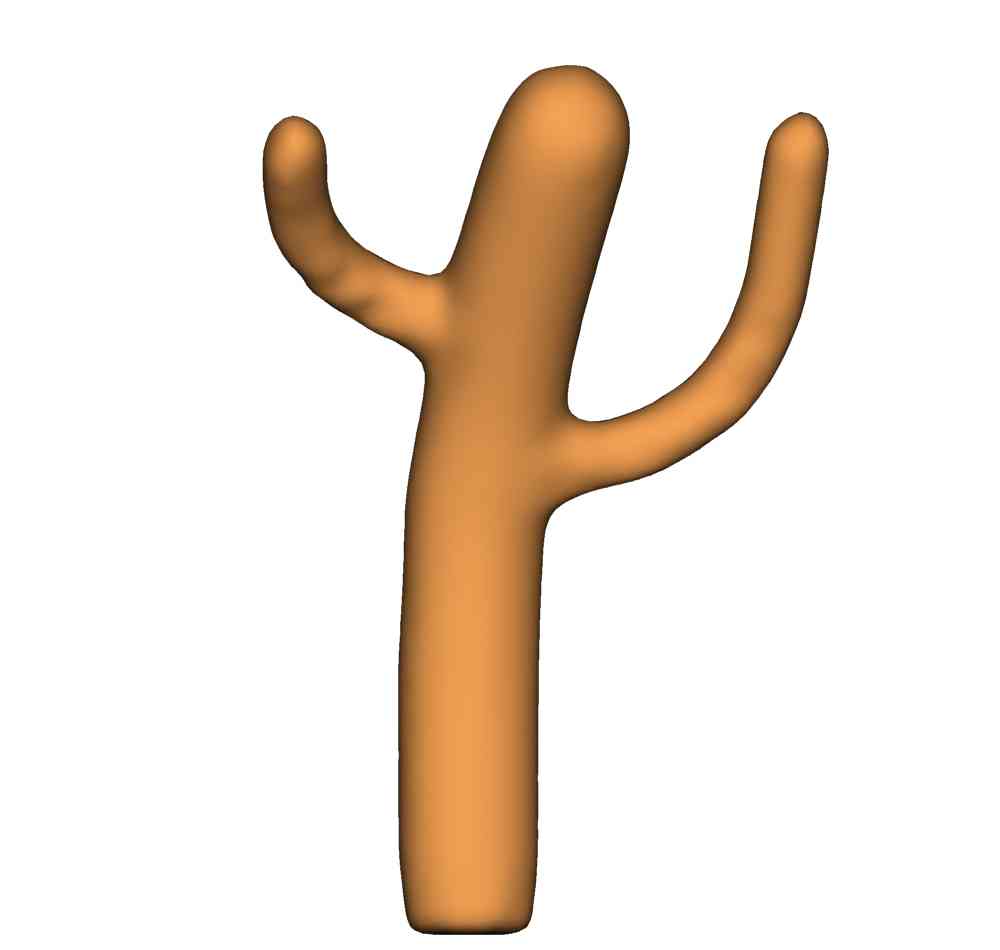}}}
\put(0,1.5){\resizebox{\unitlength}{!}{\includegraphics{cactus_spline_NL_0p0001mu_key_cropped0}}}
\put(5,1.5){\resizebox{\unitlength}{!}{\includegraphics{cactus_spline_NL_0p0001mu_key_cropped1}}}
\put(10,1.5){\resizebox{\unitlength}{!}{\includegraphics{cactus_spline_NL_0p0001mu_key_cropped2}}}
\put(2,0.3){\resizebox{\unitlength}{!}{\includegraphics{cactus_spline_NL_0p0001mu_key_cropped3}}}
\put(7,0.3){\resizebox{\unitlength}{!}{\includegraphics{cactus_spline_NL_0p0001mu_key_cropped4}}}
\put(12,0.3){\resizebox{\unitlength}{!}{\includegraphics{cactus_spline_NL_0p0001mu_key_cropped5}}}
\put(1,1.5){\resizebox{\unitlength}{!}{\includegraphics{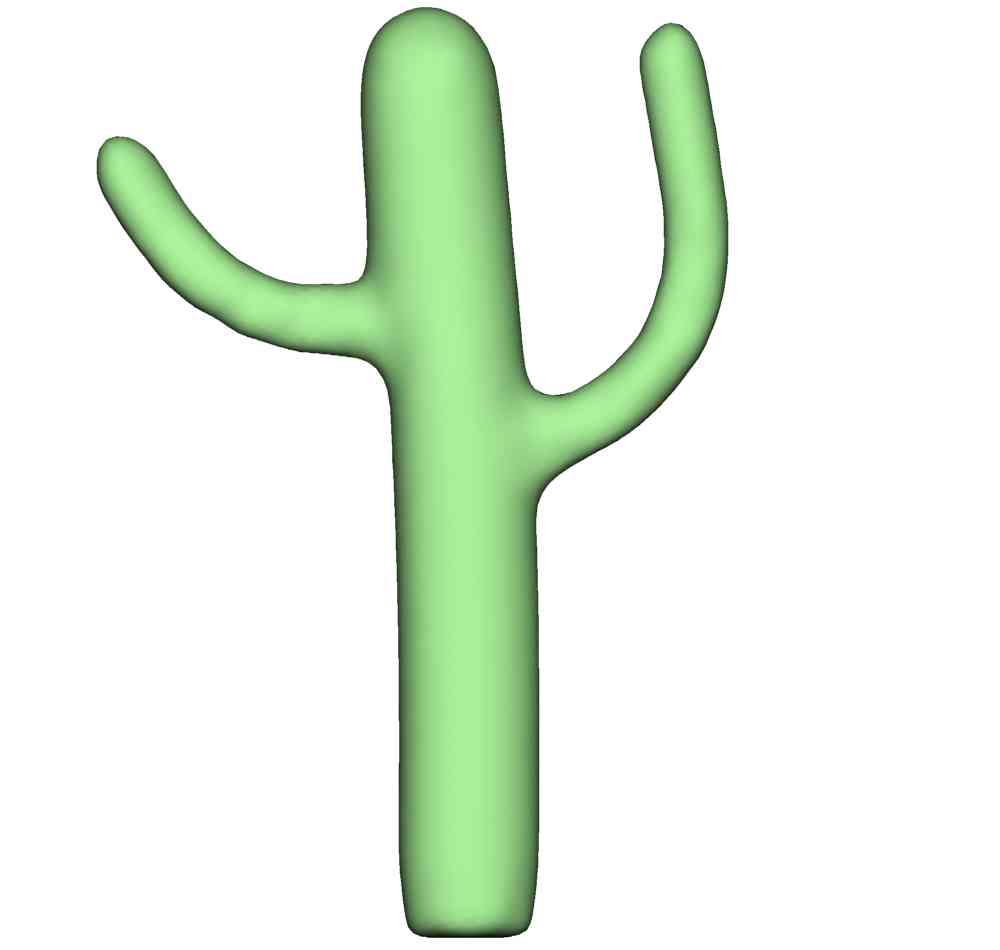}}}
\put(2,1.5){\resizebox{\unitlength}{!}{\includegraphics{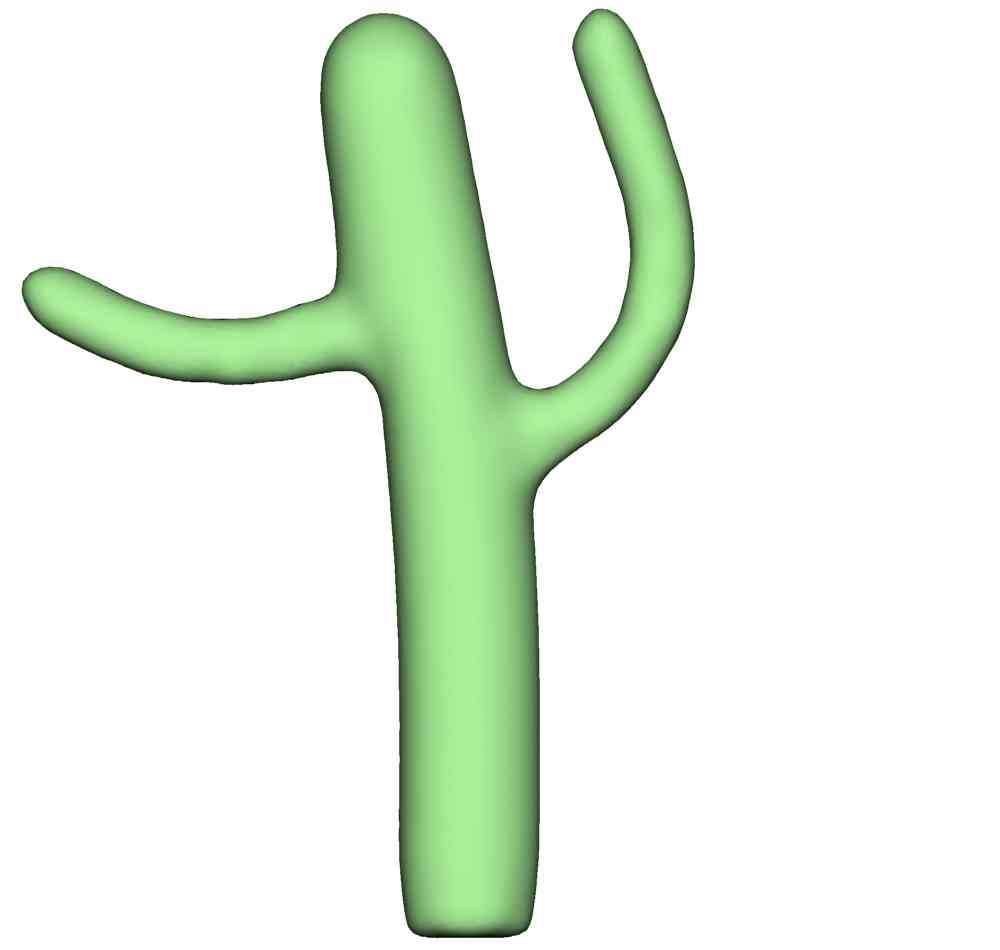}}}
\put(3,1.5){\resizebox{\unitlength}{!}{\includegraphics{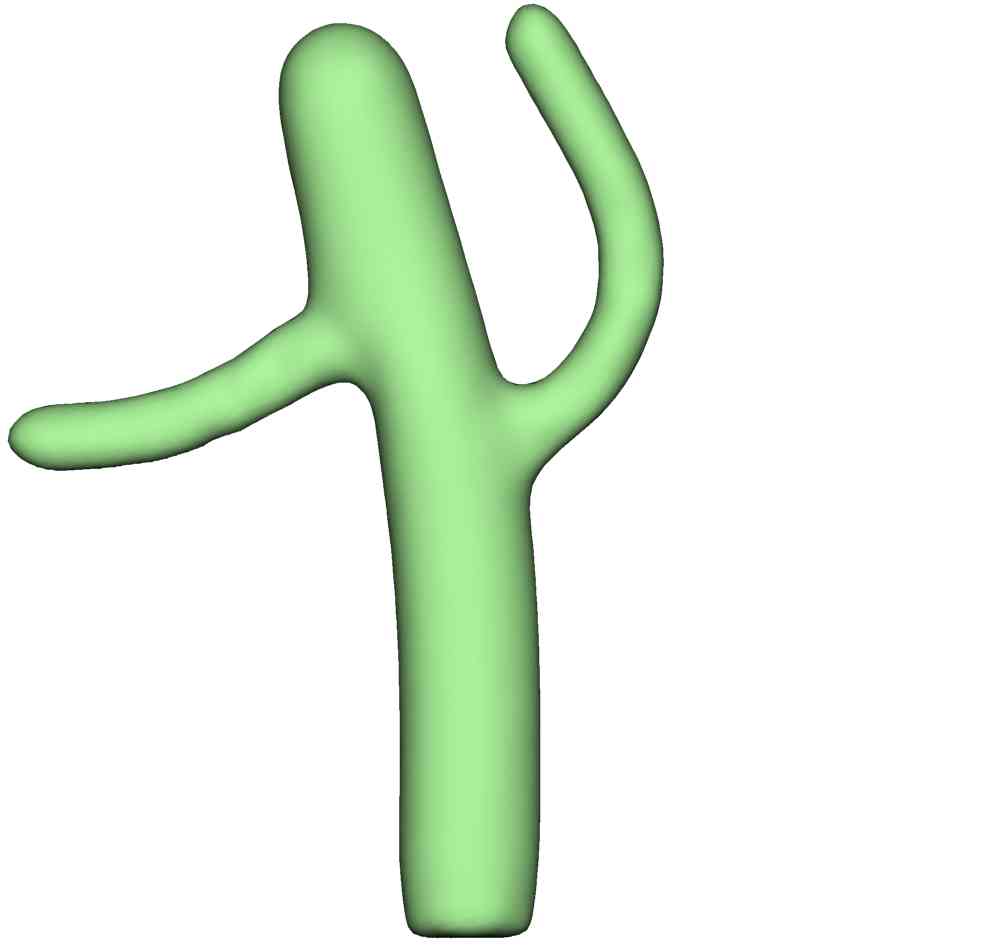}}}
\put(4,1.5){\resizebox{\unitlength}{!}{\includegraphics{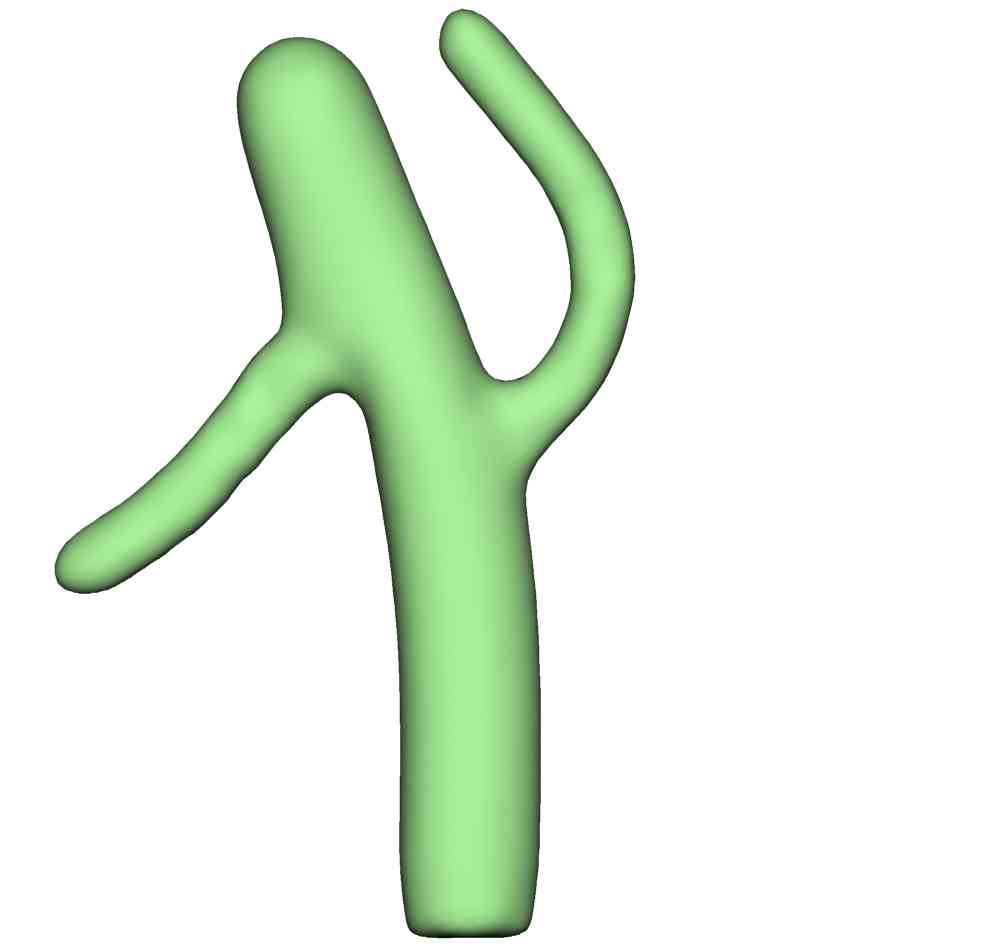}}}
\put(6,1.5){\resizebox{\unitlength}{!}{\includegraphics{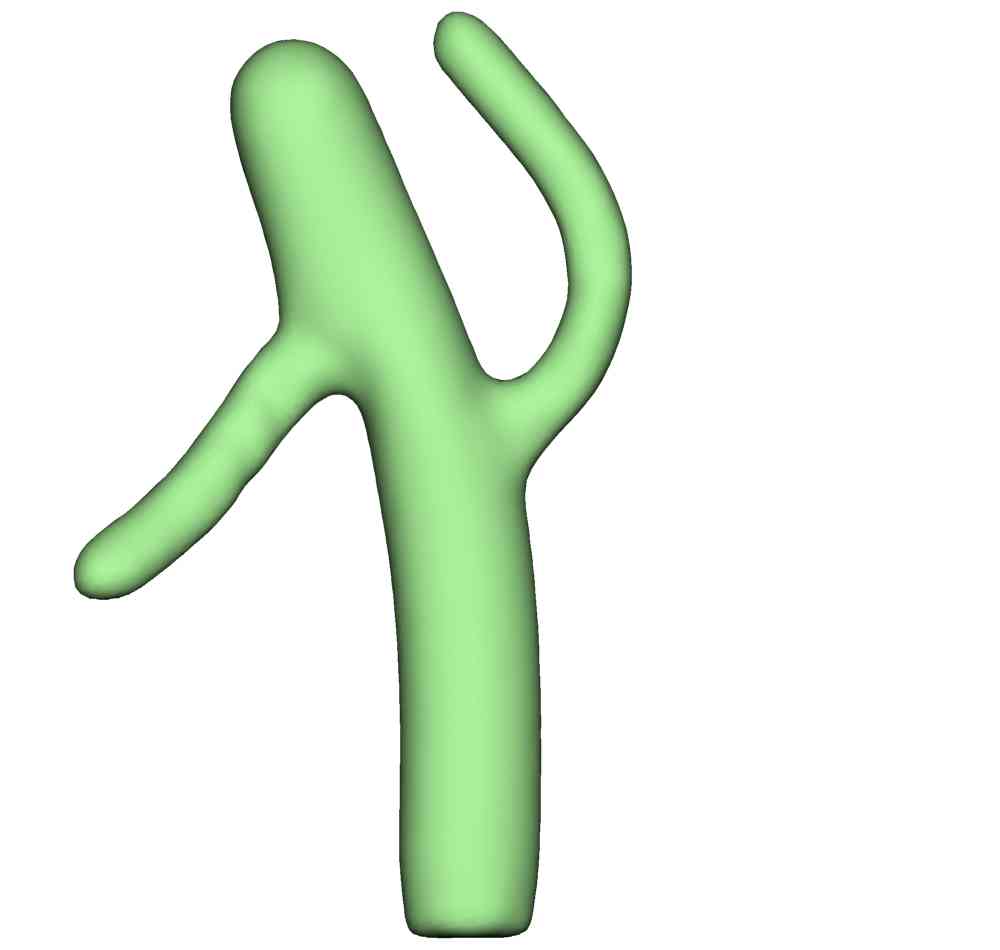}}}
\put(7,1.5){\resizebox{\unitlength}{!}{\includegraphics{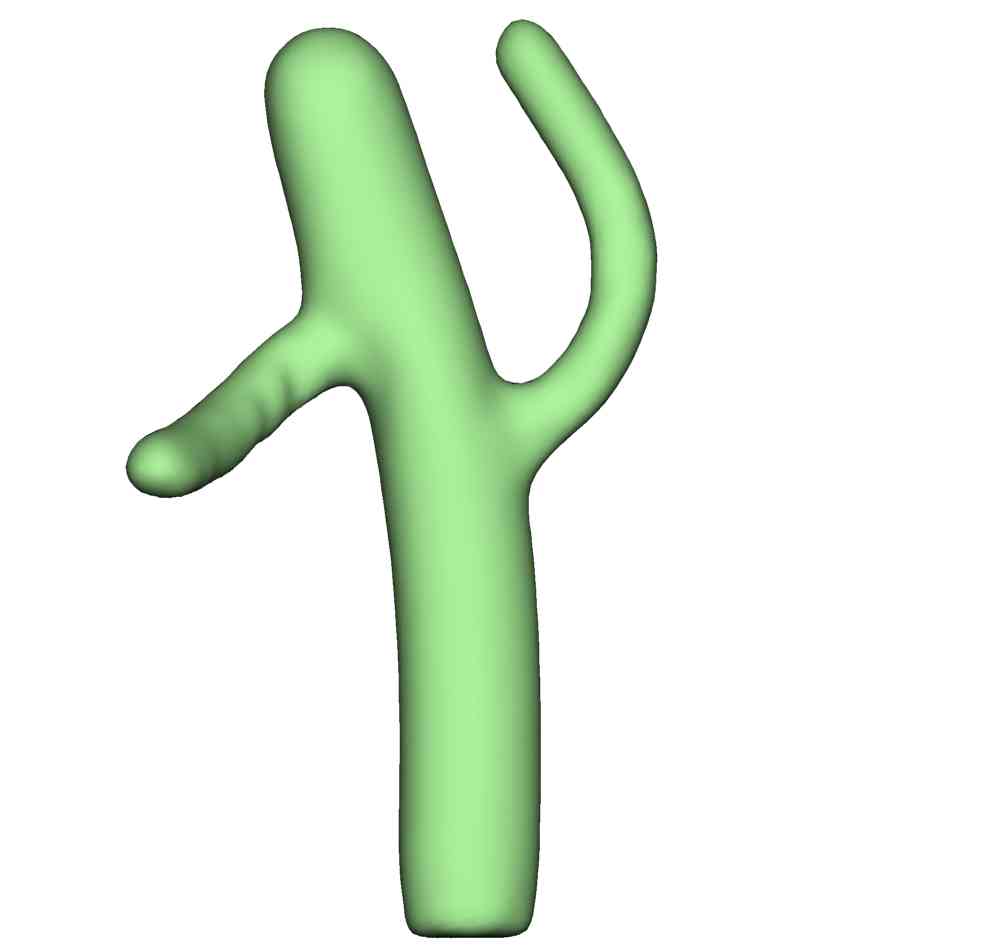}}}
\put(8,1.5){\resizebox{\unitlength}{!}{\includegraphics{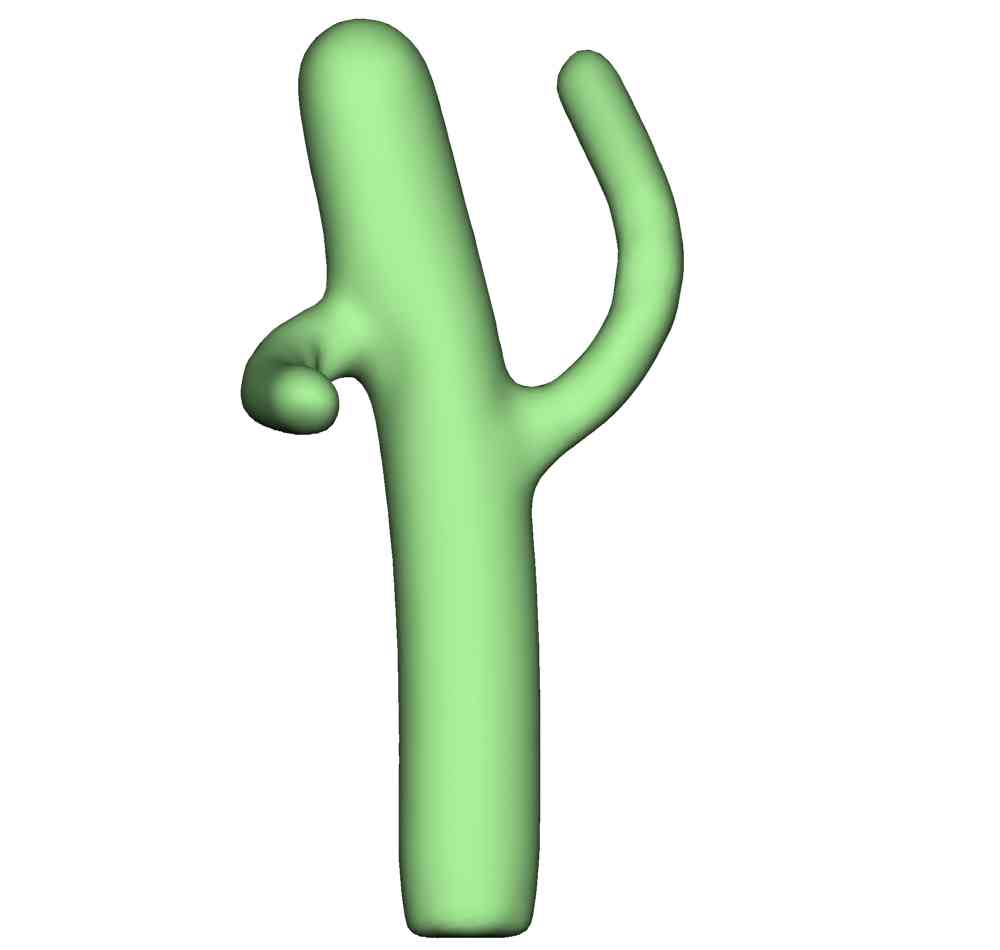}}}
\put(9,1.5){\resizebox{\unitlength}{!}{\includegraphics{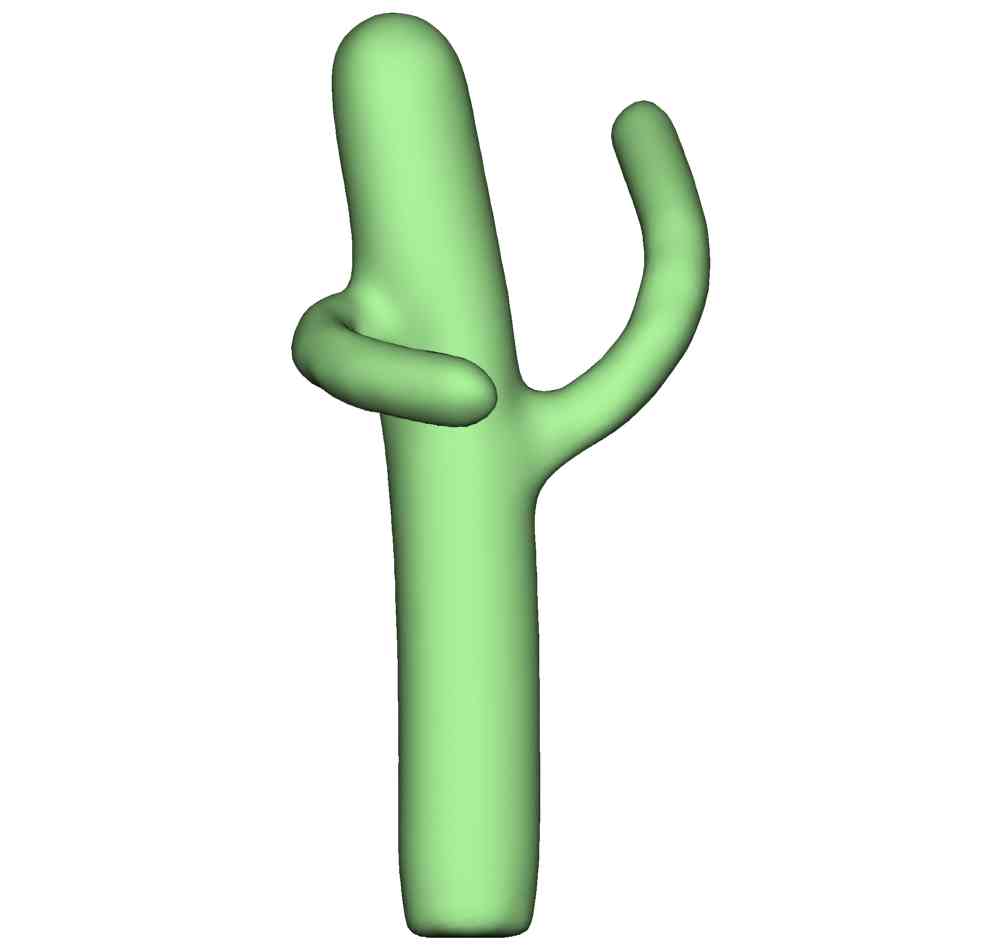}}}
\put(11,1.5){\resizebox{\unitlength}{!}{\includegraphics{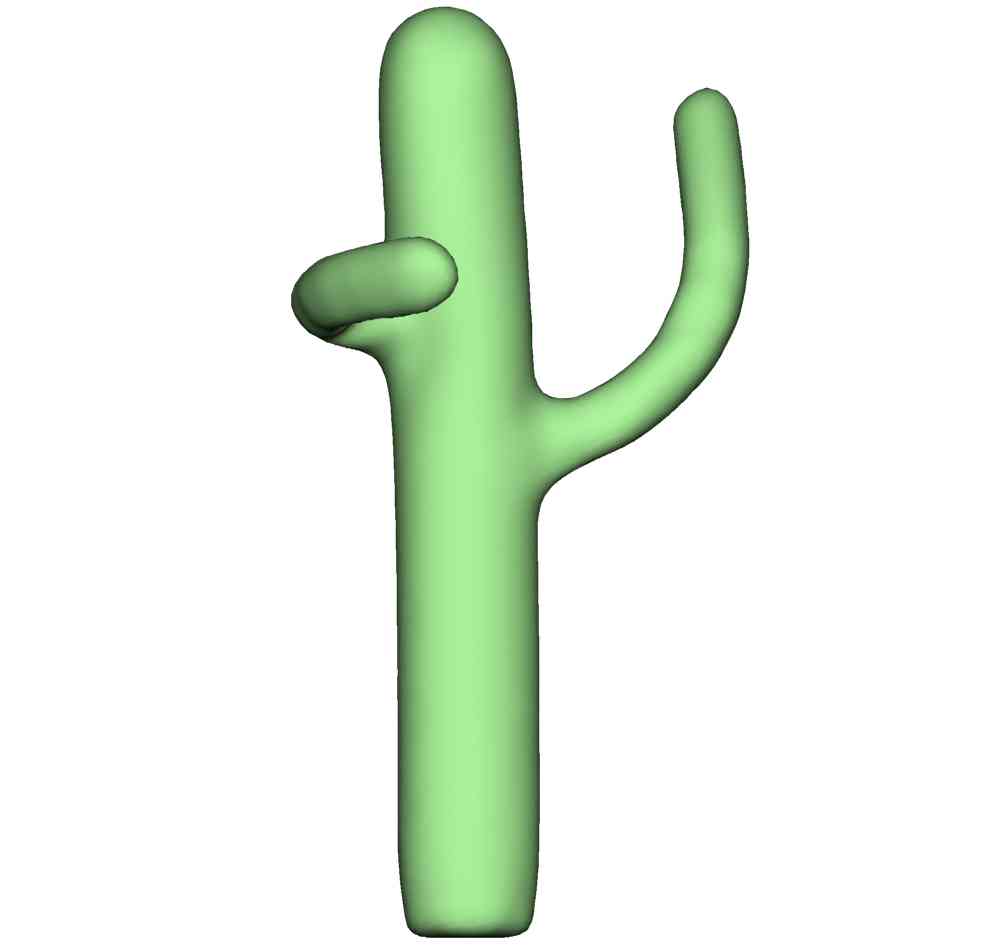}}}
\put(12,1.5){\resizebox{\unitlength}{!}{\includegraphics{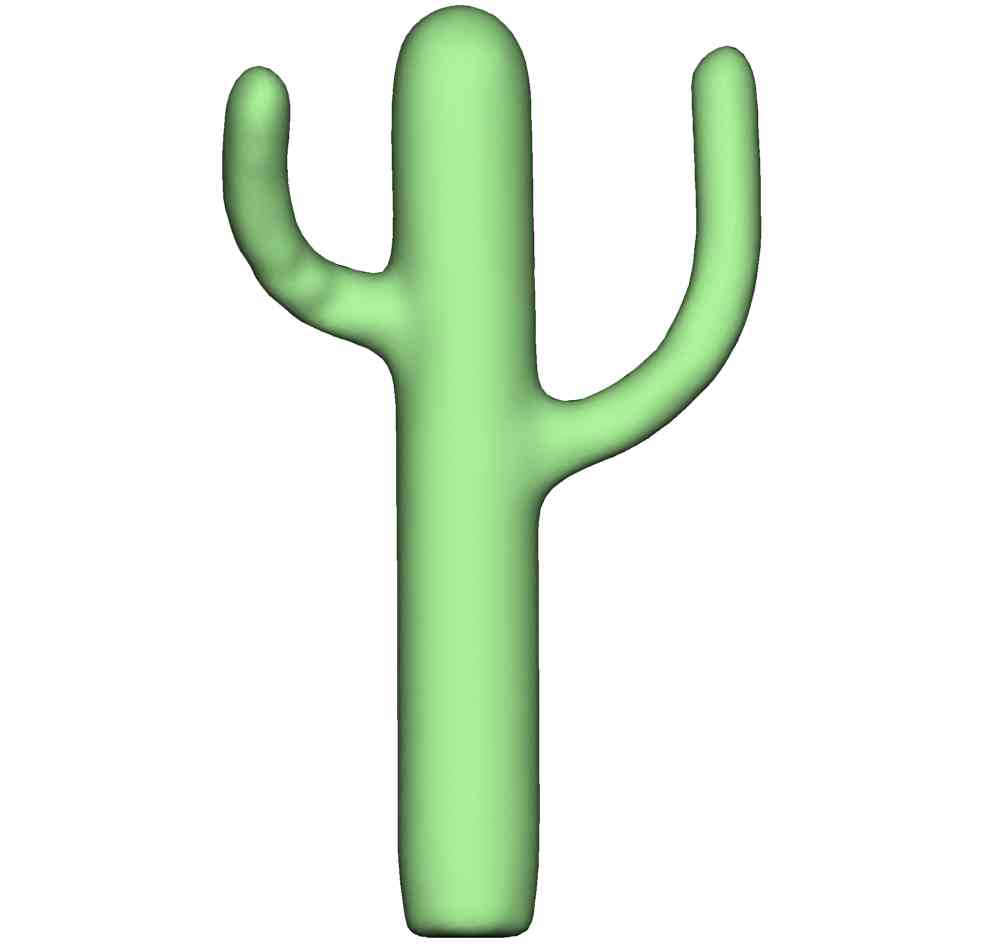}}}
\put(0,0.3){\resizebox{\unitlength}{!}{\includegraphics{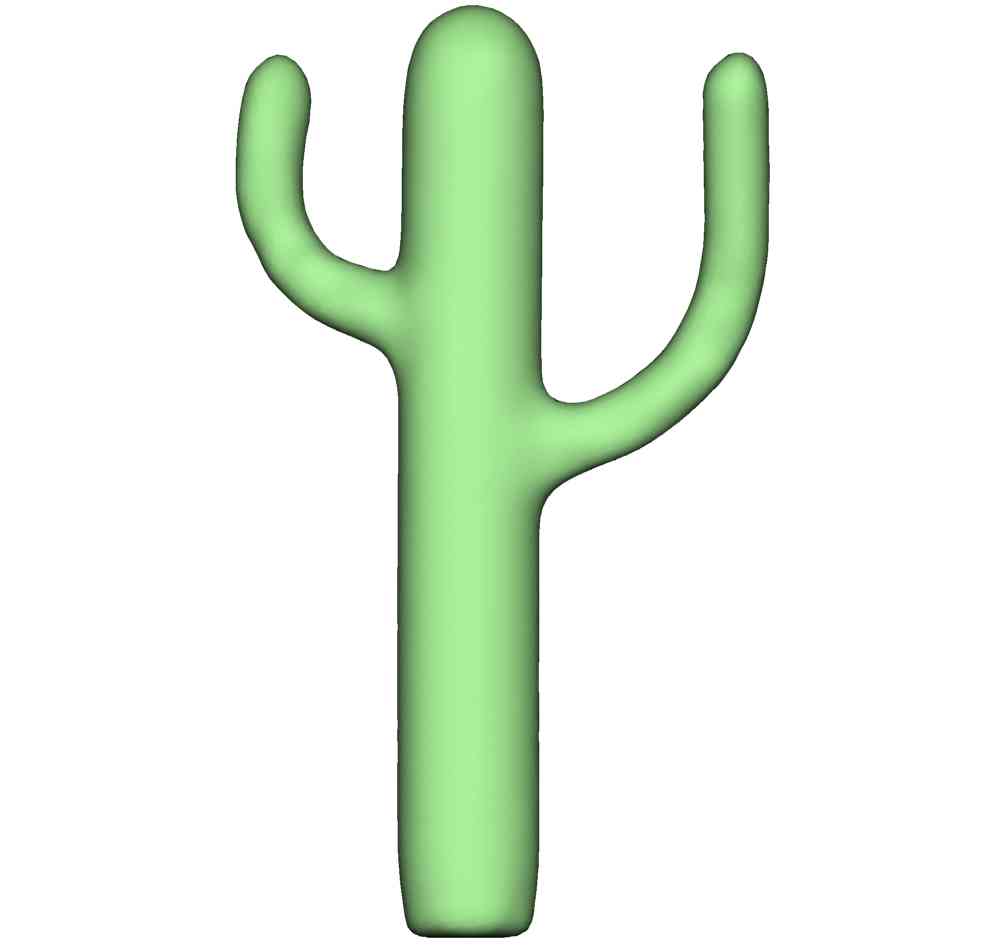}}}
\put(1,0.3){\resizebox{\unitlength}{!}{\includegraphics{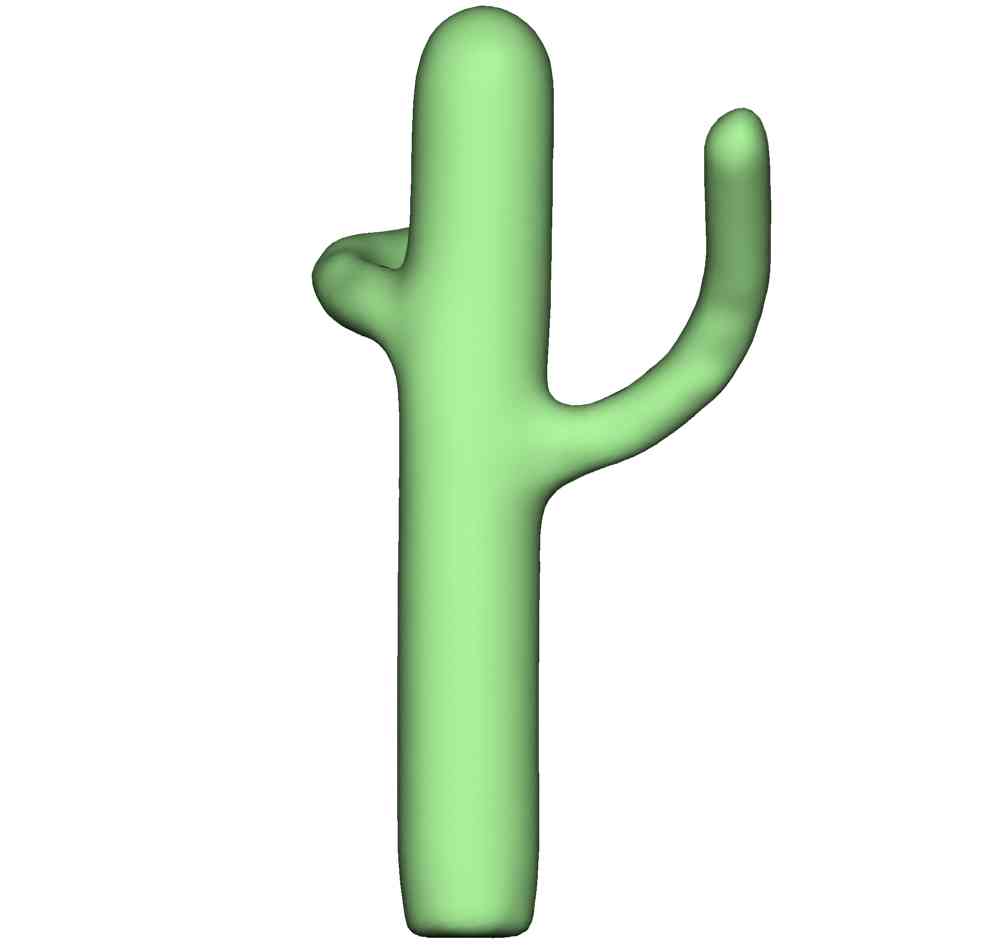}}}
\put(3,0.3){\resizebox{\unitlength}{!}{\includegraphics{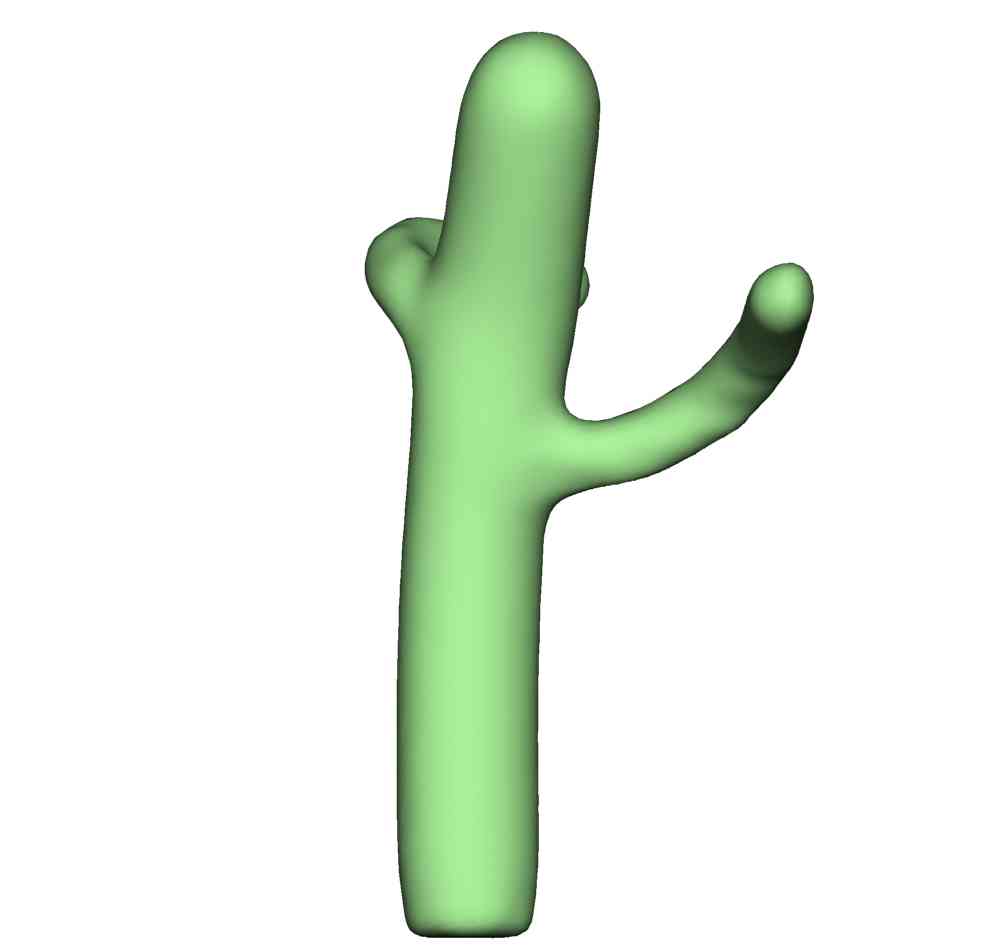}}}
\put(4,0.3){\resizebox{\unitlength}{!}{\includegraphics{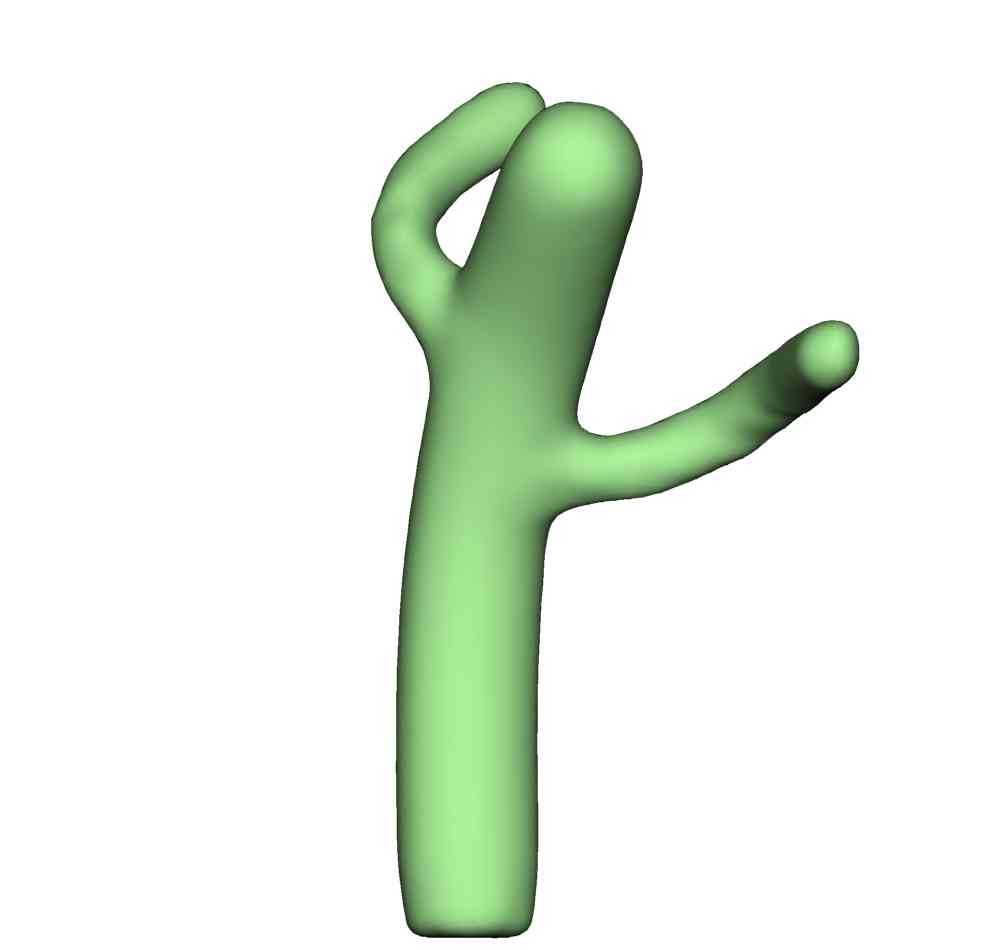}}}
\put(5,0.3){\resizebox{\unitlength}{!}{\includegraphics{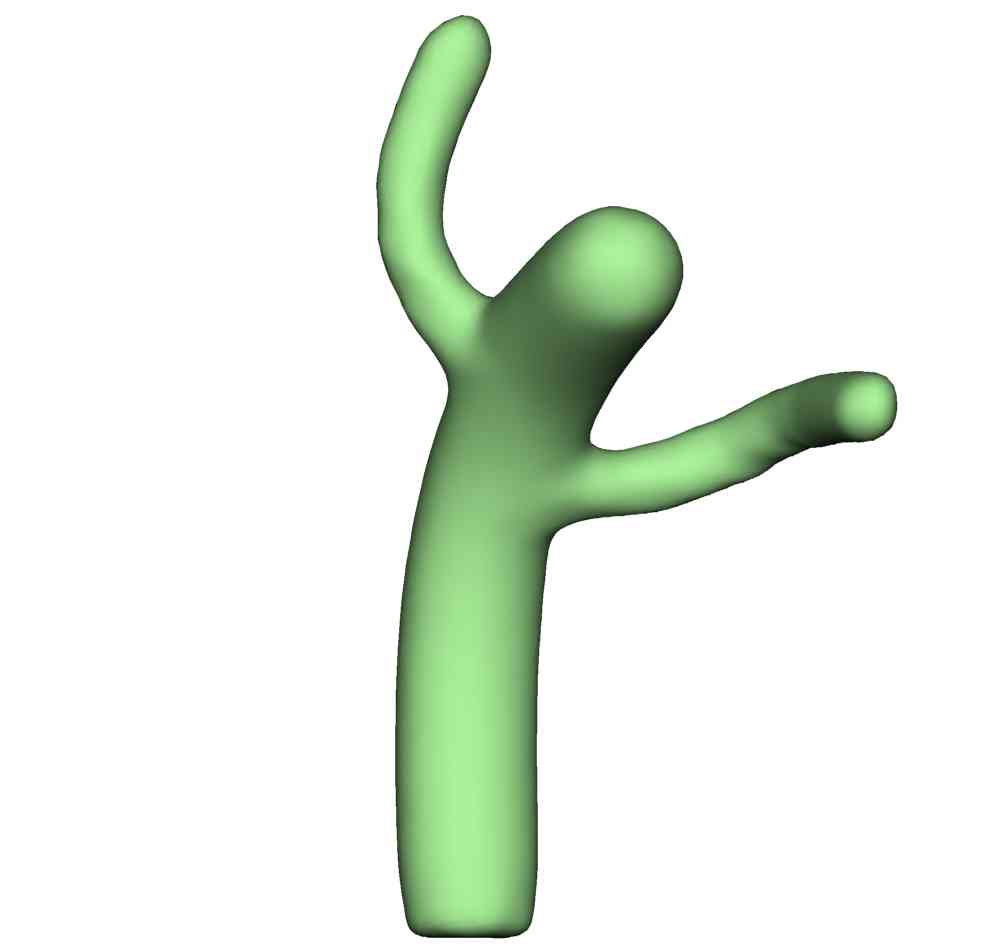}}}
\put(6,0.3){\resizebox{\unitlength}{!}{\includegraphics{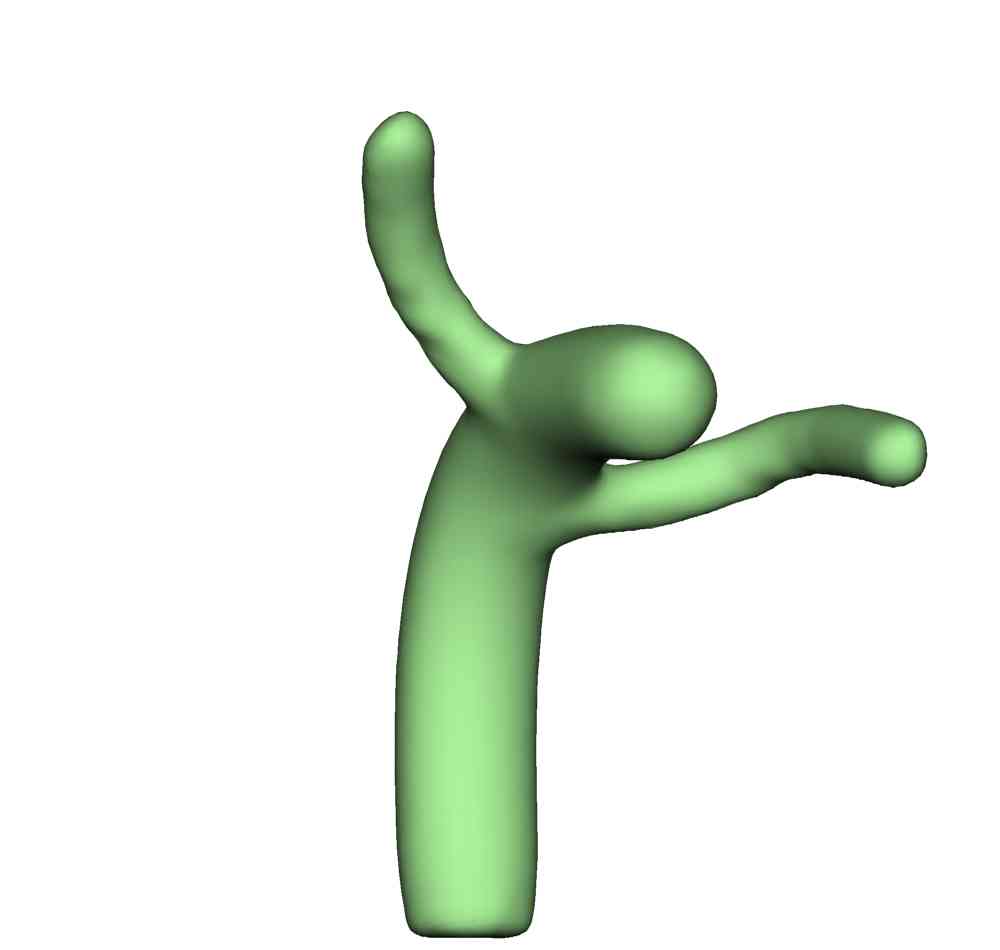}}}
\put(8,0.3){\resizebox{\unitlength}{!}{\includegraphics{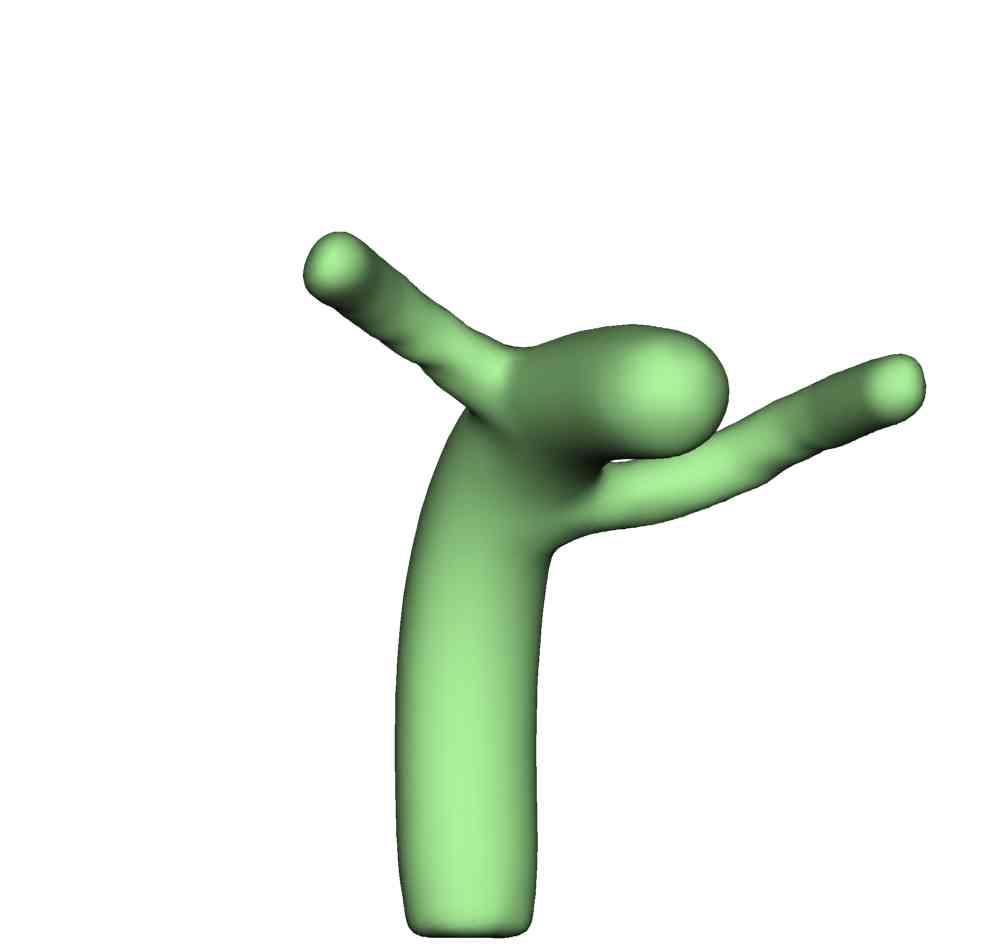}}}
\put(9,0.3){\resizebox{\unitlength}{!}{\includegraphics{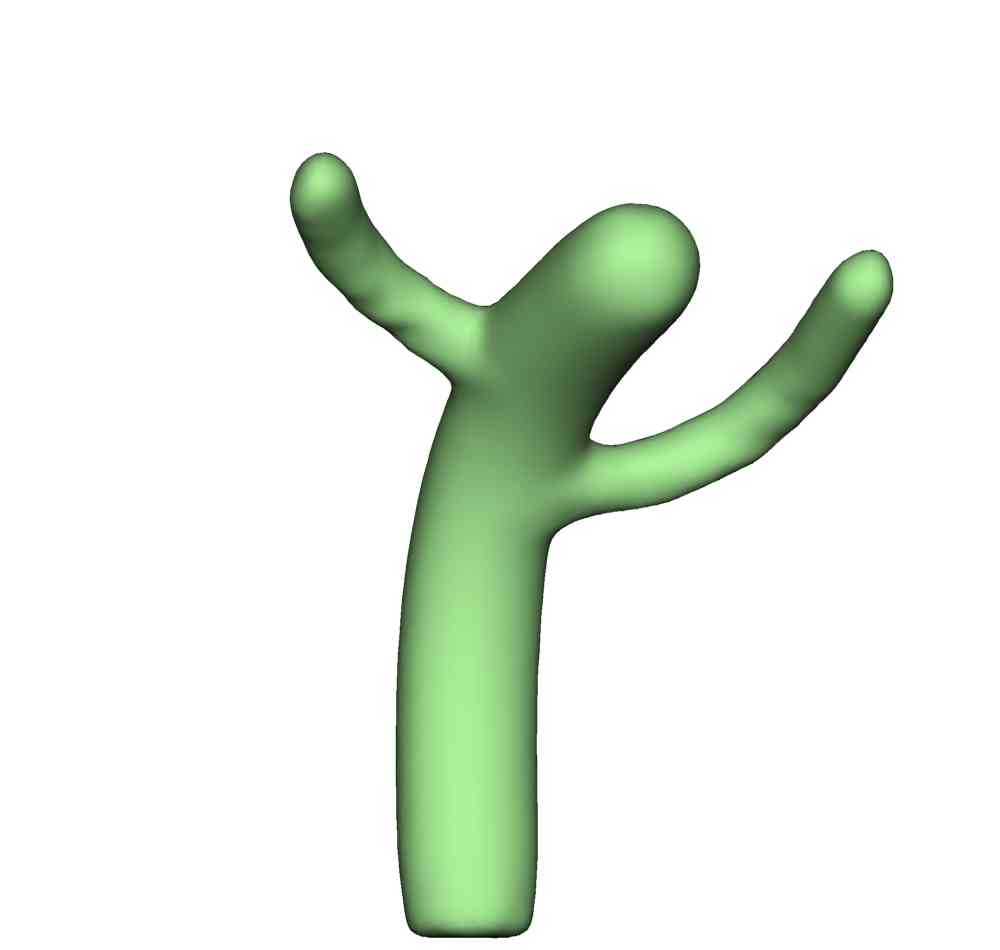}}}
\put(10,0.3){\resizebox{\unitlength}{!}{\includegraphics{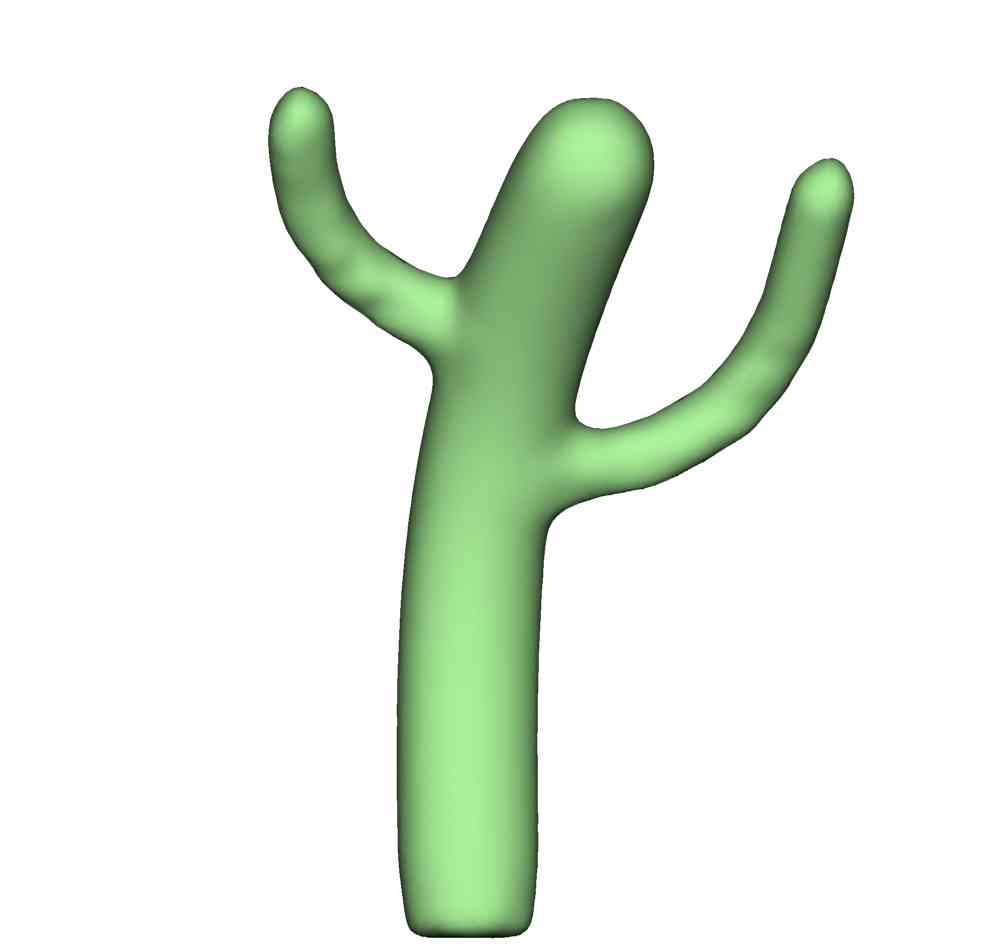}}}
\put(11,0.3){\resizebox{\unitlength}{!}{\includegraphics{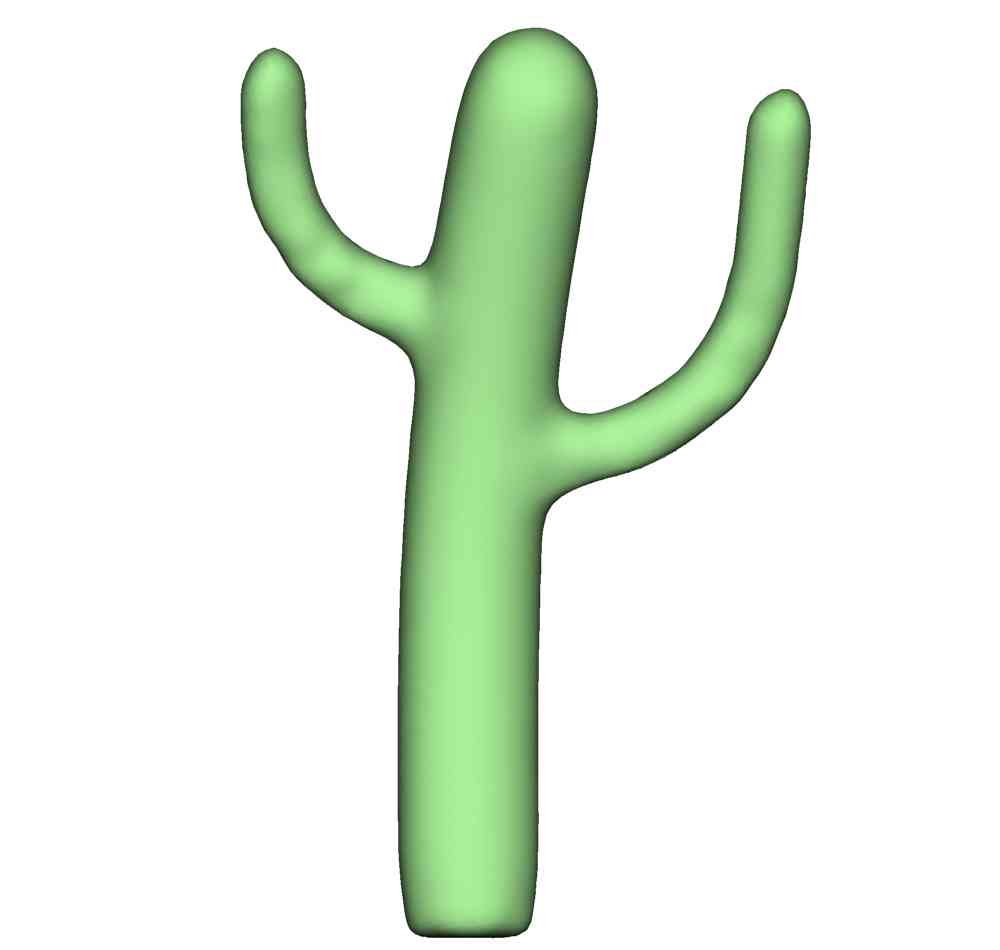}}}
\end{picture}

\centering
\setlength{\unitlength}{.13\linewidth}%
\begin{picture}(6,1.3)
 \put(0,0.2){\resizebox{.85\unitlength}{!}{\includegraphics{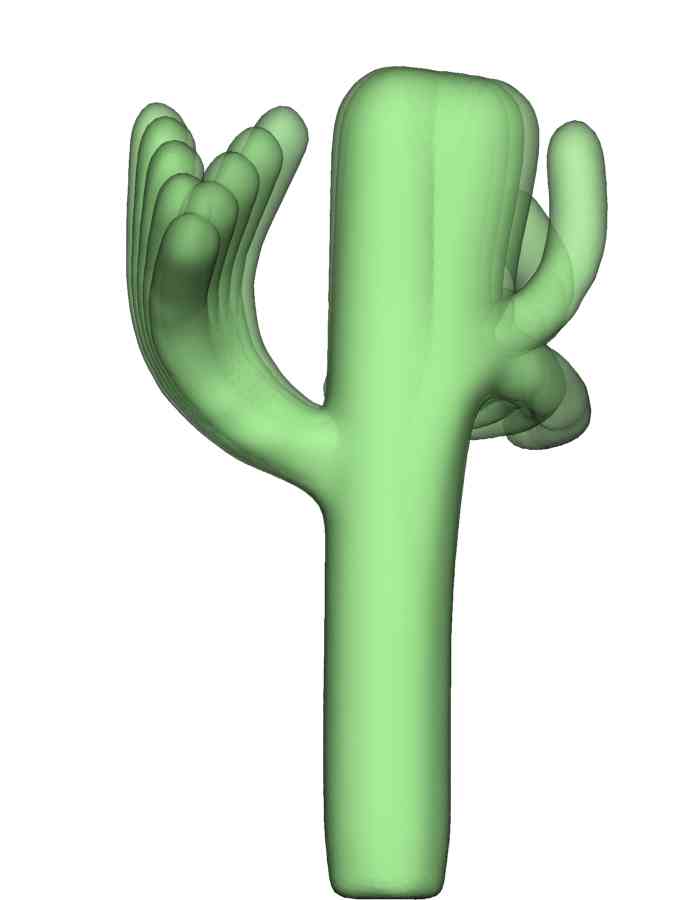}}}
  \put(0.8,0.2){\resizebox{.85\unitlength}{!}{\includegraphics{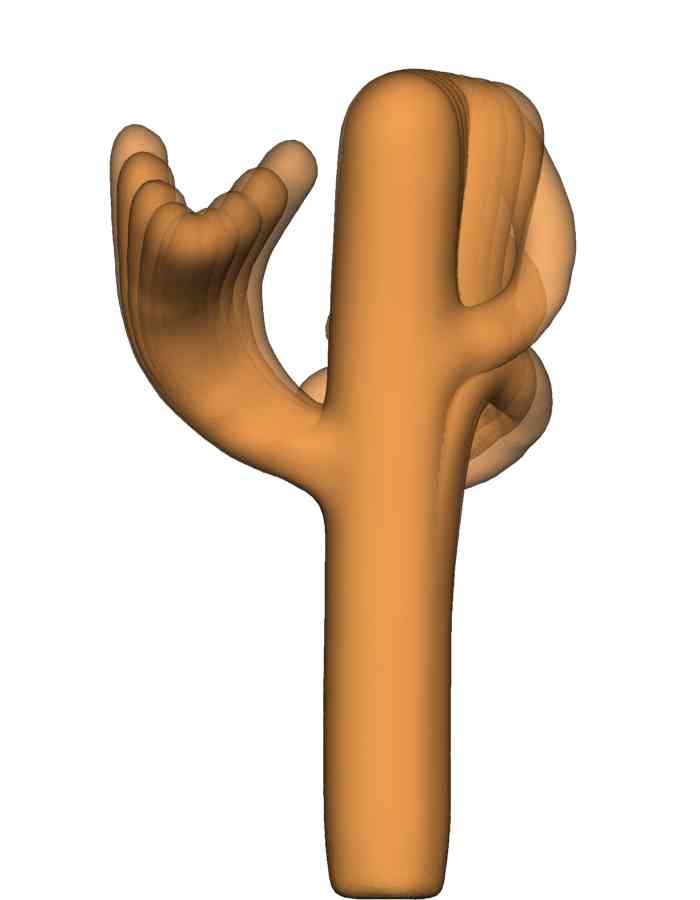}}}
  \put(1.8,0.1){\resizebox{\unitlength}{!}{\includegraphics{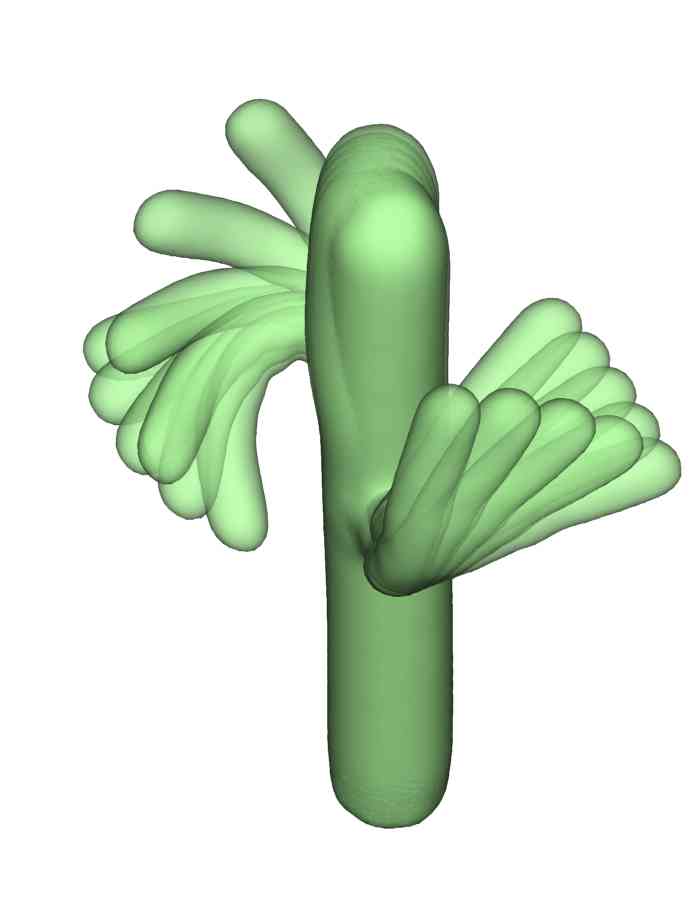}}}
  \put(2.8,0.1){\resizebox{\unitlength}{!}{\includegraphics{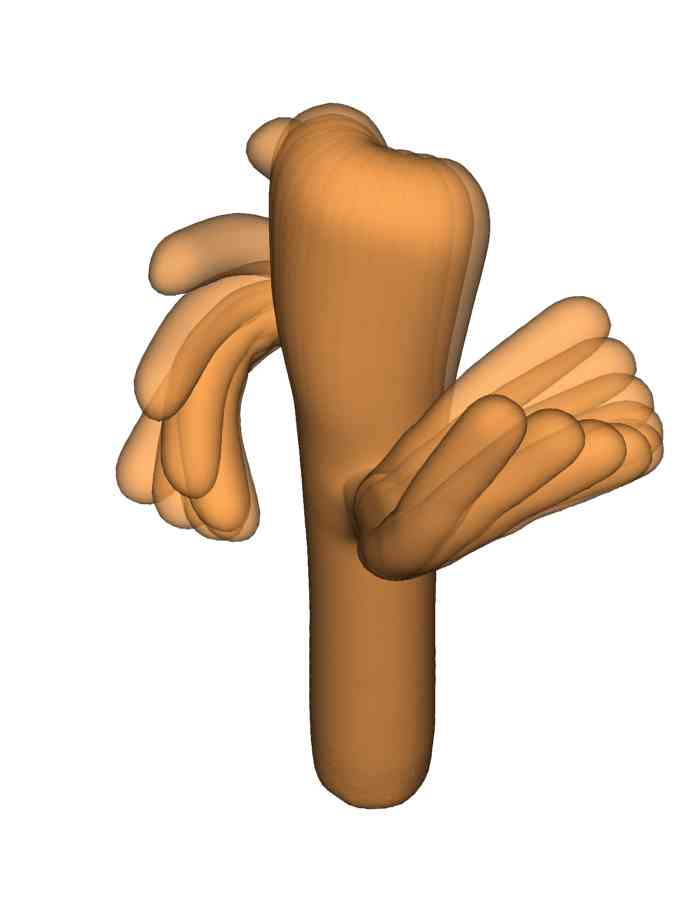}}}
   \put(4.05,0.3){\resizebox{\unitlength}{!}{\includegraphics{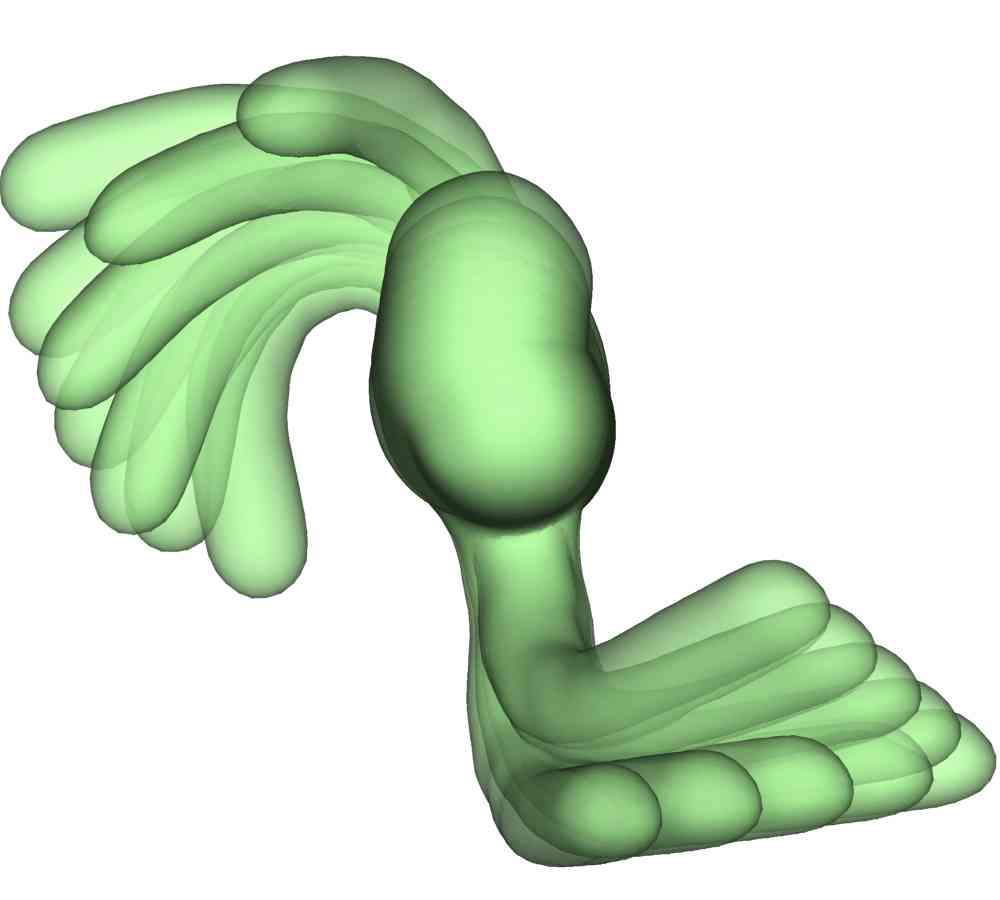}}}
  \put(5.1,0.3){\resizebox{\unitlength}{!}{\includegraphics{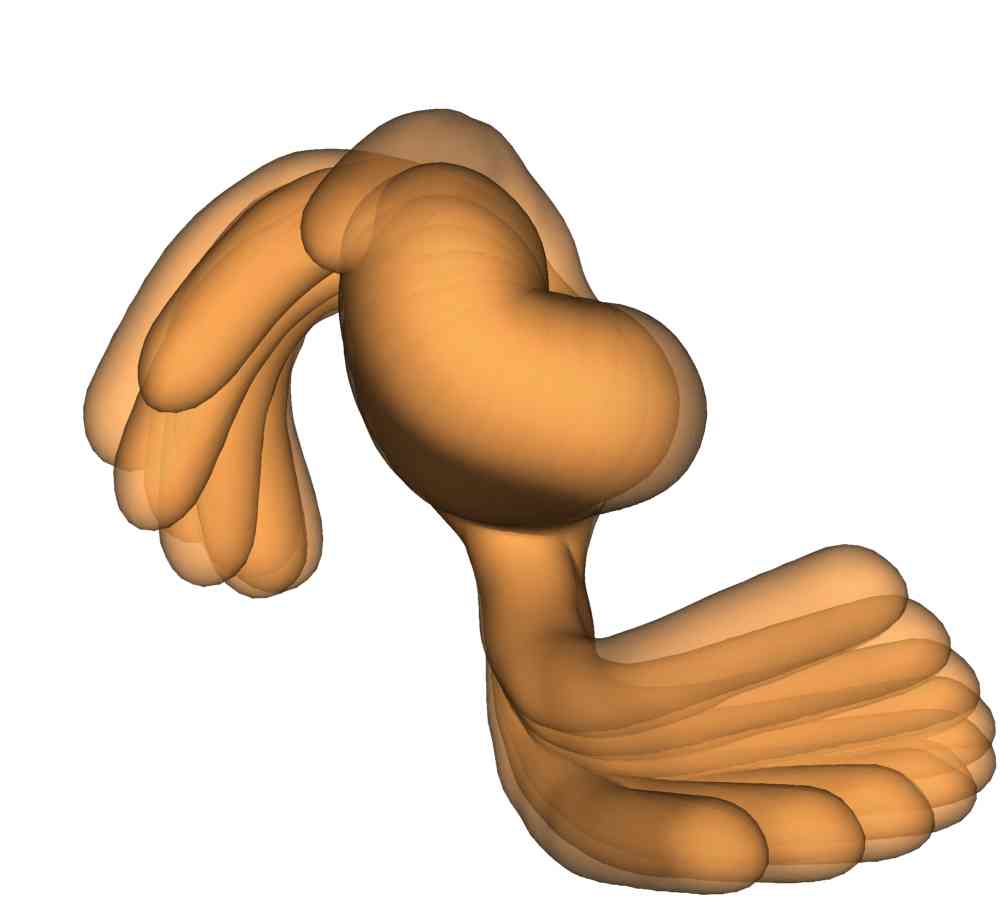}}}
\end{picture}

\centering
\begin{tikzpicture}[scale=0.8]
\begin{axis}[x=0.34cm,y=.03cm, xtickmin = 0, xtickmax = 50, enlarge x limits=false,ytick=\empty ]
\addplot[color=myOrange, very thick] plot coordinates {  (1, 1.03*0.1 )  ( 2,    2.27*0.1 )  ( 3,    3.31*0.1 )  ( 4,    4.85*0.1 )  ( 5,    7.13*0.1 )  ( 6,    1.02 )  ( 7,    1.42 )  ( 8,    1.92 )  ( 9,    2.53 )  ( 10,    3.32 )  ( 11,    1.98 )  ( 12,    1.18 )  ( 13,    7.69*0.1 )  ( 14,    6.86*0.1 )  ( 15,    9.43*0.1 )  ( 16,    1.55 )  ( 17,    2.59 )  ( 18,    4.13 )  ( 19,    6.17 )  ( 20,    8.97 )  ( 21,    5.67 )  ( 22,    3.83 )  ( 23,    2.75 )  ( 24,    2.03 )  ( 25,    1.75 )  ( 26,    2.01 )  ( 27,    2.72 )  ( 28,    4.02 )  ( 29,    6.09 )  ( 30,    9.32 )  ( 31,    6.29 )  ( 32,    4.13 )  ( 33,    2.58 )  ( 34,    1.55 )  ( 35,    9.45*0.1 )  ( 36,    7.19*0.1 )  ( 37,    8.99*0.1 )  ( 38,    1.55 )  ( 39,    2.78 )  ( 40,    4.86 )  ( 41,    3.65 )  ( 42,    2.79 )  ( 43,    2.10 )  ( 44,    1.56 )  ( 45,    1.20 )  ( 46,    9.98*0.1 )  ( 47,    8.50*0.1 )  ( 48,    6.88*0.1 )  ( 49,    5.93*0.1 )  };
\addplot[color=myGreen, very thick] plot coordinates {   ( 1, 0.015221 )  ( 2, 0.018111 )  ( 3, 0.0173745 )  ( 4, 0.0171919 )  ( 5, 0.0174105 )  ( 6, 0.0172703 )  ( 7, 0.017123 )  ( 8, 0.017094 )  ( 9, 0.0143724 )  ( 10, 37.3902 )  ( 11, 0.0206153 )  ( 12, 0.0234803 )  ( 13, 0.0228676 )  ( 14, 0.0225033 )  ( 15, 0.0225812 )  ( 16, 0.0232982 )  ( 17, 0.0234791 )  ( 18, 0.0244838 )  ( 19, 0.0225182 )  ( 20, 86.9195 )  ( 21, 0.066487 )  ( 22, 0.0638303 )  ( 23, 0.0603444 )  ( 24, 0.0582485 )  ( 25, 0.0576028 )  ( 26, 0.0584422 )  ( 27, 0.0608681 )  ( 28, 0.0634713 )  ( 29, 0.065535 )  ( 30, 86.8167 )  ( 31, 0.0216524 )  ( 32, 0.0240733 )  ( 33, 0.0231159 )  ( 34, 0.0232251 )  ( 35, 0.0229853 )  ( 36, 0.0234122 )  ( 37, 0.023376 )  ( 38, 0.0230098 )  ( 39, 0.0202336 )  ( 40, 59.0867 )  ( 41, 0.0150149 )  ( 42, 0.0176436 )  ( 43, 0.0179652 )  ( 44, 0.0179942 )  ( 45, 0.0180151 )  ( 46, 0.0183136 )  ( 47, 0.0179488 )  ( 48, 0.0174361 )  ( 49, 0.0146319 )   };
\end{axis}
\end{tikzpicture}
 \setlength{\unitlength}{.162\linewidth}%
\begin{picture}(5,0.18)
 \put(-0.1,-0.1){\resizebox{.3\unitlength}{!}{\includegraphics{cactus_spline_NL_0p0001mu_key_cropped0}}}
\put(0.8,-0.1){\resizebox{.3\unitlength}{!}{\includegraphics{cactus_spline_NL_0p0001mu_key_cropped1}}}
\put(1.8,-0.1){\resizebox{.3\unitlength}{!}{\includegraphics{cactus_spline_NL_0p0001mu_key_cropped2}}}
\put(2.8,-0.1){\resizebox{.3\unitlength}{!}{\includegraphics{cactus_spline_NL_0p0001mu_key_cropped3}}}
\put(3.8,-0.1){\resizebox{.3\unitlength}{!}{\includegraphics{cactus_spline_NL_0p0001mu_key_cropped4}}}
\put(4.8,-0.1){\resizebox{.3\unitlength}{!}{\includegraphics{cactus_spline_NL_0p0001mu_key_cropped5}}}
\end{picture}

\caption{Top: interpolation of six input shapes (grey) by a discrete spline (orange) and a piecewise discrete geodesic (green), respectively. 
Each discrete path consists of $51$ discrete shells in total where we have depicted every second shape here.
The chosen parameters are $\lambda =1$, $\mu =1$, $\zeta=1$, $\eta =10^{-4}$. 
Middle: Three different views of the subsequences from shape 16 to shape 24 (superposed) for the discrete spline (orange) and the piecewise discrete geodesic (green), respectively. 
Bottom: distribution of the energy $k \mapsto \energy[ \y_k, \widetilde \y_k]$ both for the discrete spline (orange) and the piecewise discrete geodesic (green).}
\label{fig:cactus}
\end{figure}
Figure \ref{fig:cactus} shows a spline curve for six given input poses of the discrete shell model of a cactus and compares the discrete spline interpolation and a piecewise discrete geodesic interpolation. 
The plotted quantity $\energy[ \y_k, \tilde \y_k]$ is considered as an approximation of the squared covariant derivative $\|\cov \dot y\|^2$ 
(cf.\ Remark~\ref{remark:motivation}).
It reflects the postulated regularity of spline curves. Indeed, we experimentally observe that $\|\cov \dot y\|^2$ appears to be bounded but not differentiable.
The profile is approximately parabolic as would be the profile for Euclidean spline interpolation, which exhibits a piecewise affine acceleration (see also Section~\ref{sec:embedded}).

\subsection{Viscous rods} \label{sec:viscousrods}
Unlike shells, which represent thin, macroscopically two-dimensional material layers,
the space of (linearized) viscous rods contains thin, macroscopically one-dimensional rods of material.
This shape space can be used to model curves, for instance contours of objects in 2D.
Again, the metric of the shape space is based on the energy dissipation due to rod stretching and bending.
Below we shall consider closed viscous rods, that is, shapes that have the same topology as $S^1$.
Let us pick the definition introduced in \cite{RuWi12b} and modify it slightly to ensure existence of 
spline curves.

\begin{definition}[Viscous rods]
Given a template shape $\y_A:S^1\to\R^2$ with Sobolev regularity $W^{2,2}(S^1;\R^2)$ and a (sufficiently small) $\eta>0$, the manifold of \emph{viscous rods} is given as
\begin{equation*}
\textstyle\manifold=\big\{\y:S^1\to\R^2\,|\,\|\y-\y_A\|_{W^{2,2}(S^1;\R^2)}^2\leq\eta,\,\int_{S^1}\y(s) \d s = 0\big\}\,,
\end{equation*}
thus it contains all closed curves in $\R^2$ which do not deviate too much from the template in the $W^{2,2}$ sense
and whose centre of mass is fixed to eliminate mere shape translations.
The \emph{dissipation} and \emph{metric} on $\manifold$ are given as
\begin{align*}
\energy[\y,\widetilde{\y}]
&=\int_{S^1} \frac{\delta }{2} \left(1-\frac{|\widetilde{\y}_{,s}|^2}{|\y_{,s}|^2}\right)^2 |\y_{,s}| + \delta^3  (\widetilde{\y}_{,ss}-\y_{,ss})^2 \d s\,,\\
g_{\y}(v,v)
&=\tfrac12\partial_2^2\energy[\y,\y](v,v)
= \int_{S^1} 2 \delta  \frac{|v^\tgl_{,s}|^2}{|\y_{,s}|^2} |\y_{,s}| + \delta^3 |v_{,ss}|^2 \d s\,,
\end{align*}
where $\delta>0$ has the interpretation of the material thickness, subscript $,s$ denotes the derivative along $S^1$, and $v^\tgl_{,s} = v_{,s} \cdot \frac{\y_{,s}}{|\y_{,s}|}$ reflects the tangential component of $v_{,s}$. Note that $|\y_{,s}| \d s$ means arclength integration along the curve $\y(S^1)$. 
Different from the model proposed in \cite{RuWi12b} we here drop the length element $|\y_{,s}|$ in the last term of the energy $\energy$ and the metric $g$.
This ensures that the highest order terms in the dissipation and the metric are a quadratic form $\Qenergy(v,v)=\delta^3\int_{S^1}|v_{,ss}|^2 \d s$.
\end{definition}
The first integrand in $\energy$ and $g_y$ represents energy dissipation due to stretching or compression of the rod
(indeed, it prefers $\widetilde{\y}_{,s}=\y_{,s}$ or zero tangential stretch $v^\tgl_{,s}=0$), while the second measures (linearized) dissipation due to bending.
With the choice
\begin{align*}
\Y &= C^{1,\alpha}(S^1;\R^2)\,,\quad\alpha\in[0,\tfrac12),\\
\V &= \textstyle\big\{ v \in W^{2,2}(S^1;\R^2)\,|\, \int_{S^1} v(s) \d s = 0 \big\}
\end{align*}
it is shown in \cite[Sec.\,7.2]{RuWi12b} that $g$ as well as $\energy$ satisfy the admissibility conditions from Definitions\,\ref{def:admissibleMetric} and \ref{def:admissibleEnergy} on $\manifold$
for $\eta$ small enough (such that there are constants $c,C>0$ with $C>|\y_{,s}|>c$ for all $\y\in\manifold$).
Indeed, if $\y_j\rightharpoonup\y$ and $\widetilde{\y}_j\rightharpoonup\widetilde{\y}$ in $\V$ as $n\to\infty$, then $\y_j\to\y$ and $\widetilde{\y}_j\to\widetilde{\y}$ strongly in $C^{1,\alpha}(S^1;\R^2)$
so that $\energy^c[\y_j,\widetilde{\y}_j]\to\energy^c[\y,\widetilde{\y}]$ with $\energy^c[\y,\tilde \y] = \energy[\y,\tilde \y] - \Qenergy(\y-\tilde \y,\y-\tilde \y)$.

\begin{remark}[Applicability of model analysis]
Again, $\manifold$ is not admissible in the sense of Definition\,\ref{def:admissibleManifold} since it has a boundary.
However, our analysis stays applicable.
In particular, the proof of existence of continuous spline interpolations from Theorem\,\ref{thm:ExistenceContinuous} still holds
(and also the proof of the regularity estimate as long as the minimizer does not touch the manifold boundary $\partial\manifold$), since $\manifold$ is weakly closed in $\V$.
The proof of existence of discrete spline interpolations from Theorem\,\ref{thm:ExistenceDiscrete} holds for the same reason.
Also the proof of $\Gamma$-convergence from Theorem\,\ref{thm:GammaConvergence} remains unchanged as long as one considers the $\Gamma$-limit in a curve with positive distance from $\partial\manifold$,
and for the convergence of discrete to continuous interpolations (as long as the continuous interpolation has positive distance to $\partial\manifold$) we can proceed as in Remark\,\ref{rem:proofChangesFiniteDimM},
first using the additional constraint of staying at least a distance $\varepsilon>0$ away from $\partial\manifold$ (which is weakly closed) and then letting $\varepsilon\to0$.
\end{remark}

\begin{figure}[h]
\includegraphics[width=\linewidth]{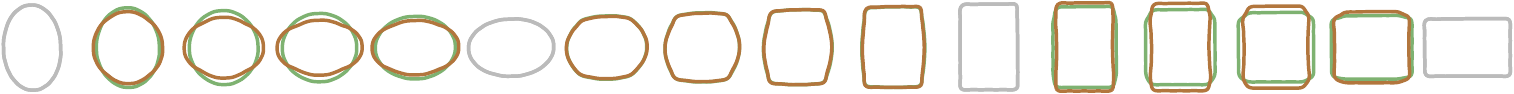}
\caption{Computed discrete geodesic (green) and spline (orange) interpolation between the grey viscous rods.
The influence of the ellipses' curvature on the spline segment between the rectangles is still visible, while for the piecewise geodesic interpolation any memory of the ellipses is lost between the rectangles.
Likewise, memory of the rectangles persists in the spline segment between the ellipses in form of a slight concavity in those places where the rectangle corners were flattened out.}
\label{fig:viscousRodsCircleSquare}
\end{figure}

For numerical computations, viscous rods were discretized using a spectral representation, and the energy $\energy$ was approximated via trapezoidal quadrature.
Figure\,\ref{fig:viscousRodsCircleSquare} shows an overlay of a discrete piecewise geodesic and a discrete spline interpolation with natural boundary conditions.
Here, the influence of the ellipses on the spline segment between the first two rectangles is still slightly visible as the sides of the intermediate rectangles are slightly bent inward (opposite to the curvature of the ellipses).
The piecewise geodesic interpolation does not show this feature.
Also the size change of the different contours is smoother in the spline than the piecewise geodesic.
Figure\,\ref{fig:viscousRodsDogs} shows the same comparison for interpolations between two silhouettes of a running dog.
While in the piecewise geodesic curve there is an abrupt change at each interpolation point between spreading and contracting the legs,
the legs stay spread out or pulled in for a longer time in the spline curve.
Finally, Figure\,\ref{fig:viscousRodsMPEG} computes a discrete periodic spline interpolation between a set of shapes from the MPEG-7 Core Experiment CE-Shape-1 (\url{http://www.cis.temple.edu/~latecki/TestData/mpeg7shapeB.tar.gz}).
It also shows $\energy[\y_k,\widetilde\y_k]$ as a function of $k$, which may be viewed as an approximation to the squared acceleration $g_\y(\cov\dot\y,\cov\dot\y)$.
The plot is consistent with the regularity result in Theorem\,\ref{thm:ExistenceContinuous}, which shows that the second derivatives of the spline interpolation are still H\"older continuous.
The graph is reminiscent of a continuous, piecewise quadratic function, which would be exactly the form of $\|\ddot\y\|^2$ for a cubic spline interpolation $\y$ in the Euclidean case.

\begin{figure}[h]
\setlength\unitlength\linewidth
\begin{picture}(1,.19)
\put(0,0){\includegraphics[width=\unitlength]{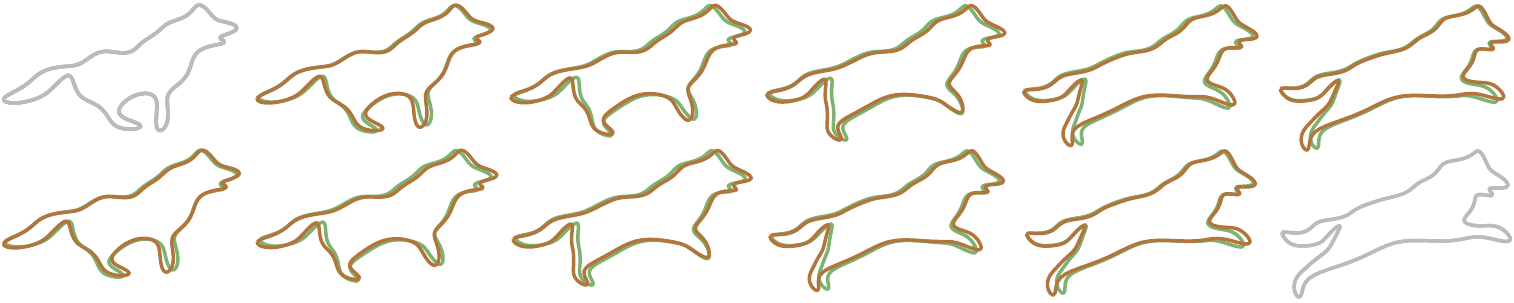}}
\multiput(.14,.14)(.167,0){5}{\includegraphics[width=.035\unitlength]{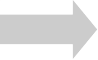}}
\multiput(.15,.05)(.167,0){5}{\includegraphics[width=.035\unitlength,angle=180,origin=c]{arrow}}
\put(.07,.08){\includegraphics[width=.035\unitlength,angle=90,origin=c]{arrow}}
\put(.91,.09){\includegraphics[width=.035\unitlength,angle=-90,origin=c]{arrow}}
\end{picture}
\caption{Computed discrete periodic piecewise geodesic (green) and spline (orange) interpolation between the grey viscous rods.}
\label{fig:viscousRodsDogs}
\end{figure}

\begin{figure}[h]
\includegraphics[width=.3\linewidth]{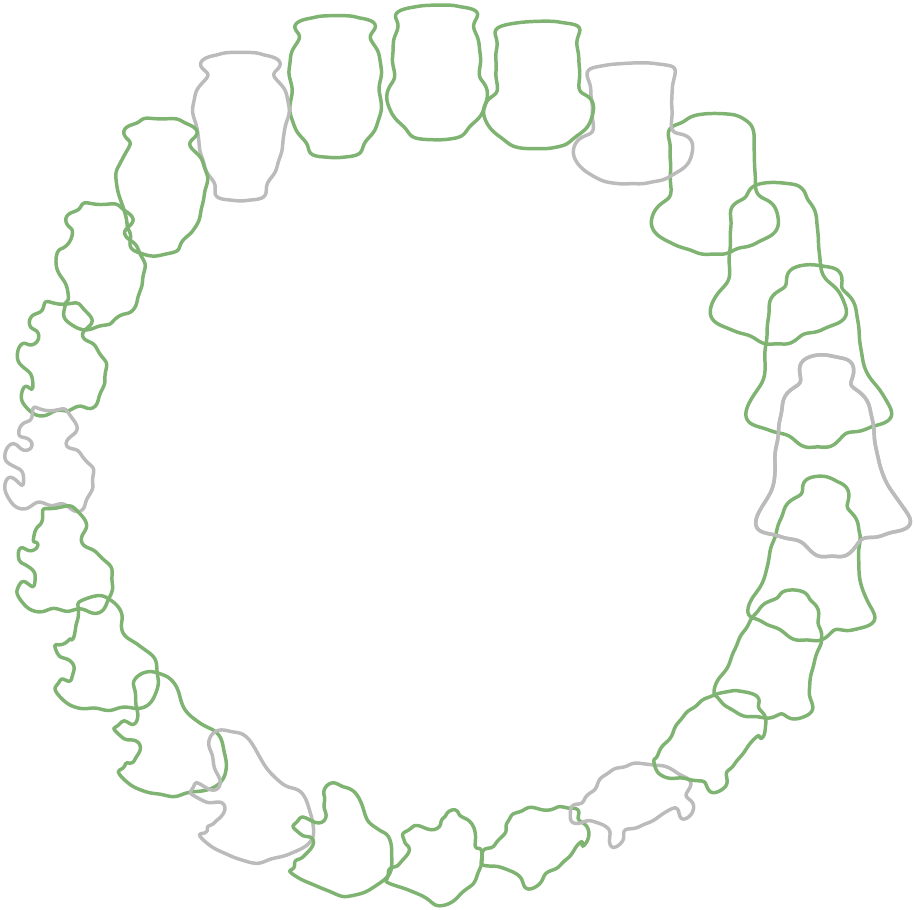}
\includegraphics[width=.3\linewidth]{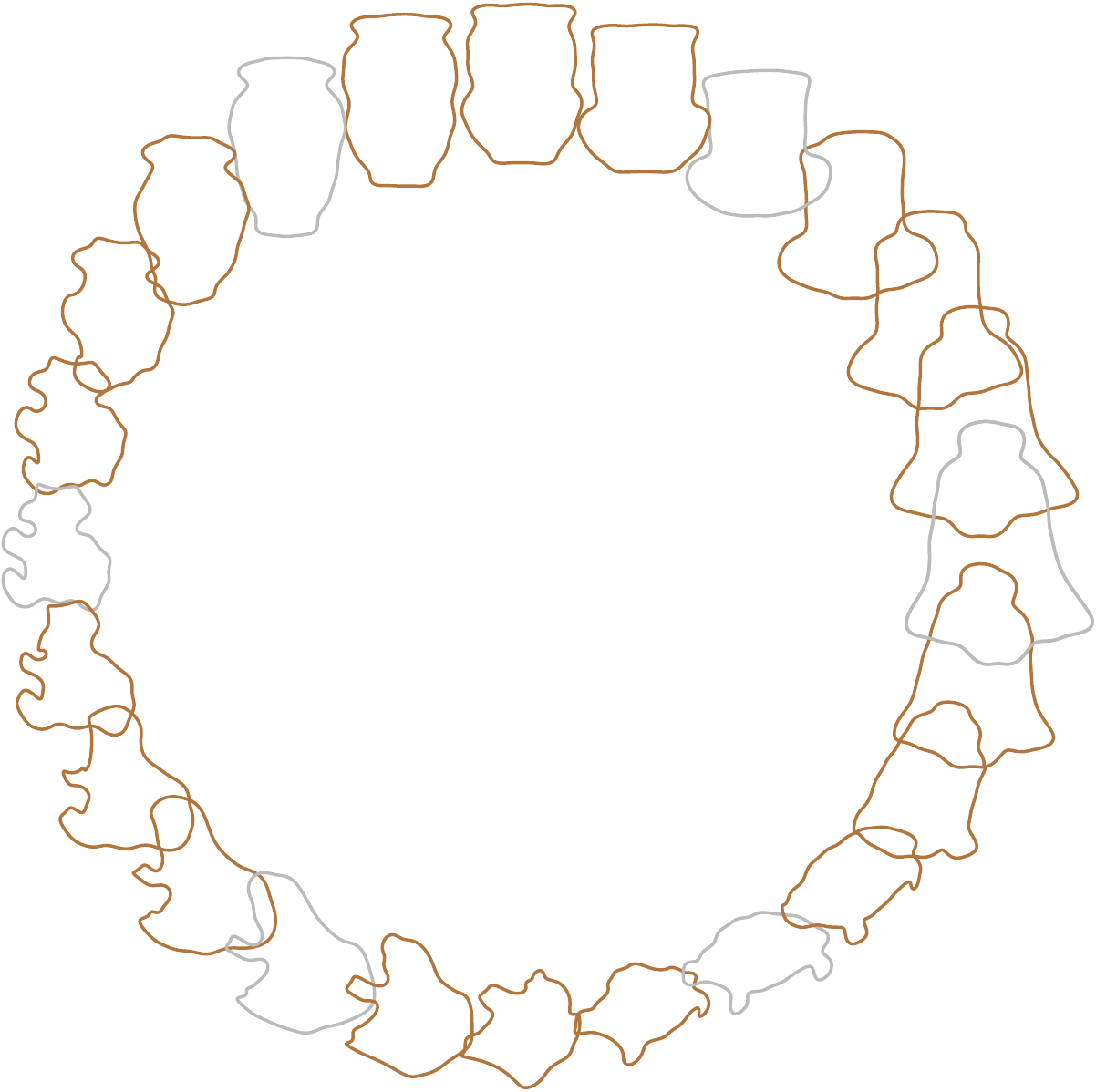}
\definecolor{myOrange}{RGB}{255,169,87}
\definecolor{myGreen}{RGB}{180,255,162}
\begin{tikzpicture}[scale=0.38]
\begin{axis}[x=.35cm,y=.0005cm, xtickmin = 0, xtickmax = 48, enlarge x limits=false,ytick=\empty]
\addplot[color=myGreen, very thick] plot coordinates { (1,5.14689793) (2,5.14761291) (3,0.00694378) (4,0.00211039) (5,0.00060565) (6,0.00031697) (7,0.00263394) (8,1.59483743) (9,2809.41073742) (10,1.59763123) (11,0.00196002) (12,0.00026815) (13,0.00051474) (14,0.00081095) (15,0.00290151) (16,3.11032109) (17,5477.42868347) (18,3.11557123) (19,0.00488327) (20,0.00682075) (21,0.00208920) (22,0.00718740) (23,0.00775329) (24,6.36225359) (25,11218.66031558) (26,6.38371572) (27,0.01375447) (28,0.00849266) (29,0.00652885) (30,0.00904631) (31,0.00819895) (32,6.08450502) (33,10673.38781041) (34,6.03818741) (35,0.00991369) (36,0.00533448) (37,0.00600604) (38,0.01519570) (39,0.00831694) (40,4.80397498) (41,8517.11944282) (42,4.82912068) (43,0.06632377) (44,0.00514074) (45,0.04640732) (46,0.03026716) (47,0.01625112) (48,5.14689793) };
\addplot[color=myOrange, very thick] plot coordinates { (1,440.64997446) (2,535.47294356) (3,314.41136366) (4,176.03284593) (5,86.48768901) (6,58.92976425) (7,85.20078304) (8,164.61080605) (9,306.30171432) (10,193.06784916) (11,125.06711995) (12,94.64734358) (13,102.03101454) (14,146.59121123) (15,227.77600518) (16,348.12875503) (17,507.27513393) (18,267.25851502) (19,115.01044478) (20,48.04822059) (21,66.55814093) (22,170.40377980) (23,358.77728620) (24,633.15524264) (25,996.33119659) (26,594.80966406) (27,304.20481197) (28,130.08182355) (29,71.62679032) (30,129.88341768) (31,309.94626355) (32,605.78372952) (33,1021.22889770) (34,635.95755857) (35,357.96615781) (36,180.27164768) (37,103.59448036) (38,129.55956470) (39,260.04561693) (40,490.39260271) (41,836.66240024) (42,515.74295552) (43,296.97629761) (44,163.69471502) (45,105.63933558) (46,135.05664290) (47,257.03914638) (48,440.64997446) };
\end{axis}
\end{tikzpicture}
\caption{Computed discrete periodic piecewise geodesic (green) and spline (orange) interpolation between the grey viscous rods (left, only half of all intermediate shapes are shown, the grey shapes were taken from the MPEG-7 Core Experiment CE-Shape-1)
and $\energy[\y_k,\widetilde\y_k]$ as a function of $k$ (right).}
\label{fig:viscousRodsMPEG}
\end{figure}

\paragraph{Acknowledgements.} B.\,Heeren and M.\,Rumpf acknowledge support of the Hausdorff Center for
Mathematics and the Collaborative Research Center 1060 funded by the German Research Foundation.
B.\,Wirth's research was supported by the Alfried Krupp Prize for Young University Teachers awarded by the Alfried Krupp von Bohlen und Halbach-Stiftung.
The work was also supported by the German Research Foundation, Cells-in-Motion Cluster of Excellence (EXC1003 -- CiM), University of M\"unster.


\bibliographystyle{alpha}
\bibliography{arxiv}

\end{document}